\documentclass[12pt,reqno]{amsart}
\topmargin=-0.7in \hoffset=-1cm \voffset=2cm \textheight=220mm
\textwidth=150mm

\usepackage{graphicx}
\usepackage{amssymb}
\usepackage{amsfonts}
\usepackage{amsmath,latexsym,amssymb,amsfonts,amsbsy, amsthm}
\usepackage[usenames]{color}
\usepackage{xspace,colortbl}
\usepackage{mathrsfs}
\usepackage[colorlinks,linkcolor=blue,citecolor=blue]{hyperref}
\usepackage[titletoc]{appendix}
\usepackage{pdfsync}
\usepackage{esint}
\allowdisplaybreaks

\setcounter{equation}{0}

\newcommand{\beq}{\begin{equation}}
	\newcommand{\eeq}{\end{equation}}
\newcommand{\ben}{\begin{eqnarray}}
	\newcommand{\een}{\end{eqnarray}}
\newcommand{\beno}{\begin{eqnarray*}}
	\newcommand{\eeno}{\end{eqnarray*}}
\newcommand{\R}{\mathbb{R}}

\newtheorem{theorem}{Theorem}[section]
\newtheorem{lemma}[theorem]{Lemma}
\newtheorem{remark}[theorem]{Remark}
\newtheorem{proposition}[theorem]{Proposition}
\newtheorem{corollary}[theorem]{Corollary}
\newtheorem{definition}[theorem]{Definition}

\numberwithin{equation}{section}



\title[Entropy for supercritical Fujita equation]{F-stability, entropy and energy gap  for supercritical  Fujita equation}

\author[K. Wang]{Kelei Wang$^\dag$}
\address{$^\dag$School of Mathematics and Statistics \\ Wuhan University\\
	Wuhan 430072, China}
\email{wangkelei@whu.edu.cn}

\author[J. Wei]{Juncheng Wei$^\ast$}
\address{$^\dag$Department of Mathematics \\ Chinese University of Hong Kong\\
	Shatin, NT, Hong Kong}
\email{wei@math.cuhk.edu.hk}

\author[K. Wu]{Ke Wu$^\ast$}
\address{$\ast$ School of Mathematics and Statistics \\ Wuhan University\\
	Wuhan 430072, China}
\email{wukemail@whu.edu.cn}

%
%
%
%
\thanks{K. Wang is supported by  National Key R\&D Program of China (No. 2022YFA1005602) and the National Natural Science Foundation of China (No. 12131017 and No. 12221001).  J. Wei is partially supported  by  National Key R\&D Program of China (No. 2022YFA1005602), and Hong Kong General Research Fund "New frontiers in singular limits of nonlinear partial differential equations. K. Wu is supported by the China Postdoctoral Science Foundation (No. 2023M732712)}
\keywords{Fujita equation; blow up phenomena;  self similar solutions; least energy solution.}

\subjclass[2020]{35K58;35B44;35B45.}

\begin{document}

\begin{abstract}
We  study some problems on self similar solutions to the Fujita
equation when $p>(n+2)/(n-2)$, especially, the characterization of constant solutions by the energy.  Motivated by recent advances in mean curvature flows, we introduce the notion of $F-$functional, $F$-stability and entropy for solutions of supercritical Fujita equation.  Using these tools, we   prove that among bounded positive self similar solutions, the constant
solution has the lowest entropy. Furthermore, there is also a gap between the   entropy of constant and non-constant solutions.   As an application of these results,  we   prove that if $p>(n+2)/(n-2)$, then   the blow up set of type I blow up solutions   is the union of
a $(n-1)-$ rectifiable set  and a set of Hausdorff dimension at most $n-3$.
\end{abstract}

\maketitle

\tableofcontents

\section{Introduction}\label{sec introduction}
Consider the Cauchy problem
\begin{equation}\label{Cauchyproblem}
\left\{\begin{array}{lll}
\partial_{t}u=\Delta u+|u|^{p-1}u,\quad\text{in}\quad\mathbb{R}^{n}\times (0, T),\\
u(\cdot, 0)=u_{0},
\end{array}
\right.
\end{equation}
where $p>1$ and $u_{0}\in L^{\infty}(\mathbb{R}^{n})$. It is well known (see \cite{Fujita1966}) that solutions of \eqref{Cauchyproblem} may blow up in a finite time. Here a solution $u(x, t)$ is said to blow up in a finite time $T$ if $u(x, t)$ satisfies \eqref{Cauchyproblem} and
\[\limsup_{t\to T}\|u(\cdot,t)\|_\infty=\infty.\]
 A point $x_{0}$ is called a blow-up point if there exist sequences
 $\{x_{k}\}$ and $\{t_{k}\}$ such that
  \[\lim_{k\rightarrow\infty}x_{k}=x_{0}, \quad \lim_{k\rightarrow\infty}t_{k}=T,\quad
  \lim_{k\rightarrow\infty}|u(x_{k}, y_{k})|=+\infty.\] The set $\Sigma$ consisting of
  all the blow-up points is termed the blow-up set. For any $x_{0}\in\Sigma$, $\mathcal{T}(u, x_0)$ is the set of blow-up limits of $u$ at $x_0$. In the setting of geometric measure theory, $\mathcal{T}(u, (x_0, T))$  is usually called the set of tangent functions.

The finite time blow up is said to be of type I if
\[\limsup_{t\to T} (T-t)^{\frac{1}{p-1}}\|u(\cdot,t)\|_\infty<+\infty,\]
and of type II if
\[\limsup_{t\to T} (T-t)^{\frac{1}{p-1}}\|u(\cdot,t)\|_\infty=+\infty,\]
where $T$  is the maximal existence time of the $L^{\infty}$ solution $u$.

 In a series of papers, Giga and Kohn \cite{Giga-Kohn1985, Giga-Kohn1987, Giga-Kohn1989} studied the asymptotic behavior of blow-up solutions to \eqref{Cauchyproblem} when $1<p<p_{s}(n)$, where
   \[p_{s}(n)=\left\{\begin{array}{lll}
+\infty,\quad &\text{if}~1\leq n\leq 2,\\
\frac{n+2}{n-2}, &\text{if}~n\geq 3.
\end{array}
\right.\]
In order to analyzing \eqref{Cauchyproblem}, Giga and Kohn consided the self similar transform
\[w(y,\tau)=(T-t)^{\frac{1}{p-1}}u(x,t),\quad x=(T-t)^{\frac{1}{2}}y, \quad T-t=e^{-\tau}.\]
If $u$ satisfies \eqref{Cauchyproblem}, then $w$ satisfies
\begin{equation}\label{selfsimilar}
\partial_{\tau}w-\Delta w + \frac{1}{2} y \cdot \nabla w + \frac{1}{p-1} w - |w|^{p-1}w=0.
\end{equation}
In particular, if $w$ is a stationary solution of \eqref{selfsimilar}, then $w$ satisfies
\begin{equation}\label{SC1}
\Delta w-\frac{1}{2} y\cdot\nabla w-\frac{1}{p-1}w+|w|^{p-1} w=0,\quad\text{in}~\mathbb{R}^{n}.
\end{equation}
It is clear that \eqref{SC1} has three constant solutions $0, \pm\kappa$, where
\[\kappa:=(\frac{1}{p-1})^{\frac{1}{p-1}}.\]
Giga and Kohn \cite{Giga-Kohn1985} proved that if $1< p< p_{s}(n)$ and $u$  is a finite blow up solution  of  \eqref{Cauchyproblem}
  satisfying
  \begin{eqnarray}
    \sup_{\mathbb{R}^n\times(0,T)}(T-t)^{\frac{1}{p-1}}|u(x,t)|< \infty.
  \end{eqnarray}
 Then for any $x_0\in\R^n$,
  \begin{eqnarray}
  \label{limit1}
    \lim_{	t\rightarrow T}(T-t)^{\frac{1}{p-}}u(x_0+(T-t)y,t)=0\ \text{or}\ \pm\kappa
  \end{eqnarray}
uniformly for $y$ bounded. Their proof relies on two ingredients: first there is the {\em generalized Pohozaev} identity for bounded solutions of \eqref{SC1}
\begin{equation} \left(\frac{n}{p+1} +\frac{2-n}{2}\right) \int _{\mathbb{R}^{n}}|\nabla w|^2 \rho dy +\frac{1}{2} \left(\frac{1}{2}-\frac{1}{p+1}\right) \int_{\mathbb{R}^{n}} |y|^2 |\nabla w|^2 \rho dy =0
\end{equation}
where $ \rho = (4\pi)^{-n/2}e^{- |y|^2/4}$ is the Gaussian. Second the following  Giga-Kohn energy functional
\[E[w](\tau)= \frac{1}{2} \int_{\mathbb{R}^{n}} |\nabla w|^2 \rho dy +\frac{1}{2(p-1)} \int _{\mathbb{R}^{n}}|w|^2 \rho dy - \frac{1}{p+1} \int_{\mathbb{R}^{n}} |w|^{p+1} \rho dy\]
is monotonically decreasing for bounded solutions of \eqref{selfsimilar}.

In \cite{Giga-Kohn1987}, Giga and Kohn proved that if $1<p<p_{s}(n), u_{0}\geq 0$, then \eqref{Cauchyproblem} has only type I finite blow up solutions.  In \cite{Giga-Kohn1989}, Giga and Kohn   proved the nondegeneracy of the blow ups: we can tell whether or not a point $a$ is a blow up point by studying the asymptotic behavior of the solution in a backward 
spacetime parabola based at $(a, T)$. In \cite{Merle-Zaag}, Merle and Zaag classified all the bounded global nonnegative solutions to \eqref{selfsimilar} defined on $\mathbb{R}^n\times\mathbb{R}$.

Due to the above mentioned results, the blow up phenomenon of \eqref{Cauchyproblem} when $1<p<p_{s}(n)$ has been well understood. In the remaining part of the paper, we assume that $n\geq 3, p>(n+2)/(n-2)$.

\subsection{Setting and main results}
For any bounded smooth function, let 
\begin{equation}\label{weightedenergy}
E(w)=\frac{1}{2}\int_{\mathbb{R}^{n}}|\nabla w|^{2}\rho dy+\frac{1}{2(p-1)}\int_{\mathbb{R}^{n}}w^{2}\rho dy-\frac{1}{p+1}\int_{\mathbb{R}^{n}}|w|^{p+1}\rho dy
\end{equation}
be the weighted energy of $w$. For any constant $m$  such that $m>\kappa
$, we denote
\[\mathcal{B}_{n, m}=\{w: w~\text{is a positive solution of}~\eqref{SC1}~\text{such that}~\|w\|_{L^{\infty}(\mathbb{R}^{n})}\leq m\}.\]
Since $\kappa\in\mathcal{B}_{n, m}$, then $\mathcal{B}_{n, m}$ is not empty. The first main result is the following.
\begin{theorem}\label{maintheorem1}
Assume $n\geq 3, p>(n+2)/(n-2)$. Then for any $w\in\mathcal{B}_{n, m}$, 
\[E(w)\geq E(\kappa).\]
\end{theorem}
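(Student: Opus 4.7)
The plan is to first collapse $E$ on solutions of \eqref{SC1} to a single weighted moment of $w$, and then to deduce the desired inequality from the $F$-functional and entropy framework announced in the abstract.

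First I would test \eqref{SC1} against $w\rho$. Using $\nabla\cdot(\rho\nabla w)=\rho(\Delta w-\tfrac{y}{2}\cdot\nabla w)$ together with the Gaussian decay of $\rho$ to discard boundary terms, one obtains the Nehari-type identity
\[
\int_{\R^n} |\nabla w|^2 \rho\, dy + \frac{1}{p-1} \int_{\R^n} w^2 \rho\, dy = \int_{\R^n} w^{p+1}\rho\, dy.
\]
Substituting this into \eqref{weightedenergy} collapses the energy on any solution to $E(w) = \frac{p-1}{2(p+1)} \int_{\R^n} w^{p+1}\rho\, dy$, so in particular $E(\kappa) = \frac{p-1}{2(p+1)}\kappa^{p+1}$, and Theorem~\ref{maintheorem1} becomes equivalent to the moment inequality
\[
\int_{\R^n} w^{p+1}\rho\, dy\ \ge\ \kappa^{p+1} \qquad\text{for every } w\in\mathcal{B}_{n,m}.
\]

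To establish this, I would introduce the $F$-functional $F_{x_0, s}(w)$ for $(x_0, s)\in\R^n\times(0,\infty)$, essentially the Giga-Kohn weighted energy of the shift-rescale of $w$ anchored at the parabolic base-point $(x_0, s)$, together with the entropy $\lambda(w):=\sup_{(x_0, s)} F_{x_0, s}(w)$. Three properties drive the argument: (i) a Huisken-type monotonicity, namely that $F_{x_0, s}$ is non-increasing along the parabolic flow \eqref{selfsimilar} for every fixed base-point, so that $\lambda$ is non-increasing along the flow; (ii) $\lambda(\kappa)=E(\kappa)$, because the constant $\kappa$ is invariant under every shift and dilation; (iii) for a stationary solution $w$ of \eqref{SC1}, the monotonicity forces the supremum defining $\lambda(w)$ to be attained at the canonical base-point $(x_0, s)=(0, 1)$, so that $\lambda(w)=E(w)$. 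Together (i)--(iii) reduce the theorem to the entropy comparison $\lambda(w)\ge\lambda(\kappa)$ for every $w\in\mathcal{B}_{n,m}$.

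The entropy comparison I would then prove by a Colding-Minicozzi-style contradiction. If some non-constant $w\in\mathcal{B}_{n,m}$ had $\lambda(w)<\lambda(\kappa)$, a spectral analysis of the linearized operator $\mathcal{L}+p w^{p-1}-\tfrac{1}{p-1}$, where $\mathcal{L}:=\Delta-\tfrac{y}{2}\cdot\nabla$ is the Ornstein-Uhlenbeck operator on the Gaussian-weighted $L^2$, shows that $w$ is $F$-unstable modulo the symmetry group. Perturbing $w$ along such an unstable direction and flowing by \eqref{selfsimilar}, monotonicity keeps the entropy below $\lambda(\kappa)$ throughout the flow, while the uniform bound $m$ combined with positivity (via the strong maximum principle for $\mathcal{L}$) confine the trajectory to a compact subset of $C^2_{\mathrm{loc}}(\R^n)$ bounded away from zero. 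The $\omega$-limit is then another bounded positive stationary solution of strictly smaller entropy, and an iteration together with a compactness/entropy-gap argument on $\mathcal{B}_{n,m}$ yields a contradiction. The main obstacle is exactly this last step: proving the $F$-instability of every non-constant $w\in\mathcal{B}_{n,m}$ (the analog of Colding-Minicozzi's classification of $F$-stable self-shrinkers), and ruling out degeneration of the perturbed flow to the trivial solution, since $E(0)=0<E(\kappa)$ would otherwise sabotage the contradiction; the latter demands a quantitative positive barrier produced from the uniform $L^\infty$ bound together with the strong maximum principle for $\mathcal{L}$.
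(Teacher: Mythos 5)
Your reduction via the Nehari identity $E(w)=\tfrac{p-1}{2(p+1)}\int w^{p+1}\rho$, your items (i)--(iii) on the $F$-functional and entropy, and your identification of $F$-instability of non-constant solutions as the engine are all correct and coincide with what the paper does (equation \eqref{Inte1}, Lemma \ref{CSP}, Corollary \ref{CCSP}, Theorem \ref{thestable}, Theorem \ref{FSEES}). The gap is in the contradiction mechanism you propose after perturbing along the unstable direction.

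First, the perturbed trajectory is \emph{not} confined to a compact subset of $C^2_{\mathrm{loc}}$ away from zero; on the contrary, the paper proves (using the positive first eigenfunction $f$ and the Merle--Zaag blow-up criterion, Lemma \ref{blowupcriteria}) that the flow starting from $w+sf$ necessarily \emph{blows up in finite time}. So there is no global-in-time flow whose $\omega$-limit you can extract. The limit solution in the paper's argument is obtained instead by rescaling at the blow-up point (Step~3 of Proposition \ref{decreaseentropy}), which is a tangent-flow limit, not an $\omega$-limit of a globally defined trajectory. Second, the iteration you propose (keep producing solutions of strictly smaller entropy until contradiction) requires the entropy gap, but that gap is precisely Theorem~\ref{maintheorem2}, which the paper derives \emph{from} Theorem~\ref{maintheorem1} via a compactness/linearization argument; invoking it here would be circular, and without a quantitative gap the descending sequence need not terminate. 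Third, the mechanism the paper actually uses to prevent degeneration to zero and to pin down the rescaled limit is not a barrier: it is that the time-monotonicity $\partial_\tau\tilde w>0$ (propagated by the maximum principle, Step~1) passes to the rescaled limit and forces $\Lambda(\tilde w_*)=\tfrac{2}{p-1}\tilde w_*+y\cdot\nabla\tilde w_*\geq 0$; by Proposition \ref{ChCS}, a bounded solution with $\Lambda$ of one sign is constant, and blow-up rules out the zero constant, so $\tilde w_*=\kappa$. Finally, even granting all of the above, the perturbation argument only applies when $w$ satisfies the decay $|w|\leq C(1+|y|)^{-2/(p-1)}$ (needed for Lemma \ref{lemeigenfunctiondecay} and the finiteness estimates in Theorem \ref{FSEES}). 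For a putative minimizer $w_0\in\mathcal B_{n,m}$ that does not decay, the paper instead runs a Federer dimension-reduction induction on $n$, using the self-similar solution $u_0$ and the density function $\Theta$: if $u_0$ blows up somewhere other than the origin, one obtains a translation-invariant self-similar solution in $\mathcal B_{n-1,m}$, to which the inductive hypothesis applies. Your proposal omits this induction entirely, and without it the non-decaying case is unaddressed.
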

In Theorem \ref{maintheorem1}, the constant $m$ is not important. Therefore, what we essentially proved is that the positive constant solution has the lowest energy among bounded positive self similar solutions.
\begin{remark}
Let
\[\mathcal{E}=\{\text{the set of bounded radially symmetric solutions of}~\eqref{SC1}\}.\]
It has been proved by Matano and Merle \cite[Theorem 1.4]{Merle-Matano2011} that
\[E(w)\geq E(\kappa),\quad\text{for any}~w\in \mathcal{E}\backslash\{0\}.\]
Furthermore, the equality holds if and only if $w=\pm\kappa$. Their proof is ``parabolic'' and is based on the zero-number argument.
\end{remark}
Next, we prove that not only the constant solution has the lowest energy among functions in $\mathcal{B}_{n, m}$, but there is a gap to the second lowest.
\begin{theorem}\label{maintheorem2}
Assume $n\geq 3, p>(n+2)/(n-2)$. Then there exists a positive constant $\epsilon$ depends on $n, p, m$ such that if $w\in\mathcal{B}_{n,m}$ and $w$ is not the positive constant solution of \eqref{SC1}, then
\[E(w)>E(\kappa)+\epsilon.\]
\end{theorem}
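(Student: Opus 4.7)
The plan is to argue by contradiction via a compactness-rigidity scheme. Suppose no such $\epsilon$ exists: there is a sequence $\{w_{k}\} \subset \mathcal{B}_{n,m}$ with $w_{k} \not\equiv \kappa$ and $E(w_{k}) \to E(\kappa)$. The uniform bound $\|w_{k}\|_{\infty} \leq m$ together with interior elliptic estimates for \eqref{SC1} yields $C^{j,\alpha}_{\mathrm{loc}}$ bounds for all $j$, so along a subsequence $w_{k} \to w_{\infty}$ in $C^{\infty}_{\mathrm{loc}}(\mathbb{R}^{n})$, with $w_{\infty}$ a bounded nonnegative solution of \eqref{SC1}.

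Next, one passes the energy to the limit. The $w^{2}$ and $|w|^{p+1}$ terms of $E$ converge by dominated convergence (the weight $\rho$ is integrable and $\|w_{k}\|_{\infty} \leq m$), and Fatou's lemma applied to the gradient term gives $E(w_{\infty}) \leq E(\kappa)$. A direct computation yields $E(\kappa) = \kappa^{2}/(2(p+1)) > 0$, which rules out $w_{\infty} \equiv 0$; by the strong maximum principle $w_{\infty}$ is then strictly positive, so $w_{\infty} \in \mathcal{B}_{n,m}$ and Theorem \ref{maintheorem1} forces $E(w_{\infty}) = E(\kappa)$. The equality case of Theorem \ref{maintheorem1}, extracted from its proof, then gives $w_{\infty} \equiv \kappa$, so $w_{k} \to \kappa$ in $C^{\infty}_{\mathrm{loc}}$.

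The remaining --- and main --- obstacle is a nonlinear rigidity statement: for $k$ large, $w_{k} \equiv \kappa$, contradicting the hypothesis. Writing $w_{k} = \kappa + v_{k}$ and using $p\kappa^{p-1} - 1/(p-1) = 1$, the perturbation satisfies
\[
L v_{k} := \Delta v_{k} - \tfrac{1}{2} y \cdot \nabla v_{k} + v_{k} = - N(v_{k}), \qquad |N(v_{k})| \leq C v_{k}^{2}.
\]
The operator $L$ is self-adjoint on $L^{2}(\mathbb{R}^{n}, \rho\, dy)$ with purely discrete spectrum $\{1 - j/2 : j \geq 0\}$ and Hermite polynomial eigenfunctions; its kernel is spanned by degree-$2$ Hermite polynomials, which grow quadratically, so $\ker L \cap L^{\infty}(\mathbb{R}^{n}) = \{0\}$. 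This is the infinitesimal rigidity of $\kappa$.

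To upgrade this to local nonlinear rigidity I would use a blow-up/normalization argument. Assuming $v_{k} \not\equiv 0$ infinitely often, set $\tilde v_{k} = v_{k}/\mu_{k}$ with $\mu_{k}$ a suitable (Gaussian-weighted) norm of $v_{k}$. Then $L \tilde v_{k} = -N(v_{k})/\mu_{k}$ with right-hand side vanishing in an appropriate topology, while $\|\tilde v_{k}\| = 1$. A subsequential limit $\tilde v_{\infty}$ is a solution of $L \tilde v_{\infty} = 0$ with controlled growth, forced to vanish by the kernel analysis --- contradicting the normalization. The delicate point is ruling out escape of mass to infinity for $\tilde v_{k}$, which I plan to handle using the drift term $-\tfrac{1}{2} y \cdot \nabla$ (penalizing growth at infinity under the Gaussian measure) together with the positivity constraint $v_{k} > -\kappa$. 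Once $v_{k} \equiv 0$ is established for $k$ large, $w_{k} \equiv \kappa$, contradicting $w_{k} \not\equiv \kappa$.
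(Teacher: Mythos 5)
Your opening compactness step matches the paper's: extract $w_{k}\to w_{\infty}$ in $C^{\infty}_{\mathrm{loc}}$ and identify $w_{\infty}=\kappa$. One small repair is needed there: Fatou only yields $E(w_{\infty})\leq E(\kappa)$, which by itself does not exclude $w_{\infty}\equiv 0$. The clean route is the identity $E(w)=\bigl(\tfrac12-\tfrac1{p+1}\bigr)\int w^{p+1}\rho\,dy$ (from \eqref{Inte1}), under which dominated convergence gives $E(w_{k})\to E(w_{\infty})$ exactly; combined with the uniform lower bound $E\geq C^{-1}$ of Lemma \ref{ESL} this forces $w_{\infty}>0$, hence $w_{\infty}\in\mathcal{B}_{n,m}$ and $w_{\infty}=\kappa$.

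The genuine gap is in your rigidity step. The linearization at $\kappa$ is $L=\Delta-\tfrac{y}{2}\cdot\nabla+1$, whose kernel in the natural Gaussian space $L^{2}_{\rho}$ is spanned by the degree-$2$ Hermite polynomials $y_{i}y_{j}-2\delta_{ij}$, hence has dimension $n(n+1)/2$. Your observation that $\ker L\cap L^{\infty}=\{0\}$ is correct, but it is the wrong space. After normalizing $\tilde v_{k}=v_{k}/\mu_{k}$ with $\mu_{k}$ a Gaussian-weighted norm of $v_{k}$ tending to $0$, the sequence $\tilde v_{k}$ carries only $L^{2}_{\rho}$ (or $H^{1}_{\rho}$) control, not $L^{\infty}$ control — the $L^{\infty}$ bound on $v_{k}$ evaporates once you divide by $\mu_{k}\to 0$. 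The compact embedding $H^{1}_{\rho}\hookrightarrow L^{2}_{\rho}$ produces a limit $\tilde v_{\infty}\in\ker L\cap L^{2}_{\rho}$ with $\|\tilde v_{\infty}\|_{L^{2}_{\rho}}=1$, and a nonzero degree-$2$ Hermite polynomial is a perfectly admissible limit. There is no escape-of-mass problem to fix; the obstruction is that $\ker L$ is genuinely nontrivial in the relevant space. The positivity constraint $v_{k}>-\kappa$ is also of no use after rescaling: it only gives $\tilde v_{k}>-\kappa/\mu_{k}\to-\infty$, which is vacuous in the limit. So the sought contradiction ``$\tilde v_{\infty}=0$ versus $\|\tilde v_{\infty}\|=1$'' never materializes. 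To push this through one would need a Lyapunov--Schmidt analysis handling the $n(n+1)/2$-dimensional kernel explicitly, which is substantially more than what is sketched.

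The paper avoids linearizing at $\kappa$ altogether. It exploits a spectral fact about the linearization $L_{i}$ at each non-constant $w_{i}$: by Lemma \ref{lemeig}, $\Lambda(w_{i})=\tfrac{2}{p-1}w_{i}+y\cdot\nabla w_{i}$ is an eigenfunction of $L_{i}$ with eigenvalue $-1$, and by Proposition \ref{ChCS} it changes sign because $w_{i}\neq\kappa$, so $-1$ is \emph{not} the simple first eigenvalue $\lambda_{1,i}$. The positive first eigenfunction $f_{i}$ is therefore $L^{2}_{\rho}$-orthogonal to $\Lambda(w_{i})$. Passing this orthogonality to the limit, using $f_{i}\to 1$ and $\Lambda(w_{i})\to\tfrac{2\kappa}{p-1}>0$, yields $0=\tfrac{2\kappa}{p-1}\neq 0$, a contradiction. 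This converts non-constancy into a quantized spectral separation that cannot survive in the limit, and it is precisely the device that sidesteps the nontrivial kernel your approach runs into.
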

After Giga-Kohn's pioneering paper \cite{Giga-Kohn1985}, it is known that the analysis of blow up phenomena of \eqref{Cauchyproblem} is closely related to properties of solutions of \eqref{SC1}. For finite time blow up solutions of \eqref{Cauchyproblem}, we can apply Theorem \ref{maintheorem2} to prove the following result.
\begin{proposition}\label{blowupsetstructure}
Let $n\geq 3, p>(n+2)/(n-2), u_{0}\geq 0$ and let
\[u(x, t)\neq \kappa(T-t)^{-\frac{1}{p-1}}\]
be a solution of the equation \eqref{Cauchyproblem} that blows up at $T$. Assume there is a positive constant $m>\kappa$ such that
 \begin{equation}\label{TypeIassumption}
u(x, t)\leq m(T-t)^{-\frac{1}{p-1}}, \quad\text{in}~\mathbb{R}^{n}\times(0, T).
\end{equation}
For any $R>0$, we set $\Sigma_{R}=\Sigma\cap B_{R}(0)$. Then
\begin{enumerate}
\item $\Sigma_{R}=\Sigma_{n-1} \cup \Sigma_{n-3}$;
\item $\Sigma_{n-1}$ is relatively open in $\Sigma$, and it is countably $(n-1)$-rectifiable;
\item $dim_{\mathcal{H}}(\Sigma_{n-3})\leq n-3$;
\item  $x_{0}\in\Sigma_{n-1}$ if and only if $\mathcal{T}(u, x_{0})=\{\kappa\}$.	
\end{enumerate}
\end{proposition}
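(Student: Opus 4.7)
The plan is to attach to each blow-up point $x_0$ an entropy equal to the limit of the weighted energy $E$ along the self-similar rescaling at $(x_0, T)$, use Theorems \ref{maintheorem1} and \ref{maintheorem2} to split $\Sigma_R$ into the subset where this entropy attains the minimum value $E(\kappa)$ and its complement (on which the entropy exceeds $E(\kappa)+\epsilon$), and then combine a Federer-type dimension reduction in the self-similar variables with parabolic quantitative stratification to describe the two strata.

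For each $x_0 \in \Sigma$, let $w_{x_0, T}(y, \tau)$ denote the self-similar rescaling of $u$ about $(x_0, T)$ and set
\[\Theta(x_0) := \lim_{\tau \to \infty} E\bigl[w_{x_0, T}(\cdot, \tau)\bigr].\]
Monotonicity of $E$ along \eqref{selfsimilar} ensures that this limit exists. A standard $\omega$-limit argument based on $\frac{d}{d\tau} E = -\int (\partial_\tau w)^2 \rho\, dy$ shows that every tangent function $\varphi \in \mathcal{T}(u, x_0)$ is a stationary solution of \eqref{selfsimilar}; together with \eqref{TypeIassumption}, $u_0 \geq 0$, the strong maximum principle, and Giga-Kohn's nondegeneracy at blow-up points, any such $\varphi$ lies in $\mathcal{B}_{n, m}$ with $E(\varphi) = \Theta(x_0)$. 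Since $\Theta$ is the infimum over $\tau$ of functionals continuous in $x_0$, it is upper semicontinuous on $\Sigma$. Theorems \ref{maintheorem1} and \ref{maintheorem2} then yield the dichotomy: either $\mathcal{T}(u, x_0) = \{\kappa\}$ and $\Theta(x_0) = E(\kappa)$, or $\Theta(x_0) \geq E(\kappa) + \epsilon$. Setting
\[\Sigma_{n-1} := \bigl\{x_0 \in \Sigma_R : \Theta(x_0) = E(\kappa)\bigr\}, \qquad \Sigma_{n-3} := \Sigma_R \setminus \Sigma_{n-1},\]
immediately settles (1) and (4); upper semicontinuity combined with the gap forces $\Sigma_{n-1}$ to be relatively open in $\Sigma_R$.

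For the Hausdorff dimension bound in (3), I would apply Federer's symmetry-based dimension reduction to the stationary solutions of \eqref{selfsimilar}. A bounded nontrivial nonnegative solution of \eqref{SC1} invariant under a translation subgroup of dimension $n-k$ descends to a bounded nontrivial nonnegative solution of the analogous self-similar equation on $\mathbb{R}^k$. For $k \in \{1, 2\}$ the exponent $p$ is automatically subcritical ($p_s(1) = p_s(2) = +\infty$) and the classical Giga-Kohn classification forces the reduction to be identically $\kappa$. Consequently, any $x_0$ admitting a tangent with $(\geq n-2)$-dimensional translation symmetry lies in $\Sigma_{n-1}$; equivalently, at each $x_0 \in \Sigma_{n-3}$ every tangent has translation-invariant subspace of dimension at most $n-3$. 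Federer's stratification lemma then delivers $dim_{\mathcal{H}}(\Sigma_{n-3}) \leq n - 3$.

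The $(n-1)$-rectifiability of $\Sigma_{n-1}$ in (2) is the technically most involved point. I would adapt the Naber-Valtorta quantitative stratification framework to the Gaussian-weighted self-similar setting: an $\epsilon$-regularity statement extracted from Theorem \ref{maintheorem2} (weighted $L^2$-closeness of $w_{x_0, T}$ to $\kappa$ at one scale propagates to all smaller scales) together with a cone-splitting lemma compatible with the Ornstein-Uhlenbeck drift $\tfrac{1}{2} y \cdot \nabla$ produces uniform control of the approximate spines across all scales; a parabolic Reifenberg-type covering argument then yields the rectifiable structure. Establishing this $\epsilon$-regularity and cone-splitting in the Gaussian-weighted parabolic context is the main obstacle; once they are in place, the stratification part of the argument is largely formal.
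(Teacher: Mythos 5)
Your treatment of parts (1), (3), (4) and the relative openness in (2) is correct and essentially matches the paper, just repackaged: you define $\Sigma_{n-1}$ intrinsically as the set where the density equals $E(\kappa)$, while the paper defines it as the complement of the $(n-3)$-stratum $\mathcal{S}_{n-3}$ in the standard parabolic stratification. These agree once one observes (as both arguments do) that a tangent function invariant under an $(n-2)$-dimensional translation group reduces to a bounded solution of \eqref{SC1} in $\mathbb{R}^2$, which is classified to be $\kappa$ by Theorem \ref{theoremGiga-Kohn1985}, so the whole stratum $\mathcal{S}_{n-1}\setminus\mathcal{S}_{n-3}$ lies in the minimum-density set. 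The openness then follows identically from the upper semicontinuity of the density combined with the entropy gap of Theorem \ref{maintheorem2}, and the dimension bound $\dim_{\mathcal{H}}(\Sigma_{n-3}) \leq n-3$ is the standard Federer stratification estimate, as you state.

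The genuine gap is the $(n-1)$-rectifiability. You observe that this is "the technically most involved point" and propose to build a Gaussian-weighted parabolic Naber--Valtorta theory ($\epsilon$-regularity, cone-splitting compatible with the Ornstein--Uhlenbeck drift, a parabolic Reifenberg covering), and you explicitly leave that program unexecuted. This is a much heavier route than necessary and does not constitute a proof. The paper's proof is short precisely because, once $\Sigma_{n-1}$ is relatively open and characterized by $\mathcal{T}(u, x_0) = \{\kappa\}$, the conclusion is already available: Vel\'{a}zquez's main theorem in \cite{Velazquez1993-2} gives $(n-1)$-rectifiability of the set of blow-up points whose tangent is the constant $\kappa$ for type I nonnegative blow-up solutions. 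Your proposal never invokes this (or any equivalent) and therefore does not close part (2).
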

If $p\geq p_{s}(n)$, it is known that \eqref{Cauchyproblem} can have type II blow up solutions. We refer to \cite{Collot-Merle-Raphael2020} and \cite{Wei-zhou2020} for some interesting examples. Therefore, it is plausible to extend Theorem \ref{maintheorem1} to unbounded solutions of \eqref{SC1}. In order to clarify this problem, we first give the definition of suitable weak solutions.
\begin{definition}[Suitable weak solutions]
A function $w\in H^{1}_{loc}(\mathbb{R}^{n})\cap L^{p+1}_{loc}(\mathbb{R}^{n})$ is a suitable weak solution of \eqref{SC1}, if
\[\int_{\mathbb{R}^{n}}[|\nabla w|^{2}+w^{2}+|w|^{p+1}]e^{-\frac{|y|^{2}}{4}}dy<\infty\]
and
\begin{itemize}
\item[(1)] $w$ is a weak solution of \eqref{SC1}, that is, for any $\eta\in C_{0}^{\infty}(\mathbb{R}^{n})$,
    \[0=\int_{\mathbb{R}^{n}}\nabla w\cdot\nabla \eta e^{-\frac{|y|^{2}}{4}}dy+\frac{1}{p-1}\int_{\mathbb{R}^{n}} w\eta e^{-\frac{|y|^{2}}{4}}dy- \int_{\mathbb{R}^{n}}|w|^{p-1}w\eta e^{-\frac{|y|^{2}}{4}}dy.\]
    \item[(2)] $w$  satisfies the stationary condition, that is , for any $Y\in C_{0}^{\infty}(\mathbb{R}^{n}, \mathbb{R}^{n})$,
    \[\begin{aligned}0=&-\int_{\mathbb{R}^{n}}DY(\nabla w, \nabla w)\rho dy+\frac{1}{2}\int_{\mathbb{R}^{n}}|\nabla w|^{2}(\textrm{div}Y-\frac{Y\cdot y}{2})\rho dy\\
    &+\frac{1}{2}\int_{\mathbb{R}^{n}}w^{2}(\textrm {div} Y-\frac{Y\cdot y}{2})\rho dy-\frac{1}{p+1}\int_{\mathbb{R}^{n}}|w|^{p+1}(\textrm {div} Y-\frac{Y\cdot y}{2})\rho dy.
     \end{aligned}\]
\end{itemize}
\end{definition}
It is easy to check that bounded smooth solutions are suitable weak solutions. However, a suitable weak solution need not to be smooth everywhere. Indeed,  it is easy to check that if $n\geq 3, p>(n+2)/(n-2)$, then
 \[w(y)=\left[\frac{2}{p-1}\left(n-2-\frac{2}{p-1}\right)\right]^{\frac{1}{p-1}}|y|^{-\frac{2}{p-1}}\]
 is a suitable weak solution of \eqref{SC1} which is not smooth at the origin. For any suitable weak solution $w$, let $Reg(w)$ be the regular part of $w$, then $Reg(w)$ is an open subset of $\mathbb{R}^{n}$. We set
\[\mathcal{B}_{n}=\{w: w~\text{is a suitable weak solution of}~\eqref{SC1}~\text{and}~w>0~\text{on}~Reg(w)\}.\]
\begin{theorem}\label{theoremmain3}
If $n\leq 3$ or $n\geq 4, (n+2)/(n-2)<p<(n+1)/(n-3)$, then
\[E(w)\geq E(\kappa)\]
for any $w\in\mathcal{B}_{n}$.
\end{theorem}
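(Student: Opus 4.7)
The plan is to reduce Theorem \ref{theoremmain3} to the bounded smooth case (Theorem \ref{maintheorem1}) via an approximation argument in the Gaussian-weighted Sobolev space. The exponent constraint $(n+2)/(n-2)<p<(n+1)/(n-3)$ enters precisely to guarantee that such an approximation exists.

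First I would establish a partial regularity theorem for suitable weak solutions in $\mathcal{B}_n$: under the stated hypotheses, the singular set $\mathrm{Sing}(w):=\mathbb{R}^n\setminus\mathrm{Reg}(w)$ has Hausdorff dimension strictly less than $n-2$. To prove this, I would combine the weak form of the equation and the stationarity condition from the definition of suitable weak solutions to derive a localized monotonicity formula around any $y_0\in\mathbb{R}^n$, whose monotone quantity carries the scaling of the Pohozaev identity (an $r^{2/(p-1)-(n-2)/2}$-rescaled local energy). A Federer-style dimension reduction then identifies tangent maps at singular points with homogeneous suitable weak solutions of the autonomous Lane--Emden equation $\Delta W+|W|^{p-1}W=0$ that are positive on their regular set. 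The upper bound $p<(n+1)/(n-3)$ excludes tangent profiles that are translation-invariant in $n-2$ directions, since the dimensional reduction would yield a singular two-dimensional Lane--Emden profile which does not exist in this subcritical range.

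Given this partial regularity, I construct bounded smooth positive approximants as follows. Let $\eta_k$ be a cutoff vanishing in an $\varepsilon_k$-neighborhood of $\mathrm{Sing}(w)$ and equal to $1$ outside a $2\varepsilon_k$-neighborhood (with $\varepsilon_k\downarrow 0$), and set $v_k^0:=\eta_k w+(1-\eta_k)\kappa$; this is bounded, smooth and positive. Evolve $v_k^0$ under the self-similar parabolic flow \eqref{selfsimilar}. The Giga--Kohn energy $E(v_k(\cdot,\tau))$ is non-increasing, positivity is preserved by the maximum principle, and (using that $v_k^0$ coincides with the stationary $w$ on a large set) the flow subconverges as $\tau\to\infty$ to a bounded smooth positive stationary solution $w_k\in\mathcal{B}_{n,m_k}$ with $E(w_k)\le E(v_k^0)$. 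Theorem \ref{maintheorem1} yields $E(w_k)\ge E(\kappa)$, and the capacity estimate from the previous step together with the Gaussian-weighted Sobolev regularity of $w$ gives $E(v_k^0)\to E(w)$, completing the proof.

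The main obstacle is the partial regularity theorem. In spirit it follows the Pacard and Caffarelli--Hardt--Simon framework for the stationary Lane--Emden equation, but the Gaussian weight and the drift term $\tfrac{1}{2}y\cdot\nabla w$ in \eqref{SC1} require delicate modifications of the monotonicity formula and its blow-up analysis, and the sharp exponent $(n+1)/(n-3)$ must be extracted carefully from the tangent-map analysis. A secondary difficulty in the approximation step is controlling the parabolic flow uniformly in $k$ to prevent type II blow-up during the evolution, which we expect to be tractable because $v_k^0$ differs from the stationary solution $w$ only on a set of small capacity where we can choose $\kappa$ as a ``safe'' replacement value.
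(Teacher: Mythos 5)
Your proposal takes a genuinely different route from the paper, but the central mechanism you rely on — evolving the smoothed initial data $v_k^0$ under the rescaled flow \eqref{selfsimilar} and extracting a \emph{bounded} stationary limit $w_k$ as $\tau\to\infty$ — has a gap that I do not see how to close. The Giga--Kohn convergence theory gives subsequential convergence to a stationary solution of \eqref{SC1} only for solutions of \eqref{selfsimilar} that remain globally bounded (equivalently, for type I solutions of the original Fujita equation). For supercritical $p$, a solution of \eqref{selfsimilar} can blow up in finite time or undergo type II behavior, and nothing in your construction rules this out: you chose $\kappa$ as a ``safe'' replacement near $\mathrm{Sing}(w)$, but the flow starting from $v_k^0$ could still escape to $+\infty$, in which case there is no stationary limit and the inequality $E(w_k)\ge E(\kappa)$ never gets off the ground. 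The paper's own Proposition \ref{decreaseentropy} faces exactly this issue when it perturbs a bounded $w$ by $sf$ and evolves; there the authors \emph{want} finite-time blow-up and prove it by a maximum-principle comparison (Lemma \ref{blowupcriteria}), and then they must work hard (Steps 1--4 of that proof, including a type I estimate) to still extract useful information from the blow-up limit. Your plan invokes none of that machinery, and ``prevent type II blow-up during the evolution'' is not a secondary difficulty but the crux — it is precisely the statement that cannot be taken for granted for $p>p_s(n)$.

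The paper instead argues by induction on $n$ via Federer dimension reduction applied directly to the weak solution $w$ and to the associated self-similar $u(x,t)=(-t)^{-1/(p-1)}w(x/\sqrt{-t})$. It splits into: (i) $u$ has a singularity at some $x_0\neq 0$ on $\{t=0\}$, in which case the blow-up at $(x_0,0)$ produces a translation-invariant lower-dimensional element of $\mathcal{B}_{n-1}$ and the inductive hypothesis applies; (ii) $w$ decays like $|y|^{-2/(p-1)}$ near infinity and is bounded, in which case Proposition \ref{decreaseentropy} applies; (iii) $w$ decays at infinity but has an interior singularity $y_0$, in which case blowing up at $y_0$ yields a positive homogeneous solution of $\Delta w_\infty+w_\infty^p=0$, and the energy of that profile is compared to $E(\kappa)$ by Lemma \ref{twodiemsionenergy}, which rests on the rigidity of positive solutions on $\mathbb{S}^{n-1}$ (Proposition \ref{classifyode}). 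This is where the upper bound $p<(n+1)/(n-3)$ actually enters: it guarantees the spherical rigidity $\Phi\equiv\beta^{1/(p-1)}$ and hence an explicit lower bound for the energy of the singular tangent profile. In your proposal, that exponent range is used only to control the size of $\mathrm{Sing}(w)$ in the approximation step — which is related in spirit (both rely on what tangent maps look like after dimension reduction) but misidentifies the essential point where the restriction is needed. Note also that the paper never needs the stand-alone partial regularity theorem you propose to prove; it reduces directly to a homogeneous Lane--Emden profile and handles it by the explicit energy computation in Appendix B, bypassing the capacity/removability step your argument would require to justify $E(v_k^0)\to E(w)$.
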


\subsection{Idea of the proof: $F$-functional and entropy} 

For a hypersurface $\Sigma$ of Euclidean space $\mathbb{R}^{n+1}$, the entropy is defined by
\[\lambda(\Sigma)=\sup (4\pi t_{0})^{-\frac{n}{2}}\int_{\Sigma} e^{-\frac{|x-x_{0}|^{2}}{4t_{0}}}dx.\]
Here the supremum is taking over all $t_{0}>0$ and $x_{0}\in\mathbb{R}^{n+1}.$ This quantity was introduced by Colding -Minicozzi \cite{Colding-Minicozzi2012}. As a consequence of Huisken's monotonicity formula, it is non-increasing along the mean
curvature flow, thus giving a Lyapunov functional. In \cite{Colding-I-M-W},  Colding–
Ilmanen–Minicozzi-White proved that within the  closed smooth self-shrinking solutions of the mean curvature flow in $\mathbb{R}^{n+1}$, not only does the round sphere
have the lowest entropy, but also there is a
gap to the second lowest. Based on this result, they conjectured that, for $2\leq n\leq 6$, the
round sphere minimizes the entropy among all closed hypersurfaces. Using a cleverly constructed weak mean curvature flow that ensured
the extinction time singularity was of a special type, this conjecture was verified by Bernstein and Wang \cite{Bernstein-Wang2016}.

As pointed by Vel\'{a}zquez in \cite{Velazquez1994Ann}, the structure of singularities which arise in
mean curvature flow is strikingly similar to those appearing in \eqref{Cauchyproblem}. Because of this reason, we will borrow some ideas from \cite{Colding-Minicozzi2012} and \cite{Colding-I-M-W} to consider bounded solutions of \eqref{SC1}. Inspired by the program developed by Colding and Minicozzi \cite{Colding-Minicozzi2012, Colding-I-M-W}, we will introduce the notation of $F-$ functional and entropy. In the setting of \eqref{Cauchyproblem}, for a bounded $C^{1}$ function $w$, the $F-$ functional is defined by
\begin{equation}\label{Func}
\begin{aligned}
F_{x_{0}, t_{0}}(w)=&\frac{1}{2}(-t_{0})^{\frac{p+1}{p-1}}\int_{\mathbb{R}^{n}}|\nabla w|^{2}G(y-x_{0}, t_{0})dy\\
&-\frac{1}{p+1}(-t_{0})^{\frac{p+1}{p-1}}\int_{\mathbb{R}^{n}}|w|^{p+1}G(y-x_{0}, t_{0})dy\\
&+\frac{1}{2(p-1)}(-t_{0})^{\frac{2}{p-1}}\int_{\mathbb{R}^{n}}w^{2}G(y-x_{0}, t_{0})dy,
\end{aligned}
\end{equation}
where for any $(y, t)\in\mathbb{R}^{n}\times (-\infty, 0)$,
\[G(y, t)=(-4\pi t)^{-\frac{n}{2}}e^{\frac{|y|^{2}}{{4t}}}.\]
In particular,
\[G(y,-1)=\rho(y)=(4\pi)^{-\frac{n}{2}}e^{-\frac{|y|^{2}}{{4}}}.\]
The motivation of defining $F-$ functional in this way comes from Giga-Kohn's monotonicity formula (see \cite[Proposition 3]{Giga-Kohn1985}). The main property of these functionals is that a bounded function is a critical point of $F_{x_{0}, t_{0}}$ if and only if it is the time $t=t_{0}$ slice of a self similar solution of \eqref{Cauchyproblem}.  Using the $F-$functionals, we can also define the entropy $\lambda(w)$ of a bounded smooth function to be the supremum of the $F_{x_{0}, t_{0}}$ functionals
\begin{equation}\label{defineentropyfunctional}
\lambda(w)=\sup\limits_{x_{0}\in\mathbb{R}^{n}, t_{0}\in(-\infty, 0)}F_{x_{0}, t_{0}}(w).
\end{equation}
Similarly we can define $F-$stable and entropy-stable solutions. Under some conditions, we show that these two definitions are equivalent. Using this fact, we can perturb a bounded non-constant positive solution of \eqref{SC1}, while reducing the entropy and making the solution of \eqref{selfsimilar} starting at this perturbed function blows up at a finite time. Finally, we use an induction argument to show that the minimizer of $\lambda(w)$  among $\mathcal{B}_{n, m}$ is attained by the constant.  An entropy gap (or energy gap) is also obtained.

Finally, we point out even though Theorem \ref{maintheorem1} and Theorem \ref{maintheorem2} suggest that the positive  constant  solution of \eqref{SC1} serve the same role as the round sphere in mean curvature flow. There are still some  striking differences.  For instance,  if a mean curvature flow in $\mathbb{R}^{n+1}$ starting
at a closed smooth embedded hypersurface has only generic singularities, then the round sphere is included in the lowest strata $\mathcal{S}_{0}$ \footnote{Here we have adopted the notations in \cite[Section 4]{Colding-Minicozzi2016}} consists of isolated points (see \cite[Lemma 4.2]{Colding-Minicozzi2016}). However, for finite blow up solutions of \eqref{Cauchyproblem}, the constant solution $\kappa$ is included in the top strata $\mathcal{S}_{n}$. This will lead to some essential difficulties in our analysis.
\section{Preliminaries}
In this section, we recall several results  which will be used later.
\begin{lemma}\label{lempre3.1}
Let $w$ be a bounded solution of the equation
\begin{equation}\label{SC1-2}
\Delta w-\frac{y}{2}\cdot\nabla w-\frac{1}{p-1}w+|w|^{p-1}w=0,\quad\text{in}~\mathbb{R}^{n}.
\end{equation}
There exists a positive constant $M'$ depending only on $n, p$ and $\|w\|_{L^{\infty}(\mathbb{R}^{n})}$ such that 
\[|\nabla w|+|\nabla^{2}w|+|\nabla^{3}w|\leq M',\quad\text{in}~\mathbb{R}^{n}\]
\end{lemma}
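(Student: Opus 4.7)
The plan is to recast the elliptic equation \eqref{SC1-2}, whose drift coefficient $y/2$ is unbounded, as a parabolic equation with translation-invariant structure, so that interior parabolic regularity produces constants depending only on $n$, $p$ and $\|w\|_{L^\infty(\mathbb{R}^n)}$. The key device is the self-similar ansatz at the heart of the paper, which ``absorbs'' the drift into the constant-coefficient heat operator.

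First, I would check (by a direct computation that simply reverses the derivation of \eqref{SC1}) that setting
\[
u(x, t) \;:=\; (T - t)^{-\frac{1}{p-1}}\, w\!\left(\frac{x}{\sqrt{T - t}}\right), \qquad (x, t) \in \mathbb{R}^n \times (-\infty, T),
\]
produces a classical solution of $\partial_t u = \Delta u + |u|^{p-1} u$ whenever $w$ solves \eqref{SC1-2}. Fix $T = 1$: at $t = 0$ the scaling factor is $1$, so $u(\cdot, 0) \equiv w$ and $\nabla^k u(x_0, 0) = \nabla^k w(x_0)$ for $k = 1, 2, 3$. On the backward parabolic cylinder $Q_{1/2}(x_0, 0) := B_{1/2}(x_0) \times (-1/4, 0]$ one has $T - t \in [1, 5/4]$, giving the uniform bound
\[
\|u\|_{L^\infty(Q_{1/2}(x_0, 0))} \;\le\; \|w\|_{L^\infty(\mathbb{R}^n)}
\]
independently of $x_0$. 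Standard interior regularity for the semilinear heat equation, combined with a short $W^{2,q}$/Schauder bootstrap, then yields an estimate of the form $\|u\|_{C^{3,\alpha}(Q_{1/4}(x_0, 0))} \le M'(n, p, \|w\|_{L^\infty})$ that is uniform in $x_0$. Specializing to $t = 0$ and letting $x_0$ range over $\mathbb{R}^n$ gives the claimed pointwise bound on $|\nabla w| + |\nabla^2 w| + |\nabla^3 w|$.

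The main obstacle is the final stage of the bootstrap, since $s \mapsto |s|^{p-1} s$ has only limited smoothness at $s = 0$ when $p$ is close to $1$ (the lemma is stated for general $p>1$, and even in the later applications $p > (n+2)/(n-2)$ allows $p < 3$, and even $p < 2$, in high dimensions). I would handle this with stage-by-stage H\"older bookkeeping: parabolic $L^q$ theory first puts $u$ in $C^{1,\alpha}$; the composition $|u|^{p-1} u$ is then H\"older of some exponent in the spacetime variables, so Schauder returns $C^{2,\beta}$; a further iteration, relying only on the fact that the first derivative of the nonlinearity is H\"older on bounded intervals, yields enough classical regularity to bound $\nabla^3 w$ pointwise with a constant depending only on $n$, $p$ and $\|w\|_{L^\infty}$. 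A purely elliptic alternative, localizing $w$ on a unit ball around $y_0$ and applying interior Schauder directly to \eqref{SC1-2}, is unsatisfactory here: the $L^\infty$ norm of the drift $y/2$ on $B_1(y_0)$ grows like $|y_0|$, producing a constant that blows up as $|y_0| \to \infty$. It is precisely the self-similar change of variables that removes this $y_0$-dependence, by turning the unbounded-drift elliptic problem into a uniformly regular parabolic one on a cylinder of fixed size.
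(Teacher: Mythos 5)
Your proposal is correct and follows essentially the same route as the paper: the paper's proof is simply a citation to Proposition~1$'$ of Giga--Kohn \cite{Giga-Kohn1985}, and that proposition is itself established by precisely the self-similar change of variables $u(x,t)=(T-t)^{-1/(p-1)}w(x/\sqrt{T-t})$ followed by interior parabolic regularity on fixed-size backward cylinders, exactly as you describe. Your remarks about the unbounded drift and the limited H\"older smoothness of $s\mapsto|s|^{p-1}s$ near $s=0$ are the right things to flag, and the stage-by-stage bootstrap you sketch is the standard way Giga--Kohn handle the case $p$ close to $1$.
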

\begin{proof}
This is proved in \cite[Proposition 1']{Giga-Kohn1985}.
\end{proof}
Next, we prove a regularity result for solutions of \eqref{SC1-2} which decay at infinity.
\begin{lemma}\label{gradientestimate}
 Assume $w$ is a  bounded solution of \eqref{SC1-2} satisfying, for some  positive constant $C$,
\begin{equation}\label{hypothesis}
|w|\leq C(1+|y|)^{-\frac{2}{p-1}},\quad\text{in}~\mathbb{R}^{n}.
\end{equation}
Then there exists a positive constant $C_1$ such that
\begin{equation}\label{gradientrate}
(1+|y|)^{1+\frac{2}{p-1}}|\nabla w|+(1+|y|)^{2+\frac{2}{p-1}}|\nabla^{2}w|\leq C_{1},\quad\text{in}~\mathbb{R}^{n}.
\end{equation}
\end{lemma}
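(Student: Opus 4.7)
The plan is to establish the estimate by a scaling argument. For $|y|$ bounded (say $|y|\le 1$), Lemma~\ref{lempre3.1} gives the result directly, so the interesting case is $|y_{0}|\ge 1$. I set $R=|y_{0}|$ and $\eta_{0}=y_{0}/R$, and introduce the rescaled function
\[
\bar{w}(\eta):=R^{2/(p-1)}\,w(R\eta)
\]
on $B_{1/2}(\eta_{0})$. The decay hypothesis \eqref{hypothesis} forces $\|\bar{w}\|_{L^{\infty}(B_{1/2}(\eta_{0}))}\le C$ uniformly in $R$, and a direct computation shows that $\bar{w}$ satisfies the singularly perturbed equation
\[
R^{-2}\Delta_{\eta}\bar{w}-\tfrac{\eta}{2}\cdot\nabla_{\eta}\bar{w}-\tfrac{1}{p-1}\bar{w}+R^{-2}|\bar{w}|^{p-1}\bar{w}=0.
\]
Via the scaling identities $|\nabla_{y}w(y_{0})|=R^{-1-2/(p-1)}\,|\nabla_{\eta}\bar{w}(\eta_{0})|$ and $|\nabla_{y}^{2}w(y_{0})|=R^{-2-2/(p-1)}\,|\nabla_{\eta}^{2}\bar{w}(\eta_{0})|$, the statement \eqref{gradientrate} reduces to establishing an interior $C^{2}$ bound $\|\bar{w}\|_{C^{2}(B_{1/4}(\eta_{0}))}\le C$ with $C$ independent of $R$.

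The key observation for the $R$-uniform estimate is that the equation is a small-diffusion perturbation of the first-order transport equation $\tfrac{\eta}{2}\cdot\nabla\bar{w}+\tfrac{1}{p-1}\bar{w}=0$, whose characteristic field $\eta/2$ is non-degenerate on $B_{1/2}(\eta_{0})$ (since $|\eta_{0}|=1$) and whose zero-order coefficient $-\tfrac{1}{p-1}$ has a favorable coercive sign. I would apply a Bernstein-type maximum principle to $|\nabla\bar{w}|^{2}\psi^{2}$, with $\psi$ a smooth cutoff supported in $B_{1/2}(\eta_{0})$ and equal to one on $B_{1/4}(\eta_{0})$. After differentiating the equation and computing, the resulting differential inequality has error contributions of order $R^{-2}|\nabla^{2}\bar{w}|^{2}$ and $R^{-2}|\bar{w}|^{p-1}|\nabla\bar{w}|^{2}$ that can be absorbed by the coercive term coming from $-\tfrac{1}{p-1}$ (together with the good sign of $-\eta\cdot\nabla$ against the cutoff geometry near $\eta_{0}$), yielding an $R$-independent bound on $\nabla\bar{w}$. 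Iterating the same scheme on the differentiated equation controls $\nabla^{2}\bar{w}$, and the conclusion follows after transferring back through the rescaling.

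The main technical obstacle is precisely this $R$-uniform $C^{2}$ estimate: the naive approach of rewriting the equation in standard form $\Delta\bar{w}=R^{2}\bigl[\tfrac{\eta}{2}\cdot\nabla\bar{w}+\tfrac{1}{p-1}\bar{w}\bigr]-|\bar{w}|^{p-1}\bar{w}$ and invoking Schauder breaks down because the effective drift has norm of order $R$, so Schauder constants blow up. A robust alternative to the Bernstein calculation would be a compactness/contradiction argument: if the claimed estimate failed along some sequence $y_{k}\to\infty$, a secondary normalization of the $\bar{w}_{k}$ would extract a non-trivial limit profile of the degenerate transport equation inconsistent with the smoothness transmitted by the elliptic regularization; I expect the Bernstein route to be cleaner in practice, exploiting as it does the explicit sign of $-\tfrac{1}{p-1}$.
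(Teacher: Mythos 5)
Your scaling setup is the right start, and your diagnosis of the obstacle is accurate: in the purely elliptic rescaling $\bar{w}(\eta)=R^{2/(p-1)}w(R\eta)$ the equation degenerates to a transport equation as $R\to\infty$, so Schauder constants blow up with $R$. But the two devices you propose to overcome this — a Bernstein maximum-principle estimate on $|\nabla\bar{w}|^{2}\psi^{2}$, or a compactness/contradiction argument — are left as sketches, and the Bernstein route in particular is not obviously closable. If you write the Bochner-type inequality for $V=|\nabla\bar{w}|^{2}$ and evaluate at a maximum of $V\psi^{2}$, the coercive zeroth-order contribution $\left(\tfrac12+\tfrac{1}{p-1}\right)V$ must absorb the drift-cutoff cross term $-\tfrac{V}{2\psi}\,\eta\cdot\nabla\psi$, which is of the same order in $R$ (namely $O(1)$), sign-indefinite, and proportional to the cutoff gradient rather than to a small parameter. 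So the absorption does not come for free from the favorable sign of $-\tfrac{1}{p-1}$, and without a careful choice of cutoff adapted to the characteristic direction $\eta_{0}$, the argument does not close. The compactness alternative is even vaguer: the limiting object is a first-order transport equation, which does not by itself impose the $C^{2}$ control you want; you would need to keep the diffusion at some intermediate scale, and that is essentially redoing the estimate you are trying to avoid.

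The paper sidesteps the degeneracy entirely by a different change of variables: set $u(x,t)=(-t)^{-1/(p-1)}w(x/\sqrt{-t})$, which converts \eqref{SC1-2} into the genuinely parabolic equation $\partial_{t}u=\Delta u+|u|^{p-1}u$ on $\mathbb{R}^{n}\times(-\infty,0)$. The decay hypothesis \eqref{hypothesis} translates into a uniform $L^{\infty}$ bound for $u$ on a fixed annulus $\{1/4<|x|<2\}$ for all $t\in[-1,0)$, so interior parabolic Schauder (the same mechanism as in \cite[Proposition 1]{Giga-Kohn1985}) gives uniform bounds on $|\nabla u|$ and $|\nabla^{2}u|$ on a slightly smaller annulus for $t\in[-1/2,0)$. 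Undoing the change of variables with $-t=|y|^{-2}$ yields exactly \eqref{gradientrate}. The lesson is that the far-field degeneracy of the elliptic equation \eqref{SC1-2} is an artifact of working in self-similar coordinates: passing to the parabolic picture makes the equation uniformly nondegenerate near $t=0$, and the desired decay of derivatives is nothing but interior parabolic regularity in disguise. Your approach can likely be made to work, but it recreates by hand the uniformity that the parabolic transform delivers automatically, so as written it contains a genuine gap at the point you yourself flag as the main difficulty.
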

\begin{proof}
Let
\begin{equation}\label{selfsimilarsolution}
u(x, t)=(-t)^{-\frac{1}{p-1}}w\left(\frac{x}{\sqrt{-t}}\right).
\end{equation}
Then $u$ satisfies 
\begin{equation}\label{Fujitaequation}
\partial_{t}u=\Delta u+|u|^{p-1}u,\quad\text{in}~\mathbb{R}^{n}\times(-\infty, 0).
\end{equation}
By \eqref{hypothesis}, $u$ is  bounded on $(B_{2}(0)\backslash B_{1/4}(0))\times [-1, 0)$. Thus we get from parabolic regularity estimates (see the proof of \cite[Proposition 1]{Giga-Kohn1985}) that \[|\nabla u|+|\nabla^{2} u|\leq c_{1},\quad\text{in}~(B_{3/2}(0)\backslash B_{1/2}(0))\times [-\frac{1}{2}, 0)\]
for some positive constant $c_{1}$.  By scaling back, we can get \eqref{gradientrate}.
\end{proof}
For any bounded solution $w$ of \eqref{SC1-2}, let $u$ be the function defined by \eqref{selfsimilarsolution}. Next, we recall the following monotonicity formula.
\begin{lemma}[Monotonicity formula]\label{monotonicityformula}
Fix $(x, t)\in\mathbb{R}^{n}\times (-\infty, 0)$, then for any $T\geq t$, the function
\begin{align}\label{resalemontonicityformula}
E(s; x, T, u)=&\frac{1}{2}(-s)^{\frac{p+1}{p-1}}\int_{\mathbb{R}^{n}}|\nabla u(y, T+s)|^{2}G(y-x, s)dy\notag\\
&-\frac{1}{p+1}(-s)^{\frac{p+1}{p-1}}\int_{\mathbb{R}^{n}}|u|^{p+1}(y, T+s)G(y-x, s)dy\\
&+\frac{1}{2(p-1)}(-s)^{\frac{2}{p-1}}\int_{\mathbb{R}^{n}}u^{2}(y, T+s)G(y-x, s)dy\notag
\end{align}
is nonincreasing with respect to $s$ in $(-\infty, -(T-t))$. If $T>t$, then $E(s; x, T, u)$ is nonincreasing with respect to $s$ in $(-\infty, -(T-t)]$.
\end{lemma}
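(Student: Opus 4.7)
The plan is to reduce the statement to the classical Giga--Kohn weighted energy monotonicity via a self-similar change of variables centered at $(x, T)$. For $s$ in the stated range, define
\[
W(z, \tau) := (-s)^{\frac{1}{p-1}}\, u(x + \sqrt{-s}\,z,\ T+s), \qquad \tau := -\log(-s).
\]
Since $u$ solves $\partial_t u = \Delta u + |u|^{p-1}u$ and is smooth and uniformly bounded on $\mathbb{R}^n \times (-\infty, t]$ (by Lemma \ref{lempre3.1} applied to $w$), a routine computation shows that $W$ satisfies the self-similar equation \eqref{selfsimilar} in the variable $\tau$ on the corresponding interval.

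Next, I would verify that this substitution identifies $E(s; x, T, u)$ with the Giga--Kohn weighted energy of $W(\cdot, \tau)$. Under the change $y = x + \sqrt{-s}\,z$ one has $G(y-x, s)\,dy = \rho(z)\,dz$, together with $|\nabla_y u|^2 = (-s)^{-1 - 2/(p-1)}|\nabla_z W|^2$, $|u|^{p+1} = (-s)^{-(p+1)/(p-1)}|W|^{p+1}$ and $u^2 = (-s)^{-2/(p-1)}W^2$. The prefactors $(-s)^{(p+1)/(p-1)}$ and $(-s)^{2/(p-1)}$ appearing in \eqref{resalemontonicityformula} are exactly the ones that cancel these scaling exponents, so term by term
\[
E(s; x, T, u) = \frac{1}{2}\int_{\mathbb{R}^n}|\nabla W|^2 \rho\, dz + \frac{1}{2(p-1)}\int_{\mathbb{R}^n}W^2 \rho\, dz - \frac{1}{p+1}\int_{\mathbb{R}^n}|W|^{p+1}\rho\, dz = E[W(\cdot,\tau)],
\]
which is exactly the weighted energy from \eqref{weightedenergy}.

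Now the classical Giga--Kohn identity (cf.\ \cite[Proposition 3]{Giga-Kohn1985}) yields
\[
\frac{d}{d\tau}E[W(\cdot, \tau)] = -\int_{\mathbb{R}^n}(\partial_\tau W)^2 \rho\, dz \leq 0,
\]
with all integrations by parts justified by the derivative bounds of Lemma \ref{lempre3.1} combined with the Gaussian weight $\rho$. Since $d\tau/ds = 1/(-s) > 0$ on $(-\infty, 0)$, this monotonicity transfers to $s$ on $(-\infty, -(T-t))$. When $T > t$, the endpoint $s = -(T-t)$ corresponds to $T + s = t < 0$, which still lies in the parabolic existence interval of $u$, so all quantities are continuous up to this endpoint and the monotonicity extends to the closed interval. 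The only delicate point is the exponent bookkeeping in the middle step; no genuine analytic obstacle arises because $u$ is smooth and bounded and the Gaussian weight ensures convergence of all integrals and their derivatives.
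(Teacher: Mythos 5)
Your proof is correct and follows exactly the route the paper intends: the paper's one-line proof simply cites \cite[Proposition 3]{Giga-Kohn1985} as ``a reformulation,'' and your change of variables $W(z,\tau)=(-s)^{1/(p-1)}u(x+\sqrt{-s}\,z,T+s)$, $\tau=-\log(-s)$ is precisely the reformulation that makes $E(s;x,T,u)=E[W(\cdot,\tau)]$ and reduces the claim to the Giga--Kohn weighted-energy dissipation identity. The exponent bookkeeping, the identity $G(y-x,s)\,dy=\rho(z)\,dz$, the monotone reparametrization $d\tau/ds=1/(-s)>0$, and the endpoint discussion for $T>t$ are all checked correctly.
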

\begin{proof}
This is a reformulation of \cite[Proposition 3]{Giga-Kohn1985}.
\end{proof}
For any $(x, t)\in\mathbb{R}^{n}\times (-\infty, 0]$, we take $T=t$ into \eqref{resalemontonicityformula}. The monotonicity of $E$ allows us to define the density function
\begin{equation}\label{densityfunction}
	\Theta(x, t ;u):=\lim_{s\to 0}E(s ;x, t,u).
 \end{equation}
 The density function defined in \eqref{densityfunction} satisfies the following two properties, whose proof are standard.
\begin{lemma}\label{densityfunctionproperties1}
The density function $\Theta$ is upper semi-continuous in the sense that if $(x_{i}, t_{i})$ is a sequence of points  converging to $(x_{\infty}, t_{\infty})$, then
  \[\Theta(x_{\infty}, t_{\infty};u)\geq \limsup_{i\to\infty}\Theta(x_{i}, t_{i};u).\]
\end{lemma}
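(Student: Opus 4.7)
The plan is to exploit the monotonicity formula (Lemma \ref{monotonicityformula}) to bound each $\Theta(x_i,t_i;u)$ from above by $E(s;x_i,t_i,u)$ for an arbitrary fixed $s<0$, then pass to the limit $i\to\infty$ by continuity of $E(s;\cdot,\cdot,u)$ in $(x,t)$, and finally let $s\to 0^-$ to recover $\Theta(x_\infty,t_\infty;u)$. This is the standard recipe for extracting upper semicontinuity from a monotonicity formula.

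Concretely, taking $T=t$ in Lemma \ref{monotonicityformula} shows that $s\mapsto E(s;x,t,u)$ is nonincreasing on $(-\infty,0)$, so
\[
\Theta(x,t;u) \;=\; \inf_{s<0} E(s;x,t,u) \;\leq\; E(s;x,t,u) \qquad \text{for every } s<0.
\]
Applied to each $(x_i,t_i)$ this gives $\Theta(x_i,t_i;u)\leq E(s;x_i,t_i,u)$, so the whole statement reduces to showing that for each fixed $s<0$,
\[
\lim_{i\to\infty} E(s;x_i,t_i,u) \;=\; E(s;x_\infty,t_\infty,u),
\]
after which taking $\limsup_i$ and then letting $s\to 0^-$ yields the desired inequality.

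For the continuity step I would invoke dominated convergence in the definition \eqref{resalemontonicityformula}. Since $t_\infty\leq 0$ and $s<0$ is fixed, the times $t_i+s$ eventually lie in a compact subinterval of $(-\infty,0)$; on such an interval the self similar representation $u(x,t)=(-t)^{-1/(p-1)}w(x/\sqrt{-t})$, combined with Lemma \ref{lempre3.1} applied to $w$, furnishes uniform bounds on $u$, $\nabla u$ and $|u|^{p+1}$ pointwise in $y$. The kernels $G(y-x_i,s)$ converge pointwise to $G(y-x_\infty,s)$ and, uniformly in all sufficiently large $i$, are controlled by a single integrable Gaussian (a mild dilation of $G(y-x_\infty,s)$ centered at $x_\infty$), which absorbs the $y$-independent $L^\infty$ bounds on the integrands. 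Continuity of $u(y,\cdot)$ in time handles the fact that $t_i+s\to t_\infty+s$, and DCT delivers the convergence.

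No substantial obstacle is anticipated; the only point requiring care is uniformity of the dominating Gaussian for $x_i$ near $x_\infty$, which is automatic once the $(x_i,t_i)$ are restricted to a fixed compact set with $t_i+s<0$. Putting the two steps together, one obtains
\[
\limsup_{i\to\infty} \Theta(x_i,t_i;u) \;\leq\; E(s;x_\infty,t_\infty,u),
\]
and sending $s\to 0^-$ on the right-hand side completes the proof via the defining formula \eqref{densityfunction}.
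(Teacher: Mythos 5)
Your proposal is correct and takes essentially the same approach as the paper: both reduce the claim to the bound $\Theta(x_i,t_i;u)\le E(s;x_i,t_i,u)$ coming from the monotonicity formula, then use continuity of $E(s;\cdot,\cdot,u)$ in $(x,t)$ (justified via the uniform derivative bounds of Lemma~\ref{lempre3.1}) before sending $s\to0^-$. The only cosmetic difference is that the paper first fixes $\varepsilon>0$ and picks $s$ accordingly, whereas you fix $s$ and send it to $0$ afterward; the logic is the same.
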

\begin{proof}
For any $\varepsilon>0$, choose an $s<0$ such that
\[E(s; x_{\infty}, t_{\infty}, u)\leq\Theta(x_{\infty}, t_{\infty}; u)+\varepsilon.\]
 Because $(x_{i}, t_{i})\to (x_{\infty}, t_{\infty})$ and $w$ is a bounded solution of \eqref{SC1-2}, we get from Lemma \ref{lempre3.1} that
\[E(s;x_{i}, t_{i},u)\to E(s; x_{\infty}, t_{\infty}, u)\]
as $i\to\infty$. Hence
		\[
			E(s;x_{i}, t_{i},u)\leq \Theta(x_{\infty}, t_{\infty}; u)+2\varepsilon
		\]
	provided that $i$ is large enough.	Let $s\to0$ and $\varepsilon\to 0$, we get the desired claim.
\end{proof}
\begin{lemma}\label{densityfunctionproperties2}
		For any $(x_{0}, t_{0})\in\R^{n}\times (-\infty, 0]$, we have $\Theta(x_{0}, t_{0};u)\leq \Theta(0, 0;u)$.
If $\Theta(x_{0}, t_{0};u)= \Theta(0, 0; u)$, then $u$ is backward self similar with respect to $(x_{0}, t_{0})$. Moreover,  for any $a\in\R$ and $(x,t)\in\R^n\times(-\infty,0)$,
			\[
			u(ax_{0}+x,a^2t_{0}+t)=u(x,t).
			\]
	\end{lemma}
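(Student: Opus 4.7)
The argument proceeds in three stages: compute $\Theta(0, 0; u)$ explicitly via backward self-similarity, deduce the inequality from a parabolic scaling identity for $E$, and extract the claimed invariances from equality in the monotonicity.

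First, I would exploit the backward self-similarity $u(y, s) = (-s)^{-1/(p-1)} w(y/\sqrt{-s})$. Substituting $z = y/\sqrt{-s}$ in \eqref{resalemontonicityformula} with $x = 0$ and $T = 0$ causes all powers of $-s$ in each of the three integrals to cancel exactly (the exponents $(p+1)/(p-1) - 2/(p-1) - 1$ and $2/(p-1) - 2/(p-1)$ both vanish), so $E(s; 0, 0, u) \equiv E(w)$ is independent of $s$; in particular $\Theta(0, 0; u) = E(w)$. The same calculation performed at an arbitrary base point $(x_0, t_0)$ gives the scaling identity
\[
E(s; x_0, t_0, u) \;=\; E(\lambda^2 s; \lambda x_0, \lambda^2 t_0, u), \qquad \lambda > 0,
\]
which is simply the statement that $E$ is invariant under the parabolic dilations preserving $u$. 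Choosing $\lambda = (-s)^{-1/2}$ it becomes $E(s; x_0, t_0, u) = E(-1; x_0/\sqrt{-s},\, t_0/(-s), u)$. As $s \to -\infty$ the arguments tend to $(0, 0)$; combining the uniform bounds of Lemma \ref{lempre3.1} with the Gaussian decay of $G(\cdot, -1)$, dominated convergence applies to each of the three integrals and yields $\lim_{s \to -\infty} E(s; x_0, t_0, u) = E(-1; 0, 0, u) = E(w) = \Theta(0, 0; u)$. The monotonicity of Lemma \ref{monotonicityformula} then gives
\[
\Theta(x_0, t_0; u) \;=\; \lim_{s \to 0^-} E(s; x_0, t_0, u) \;\leq\; \lim_{s \to -\infty} E(s; x_0, t_0, u) \;=\; \Theta(0, 0; u),
\]
proving the inequality.

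For the equality case, if $\Theta(x_0, t_0; u) = \Theta(0, 0; u)$ then $s \mapsto E(s; x_0, t_0, u)$ must be constant on $(-\infty, 0)$, since it is non-increasing with matching limits at both ends. Recalling Giga--Kohn's computation in similarity variables centered at $(x_0, t_0)$, the derivative of $E$ along this flow equals (up to a positive factor) $-\int (\partial_\tau w^{*})^2 \rho \, dy$ where $w^{*}(\cdot, \tau)$ is the rescaled solution; constancy therefore forces $\partial_\tau w^{*} \equiv 0$, so $u$ is backward self-similar about $(x_0, t_0)$:
\[
u\bigl(x_0 + \mu(y - x_0),\, t_0 + \mu^2(s - t_0)\bigr) \;=\; \mu^{-2/(p-1)} u(y, s), \qquad \mu > 0.
\]
Combining this with the self-similarity $u(\mu y, \mu^2 s) = \mu^{-2/(p-1)} u(y, s)$ about $(0, 0)$ and substituting $y' = \mu y$, $s' = \mu^2 s$ yields the one-parameter family of pure translation symmetries
\[
T_\alpha: \quad u\bigl(y' + \alpha x_0,\, s' + (2\alpha - \alpha^2) t_0\bigr) \;=\; u(y', s'), \qquad \alpha = 1 - \mu.
\]
The main obstacle is that the orbit $\{(a x_0, a^2 t_0) : a \in \mathbb{R}\}$ appearing in the claim is a parabola rather than a subgroup of the translation group, so the claimed identity cannot be read off a single one-parameter subgroup. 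To overcome this, I would compose $T_\alpha$ with $T_{-\alpha}$ to produce pure time translations $(0, -2\alpha^2 t_0)$; varying $\alpha$, together with inverses and iterated compositions, shows that when $t_0 \neq 0$ the function $u$ is invariant under every time translation, and subtracting the resulting time-only translation from $T_1$ recovers invariance in the pure spatial direction $x_0$. When $t_0 = 0$ the family $T_\alpha$ is already purely spatial. In either case $u$ is invariant under the full plane of translations $\{(\alpha x_0, \beta t_0) : \alpha, \beta \in \mathbb{R}\}$, and specializing $\beta = \alpha^2$ gives the asserted formula.
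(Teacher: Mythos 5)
Your derivation of the inequality takes essentially the same route as the paper: both exploit the parabolic scaling invariance $E(s;x_0,t_0,u)=E(\lambda^2 s;\lambda x_0,\lambda^2 t_0,u)$ that follows from the backward self-similarity of $u$ about $(0,0)$; you phrase the $s\to-\infty$ limit via the normalization $\lambda=(-s)^{-1/2}$ and dominated convergence, while the paper rescales the solution (the blow-down $u_\lambda$) and uses local uniform convergence, but these are the same computation.

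For the equality case, however, you take a genuinely different path, and it is here that gaps appear. The paper first notes that, by the same scaling invariance, $\Theta(ax_0,a^2t_0;u)=\Theta(x_0,t_0;u)=\Theta(0,0;u)$ for every $a>0$; it therefore obtains self-similarity of $u$ about each point $(ax_0,a^2t_0)$ of the parabola, and then for each fixed $a$ combines this with the self-similarity about $(0,0)$ and passes to the limit $\lambda\to\infty$ to get the identity $u(ax_0+x,a^2t_0+t)=u(x,t)$ directly. You instead establish self-similarity only at the single point $(x_0,t_0)$ and try to generate the full parabola by composing the one-parameter family $T_\alpha$. Several points in this plan are left unjustified. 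First, $T_\alpha$ is defined for $\alpha=1-\mu$ with $\mu>0$, hence only for $\alpha<1$; your invocation of $T_1$ tacitly requires the limit $\mu\to 0^+$ (the analogue of the paper's $\lambda\to\infty$), which you should make explicit. Second, when $t_0<0$, your argument, if carried out, shows that $u$ is invariant under arbitrary time shifts and, combined with the self-similarity about $(0,0)$, this actually forces $w$ to be $-2/(p-1)$-homogeneous and hence $u\equiv 0$; noting this would trivialize the claim and spare you the delicate bookkeeping of iterated compositions on a backward time domain, which as written is only sketched. Third, the domain restrictions (both $s$ and its shifted image must lie in $(-\infty,0)$, and the self-similarity about $(x_0,t_0)$ lives only on $s<t_0$) are not tracked through the compositions. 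The paper's route avoids all of this by proving the identity for each $a$ independently rather than by group-theoretic iteration.
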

	\begin{proof}
		Define the blow down sequence of $u$ at $(x_{0}, t_{0})$ by
		\[u_{\lambda}(x, t)=\lambda^{\frac{2}{p-1}}u(x_{0}+\lambda x, t_{0}+\lambda^2 t),\]
		where $\lambda\to+\infty$. Since $w$ is a bounded solution of \eqref{SC1-2} and $u$ is backward self similar with respect to $(0,0)$, then
		\[u_{\lambda}(x, t)=u(\lambda^{-1}x_{0}+x, \lambda^{-2}t_{0}+t)\to u(x, t)\]
		locally uniformly in $\R^n\times(-\infty,0)$. This uniform convergence implies that for $s<0$,
		\begin{eqnarray}\label{cone 1}
			\Theta(0, 0;u)&\equiv E(s;0, 0, u)=\lim\limits_{\lambda\to +\infty}E(s;0, 0, u_{\lambda})=\lim\limits_{\lambda\to +\infty}E(\lambda^2 s;x_{0}, t_{0}, u)\\
			&=\lim\limits_{\tau\to -\infty}E(\tau;x_{0}, t_{0},u)\geq \Theta(x_{0}, t_{0};u).  \nonumber
		\end{eqnarray}
  Here in the last inequality we have used the monotonicity formula at $(x_{0}, t_{0})$.

  Assume that $\Theta(x_{0}, t_{0};u)= \Theta(0, 0; u)$. For any $a>0$, we have
		\[\Theta(ax_{0},a^2t_{0}; u)=\Theta(x_{0}, t_{0};u)=\Theta(0, 0;u).\]
		Thus $u$ is backward self similar with respect to $(ax_{0},a^2t_{0})$. Since $u$ is also backward self similar  with respect to $(0,0)$,
		for any $\lambda>0$, we have
		\[u(ax_{0}+x, a^2 t_{0}+t)=\lambda^{\frac{2}{p-1}}u(ax_{0}+\lambda x, a^2 t_{0}+\lambda^2t)=u(\lambda^{-1}ax_{0}+x, \lambda^{-2}a^2t_{0}+t).\]
		Since $u$ is smooth, for any $(x,t)\in\R^n\times(-\infty,0)$,
		\[\lim_{\lambda\to+\infty}u(\lambda^{-1}ax_{0}+x, (a\lambda)^{-2}t_{0}+t)= u(x,t).\]
		Thus
		\[u(ax_{0}+x, a^2 t_{0}+t)=u(x, t).\]
		The case $a<0$ can be obtained similarly, hence we have finished the proof.
	\end{proof}
Finally, for bounded solutions  of \eqref{SC1}, we recall the following Liouville type result.
\begin{theorem}\label{theoremGiga-Kohn1985}
If $n\leq 2$ or $n\geq 3, 1<p\leq (n+2)/(n-2)$, then the only bounded solutions of \eqref{SC1-2} are the trivial ones $w=0$ and $w=\pm\kappa$.
\end{theorem}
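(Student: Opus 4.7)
The natural strategy is to use the generalized Pohozaev identity displayed earlier in the introduction, which is precisely the tool that forces rigidity in the subcritical/critical range. The result is a classical one from \cite{Giga-Kohn1985}, so my proposal is essentially to reconstruct the Pohozaev-type argument cleanly using the regularity estimate provided by Lemma \ref{lempre3.1}.

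First I would multiply \eqref{SC1-2} by $w\rho$ and integrate, using Lemma \ref{lempre3.1} together with the Gaussian factor $\rho=(4\pi)^{-n/2}e^{-|y|^2/4}$ to justify integration by parts without boundary contributions. This yields the first-order identity
\[
\int_{\mathbb{R}^n} |\nabla w|^2\rho\,dy+\frac{1}{p-1}\int_{\mathbb{R}^n} w^2\rho\,dy=\int_{\mathbb{R}^n}|w|^{p+1}\rho\,dy.
\]
Next I would multiply \eqref{SC1-2} by $(y\cdot\nabla w)\rho$ and integrate, applying the identity $\mathrm{div}(y\rho)=(n-|y|^2/2)\rho$ and the usual Pohozaev computation to obtain an identity involving $\int|\nabla w|^2\rho$, $\int|y|^2|\nabla w|^2\rho$, $\int w^2\rho$, and $\int|w|^{p+1}\rho$. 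Combining the two identities (i.e.\ using the first one to eliminate the $w^2$ and $|w|^{p+1}$ terms) produces
\[
\left(\frac{n}{p+1}+\frac{2-n}{2}\right)\int_{\mathbb{R}^n}|\nabla w|^2\rho\,dy+\frac{1}{2}\left(\frac{1}{2}-\frac{1}{p+1}\right)\int_{\mathbb{R}^n}|y|^2|\nabla w|^2\rho\,dy=0.
\]

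Now I would exploit the sign of the coefficients. Under the hypothesis $1<p\le(n+2)/(n-2)$ (or any $p>1$ when $n\le 2$), an elementary manipulation shows $\frac{n}{p+1}-\frac{n-2}{2}\ge 0$, while $\frac{1}{2}-\frac{1}{p+1}>0$ always since $p>1$. Hence both integrands are nonnegative and their weighted sum vanishes, which forces $|\nabla w|\equiv 0$ on $\mathbb{R}^n\setminus\{0\}$; continuity of $\nabla w$ from Lemma \ref{lempre3.1} gives $\nabla w\equiv 0$, so $w$ is constant. Plugging a constant $c$ into \eqref{SC1-2} gives $|c|^{p-1}c=c/(p-1)$, whose only solutions are $c=0$ and $c=\pm\kappa$.

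The only technical point worth care is the justification of the integration by parts at infinity when deriving the Pohozaev identity; this is where the uniform bounds $|\nabla w|+|\nabla^2 w|\le M'$ from Lemma \ref{lempre3.1} combined with the exponential decay of $\rho$ (which dominates polynomial factors like $|y|^3$ coming from the multiplier $y\cdot\nabla w$) make a cutoff argument routine. I do not foresee any genuine obstacle beyond bookkeeping, since the critical case $p=(n+2)/(n-2)$ is handled identically: the first coefficient becomes zero but the second remains strictly positive, which alone suffices to conclude $\nabla w\equiv 0$.
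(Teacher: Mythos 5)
Your proposal is correct and follows essentially the same route as the paper: the paper simply cites the Giga--Kohn Pohozaev-type identity and observes that the sign of the two coefficients under the subcritical/critical hypothesis forces $\nabla w\equiv 0$, which is exactly what you reconstruct (with the added sketch of how the identity arises from the $w\rho$ and $(y\cdot\nabla w)\rho$ multipliers). Your sign computation and the cutoff/integration-by-parts remarks are accurate, and your handling of the critical case via the strictly positive second coefficient is the right observation.
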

\begin{proof}
As mentioned in the introduction, Giga and Kohn \cite[Theorem 1]{Giga-Kohn1985} obtained the Pohozaev type identity
\[0=(\frac{n}{p+1}+\frac{2-n}{2})\int_{\mathbb{R}^{n}}|\nabla w|^{2}e^{-\frac{|y|^{2}}{4}}dy+\frac{1}{2}(\frac{1}{2}-\frac{1}{p+1})\int_{\mathbb{R}^{n}}|y|^{2}|\nabla w|^{2}e^{-\frac{|y|^{2}}{4}}dy.\]
 Then Theorem \ref{theoremGiga-Kohn1985} follows directly from this identity.
\end{proof}
\section{A variational characterization of self similar solutions}
\subsection{The general first variation formula}
Assume  $\phi\in C_{0}^{\infty}(\mathbb{R}^{n})$, 
$x(s)$ and $t(s)$ are variations such that
\[x(0)=x_{0},\quad t(0)=t_{0},\quad x'(0)=y_{0},\quad t'(0)=h.\]
Consider the $F-$ functional defined in \eqref{Func}, then
\begin{align}
F_{x(s), t(s)}(w+s\phi)=&\frac{1}{2}(-t(s))^{\frac{p+1}{p-1}}\int_{\mathbb{R}^{n}}|\nabla (w+s\phi)|^{2}G(y-x(s), t(s))dy\notag\\
&-\frac{1}{p+1}(-t(s))^{\frac{p+1}{p-1}}\int_{\mathbb{R}^{n}}|w+s\phi|^{p+1}G(y-x(s), t(s))dy\notag\\
&+\frac{1}{2(p-1)}(-t(s))^{\frac{2}{p-1}}\int_{\mathbb{R}^{n}}(w+s\phi)^{2}G(y-x(s), t(s))dy\notag.
\end{align}
 Notice that
\begin{align}
&\frac{d}{ds}\left((-4\pi t(s))^{-\frac{n}{2}}e^{\frac{|x(s)-y|^{2}}{{4t(s)}}}\right)\notag\\
=&(-4\pi t(s))^{-\frac{n}{2}}e^{\frac{|x(s)-y|^{2}}{{4t(s)}}}\left[-\frac{nt'(s)}{2t(s)}+\frac{(x(s)-y)\cdot x'(s)}{2t(s)}-\frac{t'(s)|x(s)-y|^{2}}{4t^{2}(s)}\right]\notag,
\end{align}
thus
\begin{align}
&\frac{\partial}{\partial s} (F_{x(s), t(s)}(w+s\phi))\notag\\
=&-\frac{p+1}{2(p-1)}(-t(s))^{\frac{2}{p-1}}t'(s)\int_{\mathbb{R}^{n}}|\nabla w+s\nabla\phi|^{2}G(y-x(s), t(s))dy\notag\\
&+\frac{1}{p-1}(-t(s))^{\frac{2}{p-1}}t'(s)\int_{\mathbb{R}^{n}}|w+s\phi|^{p+1}G(y-x(s), t(s))dy\notag\\
&-\frac{1}{(p-1)^{2}}(-t(s))^{\frac{2}{p-1}-1}t'(s)\int_{\mathbb{R}^{n}}(w+s\phi)^{2}G(y-x(s), t(s))dy\notag\\
&+(-t(s))^{\frac{p+1}{p-1}}\int_{\mathbb{R}^{n}}(\nabla w+s\nabla \phi)\cdot\nabla\phi G(y-x(s), t(s))dy\notag\\
&-(-t(s))^{\frac{p+1}{p-1}}\int_{\mathbb{R}^{n}}|w+s\phi|^{p-1}(w+s\phi)\phi G(y-x(s), t(s))dy\notag\\
&+\frac{1}{p-1}(-t(s))^{\frac{2}{p-1}}\int_{\mathbb{R}^{n}}(w+s\phi)\phi G(y-x(s), t(s))dy\notag\\
&+\frac{1}{2}(-t(s))^{\frac{p+1}{p-1}}\int_{\mathbb{R}^{n}}|\nabla w+s\nabla\phi|^{2}G(y-x(s), t(s))\notag\\
&\times [-\frac{nt'(s)}{2t(s)}+\frac{(x(s)-y)\cdot x'(s)}{2t(s)}-\frac{t'(s)|x(s)-y|^{2}}{4t^{2}(s)}]dy\notag\\
&-\frac{1}{p+1}(-t(s))^{\frac{p+1}{p-1}}\int_{\mathbb{R}^{n}}|w+s\phi|^{p+1}G(y-x(s), t(s))\notag\\
&\times [-\frac{nt'(s)}{2t(s)}+\frac{(x(s)-y)\cdot x'(s)}{2t(s)}-\frac{t'(s)|x(s)-y|^{2}}{4t^{2}(s)}]dy\notag\\
&+\frac{1}{2(p-1)}(-t(s))^{\frac{2}{p-1}}\int_{\mathbb{R}^{n}}(w+s\phi)^{2}G(y-x(s), t(s))\notag\\
&\times[-\frac{nt'(s)}{2t(s)}+\frac{(x(s)-y)\cdot x'(s)}{2t(s)}-\frac{t'(s)|x(s)-y|^{2}}{4t^{2}(s)}]dy\notag.
\end{align}
Take  $s=0$ into the above formula, we have the following result.
\begin{lemma}[The first variation formula]
Let $w$ be a bounded smooth function. Assume  $\phi\in C_{0}^{\infty}(\mathbb{R}^{n})$, 
$x(s)$ and $t(s)$ are variations such that
\[x(0)=x_{0},\quad t(0)=t_{0},\quad x'(0)=y_{0},\quad t'(0)=h.\]
Then
\begin{align}\label{firstvariationformula}
&\frac{\partial}{\partial s}(F_{x(s), t(s)}(w+s\phi))|_{s=0}\notag\\
=&-\frac{p+1}{2(p-1)}(-t_{0})^{\frac{2}{p-1}}h\int_{\mathbb{R}^{n}}|\nabla w|^{2}G(y-x_{0}, t_{0})dy\notag\\
&+\frac{1}{p-1}(-t_{0})^{\frac{2}{p-1}}h\int_{\mathbb{R}^{n}}|w|^{p+1}G(y-x_{0}, t_{0})dy\notag\\
&-\frac{1}{(p-1)^{2}}(-t_{0})^{\frac{2}{p-1}-1}h\int_{\mathbb{R}^{n}}w^{2}G(y-x_{0}, t_{0})dy\notag\\
&+(-t_{0})^{\frac{p+1}{p-1}}\int_{\mathbb{R}^{n}}(\nabla w\cdot\nabla\phi) G(y-x_{0}, t_{0})dy\notag\\
&-(-t_{0})^{\frac{p+1}{p-1}}\int_{\mathbb{R}^{n}}|w|^{p-1}w\phi G(y-x_{0}, t_{0})dy\notag\\
&+\frac{1}{(p-1)}(-t_{0})^{\frac{2}{p-1}}\int_{\mathbb{R}^{n}}w\phi G(y-x_{0}, t_{0}) dy\notag\\
&+\frac{1}{2}(-t_{0})^{\frac{p+1}{p-1}}\int_{\mathbb{R}^{n}}|\nabla w|^{2}G(y-x_{0}, t_{0})\\
&\times[-\frac{nh}{2t_{0}}+\frac{(x_{0}-y)\cdot y_{0}}{2t_{0}}-\frac{h|x_{0}-y|^{2}}{4t_{0}^{2}}]dy\notag\\
&-\frac{1}{p+1}(-t_{0})^{\frac{p+1}{p-1}}\int_{\mathbb{R}^{n}}|w|^{p+1}G(y-x_{0}, t_{0})\notag\\
&\times[-\frac{nh}{2t_{0}}+\frac{(x_{0}-y)\cdot y_{0}}{2t_{0}}-\frac{h|x_{0}-y|^{2}}{4t_{0}^{2}}]dy\notag\\
&+\frac{1}{2(p-1)}(-t_{0})^{\frac{2}{p-1}}\int_{\mathbb{R}^{n}}w^{2}G(y-x_{0}, t_{0})\notag\\
&\times[-\frac{nh}{2t_{0}}+\frac{(x_{0}-y)\cdot y_{0}}{2t_{0}}-\frac{h|x_{0}-y|^{2}}{4t_{0}^{2}}]dy\notag.
\end{align}
\end{lemma}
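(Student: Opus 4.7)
The plan is to prove this first variation formula by direct differentiation: treat $F_{x(s),t(s)}(w+s\phi)$ as an explicit (smooth) function of $s$ with three kinds of $s$-dependence, differentiate term-by-term using product and chain rules, and then set $s=0$ with the initial data $x(0)=x_0$, $t(0)=t_0$, $x'(0)=y_0$, $t'(0)=h$. No integration by parts or PDE information about $w$ is needed; the lemma is purely calculus on the functional.

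More concretely, I would decompose the $s$-dependence into three sources. First, the integrand depends on $s$ through the test-function perturbation $w\mapsto w+s\phi$; differentiating the three quadratic/nonlinear densities yields the standard variational contributions
\[
\int_{\mathbb{R}^n}\nabla w\cdot\nabla\phi\,G\,dy,\quad -\int_{\mathbb{R}^n}|w|^{p-1}w\,\phi\,G\,dy,\quad \int_{\mathbb{R}^n}w\phi\,G\,dy,
\]
weighted by the scalar prefactors $(-t_0)^{(p+1)/(p-1)}$, $(-t_0)^{(p+1)/(p-1)}$ and $\frac{1}{p-1}(-t_0)^{2/(p-1)}$ respectively. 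Second, the scalar prefactors $(-t(s))^{(p+1)/(p-1)}$ and $(-t(s))^{2/(p-1)}$ themselves depend on $s$ through $t(s)$; using $\frac{d}{ds}(-t(s))^{\alpha}\big|_{s=0}=-\alpha(-t_0)^{\alpha-1}h$ produces the three ``prefactor" terms involving $-\frac{p+1}{2(p-1)}(-t_0)^{2/(p-1)}h$, $+\frac{1}{p-1}(-t_0)^{2/(p-1)}h$ and $-\frac{1}{(p-1)^2}(-t_0)^{2/(p-1)-1}h$. Third, the Gaussian kernel $G(y-x(s),t(s))$ depends on $s$ through both $x(s)$ and $t(s)$; a direct chain-rule computation gives
\[
\frac{d}{ds}G(y-x(s),t(s))\Big|_{s=0} = G(y-x_0,t_0)\left[-\frac{nh}{2t_0}+\frac{(x_0-y)\cdot y_0}{2t_0}-\frac{h|x_0-y|^{2}}{4t_0^{2}}\right],
\]
which, when integrated against each of $\frac{1}{2}|\nabla w|^2$, $-\frac{1}{p+1}|w|^{p+1}$ and $\frac{1}{2(p-1)}w^{2}$, supplies the last three blocks in the claimed identity.

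The only real task is bookkeeping: collecting these nine contributions and checking the coefficients. One must keep careful track of signs (note that $\frac{d}{ds}(-t(s))=-t'(s)$ so the $h$'s come in with the signs written above), and one must not confuse the $t'(s)=h$ and $x'(s)=y_0$ terms arising from differentiating $G$ with those arising from differentiating the scalar prefactors, since the former multiply bracketed expressions depending on $y$ while the latter are pure constants in $y$. Once the nine contributions are written out and labelled by which of the three densities $\{|\nabla w|^2, |w|^{p+1}, w^2\}$ they are paired with, the identity \eqref{firstvariationformula} is exactly what remains after setting $s=0$. The hardest part is therefore purely organizational; no estimate or limiting argument enters, so the proof reduces to verifying that the termwise differentiation above matches the nine terms displayed in the statement.
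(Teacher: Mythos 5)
Your proposal is exactly the paper's proof: the paper differentiates $F_{x(s),t(s)}(w+s\phi)$ term by term using the product and chain rules, with the same three sources of $s$-dependence (the perturbation $w+s\phi$, the $(-t(s))^\alpha$ prefactors, and the Gaussian $G(y-x(s),t(s))$), computes $\frac{d}{ds}G$ exactly as you do, and then sets $s=0$. Your decomposition into nine contributions and the coefficient bookkeeping match the paper's computation, so the proposal is correct and not a different route.
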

\begin{definition}
Let $w$ be a bounded smooth function. If for all variations $x(s), t(s)$ and $\phi\in C_{0}^{\infty}(\mathbb{R}^{n})$, we have
\[\frac{\partial}{\partial s}(F_{x(s), t(s)}(w+s\phi))|_{s=0}=0,\]
then we say that $w$ is a critical point for the functional $F_{x_{0}, t_{0}}$.
\end{definition}

\subsection{Critical points of $F-$ functional are self similar solutions}
In this section, we will prove that $w$ is a critical point of $F_{x_{0}, t_{0}}$ if and only if
it is the time $-t_{0}$ slice of a self similar solution of 
\[\partial_{t}u=\Delta u+|u|^{p-1}u,\quad\text{in}~ \mathbb{R}^{n}\times (-\infty, 0)\]
that becomes extinct at the point $x_{0}$ and time $0$.
\begin{proposition}\label{critcalpoints}
A  bounded smooth function  $w$  is a critical point of $F_{x_{0}, t_{0}}$ if and only if $w$ is a  solution to the equation
\begin{equation}\label{1}
\Delta w+\frac{y-x_{0}}{2t_{0}}\cdot \nabla w+\frac{1}{(p-1)t_{0}}w+|w|^{p-1}w=0,\quad\text{in}~\mathbb{R}^{n}.
\end{equation}
\end{proposition}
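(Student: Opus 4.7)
The plan is to work directly from the first variation formula \eqref{firstvariationformula}, exploiting the weight identity $\nabla_y G(y-x_0,t_0) = \frac{y-x_0}{2t_0}\,G(y-x_0,t_0)$ to convert gradient terms into multiplicative ones via integration by parts. Both implications then reduce to the same Euler–Lagrange computation, specialized appropriately.

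For the forward direction (critical point $\Rightarrow$ \eqref{1}), I would restrict to variations with $y_0 = 0$ and $h = 0$, so that only the three $\phi$-lines of \eqref{firstvariationformula} survive. After dividing by $(-t_0)^{(p+1)/(p-1)}$ and using $(-t_0)^{2/(p-1)-(p+1)/(p-1)} = -1/t_0$, the critical-point condition becomes
\[
\int_{\mathbb{R}^n}\bigl[\nabla w\cdot\nabla\phi - |w|^{p-1}w\,\phi\bigr]G(y-x_0,t_0)\,dy \;-\; \frac{1}{(p-1)t_0}\int_{\mathbb{R}^n}w\phi\,G(y-x_0,t_0)\,dy \;=\; 0
\]
for every $\phi\in C_0^\infty(\mathbb{R}^n)$. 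Integrating the gradient term by parts using the weight identity rewrites this as
\[
-\int_{\mathbb{R}^n}\Bigl[\Delta w + \frac{y-x_0}{2t_0}\cdot\nabla w + |w|^{p-1}w + \frac{w}{(p-1)t_0}\Bigr]\phi\,G(y-x_0,t_0)\,dy \;=\; 0,
\]
and arbitrariness of $\phi$ together with positivity of $G$ forces the bracket to vanish pointwise, which is exactly \eqref{1}.

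For the converse, assume $w$ solves \eqref{1} and split a general variation into its $\phi$-, $x_0$-, and $t_0$-pieces. The $\phi$-piece is killed by running the above computation in reverse. For the $x_0$-piece (set $\phi = 0$, $h = 0$), I would rewrite every occurrence of $\frac{(x_0-y)\cdot y_0}{2t_0}G$ as $-y_0\cdot\nabla_y G$, integrate by parts (Gaussian decay together with the boundedness of $w$ and its derivatives, guaranteed by Lemma \ref{lempre3.1} when \eqref{1} holds, make the boundary at infinity vanish), and use the product rules $y_0\cdot\nabla|\nabla w|^2 = 2\nabla w\cdot\nabla(y_0\cdot\nabla w)$, $y_0\cdot\nabla|w|^{p+1} = (p+1)|w|^{p-1}w(y_0\cdot\nabla w)$, and $y_0\cdot\nabla w^2 = 2w(y_0\cdot\nabla w)$ to recognize the result as the $\phi$-part of the first variation evaluated at $\phi = y_0\cdot\nabla w$. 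A cutoff/density argument extends the $\phi$-identity from $C_0^\infty$ to this Gaussian-integrable test function, which together with \eqref{1} kills this piece.

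The delicate step is the $t_0$-piece (set $\phi = 0$, $y_0 = 0$), which groups six $h$-dependent terms coming from $\partial_s(-t(s))^{(p+1)/(p-1)}$, $\partial_s(-t(s))^{2/(p-1)}$, and $\partial_s G(y-x_0, t(s))$. Rather than brute-force algebraic cancellation, my strategy is to exploit the scaling dictionary $z = (y-x_0)/\sqrt{-t_0}$, $W(z) = (-t_0)^{1/(p-1)}w(x_0+\sqrt{-t_0}\,z)$: a direct change of variables yields $F_{x_0,t_0}(w) = E(W)$ with $E$ as in \eqref{weightedenergy}, and equation \eqref{1} for $w$ is equivalent to the standard self-similar equation \eqref{SC1} for $W$. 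Under this dictionary, an infinitesimal variation of $t_0$ in $F_{x_0,t_0}(w)$ translates into a variation of $W$ along the scaling direction $\psi(z) = \frac{h}{t_0}\bigl[\frac{W(z)}{p-1} + \frac{1}{2}\,z\cdot\nabla W(z)\bigr]$, and the classical Euler–Lagrange identity $\langle E'(W), \psi\rangle = 0$—which holds as soon as $W$ solves \eqref{SC1}—gives the vanishing of this piece. Verifying the dictionary carefully, so that the six $h$-terms really do reassemble into $\langle E'(W),\psi\rangle$, is the main obstacle; once this is in place, the three pieces combine to close the converse direction.
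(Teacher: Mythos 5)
Your proposal is correct, and the forward direction coincides with the paper's argument. The converse, however, is organized differently. The paper first reduces to $x_0=0$, $t_0=-1$ (asserted without proof, but justified by exactly the scaling/translation invariance you articulate), then kills the general first variation \eqref{firstvariation2024-04-05} by explicitly combining four integral identities: the weak form \eqref{mlpsi}, the ``energy'' identity \eqref{Inte1} from testing with $w$, the Giga--Kohn Pohozaev identity \eqref{10}, and the translation identity \eqref{Testfunction} from testing with $\partial_i w$, with the specific coefficient combination $\eqref{mlpsi}-\frac{h}{p-1}\eqref{Inte1}-\frac{h}{2}\eqref{10}+\frac{1}{2}\eqref{Testfunction}$. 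You instead decouple the first variation into its $\phi$-, $y_0$-, and $h$-pieces (valid because the first variation is linear in these parameters) and handle each conceptually: the $y_0$-piece becomes the weak form tested against $y_0\cdot\nabla w$, and the $h$-piece is dispatched by the scaling dictionary $F_{x_0,t_0}(w)=E(W_{t_0})$ plus the chain rule, so it reduces to $\langle E'(W),\psi\rangle=0$ for the scaling direction $\psi$. This is cleaner and avoids invoking the Pohozaev identity \eqref{10} at all (though the paper has a reason to derive \eqref{10}: it is reused in Appendix A). One small simplification you could make: you worry about ``verifying the dictionary so that the six $h$-terms reassemble,'' but no term-matching is needed -- the chain rule applied to $t_0\mapsto E(W_{t_0})$ automatically produces the same $\partial_{t_0}F$, so the only things to justify are differentiation under the integral sign and the extension of the weak form from $C_0^\infty$ test functions to the Gaussian-integrable functions $y_0\cdot\nabla w$ and $\psi$ (both at most of linear growth once Lemma \ref{lempre3.1} is invoked). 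That density step is also implicit in the paper's derivations of \eqref{Inte1}, \eqref{10} and \eqref{Testfunction}, so neither argument is more rigorous on this point.
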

\begin{proof}
Obviously, it is sufficient to prove this for $x_{0}=0$ and $t_{0}=-1$. 

First assume   $w$ is a critical point of  $F_{0, -1}$. Taking $y_{0}=0$ and $h=0$ into \eqref{firstvariationformula}, we see that for any $\phi\in C_{0}^{\infty}(\mathbb{R}^{n})$,
\begin{align}\label{mlpsi}
0=\int_{\mathbb{R}^{n}}\nabla w\cdot\nabla\phi\rho dy+\frac{1}{p-1}\int_{\mathbb{R}^{n}}w\phi\rho dy-\int_{\mathbb{R}^{n}}|w|^{p-1}w\phi\rho dy.
\end{align}
Thus  
\[0=\int_{\mathbb{R}^{n}}\left[\frac{1}{\rho}\textrm{div}(\rho\nabla w)-\frac{1}{p-1}w+|w|^{p-1}w\right]\phi\rho dy.\]
It follows that $w$ satisfies \eqref{1}.

Next, assume $w$ is a solution of \eqref{1} with $x_{0}=0, t_{0}=-1$. We need to show that for any $y_{0}$, $h$ and $\phi\in C_{0}^{\infty}(\mathbb{R}^{n})$,
\[\frac{\partial}{\partial s}(F_{x(s), t(s)}(w+s\phi))|_{s=0}=0.\]
This is equivalent to the requirement that
\begin{align}\label{firstvariation2024-04-05}
0=&-\frac{p+1}{2(p-1)}h\int_{\mathbb{R}^{n}}|\nabla w|^{2}\rho dy+\frac{h}{p-1}\int_{\mathbb{R}^{n}}|w|^{p+1}\rho dy\notag\\
&-\frac{h}{(p-1)^{2}}\int_{\mathbb{R}^{n}}w^{2}\rho dy+\int_{\mathbb{R}^{n}}(\nabla w\cdot\nabla\phi)\rho dy\notag\\
&+\frac{1}{p-1}\int_{\mathbb{R}^{n}} w\phi \rho dy-\int_{\mathbb{R}^{n}}|w|^{p-1}w\phi dy\\
&+\frac{1}{2}\int_{\mathbb{R}^{n}}|\nabla w|^{2}\left[\frac{nh}{2}+\frac{y\cdot y_{0}}{2}-\frac{h|y|^{2}}{4}\right]\rho dy\notag\\
&-\frac{1}{p+1}\int_{\mathbb{R}^{n}}|w|^{p+1}\left[\frac{nh}{2}+\frac{y\cdot y_{0}}{2}-\frac{h|y|^{2}}{4}\right]\rho dy\notag\\
&+\frac{1}{2(p-1)}\int_{\mathbb{R}^{n}}w^{2}\left[\frac{nh}{2}+\frac{y\cdot y_{0}}{2}-\frac{h|y|^{2}}{4}\right]\rho dy\notag.
\end{align}
Multiplying both sides of \eqref{SC1} by $\rho w$ and integrating by parts, we  get 
\begin{equation}\label{Inte1}
0=\int_{\mathbb{R}^{n}}|\nabla w|^{2}\rho dy-\int_{\mathbb{R}^{n}}|w|^{p+1}\rho dy+\frac{1}{p-1}\int_{\mathbb{R}^{n}}w^{2}\rho dy.
\end{equation}
By \cite[Formula (3,17)]{Giga-Kohn1985}, we have
\begin{align}\label{10}
0=&\frac{2-n}{2}\int_{\mathbb{R}^{n}}|\nabla w|^{2}\rho dy-\frac{n}{2(p-1)}\int_{\mathbb{R}^{n}}w^{2}\rho dy\notag\\
&+\frac{n}{p+1}\int_{\mathbb{R}^{n}} |w|^{p+1}\rho dy+\frac{1}{4}\int_{\mathbb{R}^{n}}|y|^{2}|\nabla w|^{2}\rho dy\\
&+\frac{1}{4(p-1)}\int_{\mathbb{R}^{n}}|y|^{2}w^{2}\rho dy-\frac{1}{2(p+1)}\int_{\mathbb{R}^{n}}|y|^{2} |w|^{p+1}\rho dy.\notag
\end{align}
For $i=1, 2,\cdots n$, multiplying both sides of \eqref{1} by $\rho w_{i}$ and  integrating by parts, we  get
\begin{align}
0=&-\int_{\mathbb{R}^{n}}\nabla w\cdot\nabla w_{i}\rho dy-\frac{1}{p-1}\int_{\mathbb{R}^{n}}w_{i}w\rho dy+\int_{\mathbb{R}^{n}}|w|^{p-1}ww_{i}\rho dy\notag\\
=&-\frac{1}{2}\int_{\mathbb{R}^{n}}(|\nabla w|^{2})_{i}\rho dy-\frac{1}{2(p-1)}\int_{\mathbb{R}^{n}}(w^{2})_{i}\rho dy+\frac{1}{p+1}\int_{\mathbb{R}^{n}}(|w|^{p+1})_{i}\rho dy\notag\\
=&\frac{1}{4}\int_{\mathbb{R}^{n}}y_{i}|\nabla w|^{2}\rho dy+\frac{1}{4(p-1)}\int_{\mathbb{R}^{n}}y_{i}w^{2}\rho dy-\frac{1}{2(p+1)}\int_{\mathbb{R}^{n}}y_{i}|w|^{p+1}\rho dy\notag.
\end{align}
It follows that for any $y_{0}\in\mathbb{R}^{n}$,
 \begin{align}\label{Testfunction}
0=&\frac{1}{2}\int_{\mathbb{R}^{n}}|\nabla w|^{2}(y\cdot y_{0})\rho dy-\frac{1}{p+1}\int_{\mathbb{R}^{n}}|w|^{p+1}(y\cdot y_{0})\rho dy\\
&+\frac{1}{2(p-1)}\int_{\mathbb{R}^{n}}w^{2}(y\cdot y_{0})\rho dy.\notag
\end{align}
Combining \eqref{mlpsi}, \eqref{Inte1}, \eqref{10} and \eqref{Testfunction} in the   way
 \[\eqref{mlpsi}-\frac{h}{p-1}\eqref{Inte1}-\frac{h}{2}\eqref{10}+\frac{1}{2}\eqref{Testfunction},\]
  we see that \eqref{firstvariation2024-04-05} holds. Hence the proof of Proposition \ref{critcalpoints} is accomplished.
\end{proof}

\section{The second variation} 
In this section, we will calculate the second variation formula of the $F-$ functional for simultaneous variations in all three parameters $w$, $x_{0}$ and $t_{0}$. In particular, when $w$ is a bounded solution of the equation
\begin{equation*}
\Delta w-\frac{y}{2}\cdot\nabla w-\frac{1}{p-1}w+|w|^{p-1}w=0,\quad\text{in}~\mathbb{R}^{n},
\end{equation*}
we will  use our calculation to formulate a notion of stability.
\subsection{The general second variation formula}
\begin{lemma}
Let $w$ be a bounded smooth function. Assume $x(s)$ and $t(s)$ are variations such that
\[x(0)=x_{0},\quad t(0)=t_{0},\quad x'(0)=y_{0},\quad t'(0)=h\]
 and $x''(0)=y_{0}', t''(0)=h'$, then
 \begin{align}
 &\frac{\partial^{2}}{\partial s^2} (F_{x(s), t(s)}(w+s\phi))|_{s=0}\notag\\
=&\frac{p+1}{2(p-1)}(-t_{0})^{-1+\frac{2}{p-1}}\left[t_{0}h'+\frac{2}{p-1}h^{2}\right]\int_{\mathbb{R}^{n}}|\nabla w|^{2}G(y-x_{0}, t_{0})dy\notag\\
&-\frac{1}{p-1}(-t_{0})^{-1+\frac{2}{p-1}}\left[t_{0}h'+\frac{2}{p-1}h^{2}\right]\int_{\mathbb{R}^{n}}|w|^{p+1}G(y-x_{0}, t_{0})dy\notag\\
&-\frac{1}{(p-1)^{2}}(-t_{0})^{-2+\frac{2}{p-1}}\left[-t_{0}h'+\frac{p-3}{p-1}h^{2}\right]\int_{\mathbb{R}^{n}}w^{2}G(y-x_{0}, t_{0})dy\notag\\
&-\frac{2(p+1)}{p-1}(-t_{0})^{\frac{2}{p-1}}h\int_{\mathbb{R}^{n}}(\nabla w\cdot\nabla\phi)G(y-x_{0}, t_{0})dy\notag\\
&+\frac{2(p+1)}{p-1}(-t_{0})^{\frac{2}{p-1}}h\int_{\mathbb{R}^{n}}|w|^{p-1}w\phi G(y-x_{0}, t_{0})dy\notag\\
&-\frac{4}{(p-1)^{2}}(-t_{0})^{\frac{2}{p-1}-1}h\int_{\mathbb{R}^{n}}w\phi G(y-x_{0}, t_{0})dy\notag\\
&-\frac{p+1}{p-1}(-t_{0})^{\frac{2}{p-1}}h\int_{\mathbb{R}^{n}}|\nabla w|^{2}G(y-x_{0}, t_{0})\notag\\
&\qquad \quad\times \left[-\frac{nh}{2t_{0}}+\frac{(x_{0}-y)\cdot y_{0}}{2t_{0}}-\frac{h|x_{0}-y|^{2}}{4t_{0}^{2}}\right]dy\notag\\
&+\frac{2}{p-1}(-t_{0})^{\frac{2}{p-1}}h\int_{\mathbb{R}^{n}}|w|^{p+1}G(y-x_{0}, t_{0})\notag\\
&\qquad \quad \times\left [-\frac{nh}{2t_{0}}+\frac{(x_{0}-y)\cdot y_{0}}{2t_{0}}-\frac{h|x_{0}-y|^{2}}{4t_{0}^{2}}\right]dy\notag\\
&-\frac{2}{(p-1)^{2}}(-t_{0})^{\frac{2}{p-1}-1}h\int_{\mathbb{R}^{n}}w^{2}G(y-x_{0}, t_{0})\notag\\
&\qquad \quad \times \left[-\frac{nh}{2t_{0}}+\frac{(x_{0}-y)\cdot y_{0}}{2t_{0}}-\frac{h|x_{0}-y|^{2}}{4t_{0}^{2}}\right]dy\notag\\
&+(-t_{0})^{\frac{p+1}{p-1}}\int_{\mathbb{R}^{n}}|\nabla\phi|^{2}G(y-x_{0}, t_{0})dy\notag\\
&-p(-t_{0})^{\frac{p+1}{p-1}}\int_{\mathbb{R}^{n}}|w|^{p-1}\phi^{2}G(y-x_{0}, t_{0})dy\notag\\
&+\frac{1}{p-1}(-t_{0})^{\frac{2}{p-1}}\int_{\mathbb{R}^{n}}\phi^{2}G(y-x_{0}, t_{0})dy\notag\\
&+2(-t_{0})^{\frac{p+1}{p-1}}\int_{\mathbb{R}^{n}}(\nabla w\cdot\nabla\phi) G(y-x_{0}, t_{0})\notag\\
&\qquad \quad \times \left[-\frac{nh}{2t_{0}}+\frac{(x_{0}-y)\cdot y_{0}}{2t_{0}}-\frac{h|x_{0}-y|^{2}}{4t_{0}^{2}}\right]dy\notag\\
&-2(-t_{0})^{\frac{p+1}{p-1}}\int_{\mathbb{R}^{n}}|w|^{p-1}w\phi G(y-x_{0}, t_{0})\notag\\
&\qquad \quad \times\left[-\frac{nh}{2t_{0}}+\frac{(x_{0}-y)\cdot y_{0}}{2t_{0}}-\frac{h|x_{0}-y|^{2}}{4t_{0}^{2}}\right]dy\notag\\
&+\frac{2}{p-1}(-t_{0})^{\frac{2}{p-1}}\int_{\mathbb{R}^{n}}w\phi G(y-x_{0}, t_{0})\notag\\
&\qquad\quad \times \left[-\frac{nh}{2t_{0}}+\frac{(x_{0}-y)\cdot y_{0}}{2t_{0}}-\frac{h|x_{0}-y|^{2}}{4t_{0}^{2}}\right]dy\notag\\
&+\frac{1}{2}(-t_{0})^{\frac{p+1}{p-1}}\int_{\mathbb{R}^{n}}|\nabla w|^{2} G(y-x_{0}, t_{0})\notag\\
&\qquad \quad \times\Big\{\left[-\frac{nh}{2t_{0}}+\frac{(x_{0}-y)\cdot y_{0}}{2t_{0}}-\frac{h|x_{0}-y|^{2}}{4t_{0}^{2}}\right]^{2}-\frac{nh't_{0}-nh^{2}}{2t_{0}^{2}}\notag\\
&\qquad \quad\quad +\frac{[|y_{0}|^{2}+(x_{0}-y)\cdot y_{0}']t_{0}-h(x_{0}-y)\cdot y_{0}}{2t_{0}^{2}}\notag\\
&\qquad \quad\quad -\frac{[h'|x_{0}-y|^{2}+2h(x_{0}-y)\cdot y_{0}]t_{0}-2h^{2}|x_{0}-y|^{2}}{4t_{0}^{3}}\Big\}dy\notag\\
&-\frac{1}{p+1}(-t_{0})^{\frac{p+1}{p-1}}\int_{\mathbb{R}^{n}}|w|^{p+1}G(y-x_{0}, t_{0})\notag\\
&\qquad \quad\times\Big\{\left[-\frac{nh}{2t_{0}}+\frac{(x_{0}-y)\cdot y_{0}}{2t_{0}}-\frac{h|x_{0}-y|^{2}}{4t_{0}^{2}}\right]^{2}-\frac{nh't_{0}-nh^{2}}{2t_{0}^{2}}\notag\\
&\qquad \quad\quad +\frac{[|y_{0}|^{2}+(x_{0}-y)\cdot y_{0}']t_{0}-h(x_{0}-y)\cdot y_{0}}{2t_{0}^{2}}\notag\\
&\qquad \quad\quad -\frac{[h'|x_{0}-y|^{2}+2h(x_{0}-y)\cdot y_{0}]t_{0}-2h^{2}|x_{0}-y|^{2}}{4t_{0}^{3}}\Big\}dy\notag\\
&+\frac{1}{2(p-1)}(-t_{0})^{\frac{2}{p-1}}\int_{\mathbb{R}^{n}}w^{2}G(y-x_{0}, t_{0})\notag\\
&\qquad \quad\times\Big\{\left[-\frac{nh}{2t_{0}}+\frac{(x_{0}-y)\cdot y_{0}}{2t_{0}}-\frac{h|x_{0}-y|^{2}}{4t_{0}^{2}}\right]^{2}-\frac{nh't_{0}-nh^{2}}{2t_{0}^{2}}\notag\\
&\qquad \quad\quad +\frac{[|y_{0}|^{2}+(x_{0}-y)\cdot y_{0}']t_{0}-h(x_{0}-y)\cdot y_{0}}{2t_{0}^{2}}\notag\\
&\qquad \quad\quad -\frac{[h'|x_{0}-y|^{2}+2h(x_{0}-y)\cdot y_{0}]t_{0}-2h^{2}|x_{0}-y|^{2}}{4t_{0}^{3}}\Big\}dy\notag.
\end{align}
 \end{lemma}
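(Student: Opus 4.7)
The proof is a direct computation, extending the first variation calculation by one further differentiation in $s$. Write the functional schematically as
\[
F_{x(s),t(s)}(w+s\phi) = \sum_{k=1}^{3} g_k(s) \int_{\R^n} P_k(s,y)\, G(y-x(s),t(s))\, dy,
\]
where $g_1 = g_2 = (-t(s))^{(p+1)/(p-1)}$, $g_3 = (-t(s))^{2/(p-1)}$ and the densities are $P_1 = \tfrac{1}{2}|\nabla(w+s\phi)|^2$, $P_2 = -\tfrac{1}{p+1}|w+s\phi|^{p+1}$, $P_3 = \tfrac{1}{2(p-1)}(w+s\phi)^2$. The Leibniz rule gives, for each $k$,
\[
\partial_s^2\bigl(g_k \textstyle\int P_k G\bigr) = g_k'' \textstyle\int P_k G + 2g_k'\int \partial_s(P_k G) + g_k\int \partial_s^2(P_k G),
\]
and all that is needed is to evaluate every piece at $s=0$ and collect.

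The two auxiliary ingredients are routine. From $\log G(y-x(s),t(s)) = -\tfrac{n}{2}\log(-4\pi t(s)) + \tfrac{|x(s)-y|^2}{4t(s)}$, setting
\[
\Phi(s) = -\frac{nt'(s)}{2t(s)} + \frac{(x(s)-y)\cdot x'(s)}{2t(s)} - \frac{t'(s)|x(s)-y|^2}{4t(s)^2},
\]
one has $\partial_s G = \Phi G$ and $\partial_s^2 G = (\Phi^2 + \Phi')G$; the value $\Phi(0)$ is precisely the square bracket appearing throughout the formula, and $\Phi'(0)$ is exactly the four-term correction (with $h'$, $y_0'$, $|y_0|^2$, and mixed $h(x_0-y)\cdot y_0$ contributions) visible in the last three integrals. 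For the prefactors, $g_k''(0) = \alpha_k(\alpha_k-1)h^2(-t_0)^{\alpha_k-2} - \alpha_k h'(-t_0)^{\alpha_k-1}$, which produces the coefficients $[t_0 h' + \tfrac{2}{p-1}h^2]$ for $\alpha_k = (p+1)/(p-1)$ and $[-t_0 h' + \tfrac{p-3}{p-1}h^2]$ for $\alpha_k = 2/(p-1)$ in the first three lines of the right-hand side.

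The remaining $s$-derivatives of $P_k$ are elementary: $\partial_s P_1|_{s=0} = \nabla w\cdot\nabla\phi$ and $\partial_s^2 P_1|_{s=0} = |\nabla\phi|^2$; $\partial_s P_2|_{s=0} = -|w|^{p-1}w\phi$ and $\partial_s^2 P_2|_{s=0} = -p|w|^{p-1}\phi^2$ (using $\partial_s[|w+s\phi|^{p-1}(w+s\phi)] = p|w+s\phi|^{p-1}\phi$); $\partial_s P_3|_{s=0} = \tfrac{w\phi}{p-1}$ and $\partial_s^2 P_3|_{s=0} = \tfrac{\phi^2}{p-1}$. Expanding $\partial_s^2(P_kG) = (\partial_s^2 P_k)G + 2(\partial_s P_k)\Phi G + P_k(\Phi^2 + \Phi')G$ and assembling the contributions from $g_k''$, $g_k'$ and $g_k$ recovers the claim line by line: the pure-$\phi$ lines with $|\nabla\phi|^2$, $|w|^{p-1}\phi^2$, $\phi^2$ come from $g_k(\partial_s^2 P_k)G$; the single-$h$ terms in $\nabla w\cdot\nabla\phi$, $|w|^{p-1}w\phi$ and $w\phi$ come from $2g_k'(\partial_s P_k)G$; the single-$h$ bracket terms come from $2g_k' P_k\Phi(0) G$; the $\Phi(0)$-cross terms with $\nabla w\cdot\nabla\phi$, $|w|^{p-1}w\phi$, $w\phi$ come from $2g_k(\partial_s P_k)\Phi G$; and the long curly-brace $\Phi(0)^2 + \Phi'(0)$ terms are exactly $g_k P_k(\Phi^2 + \Phi')G$ at $s=0$. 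The sole obstacle is the sheer bookkeeping — a dozen-odd distinct contributions, each carrying its own sign and scalar factor — which I would organize by tabulating $g_k'(0), g_k''(0), \Phi(0), \Phi'(0)$ once and then matching terms. No integration by parts and no use of the equation for $w$ enters at this stage, so the lemma is a purely formal identity valid for any bounded smooth $w$.
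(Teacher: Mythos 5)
Your proposal is correct and follows essentially the same route as the paper: both are the bare second derivative in $s$ computed by the Leibniz rule, with the $G$-kernel derivative handled via $\partial_s G = \Phi G$ and $\partial_s^2 G = (\Phi^2+\Phi')G$, and no integration by parts or use of the equation. Your tabulation by $g_k$, $P_k$, $\Phi$ is merely a cleaner bookkeeping of the same expansion the paper writes out before setting $s=0$.
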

 \begin{proof}
 After some computations, we   get  
\begin{align}
&\frac{\partial^{2}}{\partial s^2} (F_{x(s), t(s)}(w+s\phi))\notag\\
=&\frac{p+1}{2(p-1)}(-t(s))^{-1+\frac{2}{p-1}}\left[t(s)t''(s)+\frac{2}{p-1}(t'(s))^{2}\right]\notag\\
&\qquad
\quad\times\int_{\mathbb{R}^{n}}|\nabla w+s\nabla\phi|^{2}G(y-x(s), t(s))dy\notag\\
&-\frac{1}{p-1}(-t(s))^{-1+\frac{2}{p-1}}\left[t(s)t''(s)+\frac{2}{p-1}(t'(s))^{2}\right]\notag\\
&\qquad \quad\times\int_{\mathbb{R}^{n}}|w+s\phi|^{p+1}G(y-x(s), t(s))dy\notag\\
&-\frac{1}{(p-1)^{2}}(-t(s))^{-2+\frac{2}{p-1}}\left[\frac{p-3}{p-1}(t'(s))^{2}-t(s)t''(s)\right]\notag\\
&\qquad \quad\times\int_{\mathbb{R}^{n}}(w+s\phi)^{2}G(y-x(s), t(s))dy\notag\\
&-\frac{2(p+1)}{p-1}(-t(s))^{\frac{2}{p-1}}t'(s)\int_{\mathbb{R}^{n}}(\nabla w+s\nabla \phi)\cdot\nabla\phi G(y-x(s), t(s))dy\notag\\
&+\frac{2(p+1)}{p-1}(-t(s))^{\frac{2}{p-1}}t'(s)\int_{\mathbb{R}^{n}}|w+s\phi|^{p-1}(w+s\phi)\phi G(y-x(s), t(s))dy\notag\\
&-\frac{4}{(p-1)^{2}}(-t(s))^{\frac{2}{p-1}-1}t'(s)\int_{\mathbb{R}^{n}}(w+s\phi)\phi G(y-x(s), t(s))dy\notag\\
&-\frac{p+1}{p-1}(-t(s))^{\frac{2}{p-1}}t'(s)\int_{\mathbb{R}^{n}}|\nabla w+s\nabla\phi|^{2}G(y-x(s), t(s))\notag\\
&\qquad \quad\times \left[-\frac{nt'(s)}{2t(s)}+\frac{(x(s)-y)\cdot x'(s)}{2t(s)}-\frac{t'(s)|x(s)-y|^{2}}{4t^{2}(s)}\right]dy\notag\\
&+\frac{2}{p-1}(-t(s))^{\frac{2}{p-1}}t'(s)\int_{\mathbb{R}^{n}}|w+s\phi|^{p+1}G(y-x(s), t(s))\notag\\
&\qquad \quad\times \left[-\frac{nt'(s)}{2t(s)}+\frac{(x(s)-y)\cdot x'(s)}{2t(s)}-\frac{t'(s)|x(s)-y|^{2}}{4t^{2}(s)}\right]dy\notag\\
&-\frac{2}{(p-1)^{2}}(-t(s))^{\frac{2}{p-1}-1}t'(s)\int_{\mathbb{R}^{n}}(w+s\phi)^{2}G(y-x(s), t(s))\notag\\
&\qquad \quad\times\left[-\frac{nt'(s)}{2t(s)}+\frac{(x(s)-y)\cdot x'(s)}{2t(s)}-\frac{t'(s)|x(s)-y|^{2}}{4t^{2}(s)}\right]dy\notag\\
&+(-t(s))^{\frac{p+1}{p-1}}\int_{\mathbb{R}^{n}}|\nabla\phi|^{2}G(y-x(s), t(s))dy\notag\\
&-p(-t(s))^{\frac{p+1}{p-1}}\int_{\mathbb{R}^{n}}|w+s\phi|^{p-1}\phi^{2}G(y-x(s), t(s))dy\notag\\
&+\frac{1}{p-1}(-t(s))^{\frac{2}{p-1}}\int_{\mathbb{R}^{n}}\phi^{2}G(y-x(s), t(s))dy\notag\\
&+2(-t(s))^{\frac{p+1}{p-1}}\int_{\mathbb{R}^{n}}(\nabla w+s\nabla\phi)\cdot\nabla\phi G(y-x(s), t(s))\notag\\
&\qquad \quad\times \left[-\frac{nt'(s)}{2t(s)}+\frac{(x(s)-y)\cdot x'(s)}{2t(s)}-\frac{t'(s)|x(s)-y|^{2}}{4t^{2}(s)}\right]dy\notag\\
&-2(-t(s))^{\frac{p+1}{p-1}}\int_{\mathbb{R}^{n}}|w+s\phi|^{p-1}(w+s\phi)\phi G(y-x(s), t(s))\notag\\
&\qquad \quad\times \left[-\frac{nt'(s)}{2t(s)}+\frac{(x(s)-y)\cdot x'(s)}{2t(s)}-\frac{t'(s)|x(s)-y|^{2}}{4t^{2}(s)}\right]dy\notag\\
&+\frac{2}{p-1}(-t(s))^{\frac{2}{p-1}}\int_{\mathbb{R}^{n}}(w+s\phi)\phi G(y-x(s), t(s))\notag\\
&\qquad \quad\times \left[-\frac{nt'(s)}{2t(s)}+\frac{(x(s)-y)\cdot x'(s)}{2t(s)}-\frac{t'(s)|x(s)-y|^{2}}{4t^{2}(s)}\right]dy\notag\\
&+\frac{1}{2}(-t(s))^{\frac{p+1}{p-1}}\int_{\mathbb{R}^{n}}|\nabla w+s\nabla\phi|^{2} G(y-x(s), t(s))\notag\\
&\qquad \quad\times\Big\{\left[-\frac{nt'(s)}{2t(s)}+\frac{(x(s)-y)\cdot x'(s)}{2t(s)}-\frac{t'(s)|x(s)-y|^{2}}{4t^{2}(s)}\right]^{2}-\frac{nt''(s)t(s)-nt'(s)t'(s)}{2t^{2}(s)}\notag\\
&\qquad \quad\quad +\frac{[|x'(s)|^{2}+(x(s)-y)\cdot x''(s)]t(s)-t'(s)(x(s)-y)\cdot x'(s)}{2t^{2}(s)}\notag\\
&\qquad \quad\quad -\frac{[t''(s)|x(s)-y|^{2}+2t'(s)(x(s)-y)\cdot x'(s)]t(s)-2t'(s)t'(s)|x(s)-y|^{2}}{4t^{3}(s)}\Big\}dy\notag\\
&-\frac{1}{p+1}(-t(s))^{\frac{p+1}{p-1}}\int_{\mathbb{R}^{n}}|w+s\phi|^{p+1}G(y-x(s), t(s))\notag\\
&\qquad \quad\times\Big\{\left[-\frac{nt'(s)}{2t(s)}+\frac{(x(s)-y)\cdot x'(s)}{2t(s)}-\frac{t'(s)|x(s)-y|^{2}}{4t^{2}(s)}\right]^{2}-\frac{nt''(s)t(s)-nt'(s)t'(s)}{2t^{2}(s)}\notag\\
&\qquad \quad\quad +\frac{[|x'(s)|^{2}+(x(s)-y)\cdot x''(s)]t(s)-t'(s)(x(s)-y)\cdot x'(s)}{2t^{2}(s)}\notag\\
&\qquad \quad\quad -\frac{[t''(s)|x(s)-y|^{2}+2t'(s)(x(s)-y)\cdot x'(s)]t(s)-2t'(s)t'(s)|x(s)-y|^{2}}{4t^{3}(s)}\Big\}dy\notag\\
&+\frac{1}{2(p-1)}(-t(s))^{\frac{2}{p-1}}\int_{\mathbb{R}^{n}}(w+s\phi)^{2}G(y-x(s), t(s))\notag\\
&\qquad \quad\times\Big\{\left[-\frac{nt'(s)}{2t(s)}+\frac{(x(s)-y)\cdot x'(s)}{2t(s)}-\frac{t'(s)|x(s)-y|^{2}}{4t^{2}(s)}\right]^{2}-\frac{nt''(s)t(s)-nt'(s)t'(s)}{2t^{2}(s)}\notag\\
&\qquad \quad\quad +\frac{[|x'(s)|^{2}+(x(s)-y)\cdot x''(s)]t(s)-t'(s)(x(s)-y)\cdot x'(s)}{2t^{2}(s)}\notag\\
&\qquad \quad\quad -\frac{[t''(s)|x(s)-y|^{2}+2t'(s)(x(s)-y)\cdot x'(s)]t(s)-2t'(s)t'(s)|x(s)-y|^{2}}{4t^{3}(s)}\Big\}dy\notag.
\end{align}
The proof is finished by substituting $s=0$ into the above formula.
\end{proof}
In particular, if $x(0)=0, t(0)=-1$, then the second variation formula is given by the following result.
\begin{lemma}
Let $w$ be a bounded smooth function. Assume $x(s), t(s)$ are variations such that
\[x(0)=0,\quad t(0)=-1,\quad x'(0)=y_{0},\quad t'(0)=h\]
 and $x''(0)=y_{0}'$, $t''(0)=h'$, then
\begin{align}\label{Thegeneralsecondvariationformula}
&\frac{\partial^{2}}{\partial s^2} (F_{x(s), t(s)}(w+s\phi))|_{s=0}\notag\\
=&\frac{p+1}{2(p-1)}(-h'+\frac{2}{p-1}h^{2})\int_{\mathbb{R}^{n}}|\nabla w|^{2}\rho dy\notag\\
&-\frac{1}{p-1}(-h'+\frac{2}{p-1}h^{2})\int_{\mathbb{R}^{n}}|w|^{p+1}\rho dy\notag\\
&-\frac{1}{(p-1)^{2}}(h'+\frac{p-3}{p-1}h^{2})\int_{\mathbb{R}^{n}}w^{2}\rho dy\notag\\
&-\frac{2(p+1)}{p-1}h\int_{\mathbb{R}^{n}}(\nabla w\cdot\nabla\phi)\rho dy\notag\\
&+\frac{2(p+1)}{p-1}h\int_{\mathbb{R}^{n}}|w|^{p-1}w\phi\rho dy-\frac{4}{(p-1)^{2}}h\int_{\mathbb{R}^{n}}w\phi \rho dy\notag\\
&-\frac{p+1}{p-1}h\int_{\mathbb{R}^{n}}|\nabla w|^{2}\rho[\frac{nh}{2}+\frac{y\cdot y_{0}}{2}-\frac{h|y|^{2}}{4}]dy\notag\\
&+\frac{2}{p-1}h\int_{\mathbb{R}^{n}}|w|^{p+1}\rho[\frac{nh}{2}+\frac{y\cdot y_{0}}{2}-\frac{h|y|^{2}}{4}]dy\\
&-\frac{2}{(p-1)^{2}}h\int_{\mathbb{R}^{n}}w^{2}\rho[\frac{nh}{2}+\frac{y\cdot y_{0}}{2}-\frac{h|y|^{2}}{4}]dy\notag\\
&+\int_{\mathbb{R}^{n}}|\nabla\phi|^{2}\rho dy-p\int_{\mathbb{R}^{n}}|w|^{p-1}\phi^{2}\rho dy+\frac{1}{p-1}\int_{\mathbb{R}^{n}}\phi^{2}\rho dy\notag\\
&+2\int_{\mathbb{R}^{n}}(\nabla w\cdot\nabla\phi)\rho[\frac{nh}{2}+\frac{y\cdot y_{0}}{2}-\frac{h|y|^{2}}{4}]dy\notag\\
&-2\int_{\mathbb{R}^{n}}|w|^{p-1}w\phi \rho[\frac{nh}{2}+\frac{y\cdot y_{0}}{2}-\frac{h|y|^{2}}{4}]dy\notag\\
&+\frac{2}{p-1}\int_{\mathbb{R}^{n}}w\phi\rho[\frac{nh}{2}+\frac{y\cdot y_{0}}{2}-\frac{h|y|^{2}}{4}]dy\notag\\
&+\frac{1}{2}\int_{\mathbb{R}^{n}}|\nabla w|^{2} \rho\{[\frac{nh}{2}+\frac{y\cdot y_{0}}{2}-\frac{h|y|^{2}}{4}]^{2}+\frac{nh'+nh^{2}}{2}\notag\\
&+\frac{-|y_{0}|^{2}+y\cdot y_{0}'+hy\cdot y_{0}}{2}-\frac{h'|y|^{2}-2hy\cdot y_{0}+2h^{2}|y|^{2}}{4}\}dy\notag\\
&-\frac{1}{p+1}\int_{\mathbb{R}^{n}}|w|^{p+1}\rho\{[\frac{nh}{2}+\frac{y\cdot y_{0}}{2}-\frac{h|y|^{2}}{4}]^{2}+\frac{nh'+nh^{2}}{2}\notag\\
&+\frac{-|y_{0}|^{2}+y\cdot y_{0}'+hy\cdot y_{0}}{2}-\frac{h'|y|^{2}-2hy\cdot y_{0}+2h^{2}|y|^{2}}{4}\}dy\notag\\
&+\frac{1}{2(p-1)}\int_{\mathbb{R}^{n}}w^{2}\rho\{[\frac{nh}{2}+\frac{y\cdot y_{0}}{2}-\frac{h|y|^{2}}{4}]^{2}+\frac{nh'+nh^{2}}{2}\notag\\
&+\frac{-|y_{0}|^{2}+y\cdot y_{0}'+hy\cdot y_{0}}{2}-\frac{h'|y|^{2}-2hy\cdot y_{0}+2h^{2}|y|^{2}}{4}\}dy\notag.
\end{align}
\end{lemma}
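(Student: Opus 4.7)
The plan is to derive this identity as a direct specialization of the general second variation formula established in the previous lemma. Setting $x_0 = 0$ and $t_0 = -1$ collapses the heat kernel $G(y - x_0, t_0)$ to the Gaussian weight $\rho(y) = (4\pi)^{-n/2} e^{-|y|^2/4}$, and every prefactor of the form $(-t_0)^{\alpha}$ reduces to $1$. Consequently, each volume integral in the general formula becomes the corresponding integral against $\rho$, which accounts for the global structural correspondence between the two identities.

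The bulk of the work is algebraic bookkeeping of the drift factors. The linear bracket appearing throughout the general formula,
\[
-\frac{nh}{2t_0} + \frac{(x_0 - y)\cdot y_0}{2t_0} - \frac{h|x_0 - y|^2}{4t_0^2},
\]
reduces to $\frac{nh}{2} + \frac{y \cdot y_0}{2} - \frac{h|y|^2}{4}$ after using $-1/t_0 = 1$, $(x_0 - y)/t_0 = y$, and $1/t_0^2 = 1$. The quadratic curly-brace expression in the general formula likewise collapses cleanly: the piece $-\frac{n h' t_0 - n h^2}{2t_0^2}$ becomes $\frac{n h' + n h^2}{2}$, the piece involving $y_0'$ and $|y_0|^2$ becomes $\frac{-|y_0|^2 + y\cdot y_0' + h y\cdot y_0}{2}$, and the remaining piece becomes $-\frac{h'|y|^2 - 2 h y\cdot y_0 + 2 h^2 |y|^2}{4}$. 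Inserting these three simplifications into the general formula then reproduces the stated identity line by line.

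The only potential obstacle is sign bookkeeping: the odd power $t_0^3 = -1$ flips signs that the even power $t_0^2 = 1$ does not, and the substitution $x_0 - y = -y$ introduces further sign flips in the cross-terms $(x_0 - y)\cdot y_0$ and $(x_0 - y)\cdot y_0'$, so care is required that these cancellations combine consistently across the many lines of the formula. Since no new analytic input beyond the previously established general second variation formula is needed, the verification is purely mechanical, and the only genuine risk is a clerical sign error in the transcription.
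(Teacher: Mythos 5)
Your proposal is correct and matches the paper's implicit approach: the paper establishes the general second variation formula in the immediately preceding lemma (parametrized by $x_0$, $t_0$) and then states the present lemma without further argument, the understanding being that one substitutes $x_0 = 0$, $t_0 = -1$. I verified your key simplifications — the prefactors $(-t_0)^{\alpha}$ all become $1$, the bracket $[t_0 h' + \tfrac{2}{p-1}h^2]$ becomes $[-h'+\tfrac{2}{p-1}h^2]$ while $[-t_0h'+\tfrac{p-3}{p-1}h^2]$ becomes $[h'+\tfrac{p-3}{p-1}h^2]$, the linear drift bracket collapses to $\tfrac{nh}{2}+\tfrac{y\cdot y_0}{2}-\tfrac{h|y|^2}{4}$, and the three pieces of the quadratic curly-brace expression specialize as you describe, with the $t_0^3 = -1$ sign flip handled correctly — so the identity is reproduced line by line.
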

\subsection{The second variation of self similar solutions}
 In this section, we will specialize our calculations from the previous section to the case where  $w$ satisfies \eqref{SC1-2}. In this case, by using \eqref{SC1-2},  the second order variation formula can be simplified.
\begin{theorem}\label{secondvariation}
Let $x(s), t(s)$ are variations of $0, -1$ with $x'(0)=y_{0}, t'(0)=h$ and $x''(0)=y_{0}', t''(0)=h'$. If $w$ is a bounded solution of \eqref{SC1-2}, then for any $\phi\in C_{0}^{\infty}(\mathbb{R}^{n})$,
\begin{align}\label{secondvariation1}
&\frac{\partial^{2}}{\partial s^2} (F_{x(s), t(s)}(w+s\phi))|_{s=0}\notag\\
=&\int_{\mathbb{R}^{n}}|\nabla\phi|^{2}\rho dy+\frac{1}{p-1}\int_{\mathbb{R}^{n}}\phi^{2}\rho dy-p\int_{\mathbb{R}^{n}}|w|^{p-1}\phi^{2}\rho dy\notag\\
&+h\int_{\mathbb{R}^{n}}\left(\frac{2}{p-1}w+y\cdot\nabla w\right)\phi \rho dy-\int_{\mathbb{R}^{n}}\phi(\nabla w\cdot y_{0})\rho dy\\
&-\frac{1}{2}\int_{\mathbb{R}^{n}}(\nabla w\cdot y_{0})(\nabla w\cdot y_{0})\rho dy-h^{2}\int_{\mathbb{R}^{n}}(\frac{1}{p-1}w+\frac{y}{2}\cdot \nabla w)^{2}\rho dy.\notag
\end{align}
In particular, we have
\begin{align}\label{secondvariation0}
&\frac{\partial^{2}}{\partial s^2} (F_{x(s), t(s)}(w+s\phi))|_{s=0}\notag\\
\leq&\int_{\mathbb{R}^{n}}|\nabla\phi|^{2}\rho dy+\frac{1}{p-1}\int_{\mathbb{R}^{n}}\phi^{2}\rho dy-p\int_{\mathbb{R}^{n}}|w|^{p-1}\phi^{2}\rho dy\\
&+h\int_{\mathbb{R}^{n}}\left(\frac{2}{p-1}w+y\cdot\nabla w\right)\phi \rho dy-\int_{\mathbb{R}^{n}}\phi(\nabla w\cdot y_{0})\rho dy\notag.
\end{align}
\end{theorem}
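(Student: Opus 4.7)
The plan is to start from the general second variation formula proved in the previous lemma, specialized at $(x_0,t_0)=(0,-1)$, and collapse it using the Euler--Lagrange equation \eqref{SC1-2} together with its integrated Pohozaev--type consequences \eqref{Inte1}, \eqref{10} and \eqref{Testfunction}. Once identity \eqref{secondvariation1} is established, the inequality \eqref{secondvariation0} is immediate: the two discarded terms $-\tfrac{1}{2}\int(\nabla w\cdot y_0)^2\rho$ and $-h^2\int(\tfrac{1}{p-1}w+\tfrac{y}{2}\cdot\nabla w)^2\rho$ are manifestly non-positive.

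First, I would dispose of the acceleration parameters $h'=t''(0)$ and $y_0'=x''(0)$. All terms in the general formula containing $h'$ or $y_0'$ appear with precisely the same algebraic shape as the terms containing $h$ or $y_0$ in the first variation formula \eqref{firstvariation2024-04-05}. Because $w$ solves \eqref{SC1-2}, Proposition \ref{critcalpoints} forces these combinations to vanish, so every $h'$-- and $y_0'$--term drops out simultaneously. This leaves an expression depending only on $(\phi,h,y_0)$.

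Next, I would organize the surviving terms by homogeneity in $(\phi,h,y_0)$ and treat each block separately. The pure $\phi^2$--block is already in the desired form $\int|\nabla\phi|^2\rho+\tfrac{1}{p-1}\int\phi^2\rho-p\int|w|^{p-1}\phi^2\rho$. For the mixed $h\phi$--block I would collect the coefficients of $h\phi$, integrate by parts against $\rho$, and use \eqref{SC1-2} to reveal the self--similar scaling generator $\tfrac{2}{p-1}w+y\cdot\nabla w$; similarly the $y_0\phi$--block integrates by parts to $-\int\phi(\nabla w\cdot y_0)\rho$, where $\nabla w\cdot y_0$ is the translation generator. The cross $h y_0$--terms, which surface from cross--multiplying the bracket $\tfrac{nh}{2}+\tfrac{y\cdot y_0}{2}-\tfrac{h|y|^2}{4}$ with the first--order coefficients, are expected to cancel thanks to \eqref{Testfunction}.

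The core of the computation is the pair of purely quadratic blocks in $h$ and $y_0$. For the $|y_0|^2$--block I would test \eqref{SC1-2} against $(y_0\cdot\nabla w)(y_0\cdot y)$ and use the pointwise decomposition $|\nabla w|^2|y_0|^2=(\nabla w\cdot y_0)^2+|\text{transverse component}|^2$, together with \eqref{Testfunction} applied in the $y_0$--direction, to reduce everything to the single term $-\tfrac{1}{2}\int(\nabla w\cdot y_0)^2\rho$. For the $h^2$--block, iterated application of \eqref{Inte1} and \eqref{10} eliminates the individual integrals $\int|\nabla w|^2\rho$, $\int w^2\rho$, $\int|w|^{p+1}\rho$ and their $|y|^2$--weighted counterparts, leaving precisely $-h^2\int(\tfrac{1}{p-1}w+\tfrac{y}{2}\cdot\nabla w)^2\rho$; the reappearance of the same scaling generator seen in the $h\phi$--block is a useful cross--check. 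The main obstacle will be sheer bookkeeping: the general formula contains on the order of twenty distinct terms with coefficients depending on $n$, $p$, $p\pm 1$ and $(p-1)^2$, and verifying that they combine exactly into the stated four terms plus two squares requires disciplined organization. A conceptually cleaner sanity check, which I would run in parallel, is to recognize $F_{x(s),t(s)}(w)$ as the Giga--Kohn density of the self--similar solution $u(x,t)=(-t)^{-1/(p-1)}w(x/\sqrt{-t})$ evaluated along the path $(x(s),t(s))$, so that the $h$-- and $y_0$--derivatives become parabolic derivatives of $u$ in which the scaling generator $\tfrac{1}{p-1}w+\tfrac{y}{2}\cdot\nabla w$ and the translation generator $\nabla w$ appear automatically.
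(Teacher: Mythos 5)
Your overall strategy tracks the paper's: kill the $h'$-- and $y_0'$--terms by observing that they multiply exactly the quantity that vanishes in the first variation formula \eqref{firstvariation2024-04-05} (since $w$ is a critical point), then simplify the surviving terms with Pohozaev--type identities obtained by testing \eqref{SC1-2} against suitable functions. The pure $\phi^2$ block, the $h\phi$ and $y_0\phi$ blocks, and the idea of testing against $(\nabla w\cdot y_0)(y\cdot y_0)\rho$ for the $|y_0|^2$ block all match the paper. However, there is a concrete gap in the mechanism you propose for the mixed $hy_0$ terms. After all the Pohozaev--type substitutions, the paper is left (see \eqref{Thethirdsimplication}) with a residual pair
\[
\frac{h}{2(p-1)}\int_{\mathbb{R}^n}w^2(y\cdot y_0)\rho\,dy
\quad\text{and}\quad
h\int_{\mathbb{R}^n}(\nabla w\cdot y)(\nabla w\cdot y_0)\rho\,dy,
\]
whose cancellation is \emph{not} a consequence of \eqref{Testfunction}. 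Indeed, testing \eqref{SC1-2} against $(\nabla w\cdot y_0)\rho$ only reproduces \eqref{Testfunction} itself, and no linear combination of \eqref{Inte1}, \eqref{10}, \eqref{Testfunction} produces an identity for $\int(\nabla w\cdot y)(\nabla w\cdot y_0)\rho\,dy$. The paper instead invokes the spectral structure of $L$ from Lemma \ref{lemeig}: $\frac{2}{p-1}w+y\cdot\nabla w$ and $w_i$ are eigenfunctions of the self--adjoint operator $L$ with the distinct eigenvalues $-1$ and $-\tfrac12$, hence orthogonal in $L^2_\omega$, which is exactly \eqref{25-x2} and yields \eqref{25-x3}. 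This is the single ingredient your plan is missing, and without it the $hy_0$ cross terms do not close up.

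Two further, smaller imprecisions. First, the pointwise decomposition $|\nabla w|^2|y_0|^2=(\nabla w\cdot y_0)^2+|\text{transverse}|^2$ is not what drives the $|y_0|^2$ block; the paper's \eqref{24-5(1)} (your suggested test function) already produces a linear relation among $\int|\nabla w|^2|y_0|^2\rho$, $\int(\nabla w\cdot y_0)^2\rho$ and the $(y\cdot y_0)^2$--weighted integrals, and nothing in the reduction uses that pointwise split. Second, for the $h^2$ block you list only \eqref{Inte1} and \eqref{10}; the paper also needs the identities obtained by testing against $(\tfrac{n}{2}-\tfrac{|y|^2}{4})(\nabla w\cdot y_0)\rho$, $(\tfrac{n}{2}-\tfrac{|y|^2}{4})(\nabla w\cdot\nabla\rho)$, $(\tfrac{n}{2}-\tfrac{|y|^2}{4})w\rho$ and the Gaussian weight identity \eqref{25-x5}. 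Your closing sanity check (recognizing the $F$-functional as the Giga--Kohn density of the self--similar flow, with the scaling and translation generators appearing automatically) is in fact the right intuition that would lead you to Lemma \ref{lemeig} and the missing orthogonality; I would elevate it from a cross-check to the organizing principle.
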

\begin{proof}
This is proved by combining several identies together. We will postpone the proof of this result to Appendix A.
\end{proof}
The second variation formula above allows us to formulate a notion of stability.
\begin{definition}
Let $w$ be a bounded solution of \eqref{SC1-2}.  If for every $\phi\in C_{0}^{\infty}(\mathbb{R}^{n})$, there exists variations $x(s), t(s)$ with 
\[x(0)=0,~~ t(0)=-1,~~ x'(0)=y_{0}, ~~t'(0)=h, ~~x''(0)=y_{0}',~~ t''(0)=h'\]
such that
\[\frac{\partial^{2}}{\partial s^2} (F_{x(s), t(s)}(w+s\phi))|_{s=0}\geq 0,\]
then we say that $w$ is $F-$stable.
\end{definition}
Roughly speaking, a bounded solution of \eqref{SC1-2} is   $F-$stable if modulo translations and dilations, the second derivative of the $F-$ functional is non-negative for all variations at the given  solution.
\begin{remark}
From \cite{Giga-Kohn1985}, we know that the energy functional of \eqref{SC1} is
 \[F(w)=\frac{1}{2}\int_{\mathbb{R}^{n}}|\nabla w|^{2}\rho dy+\frac{1}{2(p-1)}\int_{\mathbb{R}^{n}} w^{2}\rho dy-\frac{1}{p+1}\int_{\mathbb{R}^{n}} |w|^{p+1}\rho dy\]
 Thus we can also define a notion of stability as follows: $w$ is stable if  for any $\phi\in C_{0}^{\infty}(\mathbb{R}^{n})$, the quadratic form
 \[Q(\phi, \phi)=\int_{\mathbb{R}^{n}}|\nabla \phi|^{2}\rho dy+\frac{1}{(p-1)}\int_{\mathbb{R}^{n}} \phi^{2}\rho dy-p\int_{\mathbb{R}^{n}} |w|^{p-1}\phi^{2}\rho dy\]
is nonnegative definite.  However, it is easy to check that if we use this notation, then any nonzero self similar solution is unstable. Therefore, this kind of   stability can not provide any useful information.
\end{remark}

\section{Characterization of the constant solution}
Our next objective is to classify $F-$stable self similar solutions. Before doing this, we first prove a result concerning the constant solutions of \eqref{SC1-2}.
\begin{proposition}\label{ChCS}
Let $w$ be a bounded solution of \eqref{SC1-2}, then $w$ is the constant solution of \eqref{SC1-2} if and only if the function
\begin{equation}\label{definefunctioneta}
\Lambda(w)(y)=\frac{2}{p-1}w(y)+ y\cdot\nabla w(y)
\end{equation}
does not change sign in $\mathbb{R}^{n}$.
\end{proposition}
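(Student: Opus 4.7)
The ``only if'' direction is a direct check: for $w \in \{0, \pm\kappa\}$ we have $y \cdot \nabla w \equiv 0$, so $\Lambda(w) = \frac{2}{p-1}w$ has constant sign. For the converse, I would assume without loss of generality that $\Lambda(w) \geq 0$ on $\mathbb{R}^n$ (the other case is symmetric). The first step would be to derive a linear elliptic equation for $\Lambda$. Conceptually: if $u(x,t)=(-t)^{-1/(p-1)}w(x/\sqrt{-t})$ is the associated self-similar solution of \eqref{Fujitaequation}, then $v := \partial_t u$ solves the linearization $\partial_t v = \Delta v + p|u|^{p-1}v$, and a short computation gives $v(x,t) = \tfrac12(-t)^{-p/(p-1)}\Lambda(w)(x/\sqrt{-t})$. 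Unfolding this equation for $v$ in self-similar variables produces
\[
\Delta\Lambda - \tfrac{y}{2}\cdot\nabla\Lambda - \tfrac{p}{p-1}\Lambda + p|w|^{p-1}\Lambda = 0 \qquad\text{in }\mathbb{R}^n.
\]

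Since $w$ is bounded, the potential $p|w|^{p-1} - p/(p-1)$ is bounded, and the strong maximum principle gives the dichotomy $\Lambda \equiv 0$ or $\Lambda > 0$ everywhere. In the first case, $\tfrac{2}{p-1} w + y\cdot\nabla w \equiv 0$ forces $w$ to be homogeneous of degree $-2/(p-1)$ along every ray, which combined with boundedness gives $w \equiv 0$, a constant solution.

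It therefore remains to handle $\Lambda > 0$ everywhere. Here I would use $\Lambda$ as a ground state: for any $\psi \in C_0^\infty(\mathbb{R}^n)$, a standard integration by parts based on $\nabla\cdot(\rho\nabla\Lambda) = -\rho V\Lambda$ with $V := p|w|^{p-1}-p/(p-1)$ yields the factorization
\[
Q(\Lambda\psi) := \int_{\mathbb{R}^n}\rho\bigl(|\nabla(\Lambda\psi)|^2 - V(\Lambda\psi)^2\bigr)dy = \int_{\mathbb{R}^n}\rho\,\Lambda^2|\nabla\psi|^2\,dy \geq 0.
\]
Since $\Lambda$ is smooth and strictly positive, every $\phi \in C_0^\infty$ arises this way with $\psi = \phi/\Lambda$, so $Q(\phi) \geq 0$ on $C_0^\infty$. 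A cutoff approximation $w\approx w\chi_R$---justified by the Gaussian decay of $\rho$ and the uniform bounds on $w$ and its derivatives from Lemma \ref{lempre3.1}---then upgrades this to $Q(w) \geq 0$.

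The proof closes by identifying $Q(w)$. Multiplying \eqref{SC1-2} by $\rho w$ and integrating by parts yields
\[
\int_{\mathbb{R}^n}\rho |\nabla w|^2\,dy = \int_{\mathbb{R}^n}\rho|w|^{p+1}\,dy - \tfrac{1}{p-1}\int_{\mathbb{R}^n}\rho w^2\,dy,
\]
which substituted into the definition of $Q$ gives $Q(w) = \int\rho w^2 dy - (p-1)\int\rho|w|^{p+1}dy$. Thus $Q(w)\geq 0$ reads $\int\rho w^2\,dy \geq (p-1)\int\rho|w|^{p+1}\,dy$, while $\int\rho|\nabla w|^2\,dy \geq 0$ reads $(p-1)\int\rho|w|^{p+1}\,dy\geq \int\rho w^2\,dy$. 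Chaining these forces equality throughout, hence $\int\rho|\nabla w|^2\,dy = 0$, so $w$ is a constant, which must be one of $0,\pm\kappa$. The delicate point in this plan is the ground state step---both the factorization itself and the passage from $C_0^\infty$ to the non-compactly-supported $w$---where the Gaussian weight $\rho$ combined with the at-most-linear growth of $\Lambda$ and $\nabla\Lambda$ (inherited from the boundedness of $w$ and $\nabla w$) is precisely what makes all boundary terms vanish in the limit.
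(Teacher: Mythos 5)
Your proof is correct, and its skeleton matches the paper's: split into the cases $\Lambda\equiv 0$ and $\Lambda>0$ (up to sign), dispose of the first by homogeneity plus boundedness, and in the second establish nonnegativity of the quadratic form $Q(\phi)=\int\rho\bigl(|\nabla\phi|^2 - V\phi^2\bigr)\,dy$ with $V=p|w|^{p-1}-\frac{p}{p-1}$, then combine $Q(w)\geq 0$ with the Pohozaev-type identity $\int\rho|\nabla w|^2+\frac{1}{p-1}\int\rho w^2-\int\rho|w|^{p+1}=0$ to force $\nabla w\equiv 0$. Where you diverge is in \emph{how} $Q\geq 0$ is obtained: the paper invokes the spectral package of Corollary \ref{cortwoeigenfunctions}, observing that a nowhere-vanishing eigenfunction of the self-adjoint operator $L$ must realize the lowest eigenvalue, so $\lambda_1=-1$ and the Rayleigh characterization \eqref{thevariationalcharacterization} gives $Q(\phi)\geq 0$ for all $\phi\in H^1_w$ at once; you instead run an Allegretto--Piepenbrink (ground-state transform) factorization $Q(\Lambda\psi)=\int\rho\,\Lambda^2|\nabla\psi|^2$, which is more elementary and self-contained but initially yields nonnegativity only on $C_0^\infty$ and needs the cutoff step you sketch to reach $\phi=w$. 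Both are standard and equivalent ways of exploiting a positive solution of the linearized equation; the paper's route is shorter because $w\in H^1_w$ is admissible in the variational characterization directly, whereas yours avoids reliance on the Hilbert-space spectral decomposition. Your preliminary derivation of $L\Lambda=\Lambda$ via $v=\partial_t u$ of the unfolded self-similar solution is also correct (and a nice consistency check), though the paper obtains it more quickly by differentiating the scaled family $w_\lambda(y)=\lambda^{2/(p-1)}w(\lambda y)$ at $\lambda=1$ in Lemma \ref{lemeig}.
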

\begin{remark}
In a recent paper \cite{Choi-Huang2024}, Choi and Huang proved a similar result for smooth linearly stable self-similar solutions \footnote{The precise definition of linearly stable self similar solution can be found in \cite[Definition 2.1]{Choi-Huang2024}.} of the equation \eqref{Cauchyproblem} under an integral condition for all $p>1$.
\end{remark}
In order to give the proof of Proposition \ref{ChCS}, we first recall the weighted spaces
\begin{equation*}\label{18}
L_\omega^q(\mathbb R^n)=\{g\in L_{loc}^q(\mathbb R^n):\int_{\mathbb R^n}|g(y)|^qe^{-\frac{|y|^2}{4}}dy<\infty\}
\end{equation*}
and
\begin{equation*}\label{19}
H_\omega^1(\mathbb R^n)=\{g\in L_{\omega}^2(\mathbb R^n): |\nabla g|+|g|\in L_{\omega}^2(\mathbb R^n)\}.
\end{equation*}
The inner product on $L_\omega^2(\mathbb R^n)$ is defined by
\begin{equation}\label{innerproduct}
\langle\psi_{1}, \psi_{2}\rangle_{w}=\int_{\mathbb{R}^{n}}\psi_{1}\psi_{2}e^{-\frac{|y|^{2}}{4}}dy.
\end{equation}
Then both $L_\omega^2(\mathbb R^n)$ and $H_\omega^1(\mathbb R^n)$ are Hilbert spaces. 

For any bounded solution of \eqref{SC1-2}, we define the linear operator
\begin{equation}\label{linearizew}
L\psi=\Delta \psi-\frac{y}{2}\cdot\nabla\psi-\frac{1}{p-1}\psi+p|w|^{p-1}\psi.
\end{equation}
Recall that $\lambda\in\mathbb{R}$ is an eigenvalue of $L$ if there is a non-zero function $f\in H_\omega^1(\mathbb R^n)$ such that $Lf+\lambda f=0$. The operator $L$ is not self adjoint with respect to the usual $L^{2}$ inner product because of the first order term.
However, using integration by parts, we easily see that $L$ is a self adjoint operator
 in $L_\omega^2(\mathbb R^n)$ with domain $D(L):=H_{\omega}^1(\mathbb R^n)$.  Since
  the natural embedding $\iota:H_{\omega}^1(\mathbb R^n)\hookrightarrow L_{\omega}^2(\mathbb R^n)$ is compact, standard spectral theory gives the
following corollary.
\begin{corollary}\label{cortwoeigenfunctions}
Assume $w$ is a bounded solution of \eqref{SC1-2} and $L$ is defined by \eqref{linearizew}. Then
\begin{itemize}

\item[(1)] $L$ has a sequence of eigenvalues $\lambda_{1}<\lambda_{2}\leq\cdots.$

\item[(2)] There is an orthogonal basis $\{f_{k}\}$ for $L_{w}^{2}(\mathbb{R}^{n})$ with $Lf_{k}+\lambda_{k}f_{k}=0$.

\item[(3)] The smallest eigenvalue $\lambda_{1}$ is simiple and can be characterized by
\begin{equation}\label{thevariationalcharacterization}
\lambda_{1}=\inf_{\psi\in H_{w}^{1}(\mathbb{R}^{n})}\frac{\int_{\mathbb{R}^{n}}|\nabla \psi|^{2}\rho dy+\frac{1}{p-1}\int_{\mathbb{R}^{n}}\psi^{2}\rho dy-p\int_{\mathbb{R}^{n}}|w|^{p-1}\psi^{2}\rho dy}{\int_{\mathbb{R}^{n}}\psi^{2}\rho dy}.
\end{equation}

\item[(4)] Any eigenfunction associated to $\lambda_{1}$ does not change sign.
\end{itemize}
\end{corollary}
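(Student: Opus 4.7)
The plan is to realize $L$ as a self-adjoint operator on $L^{2}_\omega(\mathbb{R}^n)$ with compact resolvent and then invoke the standard spectral theorem for such operators.

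First, I would write $L$ in the weighted-divergence form
$$L\psi = \frac{1}{\rho}\mathrm{div}(\rho\,\nabla\psi) - \frac{1}{p-1}\psi + p|w|^{p-1}\psi,$$
so that integration by parts against $\rho\,dy$ produces the closed symmetric bilinear form $\mathcal{Q}(\psi_1,\psi_2)$ on $H^{1}_\omega(\mathbb{R}^n)$ whose diagonal is the numerator in \eqref{thevariationalcharacterization}. Since Lemma \ref{lempre3.1} gives $w\in L^\infty$, the zeroth-order potential $p|w|^{p-1}$ is bounded, so $\mathcal{Q}$ is bounded below on $H^{1}_\omega$; its Friedrichs extension is the self-adjoint realization of $-L$ on $L^{2}_\omega$.

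Next, the embedding $\iota:H^{1}_\omega\hookrightarrow L^{2}_\omega$ is compact (the standard Gaussian-weighted Rellich lemma, as already asserted in the excerpt), so the resolvent of $-L$ is compact on $L^{2}_\omega$. The spectral theorem then produces a complete orthonormal basis $\{f_k\}$ of $L^{2}_\omega$ consisting of eigenfunctions with eigenvalues $\lambda_1\leq\lambda_2\leq\cdots\to+\infty$, giving (1) and (2). Item (3) is then the Rayleigh-quotient formula for the smallest eigenvalue of a bounded-below self-adjoint operator.

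For simplicity of $\lambda_1$ and item (4), if $f_1$ is any first eigenfunction, then $|f_1|\in H^{1}_\omega$ and $|\nabla|f_1||=|\nabla f_1|$ almost everywhere, so $|f_1|$ realizes the same Rayleigh quotient and is therefore itself a first eigenfunction. Elliptic regularity upgrades $|f_1|$ to a classical nonnegative solution of the uniformly elliptic equation $(L+\lambda_1)|f_1|=0$ with bounded smooth coefficients; the strong maximum principle (or equivalently Harnack's inequality) then forces $|f_1|>0$ everywhere, so $f_1$ does not change sign, proving (4). Simplicity follows at once: any two first eigenfunctions must both be strictly sign-definite, and two such functions cannot be $L^{2}_\omega$-orthogonal unless one vanishes identically. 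The argument is entirely standard; the only place to be careful is in applying the strong maximum principle to $|f_1|$ through its potential zero set, which is handled by first invoking interior elliptic regularity on the open set $\{f_1\neq 0\}$ and then combining this with the weak equation for $|f_1|$ globally.
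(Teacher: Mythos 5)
Your proposal is correct and follows essentially the same route as the paper: the paper itself merely asserts, in the sentences immediately preceding the corollary, that $L$ is self-adjoint in $L^{2}_{\omega}$ with domain $H^{1}_{\omega}$, that the embedding $H^{1}_{\omega}\hookrightarrow L^{2}_{\omega}$ is compact, and that ``standard spectral theory'' then yields the corollary. You have simply spelled out those standard steps (Friedrichs extension and quadratic form, compact resolvent, Rayleigh quotient for $\lambda_1$, the absolute-value trick plus strong maximum principle for sign-definiteness and simplicity), all of which are exactly the usual ingredients the paper is invoking.
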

The next lemma shows that the operator $L$  has two explicit eigenfunctions which are induced by scaling and translations.
\begin{lemma}\label{lemeig}
If $w$ is a bounded solution of the equation \eqref{SC1-2}, then
\begin{align}
\label{2023.10.18-1}&Lw_{i}=\frac{1}{2}w_{i},\quad i=1,2,\cdots, n,\\
\label{2023.10.18-2}&L\left(\frac{2}{p-1}w+y\cdot\nabla w\right)=\frac{2}{p-1}w+y\cdot\nabla w.
\end{align}
\end{lemma}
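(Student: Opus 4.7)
The plan is computational and splits into two independent verifications; the approach is direct, with a more conceptual symmetry-based picture in the background.

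For \eqref{2023.10.18-1}, I would simply differentiate \eqref{SC1-2} in $y_i$. Because $\partial_{y_i}(\frac{y}{2}\cdot\nabla w) = \frac{1}{2}w_i + \frac{y}{2}\cdot\nabla w_i$, exactly one $\frac{1}{2}w_i$ term is produced by the drift; the nonlinear term contributes $p|w|^{p-1}w_i$, which is precisely the potential appearing in $L$. Rearranging yields $Lw_i = \frac{1}{2}w_i$. Conceptually, this is the statement that $w_i$ is the infinitesimal generator of the spatial translation symmetry $u(x,t)\mapsto u(x+\epsilon e_i,t)$ of the associated parabolic solution $u(x,t) = (-t)^{-1/(p-1)}w(x/\sqrt{-t})$.

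For \eqref{2023.10.18-2}, I would split $\Lambda(w) = \frac{2}{p-1}w + v$ with $v := y\cdot\nabla w$ and evaluate $L$ on each piece. Applying $L$ to $w$ directly and substituting \eqref{SC1-2} gives the clean identity $L(\frac{2}{p-1}w) = 2|w|^{p-1}w$. For $Lv$, the algebraic core is the commutator relation $[\Delta,\,y\cdot\nabla]=2\Delta$, which is verified by a short index computation in which the mixed second-order term $\sum_{i,k}y_iy_kw_{ki}$ arising from $\Delta v$ cancels the identical term arising from $\frac{y}{2}\cdot\nabla v$ (here one uses that $\frac{y}{2}\cdot\nabla$ commutes with $y\cdot\nabla$). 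This gives
\[
\Bigl(\Delta-\tfrac{y}{2}\cdot\nabla\Bigr)v \;=\; 2\Delta w + y\cdot\nabla\Bigl[\Bigl(\Delta-\tfrac{y}{2}\cdot\nabla\Bigr)w\Bigr].
\]
Substituting \eqref{SC1-2} on the right, using $y\cdot\nabla(|w|^{p-1}w) = p|w|^{p-1}v$, and then adding the potential terms of $L$, the right-hand side collapses to $Lv = 2\Delta w$. A final application of \eqref{SC1-2} rewrites $2\Delta w = v + \frac{2}{p-1}w - 2|w|^{p-1}w$, and summing with $L(\frac{2}{p-1}w) = 2|w|^{p-1}w$ produces $L(\Lambda(w)) = \frac{2}{p-1}w + v = \Lambda(w)$.

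A conceptually cleaner route is available via parabolic scaling: since the Fujita equation is invariant under time translation, $\partial_t u$ satisfies the linearized equation, and a direct chain-rule computation with the self-similar ansatz shows $u_t(x,t) = \frac{1}{2}(-t)^{-p/(p-1)}\Lambda(w)(y)$. Matching the power of $(-t)$ in the self-similar ansatz $\phi = (-t)^{-\alpha}\psi(y)$ on both sides of $\phi_t = \Delta\phi + p|u|^{p-1}\phi$ forces $\alpha - \frac{1}{p-1} = 1$, which is exactly the eigenvalue in \eqref{2023.10.18-2}; the analogous computation for spatial translation gives $\alpha - \frac{1}{p-1} = \frac{1}{2}$, recovering \eqref{2023.10.18-1}. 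I do not expect any substantive obstacle here — the lemma is really bookkeeping — so the only step that requires care is the cancellation of the mixed second-order term noted above.
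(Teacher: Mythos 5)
Your proof is correct. For \eqref{2023.10.18-1} you argue exactly as the paper does: differentiate \eqref{SC1-2} in $y_i$, note the extra $\frac{1}{2}w_i$ produced by $\partial_i(\frac{y}{2}\cdot\nabla w)$, and read off the eigenvalue. For \eqref{2023.10.18-2} you take a genuinely different (but equivalent) algebraic route. The paper uses the explicit scaling family $w_\lambda(y)=\lambda^{2/(p-1)}w(\lambda y)$, writes the equation it satisfies, and differentiates in $\lambda$ at $\lambda=1$, directly producing $\Lambda(w)$ with eigenvalue $1$; this is the cleanest infinitesimal-symmetry calculation and avoids any commutator bookkeeping. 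You instead decompose $\Lambda(w)=\frac{2}{p-1}w+v$ with $v=y\cdot\nabla w$, compute $L(\frac{2}{p-1}w)=2|w|^{p-1}w$ from the equation, use $[\Delta,y\cdot\nabla]=2\Delta$ and $[\frac{y}{2}\cdot\nabla,y\cdot\nabla]=0$ to get $Lv=2\Delta w$, then eliminate $\Delta w$ via \eqref{SC1-2} and recombine; all the cancellations you flag do go through. The conceptual remark at the end — matching $(-t)$-powers for $u_t$ and $u_{x_i}$ against the linearized flow — is essentially a repackaging of the paper's scaling-derivative argument and gives a useful sanity check of the eigenvalues. Both routes are elementary and about the same length; the paper's $w_\lambda$ derivative is marginally less error-prone since it never touches second-order commutator identities, while yours makes the operator algebra $[\Delta,y\cdot\nabla]=2\Delta$ explicit, which is reused implicitly elsewhere in similar linearizations.
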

\begin{proof}
Since $w$ satisfies \eqref{SC1},   for $i=1,2,\cdots, n$,
\[\Delta w_{i}-\frac{y}{2}\cdot\nabla w_{i}-\frac{1}{2}w_{i}-\frac{1}{p-1}w_{i}+p|w|^{p-1}w_{i}=0.\]
Hence we get \eqref{2023.10.18-1}. 

To show \eqref{2023.10.18-2},  consider the variation $w_{\lambda}(y)=\lambda^{2/(p-1)}w(\lambda y)$. Since $w$ satisfies \eqref{SC1},   $w_{\lambda}$ satisfies 
\begin{equation}\label{eqsca}
\Delta w_{\lambda}-\frac{\lambda^{2}}{2}y\cdot \nabla w_{\lambda}-\frac{\lambda^{2}}{p-1}w_{\lambda}
+|w_{\lambda}|^{p-1}w_{\lambda}=0.
\end{equation}
Taking derivative with respect to $\lambda$ at $\lambda=1$, we obtain
\begin{equation}\label{eqsca1}
\begin{aligned}
0=&\Delta \left(\frac{2}{p-1}w+y\cdot\nabla w\right)-\frac{y}{2}\cdot\nabla \left(\frac{2}{p-1}w+y\cdot\nabla w\right)\\
&-\frac{1}{p-1}\left(\frac{2}{p-1}w+y\cdot\nabla w\right)+p|w|^{p-1}\left(\frac{2}{p-1}w+y\cdot\nabla w\right)\\
&-\left(\frac{2}{p-1}w+y\cdot\nabla w\right),
\end{aligned}
\end{equation}
which is  \eqref{2023.10.18-2}.
\end{proof}
With Corollary \ref{cortwoeigenfunctions} and Lemma \ref{lemeig} in hand, we now have all of the tools that we need to prove Proposition \ref{ChCS}.
\begin{proof}[Proof of Proposition \ref{ChCS}]
If $w$ is the constant solution of \eqref{SC1}, then it is clear that either $\Lambda(w)\equiv 0$ or
 \[\Lambda(w)(y)=\pm2(\frac{1}{p-1})^{\frac{p}{p-1}}.\]
Thus $\Lambda(w)$ does not change sign in $\mathbb{R}^{n}$.

 Next, we need to prove that  if $\Lambda(w)$ does not change sign, then $w$ is the constant solution of  \eqref{SC1-2}.

 Assume that $\Lambda(w)$ does not change sign. By Lemma \ref{lemeig} and the strong maximum principle,   either $\Lambda(w)\equiv 0$ or up to a sign, $\Lambda(w)$ is positive in $\mathbb{R}^{n}$. If the first case holds ($\Lambda(w)\equiv 0$), then $w$ is a $2/(p-1)-$ homogeneous solution of 
 \[\Delta w+|w|^{p-1}w=0,\quad\text{in}~\mathbb{R}^{n}\backslash\{0\}.\]
 Since we have assumed that $w$ is bounded, then $w$ is smooth in $\mathbb{R}^{n}$. The homogeneity then implies that $w\equiv 0$.

 If the second case holds, then we know from  Lemma \ref{lemeig} and the last statement in Corollary \ref{cortwoeigenfunctions} that $-1$ is the smallest eigenvalue of the operator $L$ defined in \eqref{linearizew}. Therefore, for any $\phi\in H^{1}_{w}(\mathbb{R}^{n})$,
\begin{equation}\label{ChCS1}
\begin{aligned}
0\leq&\int_{\mathbb{R}^{n}}|\nabla\phi|^{2}\rho dy+\frac{1}{p-1}\int_{\mathbb{R}^{n}}\phi^{2}\rho dy-p\int_{\mathbb{R}^{n}}|w|^{p-1}\phi^{2}\rho dy+\int_{\mathbb{R}^{n}}\phi^{2}\rho dy.
\end{aligned}
\end{equation}
By taking $\phi=w$ into \eqref{ChCS1}, we get
\begin{equation}\label{ChCS2}
\begin{aligned}
0\leq&\int_{\mathbb{R}^{n}}|\nabla w|^{2}\rho dy+\frac{1}{p-1}\int_{\mathbb{R}^{n}}w^{2}\rho dy-p\int_{\mathbb{R}^{n}}|w|^{p+1}\rho dy+\int_{\mathbb{R}^{n}}w^{2}\rho dy.
\end{aligned}
\end{equation}
Multiplying  both sides of \eqref{SC1} by $w\rho$ and  integrating by parts, we get
\begin{equation}\label{ChCS3}
\begin{aligned}
0=&\int_{\mathbb{R}^{n}}|\nabla w|\rho dy+\frac{1}{p-1}\int_{\mathbb{R}^{n}}w^{2}\rho dy-\int_{\mathbb{R}^{n}}|w|^{p+1}\rho dy.
\end{aligned}
\end{equation}
Combining \eqref{ChCS2} and \eqref{ChCS3}, we have
\begin{equation}\label{ChCS4}
(1-p)\int_{\mathbb{R}^{n}}|w|^{p+1}\rho dy+\int_{\mathbb{R}^{n}}w^{2}\rho dy\geq 0.
\end{equation}
On the other hand, we get from \eqref{ChCS3} that
\begin{equation}\label{ChCS5}
(1-p)\int_{\mathbb{R}^{n}}|w|^{p+1}\rho dy+\int_{\mathbb{R}^{n}}w^{2}\rho dy=-(p-1)\int_{\mathbb{R}^{n}}|\nabla w|^{2}\rho dy\leq 0.
\end{equation}
Hence
\begin{equation}\label{ChCS6}
(1-p)\int_{\mathbb{R}^{n}}|w|^{p+1}\rho dy+\int_{\mathbb{R}^{n}}w^{2}\rho dy=0.
\end{equation}
Plugging \eqref{ChCS6} into \eqref{ChCS3}, we get
\begin{equation}\label{ChCS7}
\int_{\mathbb{R}^{n}}|\nabla w|^{2}\rho dy=0.
\end{equation}
Thus $w$ is a constant function.
\end{proof}
\begin{remark}
The proof of Proposition \ref{ChCS} indicates that a much more suitable choice to define ``Morse index'' of solutions of \eqref{SC1-2} is the number of eigenvalues of $L$ less than $1$.
\end{remark}
\begin{remark}
If we do not assume $w$ is a bounded solution of \eqref{SC1-2}, then
\[w(y)=\left[\frac{2}{p-1}(n-2-\frac{2}{p-1})\right]^{\frac{1}{p-1}}|y|^{-\frac{2}{p-1}}\]
is a solution of \eqref{SC1-2} with $\Lambda(w)(y)\equiv 0$ on $\mathbb{R}^{n}\backslash\{0\}$.
\end{remark}

\section{Classification of $F$-stable self similar solutions}
In this section, we will combine the second variation formula  and the characterization of the constant solution to prove that the constant solution is the only candidate for a bounded solution of \eqref{SC1-2} to be $F-$ stable.
\begin{theorem}\label{thestable}
If $w$ is a bounded solution of \eqref{SC1-2} and $w$ is not the constant solution, then $w$ is not $F$-stable.
\end{theorem}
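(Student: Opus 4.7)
\emph{Proof plan.} After the identities collected in Theorem \ref{secondvariation}, the exact second variation \eqref{secondvariation1} depends only on $\phi$ and on the first-order variation data $h=t'(0)$, $y_0=x'(0)$; the second-order data $h',y_0'$ have dropped out. To show $w$ is not $F$-stable, I will exhibit a single $\phi\in C_0^\infty(\mathbb{R}^n)$ making the right side of \eqref{secondvariation1} strictly negative for every $(h,y_0)\in\mathbb{R}\times\mathbb{R}^n$. The central input is spectral. Since $w$ is bounded and non-constant, Proposition \ref{ChCS} forces $\Lambda(w):=\frac{2}{p-1}w+y\cdot\nabla w$ to change sign in $\mathbb{R}^n$. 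By Lemma \ref{lempre3.1} one has $\Lambda(w)\in H_\omega^1(\mathbb{R}^n)$ (since $w,\nabla w$ are bounded and the Gaussian tames the linear growth), and by Lemma \ref{lemeig} both $\Lambda(w)$ and each $w_i$ are eigenfunctions of the operator $L$ defined in \eqref{linearizew}, at eigenvalues $-1$ and $-1/2$ respectively. Because the ground-state eigenvalue $\lambda_1$ of $L$ is simple with sign-definite eigenfunction (Corollary \ref{cortwoeigenfunctions}(3)--(4)), the sign-changing $\Lambda(w)$ cannot realize $\lambda_1$, whence $\lambda_1<-1$.

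Let $\phi_1\in H_\omega^1(\mathbb{R}^n)$ be a normalized first eigenfunction (smooth by elliptic regularity). Self-adjointness of $L$ on $L_\omega^2$, together with $\lambda_1\neq -1$ and $\lambda_1\neq -1/2$, yields the orthogonalities $\langle\phi_1,\Lambda(w)\rangle_w=0$ and $\langle\phi_1,w_i\rangle_w=0$ for $i=1,\dots,n$. Substituting $\phi=\phi_1$ into \eqref{secondvariation1}, the two cross-terms $h\int\Lambda(w)\phi_1\rho\,dy$ and $\int\phi_1(\nabla w\cdot y_0)\rho\,dy$ both vanish; the remaining pieces $-\tfrac{1}{2}\int(\nabla w\cdot y_0)^2\rho\,dy$ and $-h^2\int\bigl(\tfrac{1}{p-1}w+\tfrac{y}{2}\cdot\nabla w\bigr)^2\rho\,dy$ are manifestly nonpositive. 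What is left is precisely the Rayleigh quantity $\int|\nabla\phi_1|^2\rho+\tfrac{1}{p-1}\int\phi_1^2\rho-p\int|w|^{p-1}\phi_1^2\rho=\lambda_1<0$, so the second variation at $\phi=\phi_1$ is bounded above by $\lambda_1<0$ for every choice of $(h,y_0)$.

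It remains to promote $\phi_1$ to $C_0^\infty(\mathbb{R}^n)$, which I would do by truncation: set $\phi_R=\chi_R\phi_1$ for a cutoff $\chi_R$ equal to $1$ on $B_R$ and supported in $B_{2R}$. Then $\phi_R\in C_0^\infty$ and $\phi_R\to\phi_1$ in $H_\omega^1$ thanks to the Gaussian weight, so by continuity of all the relevant quadratic and bilinear forms one has $Q(\phi_R)\to\lambda_1<0$ while $\langle\phi_R,\Lambda(w)\rangle_w$ and $\langle\phi_R,w_i\rangle_w$ tend to $0$. The exact maximum of \eqref{secondvariation1} over $(h,y_0)$ has the form $Q(\phi_R)+\tfrac{a_R^2}{A}+\tfrac{1}{2}b_R^{\top}C^{+}b_R$, where $A=\int\Lambda(w)^2\rho>0$ (non-vanishing by the homogeneity argument used inside the proof of Proposition \ref{ChCS}), $C_{ij}=\int w_iw_j\rho$, and $a_R,b_R$ are the cross inner products; this quantity tends to $\lambda_1<0$, so is strictly negative for $R$ large. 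The main obstacle is merely packaging this truncation cleanly: the matrix $C$ may be degenerate when $w$ has translational symmetries, but any $y_0\in\ker C$ gives $\nabla w\cdot y_0\equiv 0$ almost everywhere and therefore contributes nothing to \eqref{secondvariation1}, which makes the Moore--Penrose optimization well posed and the estimate close.
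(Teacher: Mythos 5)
Your proposal is correct and follows essentially the same route as the paper: invoke Proposition \ref{ChCS} and Lemma \ref{lemeig} to conclude that $\Lambda(w)$ changes sign and hence that the ground-state eigenvalue of $L$ satisfies $\lambda_1<-1$, take $\phi$ to be the (sign-definite) first eigenfunction, and use self-adjointness of $L$ to kill the two cross terms in \eqref{secondvariation1}, leaving the Rayleigh quantity $\lambda_1<0$ plus manifestly nonpositive $h$- and $y_0$-terms. The one place you go beyond the paper is the truncation step: the paper simply substitutes the eigenfunction $f\in H_\omega^1(\mathbb{R}^n)$ into the second variation formula, even though the definition of $F$-stability and Theorem \ref{secondvariation} are phrased for $\phi\in C_0^\infty(\mathbb{R}^n)$, and you rightly observe that one must either extend the class of admissible variations or approximate $\phi_1$ by $\chi_R\phi_1$ and pass to the limit in the Moore--Penrose-optimized form; your handling of the possibly degenerate $C$ (via $\ker C$ corresponding to translation invariance of $w$) is also correct. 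This is a genuine, if minor, gain in rigor over the paper's write-up rather than a different method.
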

\begin{proof}
If $w$ is not the constant solution of \eqref{SC1-2}, then the function $\Lambda(w)$ defined in \eqref{definefunctioneta} can not
vanish identically. Moreover, we know from  Lemma \ref{lemeig} that $\Lambda(w)$ is an eigenfunction for \[L=\Delta -\frac{y}{2}\cdot\nabla-\frac{1}{p-1}+p|w|^{p-1}\]
associated with eigenvalue $-1$. By Proposition \ref{ChCS},  $\Lambda(w)$ changes sign. Since $\Lambda(w)$ changes sign, then the last statement in Corollary \ref{cortwoeigenfunctions} yields $-1$ is not the smallest eigenvalue for $L$.  Thus, we can get a positive function $f$
with $Lf +\lambda_{1} f=0$, where $\lambda_{1}<-1$ is the smallest eigenvalue for $L$. Since $L$ is self-adjoint in the weighted space $H^{1}_{w}(\mathbb{R}^{n})$,   $f$ is orthogonal to the eigenfunctions with different eigenvalues. In particular,  we have
\begin{equation}\label{themain1}
\int_{\mathbb{R}^{n}}\left(\frac{2}{p-1}w+y\cdot\nabla w\right)f \rho dy=0
\end{equation}
and for any $y_{0}\in\mathbb{R}^{n}$
\begin{equation}\label{themain1'}
\int_{\mathbb{R}^{n}}(\nabla w\cdot y_{0}) f\rho dy=0.
\end{equation}
Substituting \eqref{themain1} and \eqref{themain1'} into \eqref{secondvariation0} gives
\begin{align}\label{themain2}
&\frac{\partial^{2}}{\partial s^2} (F_{x(s), t(s)}(w+sf))|_{s=0}\notag\\
=&\int_{\mathbb{R}^{n}}|\nabla f|^{2}\rho dy+\frac{1}{p-1}\int_{\mathbb{R}^{n}}f^{2}\rho dy-p\int_{\mathbb{R}^{n}}|w|^{p-1}f^{2}\rho dy\notag\\
&+h\int_{\mathbb{R}^{n}}\left(\frac{2}{p-1}w+y\cdot\nabla w\right)f \rho dy-\int_{\mathbb{R}^{n}}f(\nabla w\cdot y_{0})\rho dy\\
&-\frac{1}{2}\int_{\mathbb{R}^{n}}(\nabla w\cdot y_{0})(\nabla w\cdot y_{0})\rho dy-h^{2}\int_{\mathbb{R}^{n}}(\frac{1}{p-1}w+\frac{y}{2}\cdot \nabla w)^{2}\rho dy\notag\\
\leq &-\lambda_{1}\int_{\mathbb{R}^{n}}f^{2}\rho dy\notag.
\end{align}
It follows that
\[\frac{\partial^{2}}{\partial s^2} (F_{x(s), t(s)}(w+sf))|_{s=0}<0\]
for any choice of $h$ and $y_{0}$. By the definition, $w$ is not $F$-stable, thus the proof is completed.
\end{proof}
\begin{remark}
In the proof of Theorem \ref{thestable}, the variation we can choose  is not unique. Indeed, assume $f$ is the first positive eigenfunction of $L$  and $\xi$ is a small positive constant which will be determined later. Let us take $\phi=f+\xi$ into the second  variation formula.  Without loss of generality, we may normalize $f$ so that $\int_{\mathbb{R}^{n}}f^{2}\rho dy=1$. Then
\begin{align}\label{themainremark}
&\frac{\partial^{2}}{\partial s^2} (F_{x(s), t(s)}(w+s(f+\xi)))|_{s=0}\notag\\
=&\int_{\mathbb{R}^{n}}|\nabla f|^{2}\rho dy-p\int_{\mathbb{R}^{n}}|w|^{p-1}f^{2}\rho dy\notag+\frac{1}{p-1}\int_{\mathbb{R}^{n}}f^{2}\rho dy\notag\\
&+\frac{2\xi}{p-1}\int_{\mathbb{R}^{n}} f\rho dy+\frac{\xi^{2}}{p-1}\int_{\mathbb{R}^{n}}\rho dy\notag\\
&-2p\xi \int_{\mathbb{R}^{n}}|w|^{p-1}f\rho dy -p\xi^{2}\int_{\mathbb{R}^{n}}|w|^{p-1}\rho dy\\
&-\xi\int_{\mathbb{R}^{n}}(\nabla w\cdot y_{0})\rho dy-\frac{1}{2}\int_{\mathbb{R}^{n}}(\nabla w\cdot y_{0})(\nabla w\cdot y_{0})\rho dy\notag\\
&+2h\xi\int_{\mathbb{R}^{n}}(\frac{1}{p-1}w+\frac{1}{2} y\cdot\nabla w)\rho dy-h^{2}\int_{\mathbb{R}^{n}}(\frac{1}{p-1}w+\frac{1}{2} y\cdot\nabla w)^{2}\rho dy\notag\\
\leq &\lambda_{1}\int_{\mathbb{R}^{n}}f^{2}\rho dy+\frac{3(p+1)}{p-1}(\xi^{2}+\xi)\notag.
\end{align}
 If we choose $\xi$ so that
 \[\frac{3(p+1)}{p-1}(\xi^{2}+\xi)<\lambda_{1},\]
 then
\[\frac{\partial^{2}}{\partial s^2} (F_{x(s), t(s)}(w+s(f+\xi)))|_{s=0}<0\]
for any choice of $h$ and $y_{0}$.
\end{remark}
In view of Theorem \ref{thestable}, it will be   natural to ask whether the constant solution is $F-$ stable. By the second variation formula, it is clear that $0$ is $F-$stable.
Next, we will show if $w$ is the positive solution of \eqref{SC1-2},  then the only  way to decrease the $F_{0, 1}$ functional is to translate in space; this will not be used elsewhere.
\begin{proposition}\label{clascons}
If $w$ is the positive constant solution of \eqref{SC1-2}, then for any function $\phi$ such that
\[\int_{\mathbb{R}^{n}}y_{i}\phi\rho dy=0,\quad i=1, 2, \cdots n,\]
there exist $y_{0}, h$ such that
\[\frac{\partial^{2}}{\partial s^2} (F_{x(s), t(s)}(w+s\phi))|_{s=0}\geq 0.\]
\end{proposition}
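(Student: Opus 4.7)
The plan is to insert the positive constant solution $w \equiv \kappa$ into the second variation formula \eqref{secondvariation1}, exploit the vanishing of $\nabla w$ to eliminate the spatial variation entirely, and then optimize the resulting scalar expression in the temporal parameter $h$ alone. Concretely, since $\nabla w = 0$ and $|w|^{p-1} = \kappa^{p-1} = 1/(p-1)$, every term in \eqref{secondvariation1} containing $\nabla w$ or $\nabla w \cdot y_0$ drops out, and we may take $y_0 = 0$ (or anything else). Using $\frac{1}{p-1} - p\kappa^{p-1} = -1$ and $\int_{\mathbb{R}^n}\rho\,dy = 1$, the formula collapses to
\[
Q(\phi,h) := \int_{\mathbb{R}^n}|\nabla\phi|^{2}\rho\,dy - \int_{\mathbb{R}^n}\phi^{2}\rho\,dy + \frac{2h\kappa}{p-1}\int_{\mathbb{R}^n}\phi\rho\,dy - \frac{h^{2}\kappa^{2}}{(p-1)^{2}}.
\]

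As a function of $h$, $Q(\phi,h)$ is a downward-opening parabola whose maximum is attained at $h^{\ast} = \frac{p-1}{\kappa}\int_{\mathbb{R}^n}\phi\rho\,dy$, and
\[
\max_{h\in\mathbb{R}}Q(\phi,h) = \int_{\mathbb{R}^n}|\nabla\phi|^{2}\rho\,dy - \int_{\mathbb{R}^n}\phi^{2}\rho\,dy + \left(\int_{\mathbb{R}^n}\phi\rho\,dy\right)^{2}.
\]
Thus the conclusion reduces to proving that this quantity is nonnegative for every $\phi$ with $\int_{\mathbb{R}^n} y_i\phi\rho\,dy = 0$ for $i = 1,\dots,n$.

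The remaining inequality is a spectral-gap statement for the Ornstein--Uhlenbeck operator $L_0 := \Delta - \tfrac{y}{2}\cdot\nabla$, which is self-adjoint in $L^{2}_{\rho}$ with Hermite-polynomial eigenfunctions and eigenvalues $-k/2$ for $k = 0,1,2,\dots$. Setting $a := \int_{\mathbb{R}^n}\phi\rho\,dy$ and $\psi := \phi - a$, the normalization $\int_{\mathbb{R}^n}\rho\,dy = 1$ gives $\psi \perp 1$ in $L^{2}_{\rho}$, while the hypothesis gives $\psi \perp y_i$ for each $i$. Hence $\psi$ lies in the orthogonal complement of the $k = 0$ and $k = 1$ eigenspaces, and the spectral gap yields
\[
\int_{\mathbb{R}^n}|\nabla\psi|^{2}\rho\,dy \geq \int_{\mathbb{R}^n}\psi^{2}\rho\,dy.
\]
Since $\int|\nabla\psi|^{2}\rho = \int|\nabla\phi|^{2}\rho$ and $\int\psi^{2}\rho = \int\phi^{2}\rho - a^{2}$, this is exactly what is needed, and the proof is complete with the choice $(y_0,h) = (0,h^{\ast})$.

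The whole argument is a short computation once the right reduction is identified, so there is no serious obstacle. The one delicate conceptual point is that the spatial parameter $y_0$ contributes nothing when $w$ is constant — all spatial shifts are wasted because $\nabla w = 0$ — so the nonnegativity has to be carried entirely by the temporal dilation $h$ together with the orthogonality to linear functions, which is precisely what the hypothesis $\int_{\mathbb{R}^n} y_i \phi \rho\,dy = 0$ supplies. Equality is attained exactly when $\phi$ is a nonzero constant, consistent with the fact that constant perturbations just shift $\kappa$ to a nearby constant state.
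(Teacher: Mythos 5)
Your argument is correct and takes essentially the same route as the paper: insert $w=\kappa$ into \eqref{secondvariation1}, complete the square in $h$ (the paper fixes $h$ to kill the square; you maximize, which is the same thing), subtract the mean, and invoke the spectral gap of the Ornstein--Uhlenbeck operator from Lemma \ref{lemsp}. The only difference is cosmetic --- you spell out explicitly that the hypothesis $\int y_i\phi\rho\,dy=0$ forces $\psi=\phi-a$ to be orthogonal to the $\lambda=0$ and $\lambda=-1/2$ eigenspaces (so the relevant gap is $-1$, not $-1/2$), a step the paper leaves implicit when it writes ``Lemma \ref{lemsp} implies.''
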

\begin{proof}
By the assumption, \[w=\kappa=\left(\frac{1}{p-1}\right)^{\frac{1}{p-1}}.\]
Plugging this into \eqref{secondvariation1}, we have
\begin{equation}
\begin{aligned}
&\frac{\partial^{2}}{\partial s^2} (F_{x(s), t(s)}(w+s\phi))|_{s=0}\\
=&\int_{\mathbb{R}^{n}}|\nabla\phi|^{2}\rho dy-\int_{\mathbb{R}^{n}}\phi^{2}\rho dy+\frac{2hw}{p-1}\int_{\mathbb{R}^{n}}\phi\rho dy-\frac{h^{2}w^{2}}{(p-1)^{2}}\int_{\mathbb{R}^{n}}\rho dy
\end{aligned}
\end{equation}
Let $L_{0}$ be the operator defined by
$$L_{0}\psi=\Delta \psi-\frac{1}{2} y\cdot\nabla \psi.$$
For the operator $L_{0}$, we have the following classical result on its eigenvalues.
\begin{lemma}\label{lemsp}
The eigenvalues of the operator $L_{0}$ are given by \[\lambda_{k}=-|\alpha|/2, \quad \alpha=(\alpha_{1}, \alpha_{2}, \cdots, \alpha_{n}),\]
where $\alpha_{i}$ is a nonnegative integer for $1\leq i\leq n$.
\end{lemma}

Let $a$ be a constant such that
\begin{align}
\int_{\mathbb{R}^{n}}(\phi-a)\rho dy=0
\end{align}
and let $\phi_{0}(y)=\phi(y)-a$. Then
\begin{align}
&\frac{\partial^{2}}{\partial s^2} (F_{x(s), t(s)}(w+s\phi))|_{s=0}\notag\\
=&\int_{\mathbb{R}^{n}}|\nabla\phi_{0}|^{2}\rho dy-\int_{\mathbb{R}^{n}}\phi_{0}^{2}\rho dy\notag\\
&+2\frac{h}{p-1}(\frac{1}{p-1})^{\frac{1}{p-1}}a-\frac{h^{2}}{(p-1)^{2}}(\frac{1}{p-1})^{\frac{2}{p-1}}-a^{2}\notag\\
=&-(\frac{h}{p-1}(\frac{1}{p-1})^{\frac{1}{p-1}}-a)^{2}+\int_{\mathbb{R}^{n}}|\nabla\phi_{0}|^{2}\rho dy-\int_{\mathbb{R}^{n}}\phi_{0}^{2}\rho dy\notag.
\end{align}
If we choose $h$ so that
\[\frac{h}{p-1}(\frac{1}{p-1})^{\frac{1}{p-1}}-a=0,\]
then Lemma \ref{lemsp} implies
\[\frac{\partial^{2}}{\partial s^2} (F_{x(s), t(s)}(w+s\phi))|_{s=0}\geq 0.\]
Hence the proof of Proposition \ref{clascons} is complete.
\end{proof}
\begin{remark}
A similar result holds for $-\kappa$.
\end{remark}

\section{Entropy}
The entropy $\lambda(w)$ of a bounded smooth function $w$ is defined to be
\[\lambda(w)=\sup\limits_{x_{0}\in\mathbb{R}^{n}, t_{0}\in(-\infty, 0)}F_{x_{0}, t_{0}}(w).\]
\begin{remark}
The notion of entropy is used by  Colding and Minicozzi (see \cite{Colding-Minicozzi2012}) in mean curvature flow to classify generic singularities. This quantity can be used to measure the complexity of self shrinkers. As discussed in \cite{Colding-Minicozzi2012}, the  advantage of the entropy functional is that it is invariant under dilations, rotations, or translations of $w$. The main disadvantage of the entropy is that for a variation $w_{s}$, it need not depend smoothly on $s$.
\end{remark}
\begin{definition}
We will say that a bounded function $w$ is entropy stable if it is a local minimum for the entropy functional.
\end{definition}
\subsection{The entropy is achieved for bounded self similar solutions}
Although in the definition of entropy, the supremum is over a noncompact space-time domain, the next lemma shows that for a bounded solution of \eqref{SC1-2}, the function $(x_{0}, t_{0})\rightarrow F_{x_{0}, t_{0}}(w)$ has a global maximum at $x_{0}=0, t_{0}=-1$.
\begin{lemma}\label{CSP}
If $w$ is a bounded solution of \eqref{SC1-2}, then $\lambda(w)$ is achieved at $(0, -1)$.
\end{lemma}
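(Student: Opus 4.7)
The plan is to identify $F_{x_0,t_0}(w)$ with a specific value of the Giga--Kohn monotone quantity $E(\cdot;\cdot,\cdot,u)$ of Lemma~\ref{monotonicityformula} for the self-similar Fujita solution
\[
u(x,t):=(-t)^{-1/(p-1)}w\!\left(x/\sqrt{-t}\right),\qquad (x,t)\in\mathbb{R}^n\times(-\infty,0),
\]
and then push the base time out to $-\infty$ to compare it with $F_{0,-1}(w)=E(w)$. Given a base point $(x_0,t_0)\in\mathbb{R}^n\times(-\infty,0)$, introduce
\[
\lambda:=\sqrt{-t_0},\qquad x_1:=x_0/\lambda,\qquad t^{\ast}:=1+1/t_0,
\]
and note that $t^{\ast}<1$ for every $t_0<0$, which will be crucial below.

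The first step is to verify the identity $F_{x_0,t_0}(w)=E(-1;x_1,t^{\ast},u)$. Since $t^{\ast}-1=1/t_0$ and hence $-(t^{\ast}-1)=1/\lambda^2$, one has $u(y,t^{\ast}-1)=\lambda^{2/(p-1)}w(\lambda y)$ and $|\nabla u(y,t^{\ast}-1)|^2=\lambda^{2+4/(p-1)}|\nabla w(\lambda y)|^2$. With $x_1=x_0/\lambda$, the substitution $\tilde z=\lambda y$ gives $y-x_1=(\tilde z-x_0)/\lambda$ and $dy=\lambda^{-n}d\tilde z$, so that $\rho(y-x_1)\,dy$ transforms directly into $G(\tilde z-x_0,t_0)\,d\tilde z$ (using $-4\pi t_0=4\pi\lambda^2$). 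The resulting $\lambda$-powers collapse, $\lambda^{2+4/(p-1)}=(-t_0)^{(p+1)/(p-1)}$ and $\lambda^{4/(p-1)}=(-t_0)^{2/(p-1)}$, and reproduce term by term the three prefactors in the definition \eqref{Func} of $F_{x_0,t_0}(w)$. Since $t^{\ast}<1$, one can pick a reference time $t\in(t^{\ast}-1,\min(0,t^{\ast}))\subset(-\infty,0)$, whereupon the monotonicity interval $(-\infty,-(t^{\ast}-t))$ provided by Lemma~\ref{monotonicityformula} contains $(-\infty,-1]$. Monotonicity therefore yields
\[
F_{x_0,t_0}(w)=E(-1;x_1,t^{\ast},u)\leq\lim_{s\to-\infty}E(s;x_1,t^{\ast},u).
\]

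The last step is to evaluate this limit as $E(w)=F_{0,-1}(w)$. Setting $z=y/\sqrt{-(t^{\ast}+s)}$ in each of the three integrals defining $E(s;x_1,t^{\ast},u)$, the scaling powers coming from $u(y,t^{\ast}+s)=(-(t^{\ast}+s))^{-1/(p-1)}w(z)$ and the Jacobian $(-(t^{\ast}+s))^{n/2}$ exactly cancel against the prefactors $(-s)^{(p+1)/(p-1)}$ and $(-s)^{2/(p-1)}$ (one checks that the residual powers combine to $(4\pi)^{-n/2}$); meanwhile $1+t^{\ast}/s\to1$ and $-|\sqrt{-(t^{\ast}+s)}\,z-x_1|^2/(4(-s))\to-|z|^2/4$ as $s\to-\infty$, so each integrand converges pointwise to the corresponding integrand of $E(w)$. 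The uniform $C^1$ bound from Lemma~\ref{lempre3.1}, together with the Gaussian tail (which dominates uniformly once $s$ is sufficiently negative), provides an $L^1$ majorant, so dominated convergence delivers the limit. Combining everything gives $F_{x_0,t_0}(w)\leq F_{0,-1}(w)$ for all $(x_0,t_0)$, with equality at $(0,-1)$, so $\lambda(w)$ is attained there. The main obstacle is the scaling bookkeeping in the first step: the identity $F_{x_0,t_0}(w)=E(-1;x_1,t^{\ast},u)$ works only because the exponents $(p+1)/(p-1)$ and $2/(p-1)$ in the definition of $F_{x_0,t_0}$ are tied to the Fujita scaling in precisely the right way, and both must line up simultaneously with the right shift $x_1$ and time $t^{\ast}$.
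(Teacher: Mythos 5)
Your proof is correct and follows essentially the same route as the paper: both identify $F_{x_0,t_0}(w)$ with the $s=-1$ slice of the Giga--Kohn monotone quantity $E(\cdot;x_1,T,u)$ with $T=1+1/t_0$ and $x_1=x_0/\sqrt{-t_0}$, then use monotonicity and send $s\to-\infty$ to reach $F_{0,-1}(w)$. The only cosmetic difference is that the paper establishes the single identity $E(s;x,T,u)=F_{x/\sqrt{-(T+s)},\,-s/(T+s)}(w)$ for all $s$, which yields both your first-step identity at $s=-1$ and the limit value at $s\to-\infty$ simultaneously, whereas you verify the $s=-1$ case directly and handle the $s\to-\infty$ limit separately by dominated convergence.
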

\begin{proof}
 For any  bounded solution of \eqref{SC1-2},  set $u(x, t)=(-t)^{-1/(p-1)}w(x/\sqrt{-t})$. It follows from Lemma \ref{monotonicityformula} that for any $(x, t)\in\mathbb{R}^{n}\times(-\infty, 0)$ and $T>t$,
\begin{align}
E(s; x, T, u)=&\frac{1}{2}(-s)^{\frac{p+1}{p-1}}\int_{\mathbb{R}^{n}}|\nabla u(y, T+s)|^{2}G(y-x, s)dy\notag\\
&-\frac{1}{p+1}(-s)^{\frac{p+1}{p-1}}\int_{\mathbb{R}^{n}}|u(y, T+s)|^{p+1}G(y-x, s)dy\notag\\
&+\frac{1}{2(p-1)}(-s)^{\frac{2}{p-1}}\int_{\mathbb{R}^{n}}u(y, T+s)^{2}G(y-x, s)dy\notag.
\end{align}
is nonincreasing with respect to $s$ in $(-\infty, -(T-t)]$. By the definition of $u$, we have
\begin{align}
E(s; x, T, u)=&\frac{1}{2}\left(\frac{s}{T+s}\right)^{\frac{p+1}{p-1}}\int_{\mathbb{R}^{n}}|\nabla w(y)|^{2}G\left(y-\frac{x}{\sqrt{-(T+s)}}, -\frac{s}{T+s}\right)dy\notag\\
&-\frac{1}{p+1}\left(\frac{s}{T+s}\right)^{\frac{p+1}{p-1}}\int_{\mathbb{R}^{n}}|w(y)|^{p+1}G\left(y-\frac{x}{\sqrt{-(T+s)}}, -\frac{s}{T+s}\right)dy\notag\\
&+\frac{1}{2(p-1)}\left(\frac{s}{T+s}\right)^{\frac{2}{p-1}}\int_{\mathbb{R}^{n}}w(y)^{2}G\left(y-\frac{x}{\sqrt{-(T+s)}}, -\frac{s}{T+s}\right)dy\notag\\
=&F_{\frac{x}{\sqrt{-(T+s)}}, -\frac{s}{T+s}}(w).\notag
\end{align}
We take
\[T-t=1,\quad x=x_{0}\sqrt{-\frac{1}{t_{0}}}, \quad T=1+\frac{1}{t_{0}}.\]
Since $E(s; x, t, u)$ is nonincreasing with respect to $s$ in $(-\infty, -1]$, then
\[E(-1; x, T, u)=F_{x_{0}, t_{0}}(w)\leq\lim_{s\rightarrow-\infty}E(s; x, T, u)=F_{0, -1}(w).\]
Since $(x_{0}, t_{0})\in\mathbb{R}^{n}\times(-\infty, 0)$ can be arbitrary, we conclude that $\lambda(w)$ achieves its maximum at $(0, -1)$.
\end{proof}
By the definition of the weighted energy $E(w)$  and the $F-$ functional, it is easy to see that $E(w)=F_{0, -1}(w)$. As a direct consequence of Lemma \ref{CSP}, we have the following.
\begin{corollary}\label{CCSP}
If $w$ is a bounded solution of \eqref{SC1-2}, then $\lambda(w)=E(w).$
\end{corollary}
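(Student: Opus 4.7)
The plan is to derive the corollary directly from Lemma \ref{CSP}, after verifying that the weighted energy $E(w)$ and the $F$-functional $F_{0,-1}(w)$ coincide as functionals on bounded smooth $w$. The corollary is really a one-line consequence, so the proposal is just to organize the two ingredients.

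First I would evaluate $F_{x_0,t_0}(w)$ at the distinguished point $x_0=0$, $t_0=-1$. At this choice, $(-t_0)^{(p+1)/(p-1)}=1$ and $(-t_0)^{2/(p-1)}=1$, while the Gaussian kernel reduces to $G(y,-1)=\rho(y)$. Substituting these into the definition \eqref{Func} of $F_{x_0,t_0}$ and comparing term by term with \eqref{weightedenergy} yields the identity
\begin{equation*}
F_{0,-1}(w)=\tfrac12\int_{\mathbb{R}^n}|\nabla w|^2\rho\,dy-\tfrac{1}{p+1}\int_{\mathbb{R}^n}|w|^{p+1}\rho\,dy+\tfrac{1}{2(p-1)}\int_{\mathbb{R}^n}w^2\rho\,dy=E(w).
\end{equation*}

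Second, since $w$ is assumed to be a bounded solution of \eqref{SC1-2}, Lemma \ref{CSP} applies and tells us that the function $(x_0,t_0)\mapsto F_{x_0,t_0}(w)$ attains its global maximum over $\mathbb{R}^n\times(-\infty,0)$ at the point $(0,-1)$. Combining this with the definition of entropy gives
\begin{equation*}
\lambda(w)=\sup_{x_0\in\mathbb{R}^n,\,t_0<0}F_{x_0,t_0}(w)=F_{0,-1}(w)=E(w),
\end{equation*}
which is the desired equality. There is no genuine obstacle here; the only thing to be careful about is the bookkeeping of the powers of $(-t_0)$ and the verification that Lemma \ref{CSP} really gives a maximum (rather than only a supremum) — but this is already contained in its statement. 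Hence a clean two-sentence proof suffices, invoking Lemma \ref{CSP} for the attainment and the direct substitution above for the identification $F_{0,-1}(w)=E(w)$.
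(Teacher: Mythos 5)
Your proof is correct and takes exactly the same route as the paper: identify $F_{0,-1}(w)=E(w)$ by substituting $t_0=-1$, $x_0=0$ into \eqref{Func} and comparing with \eqref{weightedenergy}, then invoke Lemma \ref{CSP} to conclude $\lambda(w)=F_{0,-1}(w)$. There is no difference of substance between your argument and the one in the paper.
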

Lemma \ref{CSP} only tells us the location where the $F-$ functional achieves the maximum. For our application later, we also need  to show that if $w$ is a bounded solution of \eqref{SC1-2} and
\[\nabla w\cdot y_{0}\neq 0,\quad\text{for any}~y_{0}\in\mathbb{R}^{n}\backslash\{0\},\]
then $(x_{0}, t_{0})\to F_{x_{0}, t_{0}}(w)$ has a strict global maximum at $x_{0}=0, t_{0}=-1$.
\begin{lemma}\label{gapestimate}
Suppose $w$ is a bounded solution of \eqref{SC1-2} and for any $y_{0}\in\mathbb{R}^{n}\backslash\{0\}$, $w$ is not translation invariant in the $y_{0}$ direction.
Then for every $\epsilon>0$ sufficiently small, there exists $\delta>0$ such that
\begin{equation}\label{gap}
\sup\{F_{x_{0}, t_{0}}(w):|x_{0}|+|\log t_{0}|>\epsilon\}\leq\lambda(w)-\delta.
\end{equation}
\end{lemma}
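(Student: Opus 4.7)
The plan is to argue by contradiction, using continuity of $(x_0,t_0)\mapsto F_{x_0,t_0}(w)$ together with a rigidity analysis of the equality case of the Giga-Kohn monotonicity formula (Lemma \ref{monotonicityformula}). Suppose the claim fails: there exist $\epsilon>0$ and a sequence $(x_k,t_k)$ with $|x_k|+|\log(-t_k)|\ge\epsilon$ and $F_{x_k,t_k}(w)\to\lambda(w)$. Since $w\not\equiv 0$ by hypothesis, the Pohozaev-type identity \eqref{ChCS3} yields $\lambda(w)=E(w)=\frac{1}{2(p+1)}\bigl[(p-1)\int_{\mathbb{R}^{n}}|\nabla w|^2\rho\,dy+\int_{\mathbb{R}^{n}}w^2\rho\,dy\bigr]>0$, so $F\to\lambda(w)$ is a genuine contradiction to rule out.

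First I would dispose of the boundary subcases where $(x_k,t_k)$ escapes every compact subset of $\mathbb{R}^{n}\times(-\infty,0)$. A direct change of variables in \eqref{Func} yields the key identity
\[
F_{x_0,t_0}(w)=E(\tilde w_{x_0,t_0}),\qquad \tilde w_{x_0,t_0}(z):=(-t_0)^{1/(p-1)}w\bigl(x_0+\sqrt{-t_0}\,z\bigr).
\]
When $t_k\to 0^-$, the uniform bounds from Lemma \ref{lempre3.1} force $\|\tilde w_{x_k,t_k}\|_{L^\infty}+\|\nabla\tilde w_{x_k,t_k}\|_{L^\infty}\to 0$, so $F_{x_k,t_k}(w)\to E(0)=0<\lambda(w)$, a contradiction. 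In the remaining escape regimes ($|x_k|\to\infty$ with $t_k$ staying in a compact of $(-\infty,0)$, or $t_k\to-\infty$), I would extract a subsequential $C^2_{\mathrm{loc}}$-limit of $w(\cdot+x_k)$ using Lemma \ref{lempre3.1}; the drift term in \eqref{SC1-2} dominates as $|x_k|\to\infty$, forcing the limit to be a constant, after which direct evaluation of $E(\tilde w_{x_k,t_k})$ produces a contradiction with $F_{x_k,t_k}(w)\to\lambda(w)$.

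After excluding the boundary scenarios, I pass to a subsequence with $(x_k,t_k)\to(x_\infty,t_\infty)\in\mathbb{R}^{n}\times(-\infty,0)$ and $(x_\infty,t_\infty)\neq(0,-1)$. Continuity of $F$ in $(x_0,t_0)$, also from Lemma \ref{lempre3.1}, gives $F_{x_\infty,t_\infty}(w)=\lambda(w)$. Setting $x^*:=x_\infty/\sqrt{-t_\infty}$, $T^*:=1+1/t_\infty$, and $u(y,\tau):=(-\tau)^{-1/(p-1)}w(y/\sqrt{-\tau})$, the identity
\[
E(s;x^*,T^*,u)=F_{x^*/\sqrt{-(T^*+s)},\,-s/(T^*+s)}(w)
\]
derived in the proof of Lemma \ref{CSP} gives $F_{x_\infty,t_\infty}(w)=E(-1;x^*,T^*,u)$ and $\lim_{s\to-\infty}E(s;x^*,T^*,u)=\lambda(w)$. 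The monotonicity of $s\mapsto E(s;x^*,T^*,u)$ then forces it to be constantly $\lambda(w)$ on $(-\infty,-1]$, and the equality case of the Giga-Kohn monotonicity formula implies $u$ is backward self-similar about $(x^*,T^*)$ throughout this interval.

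Since $u$ is also backward self-similar about $(0,0)$ by construction, applying a common rescaling $\mu>0$ at both centers produces the one-parameter family of invariances $u(y+(1-\mu)x^*,\tau+(1-\mu^2)T^*)=u(y,\tau)$. Differentiating at $\mu=1$, using $\partial_\tau u=\Delta u+|u|^{p-1}u$, and applying \eqref{SC1-2} at $\tau=-1$ to replace $\Delta w+|w|^{p-1}w$ by $\tfrac{y}{2}\cdot\nabla w+\tfrac{w}{p-1}$, yields the over-determined relation
\[
(x^*+T^*y)\cdot\nabla w+\frac{2T^*}{p-1}w=0\quad\text{on}\ \mathbb{R}^{n}.
\]
If $T^*=0$, then $t_\infty=-1$ and $x^*=x_\infty\neq 0$, so the relation reduces to $x^*\cdot\nabla w\equiv 0$, meaning $w$ is translation invariant in the direction $x^*$, contradicting the hypothesis. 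If $T^*\neq 0$, the shift $W(z):=w(z-x^*/T^*)$ satisfies $\Lambda(W)=z\cdot\nabla W+\tfrac{2}{p-1}W\equiv 0$; thus $W$ is $(-2/(p-1))$-homogeneous, and since $W$ is bounded and smooth on $\mathbb{R}^{n}$, it must vanish identically, forcing $w\equiv 0$ and contradicting the hypothesis as well. The hardest step I expect is the boundary analysis in the second paragraph, particularly the case $|x_k|\to\infty$, where one must control the possible limits of translated copies of $w$; the interior rigidity in the final two paragraphs is dictated cleanly by the structure of the Giga-Kohn monotonicity formula and the non-translation-invariance hypothesis on $w$.
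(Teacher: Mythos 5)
Your strategy (contradiction plus compactness plus rigidity of the Giga–Kohn monotonicity formula) is genuinely different from the paper's, which instead establishes a strict local maximum of $(x_0,t_0)\mapsto F_{x_0,t_0}(w)$ at $(0,-1)$ by plugging $\phi\equiv 0$ into the second variation formula \eqref{secondvariation1} and observing that the resulting Hessian $-\tfrac12\int(\nabla w\cdot y_0)^2\rho-h^2\int(\tfrac{w}{p-1}+\tfrac y2\cdot\nabla w)^2\rho$ is negative definite under the hypothesis, and then uses the monotone path \eqref{Mono} (from the proof of Lemma \ref{CSP}) to push an arbitrary $(x_0,t_0)$ outside the $\epsilon$-sphere back onto the compact sphere $\{|x|+|\log(-t)|=\epsilon\}$, where the strict gap $\delta$ is supplied by compactness.

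The main gap in your argument is the escape regime. The monotonicity-flow device in the paper is precisely what eliminates this regime for free, and by avoiding it you take on a genuinely hard problem. Your treatment of $t_k\to 0^-$ is fine, but the other two escape cases are not: for $t_k\to-\infty$ with $x_k$ bounded, $w(\cdot+x_k)\to w(\cdot+x_\infty)$ and the drift term does not dominate at all, so nothing forces the limit to be constant; for $|x_k|\to\infty$, the drift $\tfrac{x_k}{2}\cdot\nabla w(\cdot+x_k)$ being bounded only forces $\nabla w(\cdot+x_k)\cdot(x_k/|x_k|)\to 0$, i.e.\ the subsequential limit $w_\infty$ is constant \emph{in one direction}, not constant. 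Moreover the object you really need to control is $\tilde w_{x_k,t_k}(z)=(-t_k)^{1/(p-1)}w(x_k+\sqrt{-t_k}\,z)$ (equivalently $u(x_k/\sqrt{-t_k}+z,1/t_k)$), whose Gaussian-weighted energy is not determined by a $C^2_{\rm loc}$-limit of $w(\cdot+x_k)$ alone, and for which a noncompact limit need not lower the energy below $\lambda(w)$. So this step does not go through as written; you would need to use the flow reduction (which you only invoke later, in the rigidity step) to collapse the whole noncompact region onto a compact sphere.

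There is also a smaller domain issue in the interior rigidity. The monotonicity equality gives that $u$ is backward self-similar about $(x^*,T^*)$ only for times $\tau\le T^*-1=1/t_\infty$. When $t_\infty\in(-1,0)$, one has $1/t_\infty<-1$, so differentiating the composite invariance at $\mu=1$ and then evaluating at $\tau=-1$ is not justified. This is easily repaired (evaluate at some $\tau_0<1/t_\infty$ and use $u(\cdot,\tau_0)=(-\tau_0)^{-1/(p-1)}w(\cdot/\sqrt{-\tau_0})$ to rescale back to $w$, or invoke forward uniqueness of the linearized parabolic equation to extend $x^*\cdot\nabla u+2T^*\partial_\tau u=0$ to all $\tau<0$), and the resulting over-determined relation and dichotomy ($T^*=0$ versus $T^*\ne0$) then go through; but as written the argument glosses over the restricted range of validity.
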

\begin{proof}
Since $w$ satisfies \eqref{SC1-2},   $(0, 1)$ is a critical point of the $F$-functional. Moreover, the second variation
formula with $\phi\equiv 0$  gives that the second derivative of $F_{x(s), t(s)}(w)$ at $s=0$ along the paths
\[x(s)=sy_{0},\quad t(s)=-(1+sh)\]
is given by
\begin{align}\label{spacetimevariation}
&\frac{\partial^{2}}{\partial s^2} (F_{x(s), t(s)}(w+s\phi))|_{s=0}\notag\\
=&-\frac{1}{2}\int_{\mathbb{R}^{n}}(\nabla w\cdot y_{0})(\nabla w\cdot y_{0})\rho \notag dy-h^{2}\int_{\mathbb{R}^{n}}\left(\frac{1}{p-1} w+\frac{y}{2}\cdot\nabla w\right)^{2}\rho\notag dy.
\end{align}
This expression is clear non-positive. Moreover, the last line vanishes only when
\[h=0,~~~\nabla w\cdot y_{0}\equiv 0\]
 or
 \[\nabla w\cdot y_{0}\equiv 0,\quad \frac{1}{p-1} w+\frac{y}{2}\cdot\nabla w\equiv 0.\]
 If $y_{0}\neq 0, h=0$, then in both cases $w$ is indeed a function in $\mathbb{R}^{n-1}$. By our assumption, this can not happen. If $y_{0}=0, h\neq 0$, then
 \[\frac{\partial^{2}}{\partial s^2} (F_{x(s), t(s)}(w+s\phi))|_{s=0}=0\]
 implies
 \[\frac{1}{p-1} w+\frac{y}{2}\cdot\nabla w\equiv 0.\]
 This is equivalent to  $w$ is $2/(p-1)-$homogeneous. Since we have assumed $w$ is a bounded solution of \eqref{SC1-2}, then $w\equiv 0$, which contradicts our assumption that $w$ is not translation invariant in any direction again. Therefore, we have shown that the second derivative at $(0, 1)$ is strictly negative. In particular,  the function $(x_{0}, t_{0})\rightarrow F_{x_{0}, t_{0}}(w)$ has a strict local maximum at $(0, 1)$. Hence for every $\epsilon>0$ sufficiently small there exists $\delta >0$ such that
\[\max_{(x, t)\in\{ |x|+|\log t|=\epsilon\}}F_{x, t}(w)<\lambda (w)-\delta.\]
For every $(x_{0}, t_{0})$ such that $|x_{0}|+|\log t_{0}|>\epsilon$, it follows from the proof of Lemma \ref{CSP} that for every $s<-1$,
\begin{equation}\label{Mono}
F_{x_{0}, t_{0}}(w)\leq F_{\frac{x_{0}}{\sqrt{t_{0}+1+t_{0}s}}, -\frac{st_{0}}{t_{0}+1+t_{0}s}}(w).
\end{equation}
Since
\[\lim_{s\rightarrow -\infty}\frac{x_{0}}{\sqrt{t_{0}+1+t_{0}s}}=0\]
and
\[\lim_{s\rightarrow-\infty} -\frac{st_{0}}{t_{0}+1+t_{0}s}=-1.\]
Then  there exists $\tilde{s}$ such that
\[\left(\frac{x_{0}}{\sqrt{t_{0}+1+t_{0}s}}, -\frac{st_{0}}{t_{0}+1+t_{0}s}\right)\in \{ |x|+|\log t|=\epsilon\}.\]
By \eqref{Mono}, we know that $F_{x_{0}, t_{0}}(w)\leq\lambda(w)-\delta$. Hence \eqref{gap} holds.
\end{proof}

\subsection{The equivalence of F-stability and entropy-stability}
Our next objective is to prove that if $w$ is a bounded non-constant solution of \eqref{SC1-2} and there exists a positive constant $c$ such that
\begin{equation}\label{lowerbound}
|w|\leq c(1+|y|)^{-\frac{2}{p-1}},\quad\text{in}~\mathbb{R}^{n},
\end{equation}
then $F-$ stability and entropy stability are equivalent for $w$. In the course of the proof, we need a result concerning the first eigenfunction of the operator
 \[L\psi=\Delta\psi-\frac{1}{2} y\cdot\nabla\psi-\frac{1}{p-1}\psi+p|w|^{p-1}\psi.\]
\begin{lemma}\label{lemeigenfunctiondecay}
Assume $w$ is a bounded non-constant solution of \eqref{SC1-2}. Let $\lambda_{1}$ be the first eigenvalue of the eigenvalue problem
\begin{equation}\label{2024-03-08}
\Delta\psi-\frac{1}{2} y\cdot\nabla\psi-\frac{1}{p-1}\psi+p|w|^{p-1}\psi+\lambda\psi=0,\quad\text{in}~\mathbb{R}^{n}.
\end{equation}
 If $f$ is a positive eigenfunction associated to $\lambda_{1}$, then there exists a positive constant $C$ such that
\begin{equation}\label{eigenfunctiondecay}
(1+|y|)^{\frac{2p}{p-1}}f+(1+|y|)^{\frac{2p}{p-1}+1}|\nabla f|\leq C,\quad\text{in}~\mathbb{R}^{n}.
\end{equation}
\end{lemma}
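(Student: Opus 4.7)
The plan is to combine a strict spectral gap below $-1$ with a polynomial barrier in the exterior region, together with standard rescaled gradient estimates. First, elliptic regularity applied to \eqref{2024-03-08}—whose coefficients are bounded because $w\in L^\infty$—shows $f$ is smooth, and conjugating by $e^{-|y|^{2}/8}$ turns the eigenvalue problem into a Schrödinger equation with a harmonic-oscillator confining potential plus a bounded perturbation, whose positive ground state is pointwise bounded; hence $f\in L^\infty(\mathbb{R}^n)$. Next, Lemma \ref{lemeig} gives $L\Lambda(w)+\Lambda(w)=0$, so $-1$ lies in the spectrum of $L$; since $w$ is nonconstant, Proposition \ref{ChCS} forces $\Lambda(w)$ to change sign, and item (4) of Corollary \ref{cortwoeigenfunctions} therefore rules out $\lambda_1=-1$ and yields the strict inequality $\lambda_1<-1$.

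The heart of the proof is the observation that $\tilde f(y):=(1+|y|)^{-2p/(p-1)}$ is a supersolution of $(L+\lambda_1)\phi=0$ outside a large ball. A direct computation using $-\tfrac{y}{2}\cdot\nabla\tilde f = \tfrac{p}{p-1}\tilde f + O(|y|^{-2})\tilde f$ for $|y|$ large, together with $\Delta\tilde f = O(|y|^{-2})\tilde f$, yields
\begin{equation*}
(L+\lambda_1)\tilde f = \Bigl[\tfrac{p}{p-1}-\tfrac{1}{p-1} + \lambda_1 + p|w|^{p-1} + O(|y|^{-2})\Bigr]\tilde f = \bigl[1+\lambda_1 + p|w|^{p-1} + O(|y|^{-2})\bigr]\tilde f.
\end{equation*}
Since $1+\lambda_1<0$, this is negative for $|y|$ large provided $p|w|^{p-1}<|1+\lambda_1|$ at infinity. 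I would obtain the needed decay of $w$ at infinity by a blow-down/translation argument: using the uniform regularity of Lemma \ref{lempre3.1}, any sequence $y_k\to\infty$ along which $w(y_k)$ stays away from $0$ produces, after extracting a subsequence, a limit solving a limiting equation that must be constant $0$ or $\pm\kappa$ by the classification of bounded entire solutions (appealing to Theorem \ref{theoremGiga-Kohn1985} or the Merle–Zaag classification for ancient Fujita solutions), and the second possibility is excluded by the strong maximum principle combined with the fact that $w$ is not identically constant.

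With the supersolution in hand, I compare $f$ to $C\tilde f$ by applying the maximum principle to $u:=(f-C\tilde f)/f$. Since $(L+\lambda_1)f=0$, the product rule gives $(L+\lambda_1)(fu)=f\,\widetilde{\mathcal L}u$ for the pure second-order operator $\widetilde{\mathcal L}u:=\Delta u-\tfrac{y}{2}\cdot\nabla u+2\nabla\log f\cdot\nabla u$; the supersolution property of $\tilde f$ becomes $\widetilde{\mathcal L}u\geq 0$ on $\{|y|>R\}$, so that $u$ attains its supremum on the boundary. Choosing $C$ large enough to force $u\leq 0$ on $\{|y|=R\}$ and using the a priori decay of $f$ at infinity (at least Gaussian from the conjugated Schrödinger picture, which dominates the polynomial $\tilde f$) gives $u\leq 0$ in the exterior, i.e., $f\leq C\tilde f$ there; the interior bound is trivial from $f\in L^\infty$. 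The gradient bound is standard: rescaling $\hat f(z):=|y_0|^{2p/(p-1)}f(y_0+z)$ on $\{|z|<|y_0|/2\}$ produces a bounded solution of an elliptic equation with bounded coefficients, and interior $C^1$ Schauder estimates unrescale to $|\nabla f(y_0)|\leq C|y_0|^{-2p/(p-1)-1}$. The main obstacle is the intermediate decay statement for $w$, since without it the positive term $p|w|^{p-1}$ cannot be absorbed into the strict spectral gap $|1+\lambda_1|$ and the supersolution computation fails.
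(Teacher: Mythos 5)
Your high-level strategy is the same as the paper's: derive the strict spectral gap $\lambda_1<-1$ from Lemma \ref{lemeig}, Proposition \ref{ChCS} and Corollary \ref{cortwoeigenfunctions}(4), and compare $f$ to the exterior barrier $(1+|y|)^{-2p/(p-1)}$, which is a supersolution of $L+\lambda_1$ outside a large ball precisely because $1+\lambda_1<0$ absorbs $p|w|^{p-1}$ once $w$ is small. However, three of your steps do not close.

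First, you try to \emph{derive} the decay $|w|\lesssim(1+|y|)^{-2/(p-1)}$ from non-constancy. This cannot work in the supercritical range, and the argument you sketch fails: translating by $y_k\to\infty$ introduces the unbounded drift $\tfrac{y_k}{2}\cdot\nabla$, so there is no elliptic ``limiting equation''; passing to the parabolic picture produces bounded ancient solutions, but the Merle--Zaag classification is only available for $1<p<p_s(n)$, whereas this lemma lives in $p>(n+2)/(n-2)$. Moreover the strong maximum principle does not preclude a nonconstant $w$ from approaching $\pm\kappa$ as $|y|\to\infty$, since that limit is attained at infinity, not at an interior point. In fact the decay \eqref{lowerbound} is an extra hypothesis (it is stated at the start of this subsection and is used in the paper's proof); you correctly flagged it as ``the main obstacle,'' but it has to be assumed, not proved here.

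Second, your maximum-principle comparison uses a claimed Gaussian decay of $f$ at infinity to handle the boundary at infinity for $u=(f-C\tilde f)/f$. But the conjugated Schr\"odinger picture only yields $f\in L^2_\omega$, i.e.\ $\int f^2 e^{-|y|^2/4}\,dy<\infty$, which does not give pointwise decay of $f$ faster than the polynomial barrier $\tilde f$; indeed the whole point of the lemma is to prove $f$ decays polynomially, so assuming faster decay is circular. The paper avoids this by solving Dirichlet problems on the annuli $B_{R+k}\setminus B_R$ with data $f$ on the inner sphere and $M|y|^{-2p/(p-1)}$ on the outer sphere, extracting a limit $g_\infty$ satisfying $0\leq g_\infty\leq M|y|^{-2p/(p-1)}$, and then proving $g_\infty=f$ by a weighted $H^1_\omega$ uniqueness argument (testing $h=f-g_\infty$ against $h\phi_k^2\rho$ with cutoffs and using the sign condition \eqref{supersolution} together with $\int|\nabla h|^2\rho<\infty$).

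Third, the elliptic rescaling for the gradient bound does not produce the claimed power. As written, $\hat f(z)=|y_0|^{2p/(p-1)}f(y_0+z)$ on $\{|z|<|y_0|/2\}$ satisfies an equation whose drift coefficient $\tfrac{y_0+z}{2}$ is of size $|y_0|$, hence \emph{not} bounded, so interior Schauder does not apply. Rescaling space by the drift-adapted length $|y_0|^{-1}$, i.e.\ $\hat f(z)=|y_0|^{2p/(p-1)}f(y_0+|y_0|^{-1}z)$ on $\{|z|<1\}$, does give bounded coefficients, but unrescaling Schauder then yields only $|\nabla f(y_0)|\lesssim|y_0|^{1-2p/(p-1)}$, which is weaker than the claimed $|y_0|^{-1-2p/(p-1)}$ by a factor of $|y_0|^2$. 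This loss is intrinsic to the elliptic rescaling: a derivative gains one power of the drift-adapted length $|y_0|^{-1}$, but the decay rate changes over that scale by a factor of $|y_0|^{-2}$. The paper instead uses the parabolic self-similar change of variables $\tilde f(x,t)=(-t)^{\lambda_1-1/(p-1)}f(x/\sqrt{-t})$, which solves $\partial_t\tilde f=\Delta\tilde f+p|u|^{p-1}\tilde f$ with coefficient $p|u|^{p-1}$ bounded on an annulus in $x$ thanks to \eqref{lowerbound}; parabolic interior estimates there recover the stated rate for $\nabla f$ and $\nabla^2 f$.
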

\begin{proof}
Let $\lambda_{1}$ be the smallest eigenvalue of \eqref{2024-03-08} and let $f$ be a positive eigenfunction associated to $\lambda_{1}$. Then $f$ satisfies
\begin{equation}\label{eigenfunction}
\Delta f-\frac{y}{2}\cdot\nabla f-\frac{1}{p-1}f+p|w|^{p-1}f+\lambda_{1}f=0,\quad\text{in}~\mathbb{R}^{n}.
\end{equation}
Since $w$ is a bounded non-constant solution of \eqref{SC1-2},   $\lambda_{1}<1$. Hence 
  \eqref{lowerbound} implies there exists a positive constant $R$ such that that
\begin{equation}\label{condition1}
-\frac{2p}{p-1}(n-2-\frac{2p}{p-1})|y|^{-2}+1+p|w|^{p-1}+\lambda_{1}<0,\quad\text{in}~\mathbb{R}^{n}\backslash B_{R}(0).
\end{equation}
By taking $R$ large enough, we may  also assume that
\begin{equation}\label{supersolution}
-\frac{1}{p-1}+p|w|^{p-1}+\lambda_{1}<0,\quad\text{in}~|y|\geq B_{R}.
\end{equation}
By standard elliptic regularity theory,  there exists a positive constant $M$ such that
\begin{equation}\label{innerestimate}
|f|< MR^{-\frac{2p}{p-1}},\quad\text{in}~B_{R}.
\end{equation}

For any $k\geq 1$, consider the Dirichlet problem
\begin{equation}\label{Dirichletproblem}
\left\{\begin{array}{lll}
\Delta g_{k}-\frac{y}{2}\cdot\nabla g_{k}-\frac{1}{p-1}g_{k}+p|w|^{p-1}g_{k}+\lambda_{1}g_{k}=0,&\quad\text{in}~B_{R+k}\backslash B_{R},\\
g_{k}=f, &\quad\text{on}~\partial B_{R},\\
g_{k}=M|y|^{-\frac{2p}{p-1}},&\quad\text{on}~\partial B_{R+k},
\end{array}
\right.
\end{equation}
Since we have assumed that \eqref{supersolution} holds,  the zeroth order term of the second order elliptic equation in  \eqref{Dirichletproblem} has a negative sign. Notice that
\[(L+\lambda_{1})|y|^{-\frac{2p}{p-1}}=[-\frac{2p}{p-1}(n-2-\frac{2p}{p-1})|y|^{-2}+1+p|w|^{p-1}+\lambda_{1}]|y|^{-\frac{2p}{p-1}}.\]
Thus, it follows from \eqref{condition1}, \cite[Theorem 8.3]{Gi-Tr} and the maximum principle that for any $k\geq 1$, \eqref{Dirichletproblem} has a unique smooth solution $g_{k}$, which is bounded above by $M|y|^{-(2p)/(p+1)}$. By using the maximum principle again, $g_{k}$ is bounded below by $0$. Letting $k\rightarrow\infty$, we get from the Arzel\'{a}-Ascoli theorem that $\{g_{k}\}$ converges to some function $g_{\infty}$ in $C^{2}_{loc}(\mathbb{R}^{n}\backslash B_{R})$, where
\begin{equation}\label{limitDirichletproblem}
\left\{\begin{array}{lll}
\Delta g_{\infty}-\frac{y}{2} \cdot\nabla g_{\infty}-\frac{1}{p-1}g_{\infty}+p|w|^{p-1}g_{\infty}+\lambda_{1}g_{\infty}=0,&\quad\text{in}~\mathbb{R}^{n}\backslash B_{R},\\
g_{\infty}=f, &\quad\text{on}~\partial B_{R}.
\end{array}
\right.
\end{equation}
 Moreover,
\begin{equation}\label{limitdecay}
|g_{\infty}|\leq M|y|^{-\frac{2p}{p-1}},\quad\text{in}~\mathbb{R}^{n}\backslash B_{R}.
\end{equation}
Multiplying  both sides of \eqref{limitDirichletproblem} and integrating over $\mathbb{R}^{n}\backslash B_{R}$, we  get 
\begin{equation}
\int_{\mathbb{R}^{n}\backslash B_{R}}|\nabla g_{\infty}|^{2}\rho dy<\infty.
\end{equation}

We claim that $g_{\infty}=f$.

Indeed, by denoting $h:=f-g_{\infty}$, then $h$ satisfies
\begin{equation}\label{newlinearequation}
\left\{\begin{array}{lll}
\Delta h-\frac{y}{2}\cdot\nabla h-\frac{1}{p-1}h+p|w|^{p-1}h+\lambda_{1}h=0,&\quad\text{in}~\mathbb{R}^{n}\backslash B_{R},\\
h=0,&\quad\text{on}~\partial B_{R}.
\end{array}
\right.
\end{equation}
Let $\phi_{k}$ be a smooth cutoff function such that $\phi_{k}=1$ in $B_{k}$, $\phi_{k}=0$ outsider $B_{2k}$ and $|\nabla\phi_{k}|\leq C/k$. Multiplying both sides of \eqref{newlinearequation} by $h\phi_{k}^{2}\rho$ and integrating by parts, we get
\begin{equation}
\begin{aligned}
0=&-\int_{\mathbb{R}^{n}\backslash B_{R}}|\nabla h|^{2}\phi_{k}^{2}\rho dy-\frac{1}{p-1}\int_{\mathbb{R}^{n}\backslash B_{R}}h^{2}\phi_{k}^{2}\rho dy\\
&+p\int_{\mathbb{R}^{n}\backslash B_{R}}|w|^{p-1} h^{2}\phi_{k}^{2}\rho dy+\lambda_{1}\int_{\mathbb{R}^{n}\backslash B_{R}}h^{2}\phi_{k}^{2}\rho dy\\
&-2\int_{\mathbb{R}^{n}\backslash B_{R}}h\phi_{k}\nabla h\cdot\nabla\phi_{k}\rho dy.
\end{aligned}
\end{equation}
Recall that we have assumes that \eqref{supersolution} holds, so
\[\int_{\mathbb{R}^{n}\backslash B_{R}}|\nabla h|^{2}\phi_{k}^{2}\rho dy\leq -2\int_{\mathbb{R}^{n}\backslash B_{R}}h\phi_{k}\nabla h\cdot\nabla\phi_{k}\rho dy.\]
Letting $k\rightarrow\infty$, we conclude that
\[\int_{\mathbb{R}^{n}\backslash B_{R}}|\nabla h|^{2}\rho dy=0.\]
Hence $h=0$ and the claim is proved. 

By \eqref{innerestimate} and \eqref{limitdecay},   there exists a positive constant $C$ such that
\begin{equation}\label{eigenfunctionestimate}
|f|\leq C(1+|y|)^{-\frac{2p}{p-1}},\quad\text{in}~\mathbb{R}^{n}.
\end{equation}

Let
\[u(x, t)=(-t)^{-\frac{1}{p-1}}w(\frac{x}{\sqrt{-t}}),~~~~\tilde{f}(x, t)=(-t)^{\lambda_{1}-\frac{1}{p-1}}f(\frac{x}{\sqrt{-t}}).\]
Then $\tilde{f}$ satisfies
\[\partial_{t}\tilde{f}=\Delta \tilde{f}+p|u|^{p-1}\tilde{f},\quad\text{in}~\mathbb{R}^{n}\times (-\infty, 0).\]
Combining \eqref{lowerbound} and \eqref{eigenfunctionestimate}, we see there exists a positive constant $C$ depending only on $n, p$, $w$ and $f$ such that
\begin{equation}\label{higherordrestimate}
|\nabla\tilde{f}(x, t)|+|\nabla^{2}\tilde{f}(x, t)|\leq C(-t)^{\lambda_{1}},\quad\text{in}~B_{1}\times (-2, 0).
\end{equation}
This is equivalent to
\begin{equation}\label{higherordrestimate'}
(1+|y|)^{\frac{2p}{p-1}+1}|\nabla f|+(1+|y|)^{\frac{2p}{p-1}+2}|\nabla^{2}f|\leq C,\quad\text{in}~\mathbb{R}^{n}\backslash B_{1}.
\end{equation}
By \eqref{eigenfunctionestimate} and \eqref{higherordrestimate'}, we see \eqref{eigenfunctiondecay} holds.
\end{proof}
Now we can prove the main result in this section.
\begin{theorem}\label{FSEES}
Assume $w$ is a bounded non-constant  solution of \eqref{SC1-2}. If there exists a positive constant $C$ such that
\begin{equation*}
|w|\leq C(1+|y|)^{-\frac{2}{p-1}},\quad\text{in}~\mathbb{R}^{n},
\end{equation*}
then there is a variation $w_{s}$ with $w_{0}=w$ such that
\[\lambda(w_{s})<\lambda(w)\]
for all $s\neq 0$. In particular, $w$ is not entropy stable.
\end{theorem}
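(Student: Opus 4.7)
The plan is to take the variation $w_s = w + sf$, where $f$ is the positive first eigenfunction of the linearized operator $L$, and prove $\lambda(w_s) < \lambda(w)$ for $|s|$ small and nonzero via a Taylor expansion of $s \mapsto \lambda(w_s)$ at $s = 0$. First, I rule out that $w$ is translation invariant in any direction: invariance in some direction $y_0 \neq 0$ combined with $|w| \leq C(1+|y|)^{-2/(p-1)}$ would force $w \equiv 0$, contradicting the non-constancy of $w$. Hence Lemma \ref{gapestimate} applies, so $\lambda(w) = F_{0,-1}(w)$ is a strict global maximum of $(x_0,t_0) \mapsto F_{x_0,t_0}(w)$ with a uniform gap $\delta > 0$ on the complement of any fixed small neighborhood of $(0,-1)$, and the Hessian at $(0,-1)$ is strictly negative definite (as in the proof of Lemma \ref{gapestimate}). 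As in the proof of Theorem \ref{thestable}, the smallest eigenvalue $\lambda_1$ of the Rayleigh quotient in \eqref{thevariationalcharacterization} satisfies $\lambda_1 < -1$ and admits a positive eigenfunction $f$ with $Lf = -\lambda_1 f$; by Lemma \ref{lemeigenfunctiondecay}, $f$, $\nabla f$, $\nabla^2 f$ decay polynomially at infinity.

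Next I reduce the computation of $\lambda(w_s)$ to a neighborhood of $(0,-1)$. The pointwise decay of $w$ and of $f$ and its derivatives makes $(s,x_0,t_0) \mapsto F_{x_0,t_0}(w_s)$ continuous in $s$ uniformly in $(x_0,t_0) \in \mathbb{R}^n \times (-\infty,0)$, so together with the uniform gap $\delta$, for $|s|$ small we get $F_{x_0,t_0}(w_s) \leq \lambda(w) - \delta/2$ outside the chosen small neighborhood of $(0,-1)$; the supremum defining $\lambda(w_s)$ is therefore attained inside that neighborhood. The non-degeneracy of the Hessian persists for small $s$, so the implicit function theorem applied to the first-order conditions $\nabla_{(x_0,t_0)} F_{x_0,t_0}(w_s) = 0$ produces a unique smooth maximizer $(x_0(s),t_0(s))$ near $(0,-1)$ with $(x_0(0),t_0(0)) = (0,-1)$, so that $M(s) := \lambda(w_s) = F_{x_0(s),t_0(s)}(w_s)$ is smooth.

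To finish, I Taylor expand $M$ at $s = 0$. Since $w$ is a critical point of $F_{0,-1}$ by Proposition \ref{critcalpoints}, the envelope theorem gives $M'(0) = 0$. The second-derivative $M''(0)$ equals the second variation formula of Theorem \ref{secondvariation} with $\phi = f$, $y_0 = x_0'(0)$, $h = t_0'(0)$. Because the $L$-eigenvalues $-\lambda_1 > 1$ of $f$, $1$ of $\Lambda(w)$, and $1/2$ of each $w_i$ are pairwise distinct, $f$ is $L^2_\omega$-orthogonal to $\Lambda(w)$ and to every $w_i$, so the two cross terms $h \int \Lambda(w) f \rho\, dy$ and $\int f (\nabla w \cdot y_0) \rho\, dy$ in \eqref{secondvariation1} vanish, leaving
\[M''(0) = \lambda_1 \int_{\mathbb{R}^n} f^2 \rho\, dy - \tfrac{1}{2}\int_{\mathbb{R}^n} (\nabla w \cdot y_0)^2 \rho\, dy - h^2 \int_{\mathbb{R}^n} \big(\tfrac{1}{p-1}w + \tfrac{y}{2}\cdot \nabla w\big)^2 \rho\, dy \leq \lambda_1 \int_{\mathbb{R}^n} f^2 \rho\, dy < 0.\]
Therefore $M(s) = \lambda(w) + \tfrac{s^2}{2} M''(0) + o(s^2) < \lambda(w)$ for all sufficiently small $s \neq 0$, giving $\lambda(w_s) < \lambda(w)$ and the entropy-instability of $w$.

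The main obstacle is the uniform continuity assertion in the second paragraph: one must bound $F_{x_0,t_0}(w_s) - F_{x_0,t_0}(w)$ in $s$ uniformly over the entire half-space, including the regimes $|x_0| \to \infty$, $t_0 \to 0^-$, and $t_0 \to -\infty$. This uniformity rests essentially on the polynomial decay hypothesis on $w$ and the matching polynomial decay of $f, \nabla f, \nabla^2 f$ from Lemma \ref{lemeigenfunctiondecay}, which together guarantee that the Gaussian-weighted integrals in the $F$-functional remain uniformly bounded and equicontinuous in $s$.
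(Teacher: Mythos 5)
Your proposal is correct and rests on the same three pillars as the paper's proof: the variation $w_s=w+sf$ with $f$ the positive ground state of $L$; the far-field gap from Lemma \ref{gapestimate} combined with the uniform (in $(x_0,t_0)$) control of $s\mapsto F_{x_0,t_0}(w_s)$; and the strict negativity of the second variation coming from $\lambda_1<-1$ together with $L^2_\omega$-orthogonality of $f$ to $\Lambda(w)$ and to the $w_i$'s. Where you differ is in the local bookkeeping. The paper's Steps 1--7 are entirely hands-on: Step 1 computes the Hessian of $G(x_0,t_0,s)$ in all three variables and shows it is strictly negative definite; Step 3 bounds $|\partial_s G|$ on compact sets; Steps 4--6 give explicit one-sided Lipschitz bounds $G(x_0,t_0,s)\le G(x_0,t_0,0)+Cs$ in the three far-field regimes $|x_0|\gg M$, $|x_0|\le M,\,|t_0|\gg M$, and $t_0\to 0^-$; Step 7 then tiles the half-space. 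You instead invoke the implicit function theorem to produce a smooth maximizer curve $(x_0(s),t_0(s))$ and Taylor-expand the scalar function $M(s)=F_{x_0(s),t_0(s)}(w_s)$, using the envelope identity $M'(0)=0$ and the reduced second variation $M''(0)\le\lambda_1\int f^2\rho\,dy<0$. The IFT route is shorter and is closer in spirit to the mean-curvature-flow template; the paper's route avoids invoking $C^2$-dependence of $F$ on $(x_0,t_0,s)$ and is more robust. To make your version fully rigorous you should (i) verify that $F_{x_0,t_0}(w_s)$ is $C^2$ in $(x_0,t_0,s)$ near $(0,-1,0)$ by differentiating under the integral sign, which the polynomial decay in Lemma \ref{gradientestimate} and Lemma \ref{lemeigenfunctiondecay} permits, and (ii) state explicitly that after shrinking the neighborhood the IFT critical point is the unique interior critical point, hence it is the global maximizer once the far-field values have been pushed below $\lambda(w)-\delta/2$; these are the content of the paper's Steps 1, 3, 7 in your language. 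Your "uniform continuity in $s$" claim, which you flag as the main obstacle, is indeed what Steps 3--6 of the paper establish (in the slightly stronger one-sided Lipschitz form), including the $t_0\to 0^-$ regime where both $F_{x_0,t_0}(w_s)$ and $F_{x_0,t_0}(w)$ vanish at the rate $(-t_0)^{2/(p-1)}$, so the claim is justified.
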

\begin{proof}
Let us take a one-parameter variation $w_{s}=w+sf$ for $s\in[-2\epsilon, 2\epsilon]$, where $f$ is the first eigenfunction of the operator
\[L\psi=\Delta\psi-\frac{y}{2}\cdot\nabla\psi-\frac{1}{p-1}\psi+p|w|^{p-1}\psi.\]
Without loss of generality, we may assume $f>0$  in $\mathbb{R}^{n}$. By  the proof of Theorem \ref{thestable}, we know that for any $x(s)$ and $t(s)$ with $x(0)=0$ and $t(0)=-1$,
\begin{equation}\label{FSEES1}
\frac{\partial^{2}}{\partial s^2} (F_{x(s), t(s)}(w_{s}))|_{s=0}<0.
\end{equation}
We will use this to prove that $w$ is also not entropy stable. In order to get this, we define a function $G:\mathbb{R}^{n}\times\mathbb{R}^{-}\times[-2\epsilon, 2\epsilon]\rightarrow\mathbb{R}$ by
\begin{equation}\label{FEEES2}
G(x_{0}, t_{0}, s)=F_{x_{0}, t_{0}}(w_{s}).
\end{equation}
We will show that there exists some $\epsilon_{1}>0$ such that if $s\neq 0$ and $|s|\leq \epsilon_{1}$, then
\begin{equation}\label{FEEES3}
\lambda(w_{s})\equiv \sup\limits_{x_{0}\in\mathbb{R}^{n}, t_{0}\in (-\infty, 0)}G(x_{0}, t_{0}, s)<G(0, -1, 0)=\lambda(w).
\end{equation}
This will give the theorem with $w_{s}$ for any $s\neq 0$ in $(-\epsilon_{1}, \epsilon_{1})$. 

The proof of \eqref{FEEES3} will be divided into the following six steps.\\
\textbf{Step 1:} $G$ has a strict local maximum at $(0, -1, 0)$.

Since $w$ is a solution of the equation \eqref{SC1}, it follows from Proposition \ref{critcalpoints} that $\nabla G$ vanishes at $(0, -1, 0)$.
Given $y_{0}\in\mathbb{R}^{n}, a\in\mathbb{R}$ and $b\in\mathbb{R}\backslash\{0\}$, the second derivative of
$G(sy_{0}, -(1+as), bs)$ at $s=0$ is just
\[\frac{\partial}{\partial_{ss}}G(sy_{0}, -(1+as), bs)|_{s=0}=b^{2}\frac{\partial}{\partial s^2} (F_{x(s), t(s)}(w_{s}))|_{s=0}\]
with
\[x(s)=s\frac{y_{0}}{b},\quad t(s)=-\left(1+\frac{a}{b}s\right).\]
Thus the second derivative of $G(sy, 1+as, 0)$ at $s=0$ is given by
\begin{align}
&\frac{\partial}{\partial_{ss}}G(sy_{0}, -(1+as), bs)|_{s=0}\notag\\
=&-\frac{1}{2b^{2}}\int_{\mathbb{R}^{n}}(\nabla w\cdot y_{0})(\nabla w\cdot y_{0})\rho dy-\frac{a^{2}}{b^{2}}\int_{\mathbb{R}^{n}}\left(\frac{1}{p-1}w+\frac{y}{2}\cdot \nabla w\right)^{2}\rho\notag dy.
\end{align}
Similar to the reasoning used in the proof of Lemma \ref{gapestimate}, it is negative.  Thus we conclude that the Hessian of $G$ at $(0, 1, 0)$ is negative definite. It follows that $G$ has a strict local maximum at $(0, 1, 0)$. In particular, there exists $\epsilon_{2}\in (0, \epsilon)$ such  that
\begin{equation}\label{localestimate}
G(x_{0}, t_{0}, s)<G(0, -1, 0)
\end{equation}
provided that $0<x_{0}^{2}+(\log t_{0})^{2}+s^{2}<\epsilon_{2}^{2}.$

\textbf{Step 2:} $G(x_{0}, t_{0}, 0)$ has a strict global maximum at $(0, -1)$. Moreover, there exists a positive constant $\delta>0$  such  that
\begin{equation}\label{step2.1}
G(x_{0}, t_{0}, 0)<G(0, -1, 0)-\delta
\end{equation}
for all $x_{0}, t_{0}$ with
$\epsilon_{2}^{2}/4<x_{0}^{2}+(\log t_{0})^{2}.$

Indeed, we get from the definition of $G(x_{0}, t_{0}, s)$ that $G(x_{0}, t_{0}, 0)=F_{x_{0}, t_{0}}(w)$ and $G(0, -1, 0)=F_{0, -1}(w)=\lambda(w)$. Thus \eqref{step2.1} is a direct consequence of Lemma \ref{gapestimate}.

\textbf{Step 3:} $|\partial_{s}G|$ is  bounded on compact sets of $\mathbb{R}^{n}\times\mathbb{R}^{-}\times[-2\epsilon, 2\epsilon]$.

By the definition of $G$ and the first variation formula, we have
\begin{equation}
\begin{aligned}
\partial_{s}G(x_{0}, t_{0}, s)=&(-t_{0})^{\frac{p+1}{p-1}}\int_{\mathbb{R}^{n}}(\nabla w\cdot\nabla f)G(y-x_{0}, t_{0})dy\\
&-(-t_{0})^{\frac{p+1}{p-1}}\int_{\mathbb{R}^{n}}|w+sf|^{p-1}(w+sf)fG(y-x_{0}, t_{0})dy\\
&+\frac{1}{(p-1)}(-t_{0})^{\frac{2}{p-1}}\int_{\mathbb{R}^{n}}fwG(y-x_{0}, t_{0})dy.
\end{aligned}
\end{equation}
It is clear that $\partial_{s}G$ is continuous in all three variables $x_{0}, t_{0}$ and $s$. We conclude that $|\partial_{s}G|$ is  bounded on compact subsets.

\textbf{Step 4:} Given $M\geq 1$, there exists  a positive constant $C$ depending only on $n, p, w, M$ such that if $|x_{0}|\gg M$, then
\[G(x_{0}, t_{0}, s)\leq G(x_{0}, t_{0}, 0)+Cs.\]

 By the definition, we have
\begin{align}\label{defineGx0t0s}
G(x_{0}, t_{0}, s)&=\frac{1}{2}(-t_{0})^{\frac{p+1}{p-1}}\int_{\mathbb{R}^{n}}|\nabla w+sf|^{2}G(y-x_{0}, t_{0})dy\notag\\
&-\frac{1}{p+1}(-t_{0})^{\frac{p+1}{p-1}}\int_{\mathbb{R}^{n}}|w+sf|^{p+1}G(y-x_{0}, t_{0})dy\notag\\
&+\frac{1}{2(p-1)}(-t_{0})^{\frac{2}{p-1}}\int_{\mathbb{R}^{n}}(w+sf)^{2}G(y-x_{0}, t_{0})dy\notag\\
=&\frac{1}{2}(-t_{0})^{\frac{p+1}{p-1}}\int_{\mathbb{R}^{n}}|\nabla w|^{2}G(y-x_{0}, t_{0})dy\notag\\
&-\frac{1}{p+1}(-t_{0})^{\frac{p+1}{p-1}}\int_{\mathbb{R}^{n}}|w|^{p+1}G(y-x_{0}, t_{0})dy\notag\\
&+\frac{1}{2(p-1)}(-t_{0})^{\frac{2}{p-1}}\int_{\mathbb{R}^{n}}w^{2}G(y-x_{0}, t_{0})dy\\
&+s(-t_{0})^{\frac{p+1}{p-1}}\int_{\mathbb{R}^{n}}\nabla w\cdot\nabla f G(y-x_{0}, t_{0})dy\notag\\
&-s(-t_{0})^{\frac{p+1}{p-1}}\int_{\mathbb{R}^{n}}|w|^{p-1}wfG(y-x_{0}, t_{0})dy\notag\\
&+\frac{s}{p-1}(-t_{0})^{\frac{2}{p-1}}\int_{\mathbb{R}^{n}}w fG(y-x_{0}, t_{0})dy\notag\\
&+\frac{s^{2}}{2}(-t_{0})^{\frac{p+1}{p-1}}\int_{\mathbb{R}^{n}}|\nabla f|^{2}G(y-x_{0}, t_{0})dy\notag\\
&-\frac{ps^{2}}{2}(-t_{0})^{\frac{p+1}{p-1}}\int_{\mathbb{R}^{n}}|w|^{p-1}f^{2}G(y-x_{0}, t_{0})dy\notag\\
&+\frac{s^{2}}{2(p-1)}(-t_{0})^{\frac{2}{p-1}}\int_{\mathbb{R}^{n}}f^{2}G(y-x_{0}, t_{0})dy\notag\\
&-(-t_{0})^{\frac{p+1}{p-1}}\int_{\mathbb{R}^{n}}F(s, w, f)G(y-x_{0}, t_{0})dy,\notag
\end{align}
where
\[F(s, w, f)=\frac{1}{p+1}[|w+sf|^{p+1}-|w|^{p+1}]-s|w|^{p-1}wf-\frac{ps^{2}}{2}|w|^{p-1}f^{2}.\]
We fix a positive constant $M$ such that $M\gg 1$. For any $x_{0}$ with $|x_{0}|\gg M$, consider the changing of variable $y=x_{0}+\sqrt{-t_{0}}z$. Then
\begin{align}
&(-t_{0})^{\frac{p+1}{p-1}}\int_{\mathbb{R}^{n}}\nabla w\cdot\nabla fG(y-x_{0}, t_{0})dy\notag\\
=&(-t_{0})^{\frac{p+1}{p-1}}\int_{\mathbb{R}^{n}}\nabla w\cdot\nabla f(x_{0}+\sqrt{-t_{0}}z)\rho(z)dz\\
\leq & I_{1}+I_{2}+I_{3}\notag
\end{align}
with
\begin{equation}
\begin{aligned}
I_{1}=&(-t_{0})^{\frac{p+1}{p-1}}\int_{\left\{|z|\leq \frac{|x_{0}|}{2\sqrt{-t_{0}}}\right\}}|\nabla w||\nabla f|(x_{0}+\sqrt{-t_{0}}z)\rho(z)dz,\\
I_{2}=&(-t_{0})^{\frac{p+1}{p-1}}\int_{\left\{\frac{|x_{0}|}{2\sqrt{-t_{0}}}\leq|z|\leq \frac{3|x_{0}|}{2\sqrt{-t_{0}}}\right\}}|\nabla w||\nabla f|(x_{0}+\sqrt{-t_{0}}z)\rho(z)dz,\\
I_{3}=&(-t_{0})^{\frac{p+1}{p-1}}\int_{\left\{|z|\geq \frac{3|x_{0}|}{2\sqrt{-t_{0}}}\right\}}|\nabla w||\nabla f|(x_{0}+\sqrt{-t_{0}}z)\rho(z)dz.
\end{aligned}
\end{equation}

First, we estimate the term $I_{3}$. If $|z|\geq (3|x_{0}|)/(2\sqrt{-t_{0}})$, then
\begin{equation}\label{domainI3}
|x_{0}+\sqrt{-t_{0}z}|\geq \sqrt{-t_{0}}|z|-|x_{0}|\geq \sqrt{-t_{0}}|z|-\frac{2}{3}\sqrt{-t_{0}}|z|\geq\frac{1}{3}\sqrt{-t_{0}}|z|.
\end{equation}
Therefore,
\[|x_{0}+\sqrt{-t_{0}z}|\geq\frac{|x_{0}|}{2}\geq \frac{M}{2}.\]
By \eqref{lowerbound}, Lemma \ref{gradientestimate} and Lemma \ref{lemeigenfunctiondecay}, there exists a positive constant $C$ such that
\[\begin{aligned}
I_{3}=&(-t_{0})^{\frac{p+1}{p-1}}\int_{\left\{|z|\geq \frac{3|x_{0}|}{2\sqrt{-t_{0}}}\right\}}|\nabla w||\nabla f|(x_{0}+\sqrt{-t_{0}}z)\rho(z)dz\\
\leq& C(-t_{0})^{\frac{p+1}{p-1}}\int_{\left\{|z|\geq \frac{3|x_{0}|}{2\sqrt{-t_{0}}}\right\}}|x_{0}+\sqrt{-t_{0}}z|^{-\frac{4}{p-1}-2}\rho(z)dz.
\end{aligned}\]
Taking \eqref{domainI3} into account, we see there exists a positive constant $C$ depending only on $n, p$ and $w$ such that
\begin{equation}\label{estimatei3}
\begin{aligned}
I_{3}\leq& C\int_{\left\{|z|\geq \frac{3|x_{0}|}{2\sqrt{-t_{0}}}\right\}}|z|^{-\frac{4}{p-1}-2}\rho(z)dz\\
\leq&C\int_{\mathbb{R}^{n}}|z|^{-\frac{4}{p-1}-2}\rho(z)dz\leq C.
\end{aligned}
\end{equation}
Here we have applied the fact that $p>(n+2)/(n-2)$.

For the term $I_{1}$, we divide the estimate  into two cases. If $ |x_{0}|/\sqrt{-t_{0}}\geq M$, then
\begin{equation}\label{estimatei1}
I_{1}\leq C(-t_{0})^{\frac{p+1}{p-1}}|x_{0}|^{-\frac{4}{p-1}-2}\leq C.
\end{equation}
On the other hand, if $|x_{0}|/\sqrt{-t_{0}}\leq M$, then
\begin{equation}\label{estimatei1'}
I_{1}\leq C(-t_{0})^{\frac{p+1}{p-1}}|x_{0}|^{-\frac{4}{p-1}-2}\left(\frac{|x_{0}|}{\sqrt{-t_{0}}}\right)^{n}\leq C.
\end{equation}

 Finally, we  estimate $I_{2}$ as follows,
\begin{align}
I_{2}=&(-t_{0})^{\frac{p+1}{p-1}}\int_{\left\{\frac{|x_{0}|}{2\sqrt{-t_{0}}}\leq|z|\leq \frac{3|x_{0}|}{2\sqrt{-t_{0}}}\right\}}|\nabla w||\nabla f|(x_{0}+\sqrt{-t_{0}}z)\rho(z)dz\notag\\
\leq &C(-t_{0})^{\frac{p+1}{p-1}}e^{\frac{|x_{0}|^{2}}{16t_{0}}}\int_{\left\{\frac{|x_{0}|}{2\sqrt{-t_{0}}}\leq|z|\leq \frac{3|x_{0}|}{2\sqrt{-t_{0}}}\right\}}|\nabla w||\nabla f|(x_{0}+\sqrt{-t_{0}}z)dz\notag\\
\leq & C(-t_{0})^{\frac{p+1}{p-1}}e^{\frac{|x_{0}|^{2}}{16t_{0}}}(-t_{0})^{-\frac{n}{2}}\int_{\left\{|y|\leq 4|x_{0}|\right\}}|\nabla w_{0}||\nabla f|dy\notag\\
\leq & C(-t_{0})^{\frac{p+1}{p-1}}e^{\frac{|x_{0}|^{2}}{16t_{0}
}}(-t_{0})^{-\frac{n}{2}}|x_{0}|^{n-2-\frac{4}{p-1}}\notag\\
\leq &C\left(\frac{|x_{0}|}{\sqrt{-t_{0}}}\right)^{n-2-\frac{4}{p-1}}e^{\frac{|x_{0}|^{2}}{16t_{0}}}\leq C.\notag
\end{align}
In conclusion, there exist $M\geq 1$  and $C$ depending only on $n, p, w_{0}$ such that if $|x_{0}|\gg M$ or $|t_{0}|\gg M$, then
\[(-t_{0})^{\frac{p+1}{p-1}}\int_{\mathbb{R}^{n}}|\nabla w||\nabla f|G(y-x_{0}, t_{0})dy\leq C.\]
Other terms can be estimated similarly. Therefore, we conclude that there exists $M\geq 1$  and $C$ depending only on $n, p, w$ such that if $|x_{0}|\gg M$ or $|t_{0}|\gg M$, then
\[F_{x_{0}, t_{0}}(w+sf)\leq F_{x_{0}, t_{0}}(w)+Cs.\]

\textbf{Step 5:} Given $M\geq 1$, there exists  a positive constant $C$ depending only on $n, p, w, M$ such that if $|x_{0}|\leq M$ and $|t_{0}|\gg M$, then
\[G(x_{0}, t_{0}, s)\leq G(x_{0}, t_{0}, 0)+Cs.\]

Similar to the proof of Step 4, we only show that there exists a positive constant $C$ depending only on $n, p, w, M$ such that
\begin{equation}\label{step5.1e}
   (-t_{0})^{\frac{p+1}{p-1}}\int_{\mathbb{R}^{n}}|\nabla w||\nabla f|G(y-x_{0}, t_{0})dy\leq C.
\end{equation}
There are three cases:

\textbf{Case 1:} $n-5-4/(p-1)>-1$.

Then \eqref{lowerbound} and Lemma \ref{lemeigenfunctiondecay} imply
\[
\begin{aligned}
&(-t_{0})^{\frac{p+1}{p-1}}\int_{\mathbb{R}^{n}}|\nabla w||\nabla f|G(y-x_{0}, t_{0})dy\\
\leq& C(-t_{0})^{\frac{p+1}{p-1}-\frac{n}{2}}\int_{\mathbb{R}^{n}}(1+|y|)^{-4-\frac{4}{p-1}}dy\\
\leq&C(-t_{0})^{\frac{p+1}{p-1}-\frac{n}{2}}.
\end{aligned}
\]
Since we have assumed that $p>(n+2)/(n-2)$, it is clear that \eqref{step5.1e} holds.

\textbf{Case 2:} $n-5-4/(p-1)<-1$.

In this case, we have
\[
\begin{aligned}
&(-t_{0})^{\frac{p+1}{p-1}}\int_{\mathbb{R}^{n}}|\nabla w||\nabla f|G(y-x_{0}, t_{0})dy\\
\leq & C(-t_{0})^{\frac{p+1}{p-1}-\frac{n}{2}}\int_{\{|y|\leq(-t_{0})^{\alpha(p)}\}}(1+|y|)^{-4-\frac{4}{p-1}}dy\\
&+C(-t_{0})^{\frac{p+1}{p-1}}\int_{\{|y|\geq(-t_{0})^{\alpha(p)}\}}(-4\pi t_{0})^{-\frac{n}{2}}e^{\frac{|y-x_{0}|^{2}}{4 t_{0}}}dy\leq C,
\end{aligned}
\]
where
\[\alpha(p):=\frac{n-2-\frac{4}{p-1}}{2(n-4-\frac{4}{p-1})}.\]

\textbf{Case 3:} $n-5-4/(p-1)=-1$.

This case   can be proved in the same way as in  Case2.  

In conclusion, we have shown that  both \eqref{step5.1e} and Step 5 hold.

\textbf{Step 6:} There exists $t_{1}$ such that if $t_{1}<t_{0}<0$, then $G(x_{0}, t_{0}, s)$ is strictly less than $G(0, -1, 0)$.

Indeed, since $|\nabla w|,|\nabla f|, f$ are bounded in $\mathbb{R}^{n}$ and $w$ satisfies \eqref{lowerbound},  there exists two constant $C_{1}, C_{2}$ independent of $x_{0}$ such that
\[G(x_{0}, t_{0}, s)=F_{x_{0}, t_{0}}(w_{s})\leq C_{1}(-t_{0})^{\frac{p+1}{p-1}}+C_{2}(-t_{0})^{\frac{2}{p-1}}.\]
By Corollary \ref{CCSP}, we have
\[\lambda(w)=E(w)=\left(\frac{1}{2}-\frac{1}{p+1}\right)\int_{\mathbb{R}^{n}}|w|^{p+1}\rho dy>0.\]
Therefore, if $-t_{0}$ is small enough, then $G(x_{0}, t_{0}, s)<\lambda(w)=G(0, -1, 0)$.

\textbf{Step 7:}  There exists some $\epsilon_{1}>0$ such that if $s\neq 0$ and $|s|\leq \epsilon_{1}$, then
\begin{equation}\label{Conclusion}
\lambda(w_{s})\equiv \sup\limits_{x_{0}, t_{0}}G(x_{0}, t_{0}, s)<G(0, -1, 0)=\lambda(w).
\end{equation}
Let us consider  three separate regions depending on the size of $|x_{0}|^{2}+(\log t_{0})^{2}$. First, it follows from Step 4, Step 5 and Step 6 that there is some $R>0$ such that \eqref{Conclusion} holds for every $s$ whenever
\[|x_{0}|^{2}+(\log t_{0})^{2}>R^{2}.\]
Secondly, as long as $s$ is small, Step 1 implies that \eqref{Conclusion} holds whenever $|x_{0}|^{2}+(\log t_{0})^{2}$ is sufficiently small.

Finally, in the intermediate region where $|x_{0}|^{2}+(\log t_{0})^{2}$ is bounded  from above and bounded uniformly away from zero, Step 2 implies that $G$ is strictly  less than $\lambda(w)$ at $s=0$. Moreover, it follows from Step 3 that the $s$ derivative of $G$ is uniformly bounded. Hence there exists some $\epsilon_{3}>0$ such that $G(x_{0}, t_{0}, s)$ is strictly less than $\lambda(w)$ whenever $(x_{0}, s_{0})$ is in the intermediate region as long as $|s|\leq \epsilon_{3}$.
\end{proof}

\section{The energy of self similar solutions}
Results obtained in the previous sections hold for all bounded solutions of \eqref{SC1-2}. In this section, we will turn our attention to functions in $\mathcal{B}_{n, m}$. We will prove that if $w\in\mathcal{B}_{n, m}$ and there exists a positive constant $C$  such that \eqref{lowerbound} holds, then $E(w)$ is strictly larger than the weighted energy of the positive constant solution of \eqref{SC1}.
\begin{lemma}[Blow up criteria]\label{blowupcriteria}
Let $w$ be a positive solution of
\begin{equation}\label{selfsimilarequation}
\partial_{\tau}w=\Delta w-\frac{y}{2}\cdot\nabla w-\frac{1}{p-1}w+w^{p},\quad\text{in}~\mathbb{R}^{n}\times(0, +\infty).
\end{equation}
Assume for any $M>0$, there exists a positive constant $C(M)$ such that
\[w\leq C(M)\quad\text{in}~\mathbb{R}^{n}\times[0, M].\]
Then for any $\tau\in [0, +\infty)$,
\begin{equation}\label{integralaveragecondition}
(4\pi)^{-\frac{n}{2}}\int_{\mathbb{R}^{n}}w(y, \tau)e^{-\frac{|y|^{2}}{4}}dy\leq \kappa.
\end{equation}
\end{lemma}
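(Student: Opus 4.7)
The plan is to control the Gaussian-weighted mean
\[ I(\tau) := \int_{\mathbb{R}^n} w(y,\tau)\,\rho(y)\,dy, \qquad \rho(y) = (4\pi)^{-n/2} e^{-|y|^2/4}, \]
and derive a one-variable differential inequality forcing $I(\tau) \le \kappa$ for every $\tau \ge 0$. Note $\int_{\mathbb{R}^n}\rho\,dy = 1$, so $I$ is indeed the quantity appearing in \eqref{integralaveragecondition}.

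First, I would differentiate $I$ under the integral sign (justified by the equation, the local-in-time bound on $w$, and standard parabolic regularity supplying gradient bounds on compact subsets of $\mathbb{R}^n\times[0,M]$). Using the divergence-form identity
\[ \Delta w - \tfrac{y}{2}\cdot\nabla w = \rho^{-1}\,\mathrm{div}\bigl(\rho\,\nabla w\bigr), \]
and integrating by parts on large balls (boundary contributions vanish by the exponential decay of $\rho$ against the local sup-bounds on $w$ and $\nabla w$), the first two spatial terms on the right-hand side of \eqref{selfsimilarequation} contribute zero after integrating against $\rho$. Thus
\[ I'(\tau) \;=\; -\frac{1}{p-1}\,I(\tau) \;+\; \int_{\mathbb{R}^n} w(y,\tau)^p\,\rho(y)\,dy. \]

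Next, because $\rho\,dy$ is a probability measure, $w>0$, and $u\mapsto u^p$ is convex for $p>1$, Jensen's inequality yields
\[ \int_{\mathbb{R}^n} w^p \rho\,dy \;\ge\; \Bigl(\int_{\mathbb{R}^n} w\,\rho\,dy\Bigr)^{p} \;=\; I(\tau)^p. \]
Combining these and using $\kappa^{p-1} = \tfrac{1}{p-1}$, one obtains the key differential inequality
\[ I'(\tau) \;\ge\; I(\tau)\bigl(I(\tau)^{p-1} - \kappa^{p-1}\bigr). \]

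Finally, I would argue by contradiction. Suppose $I(\tau_0) > \kappa$ for some $\tau_0 \ge 0$. Comparison with the scalar ODE $J' = J(J^{p-1} - \kappa^{p-1})$ with $J(\tau_0) = I(\tau_0)$ gives $I \ge J$ on the common interval of existence. But $J$ is strictly increasing past $\kappa$, and once $J \ge 2\kappa$ one has $J' \ge \tfrac{1}{2} J^p$, so $J$ blows up at some finite time $\tau_1 < \infty$. Hence $I$ is unbounded on $[\tau_0,\tau_1)$. This contradicts the hypothesis: for $M = \tau_1$ the assumption furnishes $\|w(\cdot,\tau)\|_{\infty} \le C(M)$ on $[0,M]$, and since $\int\rho\,dy = 1$ this forces $I(\tau) \le C(M)$ on the same interval. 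Therefore $I(\tau) \le \kappa$ for all $\tau \ge 0$, which is \eqref{integralaveragecondition}.

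\textbf{Main obstacle.} The conceptual content (Jensen plus ODE blow-up comparison) is straightforward once the derivative formula for $I(\tau)$ is in hand. The only point requiring real care is the justification of differentiating under the integral sign and of the integration by parts producing the cancellation of the elliptic operator in the weighted norm; both are handled routinely via the Gaussian decay of $\rho$ and standard local parabolic gradient estimates (in the spirit of Lemma~\ref{lempre3.1}) applied on bounded time intervals, where the hypothesis supplies the necessary uniform bound on $w$.
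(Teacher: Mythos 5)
Your proof is correct. The paper itself does not prove this lemma but simply cites \cite[Proposition 3.8]{Merle-Zaag}; your self-contained argument — differentiating the Gaussian average $I(\tau)$, killing the drift term by the divergence-form identity for the Ornstein--Uhlenbeck operator, applying Jensen's inequality to get $I' \ge I(I^{p-1}-\kappa^{p-1})$, and concluding by comparison with a finite-time blow-up ODE — is the standard one and is essentially the proof given in that reference. One minor imprecision: the bound $J' \ge \tfrac12 J^p$ once $J\ge 2\kappa$ requires $p\ge 2$; for $1<p<2$ one only gets $J' \ge (1-2^{-(p-1)})J^p$, but this positive constant still forces finite-time blow-up of $J$, so the conclusion is unaffected.
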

\begin{proof}
This is exactly \cite[Proposition 3.8]{Merle-Zaag}.
\end{proof}
\begin{lemma}
If $w$ is a bounded positive solution of the equation \eqref{SC1} such that \eqref{lowerbound} holds. Assume $f$ is the first eigenfunction of the operator
\begin{equation}\label{definelinearoperatorL}
L\psi=\Delta\psi-\frac{y}{2}\cdot\nabla\psi-\frac{1}{p-1}\psi+pw^{p-1}\psi,
\end{equation}
Without loss of generality, we may assume $f$ is positive in $\mathbb{R}^{n}$. Consider the Cauchy problem
\begin{equation}\label{IVP}
\left\{\begin{array}{lll}
\partial_{\tau}\tilde{w}=\Delta \tilde{w}-\frac{y}{2}\cdot\nabla\tilde{w}-\frac{1}{p-1}\tilde{w}+\tilde{w}^{p},\\
\tilde{w}(\cdot, 0)=w+sf,
\end{array}
\right.
\end{equation}
where $s$ is a small positive constant, then $\tilde{w}$ blows up in finite time.
\end{lemma}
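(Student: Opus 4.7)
The plan is to argue by contradiction. Suppose $\tilde{w}$ exists globally and is uniformly bounded on each compact time interval $[0,M]$. Then Lemma \ref{blowupcriteria} applies to $\tilde{w}$, giving the a priori bound
\[
(4\pi)^{-\frac{n}{2}}\int_{\mathbb{R}^{n}}\tilde{w}(y,\tau)e^{-\frac{|y|^{2}}{4}}dy \leq \kappa, \qquad \text{for all } \tau \geq 0.
\]
I would then produce a matching lower bound that blows up as $\tau\to +\infty$, which is the desired contradiction.

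To produce this lower bound, I would study $v:=\tilde{w}-w$. Since $w$ is a stationary solution of \eqref{selfsimilarequation}, $v$ satisfies
\[
\partial_{\tau}v=\Delta v-\frac{y}{2}\cdot\nabla v-\frac{1}{p-1}v+\tilde{w}^{p}-w^{p},
\]
with $v(\cdot,0)=sf>0$. A standard parabolic maximum principle argument (legitimate on $\mathbb{R}^n$ because $v$ is bounded on each slab under the contradiction hypothesis, and the confining drift $-\frac{y}{2}\cdot\nabla$ suppresses growth at infinity) shows $v\geq 0$. Convexity of $u\mapsto u^{p}$ on $[0,\infty)$ gives $\tilde{w}^{p}-w^{p}\geq pw^{p-1}v$, so
\[
\partial_{\tau}v \geq Lv,
\]
where $L$ is the linearized operator in \eqref{definelinearoperatorL}.

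Now let $\lambda_{1}$ be the first eigenvalue of $L$ in the sense of Corollary \ref{cortwoeigenfunctions}, so $Lf=-\lambda_{1}f$. Because $w$ is not constant, the eigenfunction $\Lambda(w)=\frac{2}{p-1}w+y\cdot\nabla w$ (which has eigenvalue $-1$ by Lemma \ref{lemeig}) changes sign by Proposition \ref{ChCS}, so exactly as in the proof of Theorem \ref{thestable} we have $\lambda_{1}<-1$, i.e.\ $-\lambda_{1}>1$. The function $\phi(y,\tau):=sf(y)e^{-\lambda_{1}\tau}$ solves $\partial_{\tau}\phi=L\phi$ exactly and equals $v$ at $\tau=0$. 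Comparing $v$ to $\phi$ on each slab $\mathbb{R}^{n}\times[0,M]$ (both are bounded there, using Lemma \ref{lemeigenfunctiondecay} for $f$), the maximum principle yields $v(y,\tau)\geq sf(y)e^{-\lambda_{1}\tau}$, hence
\[
(4\pi)^{-\frac{n}{2}}\!\int_{\mathbb{R}^{n}}\!\tilde{w}(y,\tau)e^{-\frac{|y|^{2}}{4}}dy \;\geq\; (4\pi)^{-\frac{n}{2}}\!\int_{\mathbb{R}^{n}}\!w\,e^{-\frac{|y|^{2}}{4}}dy + s e^{-\lambda_{1}\tau}(4\pi)^{-\frac{n}{2}}\!\int_{\mathbb{R}^{n}}\!f\,e^{-\frac{|y|^{2}}{4}}dy.
\]
The last integral is finite and strictly positive (Lemma \ref{lemeigenfunctiondecay}, together with $f>0$), and $-\lambda_{1}>0$, so the right-hand side tends to $+\infty$, contradicting the bound by $\kappa$. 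Thus $\tilde{w}$ cannot remain bounded on all compact time intervals, i.e.\ it must blow up in finite time.

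The main obstacle I anticipate is the rigorous justification of the comparison principle on the unbounded domain $\mathbb{R}^{n}$ for the drift-diffusion operator $L$; this is handled by exploiting the $L^{\infty}$ bound on $v$ and the pointwise decay bound on $f$ from Lemma \ref{lemeigenfunctiondecay}, which together with the Ornstein--Uhlenbeck type drift provide enough control at infinity to close the argument.
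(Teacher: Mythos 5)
Your argument is correct and is essentially the paper's proof: both rest on the same three ingredients (convexity of $u\mapsto u^p$, the spectral gap $\lambda_1<-1$ coming from Proposition \ref{ChCS}, and the integral bound of Lemma \ref{blowupcriteria}). The only cosmetic difference is that you work with $v=\tilde{w}-w$ and compare it to $sfe^{-\lambda_1\tau}$, while the paper constructs the subsolution $\bar{w}=w+sfe^{-\lambda_1\tau}$ directly and compares $\tilde{w}\geq\bar{w}$; these are the same inequality, and the intermediate step where you first establish $v\geq 0$ is absorbed in the paper's one-shot comparison.
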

\begin{proof}
Assume it is false, then $\tilde{w}$ exists in $\mathbb{R}^{n}\times (0, +\infty)$. Consider the function
\[\bar{w}(y, \tau)=w(y)+se^{-\lambda_{1}\tau}f(y),\]
where $\lambda_{1}$ is the first eigenvalue of the operator defined in \eqref{definelinearoperatorL}.  
Since $w$ satisfies \eqref{SC1},  $\bar{w}$ satisfies
\begin{equation*}
\left\{\begin{array}{lll}
\partial_{\tau}\bar{w}=\Delta \bar{w}-\frac{y}{2}\cdot\nabla\bar{w}-\frac{1}{p-1}\bar{w}+w^{p}+spw^{p-1}e^{-\lambda_{1}\tau}f,\\
\tilde{w}(\cdot, 0)=w+sf.
\end{array}
\right.
\end{equation*}
Notice that for any $a\geq 0, b\geq 0$, we have $(a+b)^{p}\geq a^{p}+pa^{p-1}b$. Thus $\bar{w}$ satisfies
\begin{equation*}
\left\{\begin{array}{lll}
\partial_{\tau}\bar{w}\leq\Delta \bar{w}-\frac{y}{2}\cdot\nabla\bar{w}-\frac{1}{p-1}\bar{w}+\bar{w}^{p},\\
\tilde{w}(\cdot, 0)=w+sf.
\end{array}
\right.
\end{equation*}
By the maximum principle, we have
\[\tilde{w}\geq \bar{w},\quad\text{in}~\mathbb{R}^{n}\times(0,+\infty).\]
In particular, for any $\tau\geq 0$,
\begin{equation}\label{integralaveragecompare}
\begin{aligned}
&(4\pi)^{-\frac{n}{2}}\int_{\mathbb{R}^{n}}\tilde{w}(y, \tau)e^{-\frac{|y|^{2}}{4}}dy\\
\geq & (4\pi)^{-\frac{n}{2}}\int_{\mathbb{R}^{n}}\bar{w}(y, \tau)e^{-\frac{|y|^{2}}{4}}dy\\
=&(4\pi)^{-\frac{n}{2}}\int_{\mathbb{R}^{n}}w(y)e^{-\frac{|y|^{2}}{4}}dy+(4\pi)^{-\frac{n}{2}}e^{-\lambda_{1}\tau}\int_{\mathbb{R}^{n}}f(y)e^{-\frac{|y|^{2}}{4}}dy.
\end{aligned}
\end{equation}
Since $\lambda_{1}<-1$, we get from \eqref{integralaveragecompare} that
\[(4\pi)^{-\frac{n}{2}}\int_{\mathbb{R}^{n}}\tilde{w}(y, \tau)e^{-\frac{|y|^{2}}{4}}dy>\kappa\]
provided that $\tau$ is large enough. By Lemma \ref{blowupcriteria}, this is a contradiction.
\end{proof}
\begin{lemma}\label{lemblowupset}
Let $\tau_{1}$ be the first blow up time of $\tilde{w}$, then there exist $R>0$ and $c>0$ such that
\[\tilde{w}\leq c,\quad\text{in}~(\mathbb{R}^{n}\backslash B_{R}(0))\times (0, \tau_{1}).\]
In particular, if we use  $\Sigma$  to denote the blow up set of $\tilde{w}$, then $\Sigma$ is a compact subset of $\mathbb{R}^{n}$.
\end{lemma}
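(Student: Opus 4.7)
The plan is to switch to the original Fujita variables, where the statement reduces to localization of the blow-up set for the underlying Cauchy problem. Setting
\[U(x,t) := (-t)^{-1/(p-1)}\tilde w\!\left(x/\sqrt{-t},\,-\log(-t)\right),\qquad t\in[-1,0),\]
a direct computation shows that $U$ solves $\partial_t U=\Delta U+U^p$ on $\mathbb{R}^n\times[-1,t_1)$, where $t_1:=-e^{-\tau_1}\in(-1,0)$ is the first blow-up time of $U$ and $U(\cdot,-1)=w+sf=:U_0$. By \eqref{lowerbound} and Lemma \ref{lemeigenfunctiondecay}, the initial datum satisfies $U_0(x)\leq C(1+|x|)^{-\alpha}$ with $\alpha:=2/(p-1)$. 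The desired bound is equivalent to proving that $U$ is bounded on $\{|x|\geq R_0\}\times[-1,t_1)$ for some $R_0>0$: granted such a bound by a constant $K$, take $R:=R_0 e^{\tau_1/2}$, and for $|y|\geq R$ and $\tau\in[0,\tau_1)$ observe $|ye^{-\tau/2}|\geq R_0$, so $\tilde w(y,\tau)=e^{-\tau/(p-1)}U(ye^{-\tau/2},-e^{-\tau})\leq K$.

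The main tool I would use is comparison with the stationary singular profile $\Psi(x):=A|x|^{-\alpha}$. A short calculation yields
\[\Delta\Psi+\Psi^p=\bigl[A^p-A\alpha(n-\alpha-2)\bigr]\,|x|^{-\alpha-2},\]
so $\Psi$ is a classical supersolution of the Fujita equation on $\mathbb{R}^n\setminus\{0\}$ precisely when $A^{p-1}\leq\alpha(n-\alpha-2)$; the right-hand side is strictly positive because $p>(n+2)/(n-2)$ forces $\alpha<n-2$. If $A$ can be chosen with $A\geq\max\{C,\|U_0\|_\infty\}$ and $A\leq A_*:=[\alpha(n-\alpha-2)]^{1/(p-1)}$, then $U_0\leq\Psi$ throughout $\mathbb{R}^n\setminus\{0\}$. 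The crucial feature is that $\Psi$ is \emph{singular at the origin}, so it automatically dominates the classically bounded $U$ on arbitrarily small spheres. I would apply the parabolic maximum principle on $\{\delta<|x|\}\times[-1,t_1-\epsilon]$: the inner boundary data $\Psi(x)=A\delta^{-\alpha}$ blows up as $\delta\to 0^+$ and dominates $U$, which is uniformly bounded on $[-1,t_1-\epsilon]$ by definition of the first blow-up time; the bottom data is controlled by the choice of $A$. Taking the limits $\epsilon\to 0^+$ and $\delta\to 0^+$ yields $U\leq\Psi$ on $(\mathbb{R}^n\setminus\{0\})\times[-1,t_1)$, which gives the sought bound $U\leq AR_0^{-\alpha}$ on $\{|x|\geq R_0\}\times[-1,t_1)$ for any $R_0>0$.

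The main obstacle is the compatibility of the constraints $A\geq\max\{C,\|U_0\|_\infty\}$ and $A\leq A_*$: the admissible range is empty when the initial datum is too large relative to the singular steady state. I see two ways around this. First, a refined asymptotic analysis of bounded positive solutions of \eqref{SC1-2} (via the radial ODE satisfied by the leading-order profile) shows that $|y|^\alpha w(y)$ admits a finite limit at infinity which, by comparison with the singular steady state, cannot exceed $A_*$; after enlarging $R_0$ one then obtains $U_0\leq A_*|x|^{-\alpha}$ on $\{|x|\geq R_0\}$, and the comparison can be localized to this exterior domain (with the remaining inner-boundary control coming from the fact that $U$ is smooth and bounded on any time interval $[-1,t_1-\epsilon]$, together with the singular supersolution trick applied on $\{R_0<|x|,\,|x-x_0|>\delta\}$ for potential interior blow-up points $x_0$). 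Second, when this asymptotic bound is not directly available, I would form the viscosity supersolution $\Phi(x,t):=\min\{A_*|x|^{-\alpha},\,\psi_{\mathrm{ODE}}(t)\}$ with $\psi_{\mathrm{ODE}}(t):=[(p-1)(T_*-t)]^{-1/(p-1)}$ and $T_*$ chosen so that $\psi_{\mathrm{ODE}}(-1)\geq\|U_0\|_\infty$ and $T_*>t_1$, and run the same argument; the delicate point is verifying that this minimum remains a supersolution across the transition locus. Once the boundedness of $\tilde w$ outside $B_R(0)$ is established, the compactness of $\Sigma$ is immediate, as $\Sigma$ is closed and contained in the closed ball $\overline{B_R(0)}$.
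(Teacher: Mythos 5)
Your approach is genuinely different from the paper's. The paper does not use any comparison-principle argument at all: it observes that the polynomial decay of $w$ and $f$ (together with their gradients, from Lemma \ref{gradientestimate} and Lemma \ref{lemeigenfunctiondecay}) forces the scaled Giga--Kohn energy $\mathcal{E}_{(z_0,\tau_0)}(w+sf)$ of the initial data to be below any prescribed $\epsilon$ at every parabolic cylinder $B_{\delta_0}(y_0)\times(\tau_1-\delta_0^2,\tau_1+\delta_0^2)$ with $|y_0|$ large, and then invokes the $\epsilon$-regularity theorem for the rescaled Fujita equation (Theorem~3.1 of the cited Du--Shi--Zhong paper, via the monotonicity formula) to conclude $\tilde w\leq C_*\delta_0^{-2/(p-1)}$ there. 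That route needs no supersolution and no constraint on the size of the initial data, which is precisely what makes it work.

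Your proposal has a real gap, and you correctly identify it, but neither of your proposed fixes closes it. The constraints in your second remedy are self-contradictory: the ODE comparison $U(x,t)\leq\psi_{\mathrm{ODE}}(t)$ on the whole of $\mathbb{R}^n$ (valid as long as $\psi_{\mathrm{ODE}}$ is finite) forces $t_1\geq -1+\tfrac{1}{(p-1)\|U_0\|_\infty^{p-1}}$, while the requirement $\psi_{\mathrm{ODE}}(-1)\geq\|U_0\|_\infty$ forces $T_*\leq -1+\tfrac{1}{(p-1)\|U_0\|_\infty^{p-1}}\leq t_1$, so $T_*>t_1$ is impossible. (The ``delicate point'' you flag about the transition locus is actually fine --- a minimum of two classical supersolutions is a viscosity supersolution --- but using $\min$ means the initial datum must lie below both envelopes, which is strictly harder, not easier.) Your first remedy is on shakier footing for two reasons: the asserted asymptotic bound $\limsup_{|y|\to\infty}|y|^{2/(p-1)}w(y)\leq A_*$ is not established, and a ``comparison with the singular steady state'' is not obviously available in the parabolic setting since the two functions cannot be ordered near the origin; more seriously, the localization to the exterior domain $\{|x|>R_0\}$ needs uniform-in-time control of $U$ on the inner boundary $\{|x|=R_0\}\times[-1,t_1)$, which is what we are trying to prove. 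The proposed fix of excising small balls around the interior blow-up points is circular, since the location and structure of those points is exactly what the lemma is meant to constrain.
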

\begin{proof}
 Since $w$ satisfies \eqref{lowerbound}, it follows  from  Lemma \ref{gradientestimate} and Lemma \ref{lemeigenfunctiondecay} that there exists a positive constant $C$ such that
 \[(1+|y|)^{\frac{2}{p-1}}w+(1+|y|)^{1+\frac{2}{p-1}}|\nabla w|\leq C,\quad\text{in}~\mathbb{R}^{n}\]
 and
 \[(1+|y|)^{\frac{2p}{p-1}}|f|+(1+|y|)^{1+\frac{2p}{p-1}}|\nabla f|\leq C,\quad\text{in}~\mathbb{R}^{n}.\]
 Thus given $\delta_{0}>0, \epsilon>0$, there exists $R$ such that if $|y_{0}|>R$ and $(z_{0}, \tau_{0})\in B_{\delta_{0}}(y_{0})\times(\tau_{1}-\delta_{0}^{2}, \tau_{1}+\delta_{0}^{2})$, then
 \[\mathcal{E}_{(z_{0}, \tau_{0})}(w+sf)<\epsilon,\]
 where
 \[
 \begin{aligned}
 \mathcal{E}_{(z_{0}, \tau_{0})}(w+sf)=&\frac{1}{2}(\tau_{0})^{\frac{2}{p-1}-\frac{n}{2}+1}\int_{\mathbb{R}^{n}}|\nabla(w+sf)|^{2}e^{-\frac{|y-z_{0}|^{2}}{4\tau_{0}}}dy\\
 &-\frac{1}{p+1}(\tau_{0})^{\frac{2}{p-1}-\frac{n}{2}+1}\int_{\mathbb{R}^{n}}|w+sf|^{p+1}e^{-\frac{|y-z_{0}|^{2}}{4\tau_{0}}}dy\\
 &+\frac{1}{2(p-1)}(\tau_{0})^{\frac{2}{p-1}-\frac{n}{2}}\int_{\mathbb{R}^{n}}(w+sf)^{2}e^{-\frac{|y-z_{0}|^{2}}{4\tau_{0}}}dy\\
 \end{aligned}\]
Following the arguments in the proof of \cite[Theorem 3.1]{Dushizhong2019} (starting from formula (3.12)), we  obtain 
 \[w\leq C_{*}\delta_{0}^{-\frac{2}{p-1}}, \quad\text{in}~B_{\delta_{0}/2}(y_{0})\times(\tau_{1}-\delta_{0}^{2}, \tau_{1}),\]
 where $C_{*}$ is a universal positive constant. Then Lemma \ref{lemblowupset} follows from this and the definition of $\tau_{1}$.
\end{proof}
\begin{proposition}\label{decreaseentropy}
If $w\in \mathcal{B}_{n, m}$ and \eqref{lowerbound} holds, then
\[E(w)\geq E(\kappa).\]
\end{proposition}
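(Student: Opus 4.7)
The plan is a contradiction argument that chains together the tools developed so far. Suppose $w \in \mathcal{B}_{n,m}$ satisfies \eqref{lowerbound} but $E(w) < E(\kappa)$. Because the positive constant $\kappa$ does not decay at infinity, the decay hypothesis forces $w \neq \kappa$. By Corollary \ref{CCSP}, $\lambda(w) = E(w) < E(\kappa)$. Theorem \ref{FSEES} then applies (its hypotheses are precisely non-constancy together with \eqref{lowerbound}), producing a variation $w_s = w + sf$, with $f$ the positive first eigenfunction of $L$, such that $\lambda(w_s) < \lambda(w)$ for all small $s \neq 0$. Fix $s > 0$ small enough that $\lambda(w_s) < E(\kappa)$ still holds.

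Next, I would invoke the two lemmas immediately preceding the statement to evolve $w_s$. The solution $\tilde{w}$ of \eqref{IVP} with initial datum $w_s$ blows up at some finite self-similar time $\tau_1$, and by Lemma \ref{lemblowupset} its blow-up set $\Sigma$ is compact. Transferring back to the Fujita equation via $\tilde{u}(x,t) = (-t)^{-1/(p-1)}\tilde{w}(x/\sqrt{-t}, -\log(-t))$ produces a Fujita solution that is bounded outside a compact spatial set up to a finite blow-up time $T_*$, and blows up at some point $x_*$ inside a fixed ball.

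Now I would exploit the monotonicity formula. By Lemma \ref{monotonicityformula} and \eqref{densityfunction}, the density satisfies $\Theta(x_*, T_*; \tilde{u}) \leq \lambda(w_s) < E(\kappa)$. A standard parabolic rescaling $\tilde{u}_\lambda(x,t) = \lambda^{2/(p-1)}\tilde{u}(x_* + \lambda x, T_* + \lambda^2 t)$ yields, along some subsequence $\lambda_k \to 0^+$, a backward self-similar tangent function $w_*$ with $F_{0,-1}(w_*) = \Theta(x_*, T_*; \tilde{u})$. The compact localization of $\Sigma$, together with the interior bounds from Lemma \ref{lempre3.1} applied away from $\Sigma$, should keep the rescaling limit nontrivial; classical type-I style regularity then gives a bounded positive limit $w_*$, hence $w_* \in \mathcal{B}_{n,m'}$ for some $m'$, with $E(w_*) < E(\kappa)$.

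The main obstacle is closing the loop: we need to upgrade ``some bounded positive self-similar solution has energy below $E(\kappa)$'' to an outright contradiction. One route is to observe that $w_*$ inherits the decay \eqref{lowerbound} from the spatial localization of $\Sigma$, so the above construction can be iterated to produce a sequence of such solutions with strictly decreasing entropies; a compactness argument based on Lemma \ref{lempre3.1} would then extract an entropy-minimizing limit, which is necessarily entropy-stable, hence $F$-stable by Theorem \ref{FSEES}, hence the constant $\kappa$ by Theorem \ref{thestable} --- contradicting the strict inequality $E(w_*) < E(\kappa)$ preserved through the limit. A second route is to establish directly that the tangent function $w_*$ at a first blow-up point of a compactly supported blow-up set must coincide with $\kappa$ by rigidity; I expect this direct identification to be the most delicate point, since in the supercritical regime one must rule out non-constant positive bounded shrinkers and any type II behaviour at $(x_*, T_*)$ on the basis of the uniform entropy bound alone.
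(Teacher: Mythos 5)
Your proposal captures the overall architecture (perturb by the first eigenfunction of $L$ via Theorem \ref{FSEES}, evolve the self-similar flow to a finite blow-up time using Lemma \ref{blowupcriteria} and Lemma \ref{lemblowupset}, and pass to a tangent function), and correctly locates where the real work is. But the argument is not closed, and the missing piece is exactly the one you flag as ``the most delicate point.'' Neither of your two routes around it succeeds: the iterative-compactness route would need a priori that an entropy-minimizing bounded positive shrinker exists and has the decay \eqref{lowerbound}, which is not available at this stage (that is essentially Theorem \ref{maintheorem1}, proved later by dimension reduction with Proposition \ref{decreaseentropy} as an input, so it would be circular); and your second route is announced but not carried out.

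The paper resolves the issue with a simple but crucial extra observation that your proposal never uses: because $f>0$ and $\lambda_1<-1$, the initial data $w+sf$ is a strict subsolution, and a maximum-principle argument (Step~1 of the paper's proof) gives $\partial_\tau\tilde w>0$ for all $\tau<\tau_1$. This parabolic monotonicity has two consequences you need and do not have. First (Step~2), it yields the type~I rate $\tilde w\le C(\tau_1-\tau)^{-1/(p-1)}$ by a standard comparison with $\partial_\tau\tilde w\ge\varepsilon\tilde w^p$; without some such bound, the tangent function $\tilde w_*$ produced by the rescaling could be trivial or unbounded, and ``classical type-I style regularity'' cannot be invoked in the supercritical range, where type~II blow-up is possible. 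Second and decisively (Step~4), after rewriting $\partial_\tau\tilde w>0$ in rescaled variables one finds that the limit $\tilde w_*$ satisfies
\[
\frac{1}{p-1}\tilde w_*+\frac{1}{2}\,z\cdot\nabla\tilde w_*\;\ge\;0\quad\text{in }\mathbb R^n,
\]
i.e.\ $\Lambda(\tilde w_*)$ does not change sign, so Proposition~\ref{ChCS} forces $\tilde w_*$ to be the positive constant $\kappa$. That identification, together with the Giga--Kohn monotonicity inequality $E(\tilde w_*)\le F_{y_0,e^{-\tau_0}-1}(w+sf)<E(w)$ from Step~3, gives $E(w)>E(\kappa)$ directly, with no contradiction argument needed. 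In short: the gap in your proposal is that you never observe the time-monotonicity of $\tilde w$, which is the mechanism that both rules out type~II behavior and rigidly identifies the tangent function as $\kappa$.
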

\begin{proof}
The proof of Proposition \ref{decreaseentropy} will be divided into several steps.

 \textbf{Step 1}: Let $\tilde{w}$ be the solution of \eqref{IVP} and let $\tau_{1}$ be the first blow up time. Then 
\begin{equation}\label{step1estimate}
\partial_{\tau}\tilde{w}>0,\quad\text{in}~\mathbb{R}^{n}\times(0, \tau_{1}).
\end{equation}

Since $\tilde{w}$ satisfies \eqref{IVP}, 
\[
\begin{aligned}
\partial_{\tau}\tilde{w}(\cdot, 0)=&\Delta (w+sf)-\frac{y}{2}\cdot\nabla(w+sf)-\frac{1}{p-1}(w+sf)+(w+sf)^{p}\\
=&(w+sf)^{p}-w^{p}-spw^{p-1}f-s\lambda_{1}f>0.
\end{aligned}
\]
We also have the equation for $\partial_{\tau}\tilde{w}$,
\begin{equation*}
\left\{\begin{array}{lll}
\partial_{\tau}\partial_{\tau}\tilde{w}=\Delta\partial_{\tau}\tilde{w}-\frac{y}{2}\cdot\nabla\partial_{\tau}\tilde{w}-\frac{1}{p-1}\partial_{\tau}\tilde{w}+p\bar{w}^{p-1}\partial_{\tau}\tilde{w},\quad\text{in}~~ \mathbb{R}^{n}\times(0, \tau_{1}),\\
\partial_{\tau}\tilde{w}(\cdot, 0)>0.
\end{array}
\right.
\end{equation*}
By the  maximum principle, we  get  \eqref{step1estimate}.

\textbf{Step 2}: Let $\tilde{w}$ be the solution of \eqref{IVP} and let $\tau_{1}$ be the first blow up time. Then there exists a positive constant $C$ such that
\begin{equation}\label{step3.1}
\tilde{w}\leq C(\tau_{1}-\tau)^{-\frac{1}{p-1}},\quad\text{in}~\mathbb{R}^{n}\times(0, \tau_{1}).
\end{equation}

By Lemma \ref{lemblowupset}, the singular set $\Sigma$ is a compact subset of $\mathbb{R}^{n}$. Fix $R>0$ such that $\Sigma\subset B_{R/2}(0)$.
By Step 1, there exist $0<\tau_{0}<\tau_{1}$ and a positive constant $c_{0}$ such that
\begin{equation}\label{step3.2}
\partial_{\tau}\tilde{w}\geq c_{0},\quad\text{on}~B_{R}(0)\times \{\tau=\tau_{0}\}\cup\partial B_{R}(0)\times [\tau_{0}, \tau_{1}].
\end{equation}
Moreover, by the choice of $R$, there exists a positive constant $C_{1}$ such that
\begin{equation}\label{step3.3}
\tilde{w}^{p}\leq C_{1},\quad\text{on}~B_{R}(0)\times \{\tau=\tau_{0}\}\cup\partial B_{R}(0)\times [\tau_{0}, \tau_{1}].
\end{equation}
Combining \eqref{step3.2} and \eqref{step3.3}, we know that there exists a small  positive constant $\epsilon$ such that
\begin{equation}\label{step3.4}
\partial_{\tau}\tilde{w}\geq\epsilon\tilde{w}^{p},\quad\text{on}~B_{R}(0)\times \{\tau=\tau_{0}\}\cup\partial B_{R}(0)\times [\tau_{0}, \tau_{1}].
\end{equation}
Because $\tilde{w}$ satisfies \eqref{IVP}, we have
\begin{equation}\label{step3.5}
\partial_{\tau}\partial_{\tau}\tilde{w}=\Delta \partial_{\tau}\tilde{w}-\frac{y}{2}\cdot\nabla\partial_{\tau}\tilde{w}-\frac{1}{p-1}\partial_{\tau}\tilde{w}+p\tilde{w}^{p-1}\partial_{\tau}\tilde{w}
\end{equation}
and
\[\partial_{\tau}\tilde{w}^{p}=\Delta \tilde{w}^{p}-\frac{y}{2}\cdot\nabla \tilde{w}^{p}+p\tilde{w}^{p-1}\tilde{w}^{p}-[p(p-1)\tilde{w}^{p-2}|\nabla \tilde{w}|^{2}+\tilde{w}^{p}].\]
Let $\omega(y, \tau)=\partial_{\tau}\tilde{w}(y, \tau)-\epsilon\tilde{w}^{p}(y, \tau)$, where $\epsilon$ is a small positive constant satisfying  \eqref{step3.4}. Then $\omega$ satisfies
\begin{equation}\label{step3.6}
\left\{\begin{array}{lll}
\partial_{\tau}\omega-\Delta \omega+\frac{y}{2}\cdot \nabla\omega+\frac{1}{p-1}\omega-p\tilde{w}^{p-1}\omega\geq 0,\quad\text{in}~B_{R}\times(\tau_{0}, \tau_{1}),\\
\omega>0 \qquad\qquad\qquad\quad\text{on}~B_{R}(0)\times \{\tau=\tau_{0}\}\cup\partial B_{R}(0)\times [\tau_{0}, \tau_{1}].
\end{array}
\right.
\end{equation}
By the maximum principle,
\begin{equation}\label{step3.7}
\partial_{\tau}w\geq \epsilon w^{p},\quad\text{in}~B_{R}\times(\tau_{0}, \tau_{1}).
\end{equation}
For any $\tau_{0}<\tau<\tau_{1}-\delta$, integrating \eqref{step3.7} from $\tau$ to $\tau_{1}-\delta$ gives
\[w(\tau_{1}-\delta)^{1-p}-w(\tau)^{1-p}\leq -(p-1)\epsilon(\tau_{1}-\delta-\tau).\]
Hence
\[w\leq M(\tau_{1}-\delta-\tau)^{-\frac{1}{p-1}},\quad\text{in}~B_{R}\times(\tau, \tau_{1}-\delta),\]
where  $M=((p-1)\epsilon)^{-1/(p-1)}$.
Taking $\delta\to 0$, we conclude that
\[w\leq M(\tau_{1}-\tau)^{-\frac{1}{p-1}},\quad\text{in}~B_{R}\times(\tau_{0}, \tau_{1}),\]
 Combining this with Lemma \ref{lemblowupset}, we  get \eqref{step3.1}.

\textbf{Step 3}: There exists a smooth solution $\tilde{w}_{*}$ of \eqref{SC1} such that $E(\tilde{w}_{*})<E(w)$.

We set
\[\hat{w}(z, \varsigma)=(1-e^{\tau-\tau_{0}})^{\frac{1}{p-1}}w(y+e^{\frac{\tau}{2}}y_{0}, \tau)\]
where
\[z=(1-e^{\tau-\tau_{0}})^{-\frac{1}{2}}(y+e^{\frac{\tau}{2}}y_{0}),~~\varsigma=-\ln(e^{-\tau}-e^{-\tau_{0}}).\]
Then $\hat{w}$ satisfies the equation
\[
\left\{\begin{array}{lll}
\partial_{\varsigma}\hat{w}=\Delta\hat{w}-\frac{y}{2}\cdot \nabla\hat{w}-\frac{1}{p-1}\hat{w}+\hat{w}^{p},\\
\hat{w}(z, -\ln(1-e^{-\tau_{0}}))=(1-e^{\tau-\tau_{0}})^{\frac{1}{p-1}}[w(\tilde{y}, -1)+sf(\tilde{y}, -1)],
\end{array}
\right.\]
where $\tilde{y}=y+y_{0}$.

Let $\{\varsigma_{k}\}$ be a sequence such that $\lim_{k\rightarrow\infty} \varsigma_{k}=\infty$ and let $w_{k}(z, \varsigma)=w(z, \varsigma+\varsigma_{k})$.
Similar to the proof of (see \cite[Proposiiton 4]{Giga-Kohn1985}), $\lim_{k\rightarrow\infty}\tilde{w}_{k}=\tilde{w}_{*}$
for some solution of \eqref{SC1} uniformly on compact subsets. By applying the monotonicity formula (see \cite[Proposiiton 3]{Giga-Kohn1985}), we have
\begin{equation}\label{compareenergy}
E(\hat{w}(z, -\ln(1-e^{-\tau_{0}})))>E(\tilde{w}_{*}).
\end{equation}
We compute
\begin{equation*}\label{limitationenergy}
\begin{aligned}
&E(\hat{w}(z, -\ln(1-e^{-\tau_{0}})))\\
=&\frac{1}{2}\int_{\mathbb{R}^{n}}|\nabla \hat{w}(\cdot, -\ln(1-e^{-\tau_{0}}))|^{2}\rho dz-\frac{1}{p+1} \int_{\mathbb{R}^{n}} \hat{w}^{p+1}(\cdot,-\ln(1-e^{-\tau_{0}}))\rho dz\\
&+\frac{1}{2(p-1)}\int_{\mathbb{R}^{n}} \hat{w}^{2}(\cdot, -\ln(1-e^{-s_{0}}))\rho dz\\
=&\frac{1}{2}(1-e^{-\tau_{0}})^{\frac{p+1}{p-1}}\int_{\mathbb{R}^{n}}|\nabla w-s\nabla f|^{2}G(y, e^{-\tau_{0}}-1)d\tilde{y}\\
&-\frac{1}{p+1}(1-e^{-\tau_{0}})^{\frac{p+1}{p-1}}\int_{\mathbb{R}^{n}}(w+sf)^{p+1}]G(y, e^{-\tau_{0}}-1)d\tilde{y}\\
&+\frac{1}{2(p-1)}(1-e^{-\tau_{0}})^{\frac{2}{p-1}}\int_{\mathbb{R}^{n}} (w+sf)^{2}G(y, e^{-\tau_{0}}-1)d\tilde{y}\\
=&F_{y_{0}, e^{-\tau_{0}}-1}(w+sf).
\end{aligned}
\end{equation*}
Since $w$ satisfies \eqref{lowerbound}, we get from Theorem \ref{thestable} that $w$ is not $F-$stable. Then Theorem \ref{FSEES} implies that $w$ is not entropy stable.
Moreover, we know from the proof of Theorem \ref{FSEES} that
 \[F_{y_{0}, e^{-\tau_{0}}-1}(w+sf)<E(w)\]
 provided that $s$ is small enough.  Combining this with \eqref{compareenergy}, we conclude the proof of this step.

\textbf{Step 4}: The function $\tilde{w}_{*}$ in Step 3 is the positive constant solution of \eqref{SC1}.

By the definition of $\hat{w}$, it is easy to see that
\[(1-e^{\tau-\tau_{0}})^{-\frac{1}{p-1}}\hat{w}(z, \varsigma)=w(y+e^{\frac{\tau}{2}}y_{0}, \tau).\]
Taking derivative with respect to $\tau$ on both sides, we have
\[\begin{aligned}
&\frac{y}{2}_{0}e^{\frac{\tau}{2}}\nabla w(y+e^{\frac{\tau}{2}}y_{0}, \tau)+\partial_{\tau}w(y+e^{\frac{\tau}{2}}y_{0}, \tau)\\
=&-\frac{1}{p+1}(1-e^{\tau-\tau_{0}})^{-\frac{1}{p-1}-1}e^{\tau-\tau_{0}}\hat{w}+\frac{1}{2}(1-e^{\tau-\tau_{0}})^{-\frac{1}{2}-1-\frac{1}{p-1}}e^{\tau-\tau_{0}}(ye^{\frac{\tau}{2}}+y_{0})\nabla\hat{w}\\
&+\frac{y}{2}_{0}e^{\frac{\tau}{2}}(1-e^{\tau-\tau_{0}})^{-\frac{1}{2}}(1-e^{\tau-\tau_{0}})^{-\frac{1}{p-1}}\nabla\hat{w}+\partial_{\varsigma}\hat{w}(1-e^{\tau-\tau_{0}})^{-\frac{1}{p-1}}\left(\frac{e^{-\tau}}{e^{-\tau}-e^{-\tau_{0}}}\right).
\end{aligned}\]
Taking derivative with respect to $y$ on both sides, we have
\[\nabla w(y+e^{\frac{\tau}{2}}y_{0},\tau)=(1-e^{\tau-\tau_{0}})^{-\frac{1}{p-1}}(1-e^{\tau-\tau_{0}})^{-\frac{1}{2}}\nabla \hat{w}.\]
Since $\tilde{w}$ satisfies \eqref{step1estimate}, we have
\[\frac{1}{p-1}\hat{w}+\frac{1}{2}z\cdot \nabla\hat{w}+\partial_{\varsigma}\hat{w}\geq \epsilon \hat{w}^{p}>0,\quad\text{in}~\mathbb{R}^{n}\times(-\ln(1-e^{-\tau_{0}}), \infty).\]
By Step 2, we know that there exists a positive constant $C_{3}$ such that
\[\hat{w}\leq c_{3},\quad\text{in}~\mathbb{R}^{n}\times(-\ln(1-e^{-\tau_{0}}),\infty).\]
Let $\{w_{k}\}$ be the functions defined in Step 3. Then $\partial_{\varsigma}\hat{w}$ converges to $0$ uniformly on compact subsets.
 By the above analysis, we conclude that $\tilde{w}_{*}$ is a nonnegative solution of \eqref{SC1} satisfying
\[\tilde{w}_{*}\leq C_{3},\quad\text{in}~\mathbb{R}^{n}\]
and
\[\frac{1}{p-1}\tilde{w}_{*}+\frac{1}{2}z\cdot\nabla\tilde{w}_{*}>0,\quad\text{in}~\mathbb{R}^{n}.\]
By Proposition \ref{ChCS}, $\tilde{w}_{*}$ is the constant solution of \eqref{SC1}.

 Combining Step 2, Step 3 and Step 4, we finish the proof of Proposition \ref{decreaseentropy}.
\end{proof}

\section{Constant solutions have the lowest energy}
In this section, we will combine the Federer type dimension reduction arguments with the results obtained in the  previous sections  to prove that the positive constant solution of \eqref{SC1} has the lowest energy among functions in $\mathcal{B}_{n, m}$.
\begin{lemma}\label{ESL}
There exists a positive constant $C$ depends only on $n, p, m$ such that if $w\in\mathcal{B}_{n, m}$ , then
\[C^{-1}\leq E(w)\leq C.\]
\end{lemma}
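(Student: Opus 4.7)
For the upper bound, I would multiply \eqref{SC1} by $w\rho$ and integrate by parts to obtain the standard identity $\int|\nabla w|^{2}\rho\,dy = \int w^{p+1}\rho\,dy - \frac{1}{p-1}\int w^{2}\rho\,dy$, which collapses the three terms in $E(w)$ to the single expression
\[
E(w) = \frac{p-1}{2(p+1)}\int_{\mathbb{R}^{n}} w^{p+1}\rho\,dy.
\]
Since $0<w\leq m$ and $\int\rho\,dy=1$, this immediately yields $E(w)\leq \frac{p-1}{2(p+1)}m^{p+1}$, an upper bound depending only on $n$, $p$, $m$.

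For the lower bound the key first step is to show $\sup_{\mathbb{R}^{n}} w\geq\kappa$ for every $w\in\mathcal{B}_{n,m}$. If the supremum is attained at a point $y_{*}\in\mathbb{R}^{n}$, then $\nabla w(y_{*})=0$ and $\Delta w(y_{*})\leq 0$, and reading \eqref{SC1} at $y_{*}$ forces $w(y_{*})^{p-1}\geq 1/(p-1)$, i.e.\ $w(y_{*})\geq\kappa$. If instead the supremum is only approached along a sequence $y_{k}\to\infty$, Lemma \ref{lempre3.1} gives uniform $C^{2}$ bounds on the translates $w_{k}(y):=w(y+y_{k})$, so a subsequence converges in $C^{1}_{\mathrm{loc}}$ to some bounded function $\bar w$. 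Dividing the translated equation by $|y_{k}|$ and letting $k\to\infty$ forces the directional derivative of $\bar w$ along any limiting direction of $y_{k}/|y_{k}|$ to vanish; passing to the limit in the translated equation itself then shows that $\bar w$ solves \eqref{SC1}. Since $\bar w(0)=\sup w$ is a maximum of $\bar w$, the first case applied to $\bar w$ yields $\sup w\geq\kappa$.

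Having found $y_{*}$ with $w(y_{*})\geq\kappa/2$, the Lipschitz bound $|\nabla w|\leq M'$ of Lemma \ref{lempre3.1} guarantees $w\geq\kappa/4$ on the ball $B(y_{*},r_{*})$ with $r_{*}:=\kappa/(4M')$ depending only on $n,p,m$. By Corollary \ref{CCSP}, $E(w)=\lambda(w)\geq F_{y_{*},t_{0}}(w)$ for every $t_{0}\in(-\infty,0)$. I will pick $t_{0}<0$ with $|t_{0}|$ so small, in terms of $n,p,m$, that the Gaussian $G(\,\cdot\,-y_{*},t_{0})$ concentrates at least half of its mass inside $B(y_{*},r_{*})$. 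Dropping the non-negative gradient term in the definition of $F_{y_{*},t_{0}}$, one obtains
\[
F_{y_{*},t_{0}}(w)\;\geq\;\frac{(-t_{0})^{2/(p-1)}\kappa^{2}}{64(p-1)}\;-\;\frac{(-t_{0})^{(p+1)/(p-1)}m^{p+1}}{p+1}.
\]
For $|t_{0}|$ small the first term dominates, and choosing the optimal $t_{0}=t_{0}(n,p,m)$ yields $E(w)\geq F_{y_{*},t_{0}}(w)\geq c(n,p,m)>0$.

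The main obstacle is the step $\sup w\geq\kappa$ in the non-attained case: \eqref{SC1} is not translation-invariant, so one must argue carefully that the $C^{1}_{\mathrm{loc}}$ limit $\bar w$ really inherits the full equation despite the non-translation-invariant $\frac{1}{2}y\cdot\nabla w$ term. Once that is in hand, the $F$-functional trick converts the pointwise information $w(y_{*})\geq\kappa/2$ into the required uniform lower bound on $E(w)$ even when $|y_{*}|$ is large, which is precisely the regime in which a direct estimate $\int w^{p+1}\rho\geq(\kappa/4)^{p+1}|B(y_{*},r_{*})|\min_{B(y_{*},r_{*})}\rho$ would fail because $\rho$ itself is then exponentially small.
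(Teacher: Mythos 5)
Your upper bound is the same as the paper's: multiply \eqref{SC1} by $w\rho$, integrate by parts to obtain \eqref{Inte1}, rewrite $E(w)=\bigl(\tfrac12-\tfrac1{p+1}\bigr)\int_{\mathbb{R}^n}w^{p+1}\rho\,dy$, and use $w\leq m$.

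For the lower bound your route genuinely differs from the paper's, and the difference matters. The paper locates a point $y_0$ with $w(y_0)=\kappa$, obtains $w\geq(\tfrac1{2(p-1)})^{1/(p-1)}$ on $B_{r_1}(y_0)$ via the gradient bound, and then bounds $\int_{B_{r_1}(y_0)}w^{p+1}\rho\,dy$ from below. That last integral scales like $\rho(y_0)$, which is exponentially small in $|y_0|^2$, and no a priori control on $|y_0|$ in terms of $n,p,m$ is given; so the constant extracted there is not manifestly uniform in $w\in\mathcal{B}_{n,m}$. Your detour through Corollary \ref{CCSP} and the re-centered $F$-functional removes precisely this dependence: writing $E(w)=\lambda(w)\geq F_{y_*,t_0}(w)$, moving the Gaussian center to $y_*$, and shrinking $|t_0|$, you use $\tfrac2{p-1}<\tfrac{p+1}{p-1}$ to make the favorable $w^2$-term dominate with constants independent of $|y_*|$. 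That part of the computation is correct and is the right way to obtain a bound that is uniform over $\mathcal{B}_{n,m}$.

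The genuine gap is exactly where you flag it, namely the claim that $\sup w\geq\kappa$ in the non-attained case. After translating by $y_k$ and passing to a $C^2_{\mathrm{loc}}$ limit $\bar w$, dividing by $|y_k|$ does give $e\cdot\nabla\bar w\equiv 0$, but this says nothing about the \emph{rate} at which $\tfrac{y_k}{|y_k|}\cdot\nabla w_k\to 0$, and the term $\tfrac{y_k}{2}\cdot\nabla w_k$ is an $\infty\cdot 0$ indeterminate form. Reading \eqref{SC1} at $y=y_k$ only shows $\tfrac{y_k}{2}\cdot\nabla w(y_k)\to\Delta\bar w(0)-\tfrac1{p-1}\bar w(0)+\bar w(0)^p$, a finite but not obviously zero limit, so $\bar w$ need not solve \eqref{SC1-2} and the attained-case argument cannot be applied to it. The compactness argument is unnecessary, however: multiply \eqref{SC1} by $\rho$ alone and integrate. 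Since $\rho(\Delta w-\tfrac{y}{2}\cdot\nabla w)=\operatorname{div}(\rho\nabla w)$ and $\nabla w$ is bounded while $\rho$ decays, one gets
\[
\int_{\mathbb{R}^n}w\left(w^{p-1}-\tfrac1{p-1}\right)\rho\,dy=0,
\]
and since $w>0$ this forces $w(y_*)\geq\kappa$ at some actual point $y_*$, with no reference to the supremum being attained. With that $y_*$, your gradient-bound step and the $F_{y_*,t_0}$ estimate proceed verbatim.
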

\begin{proof}
It follows from \eqref{Inte1} and \eqref{weightedenergy} that
\begin{equation}\label{Energy'}
E(w)=\left(\frac{1}{2}-\frac{1}{p+1}\right)\int_{\mathbb{R}^{n}}w^{p+1}\rho dy.
\end{equation}
For any $w\in\mathcal{B}_{n, m}$, it is clear that there exists a point $y_{0}\in\mathbb{R}^{n}$ such that $w(y_{0})=\kappa$. Since $w\in\mathcal{B}_{n, m}$, it follows from Lemma \ref{lempre3.1} that there exists a constant $r_{1}>0$ depending only on $n, p$ and $m$ such that
\[w\geq \left(\frac{1}{2(p-1)}\right)^{\frac{1}{p-1}},\quad\text{in}~B_{r_{1}}(y_{0}).\]
Therefore, there exists a positive constant $C$ depending only on $n, p$ and $m$ such that
\begin{equation}\label{ESL1}
\int_{\mathbb{R}^{n}}w^{p+1}\rho dy\geq \int_{B_{r_{1}}(y_{0})}w^{p+1}\rho dy\geq C.
\end{equation}
By \eqref{Energy'} and \eqref{ESL1}, we have
\begin{equation}\label{ESL2}
E(w)\leq \left(\frac{1}{2}-\frac{1}{p+1}\right)C.
\end{equation}
Next, since $w\in\mathcal{B}_{n, m}$, then
\begin{equation}\label{ESL3}
E(w)=\left(\frac{1}{2}-\frac{1}{p+1}\right)\int_{\mathbb{R}^{n}}w^{p+1}\rho dy\leq \left(\frac{1}{2}-\frac{1}{p+1}\right) m^{p+1}.
\end{equation}
Combining \eqref{ESL2} and \eqref{ESL3}, we  finish the proof.
\end{proof}
\begin{lemma}\label{minimizer}
There exists $w_{0}\in\mathcal{B}_{n, m}$ such that
\[E(w_{0})=\inf_{w\in \mathcal{B}_{n, m}}E(w).\]
\end{lemma}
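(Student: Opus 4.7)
The plan is to apply the direct method of the calculus of variations directly to $\mathcal{B}_{n,m}$. The set carries strong a priori $C^3$ bounds from Lemma \ref{lempre3.1}, and the weighted energy, thanks to its Gaussian integrand, is continuous under the resulting local convergence, so the existence of a minimizer reduces to excluding the trivial limit $w_0 \equiv 0$.

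First I would record that Lemma \ref{ESL} gives $\inf_{\mathcal{B}_{n,m}} E \in [C^{-1}, C]$ with $C = C(n,p,m) > 0$; in particular the infimum is finite and strictly positive. Then I choose a minimizing sequence $\{w_k\} \subset \mathcal{B}_{n,m}$ with $E(w_k) \to \inf_{\mathcal{B}_{n,m}} E$.

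The compactness step uses Lemma \ref{lempre3.1}: since $\|w_k\|_{L^\infty(\mathbb{R}^n)} \leq m$ uniformly, there is a constant $M'=M'(n,p,m)$ such that $|\nabla^j w_k| \leq M'$ on $\mathbb{R}^n$ for $j=1,2,3$. Arzel\`a--Ascoli combined with a diagonal extraction produces a subsequence, not relabelled, converging in $C^2_{\mathrm{loc}}(\mathbb{R}^n)$ to some $w_0 \in C^2(\mathbb{R}^n)$ with $w_0 \geq 0$ and $\|w_0\|_{L^\infty} \leq m$. Passing to the limit in \eqref{SC1} classically, $w_0$ is itself a classical solution. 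Two routine items remain. On the one hand, $E(w_k) \to E(w_0)$: each of the integrands $|\nabla w_k|^2 \rho$, $w_k^2 \rho$, $|w_k|^{p+1} \rho$ is pointwise bounded by a constant depending only on $m$ and $M'$ times the integrable Gaussian weight $\rho$, so dominated convergence applies. On the other hand, $w_0 \in \mathcal{B}_{n,m}$, i.e.\ $w_0 > 0$: from $E(w_0)=\inf E \geq C^{-1}>0$ we immediately get $w_0 \not\equiv 0$; and rewriting \eqref{SC1} as the linear equation $\Delta w_0 - \tfrac{y}{2}\cdot\nabla w_0 + c(y)\, w_0 = 0$ with bounded coefficient $c(y) = w_0(y)^{p-1} - \tfrac{1}{p-1}$, the strong maximum principle forces $w_0 > 0$ everywhere. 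This yields $w_0 \in \mathcal{B}_{n,m}$ with $E(w_0) = \inf_{\mathcal{B}_{n,m}} E$.

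The main (indeed only) subtlety is the risk that the minimizing sequence ``loses mass'' in the local limit and collapses to $w_0 \equiv 0$; this is precisely what the quantitative lower bound from Lemma \ref{ESL}, combined with dominated convergence, rules out. No Palais--Smale-type analysis or concentration-compactness is required, because the pointwise ceiling $\|w_k\|_\infty \leq m$ prevents blow-up of the sequence and the exponentially decaying Gaussian weight prevents any loss of energy at infinity.
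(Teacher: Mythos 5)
Your proof follows the same direct-method approach as the paper: uniform $C^3$ bounds from Lemma \ref{lempre3.1}, Arzel\`a--Ascoli compactness, and dominated convergence against the Gaussian weight to pass the energy to the limit. The one point you make explicit that the paper glosses over is the verification that the limit $w_0$ is genuinely positive (so that $w_0 \in \mathcal{B}_{n,m}$ rather than merely nonnegative), which you handle correctly by combining the lower bound $E(w_0)=\inf E\geq C^{-1}>0$ from Lemma \ref{ESL} with the strong maximum principle.
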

\begin{proof}
Let $\{w_{i}\}$ be a sequence such that
 \[\lim_{i\rightarrow\infty}E(w_{i})=\inf_{w\in \mathcal{B}_{n, m}}E(w).\]
 It follows from Lemma \ref{lempre3.1} that $|w_{i}|+|\nabla w_{i}|+|\nabla^{2}w_{i}|+|\nabla^{3}w_{i}|$ are uniformly bounded. By the  Arzel\'{a}-Ascoli theorem, we know that there exists a function $w_{0}$ such that $\lim_{i\rightarrow\infty}w_{i}=w_{0}$.  Since the functions $w_{i}$ converge to $w_{0}$ uniformly on compact subsets of $\mathbb{R}^{n}$, then $w_{0}\in\mathcal{B}_{n, m}$. The convergence also implies that
 \[E(w_0)=\lim_{i\rightarrow\infty}E(w_{i}),\]
 so $w_0$ attains the minima in $\mathcal{B}_{n, m}$.
\end{proof}
\begin{proof}[Proof of Theorem \ref{maintheorem1}]
By Lemma \ref{minimizer}, there exists $w_{0}\in\mathcal{B}_{n, m}$ such that
\begin{equation}\label{themaino3}
E(w_{0})=\inf_{w\in\mathcal{B}_{n, m}}E(w).
\end{equation}
Since $\kappa\in \mathcal{B}_{n, m}$, it is clear that
\begin{equation}\label{themaino3'}
E(w_{0})\leq \left(\frac{1}{2}-\frac{1}{p+1}\right)(\frac{1}{p-1})^{\frac{p+1}{p-1}}.
\end{equation}
We will prove by induction  that
\begin{equation}\label{lowestenergysolution}
E(w_{0})=\left(\frac{1}{2}-\frac{1}{p+1}\right)(\frac{1}{p-1})^{\frac{p+1}{p-1}}.
\end{equation}

If $n=1, 2$, we get from Theorem \ref{theoremGiga-Kohn1985} that  $w_{0}$ is the constant solution. In particular, \eqref{lowestenergysolution} holds.

Let us assume that Theorem \ref{maintheorem1} holds for $n-1$ with $n\geq 3$. That is to say,
\begin{equation}\label{themaino2}
E(w)\geq \left(\frac{1}{2}-\frac{1}{p+1}\right)(\frac{1}{p-1})^{\frac{p+1}{p-1}}
\end{equation}
for any $w\in\mathcal{B}_{n-1, m}$. We want to show that Theorem \ref{maintheorem1}
holds for dimension $n$. If $w_{0}$ is the constant solution of \eqref{SC1}, then we are done. Therefore, we assume $w_{0}$ is not the constant solution of \eqref{SC1}. Set $u_{0}(x, t)=(-t)^{-1/(p-1)}w_{0}(x/\sqrt{-t})$. Since $w_{0}\in\mathcal{B}_{n, m}$,  $u_{0}$ is a solution of
\[\partial_{t}u=\Delta u+u^{p},\quad\text{in}~\mathbb{R}^{n}\times(-\infty, 0)\]
satisfying
\[u(x, t)\leq m(-t)^{-\frac{1}{p-1}},\quad\text{in}~\mathbb{R}^{n}\times(-\infty, 0).\]
Assume $u_{0}$ blows up at some point $x_{0}\neq 0$ and $\Theta(x_{0}, 0; u_{0})=\Theta(0, 0; u_{0})$.  Lemma \ref{densityfunctionproperties2} implies that $w_{0}$ is translation invariant in the $x_{0}$ direction. Therefore, $w_{0}\in \mathcal{B}_{n-1, m}$. By the induction assumption,
\[E(w_{0})\geq \left(\frac{1}{2}-\frac{1}{p-1}\right)(\frac{1}{p-1})^{\frac{p+1}{p-1}},\]
and the proof is complete.

If $\Theta(x_{0}, 0; u_{0})<\Theta(0, 0; u_{0})$, we can also apply the Federer dimension reduction to get a function $w_{1}\in \mathcal{B}_{n, m}$ which is translation invariant in the $x_{0}$ direction. In particular,
 $w_{1}\in \mathcal{B}_{n-1, m}$. By the induction assumption and the properties of the density function, we have
\[E(w_{0})=\Theta(0, 0; u_{0})>E(w_{1})\geq \left(\frac{1}{2}-\frac{1}{p+1}\right)(\frac{1}{p-1})^{\frac{p+1}{p-1}}.\]
In view of \eqref{themaino3'}, this is a contradiction.

Next, assume that $u$ does not blow up at any $x_{0}\neq 0$. Then there exist $C_{1}, \delta_{0}$ such that
\[u_{0}\leq c_{1},\quad\text{in}~\{1/4<|x|<1/2\}\times(-\delta_{0}, 0).\]
This is equivalent to
\[w_{0}\leq C_{1}|y|^{-\frac{2}{p-1}},\quad\text{in}~|y|\geq \frac{1}{4\sqrt{\delta_{0}}}.\]
 Thus $w_{0}\in \mathcal{B}_{n, m}$ satisfies \eqref{lowerbound}. By Proposition \ref{decreaseentropy},
\[E(w_{0})> E(\kappa)=\left(\frac{1}{2}-\frac{1}{p-1}\right)(\frac{1}{p-1})^{\frac{p+1}{p-1}},\]
which contradicts \eqref{themaino3'} again. 
\end{proof}

\section{Proof of Theorem \ref{maintheorem2} and Proposition \ref{blowupsetstructure}}
In this section, we will prove Theorem \ref{maintheorem2} and Proposition \ref{blowupsetstructure}. 
\subsection{Energy gap}
First, we prove that not only does the positive constant  solution achieve the lowest weighted energy among functions in $\mathcal{B}_{n, m}$, but there is a gap to the second lowest.
\begin{proof}[Proof of Theorem \ref{maintheorem2}]
Suppose instead that there is a sequence of self-similar solutions $\{w_{i}\}\subset\mathcal{B}_{n, m}$, which is not equal  to the positive constant solution with
\begin{equation}\label{gap1}
E(w_{i})< E(\kappa)+2^{-i}.
\end{equation}
Since $\{w_{i}\}\subset \mathcal{B}_{n, m}$,  regularity theories imply there exists a positive constant $C(n, p, m)$  such that for any $i$
\[|w_{i}|+|\nabla w_{i}|+|\nabla^{2} w_{i}|+|\nabla^{3}w_{i}|\leq C(n, p, m).\]
By the Ascoli-Arzel\`{a} theorem,   there exists a function  $w_{\infty}\in C^{2}(\mathbb{R}^{n})$ such that $w_{i}\rightarrow w_{\infty}$ uniformly on compact subset of $\mathbb{R}^{n}$. Moreover, $w_{\infty}$ is a solution of \eqref{SC1} such that
\[E(w_{\infty})\leq E(\kappa).\]
Because Theorem \ref{maintheorem1} says that $\kappa$ is the unique least energy solution,  up to a subsequence, $w_{i}\rightarrow \kappa$ as $i\rightarrow\infty$ uniformly on compact subset of $\mathbb{R}^{n}$ and $E(w_{\infty})=E(\kappa)$.

Let $\lambda_{1, i}$ be the first eigenfunction of \[L\psi=\Delta\psi-\frac{y}{2}\cdot\nabla\psi-\frac{1}{p-1}\psi+pw_{i}^{p-1}\psi\]
and let $f_{i}$ be a positive solution of the equation
\begin{equation}\label{fiequation}
\Delta f_{i}-\frac{y}{2}\cdot\nabla f_{i}-\frac{1}{p-1}f_{i}+pw_{i}^{p-1}f_{i}+\lambda_{1, i}f_{i}=0,\quad\text{in}~\mathbb{R}^{n}.
\end{equation}
 Without loss of generality, we may assume that $\int_{\mathbb{R}^{n}}f_{i}^{2}\rho dy=1$. Since $\{w_{i}\}\in\mathcal{B}_{n, m}$, it follows easily from \eqref{thevariationalcharacterization} that the eigenvalues $\lambda_{1, i}$ are uniformly bounded. By choosing a subsequence if necessary, we may assume that
 \[\lambda_{1, i}\to\lambda_{1, \infty},\quad\text{as}~i\to\infty.\]
 Multiplying  both sides of  \eqref{fiequation} by $f_{i}\rho$ and integrating by parts, we   see
 $\{f_{i}\}$ are uniformly bounded in $H^{1}_{w}(\mathbb{R}^{n})$. Then standard elliptic regularity theory implies that
  $\{f_{i}\}$ are uniformly bounded in $C^{2,\alpha}_{loc}(\mathbb{R}^{n})$. Using the Ascoli-Arzel\`{a}
   theorem again, we know that there exists a function $f_{\infty}$ such that
    $f_{i}\to f_{\infty}$ uniformly on compact subsets of $\mathbb{R}^{n}$. Taking limit in \eqref{fiequation}, we deduce that $f_{\infty}$ is a $C^{2}$ solution of the equation
 \begin{equation}\label{finftyequation}
\Delta f_{\infty}-\frac{y}{2}\cdot\nabla f_{\infty}+f_{\infty}+\lambda_{1, \infty}f_{\infty}=0,\quad\text{in}~\mathbb{R}^{n}.
\end{equation}
Since we have assumed that $f_{i}$ is positive and $\int_{\mathbb{R}^{n}}f_{i}^{2}\rho dy=1$,  $f_{\infty}$ is positive and $\int_{\mathbb{R}^{n}}f_{\infty}^{2}\rho dy=1$. Hence $\lambda_{\infty}$ is the first eigenvalue of the linear operator
\[L\psi=\Delta\psi-\frac{y}{2}\cdot\nabla\psi+\psi+\lambda\psi\]
and $f_{\infty}$ is the associated eigenfunction.  By Lemma \ref{lemsp}, $\lambda_{1, \infty}=-1$ and $f_{\infty}=1$. By Lemma \ref{lemeig} and Proposition \ref{ChCS}, we have
\[\int_{\mathbb{R}^{n}}f_{i}\left(\frac{2}{p-1}w_{i}+y\cdot\nabla w_{i}\right)\rho dy=0.\]
Letting $i$ tend to $\infty$ gives
\[\int_{\mathbb{R}^{n}}f_{\infty}\left(\frac{2}{p-1}w_{\infty}+y\cdot\nabla w_{\infty}\right)\rho dy=0.\]
Since $f_{\infty}=1$ and $w_{\infty}=\kappa$, this is a contradiction. 
 \end{proof}
 
\subsection{Proof of Proposition \ref{blowupsetstructure}}
In this section, we will combine Theorem \ref{maintheorem1} and Theorem \ref{maintheorem2} to  prove Proposition \ref{blowupsetstructure}.
\begin{proof}[Proof of Proposition \ref{blowupsetstructure}]
We know that  the blow up set can be stratified into subsets
\[\mathcal{S}_{0}\subset\mathcal{S}_{1}\cdots\mathcal{S}_{n}=\Sigma.\]
The set $\mathcal{S}_{k}$ consists of  all blow up points where the tangent flows are at most translation invariant in $k$ directions. Moreover, if for any $R>0$, we set $\mathcal{S}_{k}(R)=\mathcal{S}_{k}\cap B_{R}(0)$, then $\dim_{\mathcal{H}}(\mathcal{S}_{k}(R))\leq k$. We set
\[\Sigma_{n-1}=(\Sigma\cap B_{R}(0))\backslash \mathcal{S}_{n-3}(R), \quad \Sigma_{n-2}= (\Sigma\cap B_{R}(0))\backslash \Sigma_{n-1},\]
then $\Sigma_{R}=\Sigma\cap B_{R}(0)=\Sigma_{n-1}\cup\Sigma_{n-2}$ and $\dim_{\mathcal{H}}(\Sigma_{n-2})\leq n-3$. Thus we have proved both (1) and (3).

If $(x_{0}, T)\in \Sigma_{n-1}$, then the tangent flows are at least translation invariant in $n-2$ directions. Hence any tangent flow can be regarded  as a bounded solution of \eqref{SC1} in $\mathbb{R}^{2}$. By Theorem \ref{theoremGiga-Kohn1985} and \cite[Theorem 3.1]{Dushizhong2019}, any tangent flow is the positive constant solution of \eqref{SC1}. Hence we have proved (4).

In order to finish the proof of Proposition \ref{blowupsetstructure}, it still remains to prove (2). First, we prove that $\Sigma_{n-1}$ is relative open in $\Sigma_{R}$. Assume it is false, then there exists a point $(x_{0}, T)\in \Sigma_{n-1}$ and  a sequence $\{(x_{i}, T)\}\subset\Sigma_{n-2}$ such that $\lim_{i\to\infty} x_{i}=x_{0}$.  Since $\{(x_{i}, T)\}\subset\Sigma_{n-2}$ , we get from Theorem \ref{maintheorem2} that
\[\Theta(x_{i}, T; u)\geq E(\kappa)+\epsilon.\]
Since $(x_{0}, T)\in \Sigma_{n-1}$, then 
\[\Theta(x_{0}, T; u)= E(\kappa).\]
Applying Lemma \ref{densityfunctionproperties1}, we have
\[\Theta(x_{0}, T; u)= E(\kappa)\geq\limsup_{i\to\infty}\Theta(x_{i}, T; u)\geq E(\kappa)+\epsilon,\]
which is a contradiction. Thus we have shown that  $\Sigma_{n-1}$ is relative open in $\Sigma_{R}$. Once this is established, the proof of main theorem in \cite{Velazquez1993-2} yields  $\Sigma_{n-1}$ is $n-1 $ rectifiable.

\end{proof}
\section{Proof of Theorem \ref{maintheorem2}}

In this  section, we prove Theorem \ref{maintheorem2}. First, we study the case when $1\leq n\leq 3, 1<p<\infty$ or $n\geq 3$ and $1<p<(n+1)/(n-3)$.
If we view any homogeneous positive solution $w$ of
\eqref{ellipticsolution} as a suitable weak solution of \eqref{SC1}, then $w$ is not the lowest energy solution among self-similar solutions. The proof of this fact is based on  the following classification result.
\begin{proposition}\label{classifyode}
 Assume $1\leq n\leq 3, 1<p<\infty$ or $n\geq 3$ and $1<p<(n+1)/(n-3)$. If $\Phi$ is a bounded positive solution of the equation \eqref{sphereequation}, then $\Phi=\beta^{1/(p-1)}$ with
 \[\beta=\frac{2}{p-1}\left(n-2-\frac{2}{p-1}\right).\]
\end{proposition}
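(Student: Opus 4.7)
The plan is to view \eqref{sphereequation} as a subcritical semilinear elliptic equation on the round sphere $S^{n-1}$ and apply a Bochner--integration-by-parts rigidity argument in the spirit of Bidaut-V\'eron and V\'eron. Once $\Phi$ is shown to be constant, the value $\Phi=\beta^{1/(p-1)}$ will follow algebraically from the equation.

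First I would dispose of the trivial case $n=1$: here $S^{0}$ is zero-dimensional and the equation collapses to $-\beta\Phi+\Phi^{p}=0$, which together with $\Phi>0$ immediately gives $\Phi=\beta^{1/(p-1)}$. For $n\geq 2$, standard elliptic theory and the strong maximum principle ensure that any bounded positive solution $\Phi$ is smooth and uniformly bounded below away from $0$, so arbitrary real powers of $\Phi$ are well-defined.

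The central step is the Bochner computation. Setting $\Psi=\Phi^{\gamma}$ for an exponent $\gamma>0$ to be tuned, I would combine the Weitzenb\"ock identity
\[
\tfrac12\Delta_{S^{n-1}}|\nabla\Psi|^{2}=|\nabla^{2}\Psi|^{2}+\nabla\Psi\cdot\nabla\Delta_{S^{n-1}}\Psi+(n-2)|\nabla\Psi|^{2},
\]
in which $(n-2)g$ is the Ricci curvature of the unit sphere, with the refined Kato--Newton inequality $|\nabla^{2}\Psi|^{2}\geq (\Delta_{S^{n-1}}\Psi)^{2}/(n-1)$ valid on the $(n-1)$-dimensional manifold $S^{n-1}$. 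Integrating over $S^{n-1}$ and using \eqref{sphereequation} to rewrite $\Delta_{S^{n-1}}\Psi$ in terms of $|\nabla\Phi|^{2}$ and polynomial expressions in $\Phi$, the Hessian terms can be eliminated and one should arrive at an inequality of the form
\[
C(n,p,\gamma)\int_{S^{n-1}}\Phi^{a(\gamma)}|\nabla\Phi|^{2}\,d\sigma\leq 0,
\]
with explicit coefficient $C(n,p,\gamma)$. The set of $\gamma$ for which $C(n,p,\gamma)>0$ should turn out to be non-empty precisely when $p$ is Sobolev-subcritical on $S^{n-1}$, i.e.\ $p<(n+1)/(n-3)$ when $n\geq 4$ and for all $p>1$ when $n\in\{2,3\}$, matching the hypothesis. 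In that case one concludes $\nabla\Phi\equiv 0$, so $\Phi$ is constant, and then $-\beta\Phi+\Phi^{p}=0$ yields $\Phi=\beta^{1/(p-1)}$.

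The principal obstacle is the algebraic bookkeeping: carrying out the repeated integrations by parts, tracking the cross terms between $\Phi^{-1}|\nabla\Phi|^{4}$ contributions and the Ricci-induced term, and verifying that the admissible window for $\gamma$ is non-empty exactly throughout the stated subcritical range. The threshold $p=(n+1)/(n-3)$ is the Sobolev critical exponent on $S^{n-1}$, where compactness of the variational problem fails, so the dimension/exponent restriction in the hypothesis is essentially sharp; this is the structural reason the argument must degenerate at the endpoint and cannot be pushed further by refining $\gamma$.
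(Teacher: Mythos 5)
The paper does not prove this proposition at all: it simply cites Bidaut-V\'eron--V\'eron (Theorem 6.1 of the referenced work), Gidas--Spruck (Theorem B.2), and Dolbeault et al., all of which establish rigidity of positive solutions of $\Delta_{S^{n-1}}\Phi-\beta\Phi+\Phi^p=0$ precisely by the Bochner/Weitzenb\"ock machinery you sketch. So your proposal is a reconstruction of the argument the authors outsource to the literature, and the structure you lay out (substitute $\Psi=\Phi^\gamma$, integrate the Bochner identity with Ricci term $(n-2)|\nabla\Psi|^2$ on the $(n-1)$-sphere, use $|\nabla^2\Psi|^2\geq(\Delta\Psi)^2/(n-1)$, eliminate the Hessian and $\Phi^{-1}|\nabla\Phi|^4$ terms to reach $C(n,p,\gamma)\int_{S^{n-1}}\Phi^{a}|\nabla\Phi|^2\leq 0$) is exactly the right one. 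As a roadmap the approach is sound; what you defer, the ``algebraic bookkeeping,'' is however the entire content of the cited theorems, so this is a plan rather than a proof.

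One point your sketch glosses over is worth making explicit. In the Bidaut-V\'eron--V\'eron framework the Bochner argument closes only if, in addition to $p$ being Sobolev-subcritical on the $(n-1)$-dimensional sphere, the zeroth-order coefficient satisfies a separate bound of the form $\beta\leq(n-1)/(p-1)$ (strict for rigidity); these are \emph{a priori} two distinct constraints. For the specific value $\beta=\frac{2}{p-1}(n-2-\frac{2}{p-1})$ appearing here, the coefficient condition rearranges to $n-3\leq 4/(p-1)$, i.e.\ $p\leq(n+1)/(n-3)$ when $n>3$, so it happens to coincide with the subcriticality threshold; but this coincidence is a computation one must perform, not assume. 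It is also worth noting that for $n\leq 2$, and for $n=3$ with $p\leq 3$, one has $\beta\leq 0$, so integrating the equation over $S^{n-1}$ shows there are no positive solutions at all; the proposition is then vacuous, which matters since $\beta^{1/(p-1)}$ is not even a well-defined positive constant in that range.
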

\begin{proof}
The proof of this rigidity result can be found in \cite[Theorem 6.1]{Veron1991}, \cite[Theorem B.2]{Gidas-Spruck1981} and \cite[Theorem 1]{Dolbeault2014}.
\end{proof}
\begin{lemma}\label{twodiemsionenergy}
Assume $n\geq 3$ and $1<p<(n+1)/(n-3)$. If $w$ is a positive homogeneous of the equation
\begin{equation}\label{twodimensionequation}
\Delta w+w^{p}=0,\quad\text{in}~\mathbb{R}^{n}\backslash\{0\},
\end{equation}
then
\begin{equation}\label{radialsingularsolutionenergy}
E(w)>\left(\frac{1}{2}-\frac{1}{p+1}\right)\left(\frac{1}{p-1}\right)^{\frac{p+1}{p-1}}.
\end{equation}
\end{lemma}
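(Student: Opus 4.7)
The plan is to combine the explicit classification in Proposition~\ref{classifyode} with a direct Gaussian computation, reducing the claim to a classical Gamma-function inequality.

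Set $\alpha:=2/(p-1)$. Proposition~\ref{classifyode} forces the spherical profile of $w$ to be constant, so that
\[
w(y)=c|y|^{-\alpha},\qquad c:=[\alpha(n-2-\alpha)]^{1/(p-1)}.
\]
A direct check using $y\cdot\nabla w=-\alpha w$ and $\alpha/2=1/(p-1)$ shows that $w$ also satisfies \eqref{SC1-2} on $\mathbb{R}^n\setminus\{0\}$. Since $p>(n+2)/(n-2)$ forces $\alpha<(n-2)/2$, each integral in $E(w)$ converges. Multiplying \eqref{SC1-2} by $w\rho$ and integrating on $\mathbb{R}^n\setminus B_\epsilon(0)$, the boundary contribution at $\partial B_\epsilon$ is $O(\epsilon^{n-2-2\alpha})\to 0$, which gives the Pohozaev-type identity
\[
\int_{\mathbb{R}^n}|\nabla w|^2\rho\,dy+\tfrac{1}{p-1}\int_{\mathbb{R}^n}w^2\rho\,dy=\int_{\mathbb{R}^n}w^{p+1}\rho\,dy.
\]
Plugging this into \eqref{weightedenergy} yields $E(w)=\tfrac{p-1}{2(p+1)}\int w^{p+1}\rho\,dy$, and the same identity applied to $\kappa$ gives $E(\kappa)=\tfrac{p-1}{2(p+1)}\kappa^{p+1}$, so the lemma is equivalent to $\int w^{p+1}\rho\,dy>\kappa^{p+1}$.

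Polar coordinates together with the substitution $r^2/4=u$ give $\int_{\mathbb{R}^n}|y|^{-a}\rho\,dy=2^{-a}\Gamma((n-a)/2)/\Gamma(n/2)$ for $0<a<n$. Applying this with $a=\alpha(p+1)=2\alpha+2$, and using $c^{p+1}=[\alpha(n-2-\alpha)]^{\alpha+1}$ together with $\kappa^{p+1}=(\alpha/2)^{\alpha+1}$, the inequality simplifies to
\[
\frac{\Gamma(\gamma+\alpha+1)}{\Gamma(\gamma)}<\Bigl(\gamma+\tfrac{\alpha}{2}\Bigr)^{\alpha+1},\qquad\gamma:=\tfrac{n-2-2\alpha}{2}>0.
\]

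The main obstacle is this last Gamma-function estimate, which is equivalent to $\Gamma(x+s)/\Gamma(x)<\bigl(x+(s-1)/2\bigr)^s$ for $x>0$ and $s=\alpha+1>1$. When $s\geq 2$ is an integer the left side equals $x(x+1)\cdots(x+s-1)$, whose arithmetic mean is $x+(s-1)/2$, and the strict AM--GM inequality gives the conclusion at once. For non-integer $s>1$ one may interpolate via log-convexity of $\Gamma$, or observe that the function $x\mapsto s\log(x+(s-1)/2)-\log\Gamma(x+s)+\log\Gamma(x)$ vanishes as $x\to\infty$ by the Stirling expansion (with leading correction $-s(s-1)(s+1)/(24x^2)$, negative for $s>1$) and is monotone on $(0,\infty)$, yielding the strict inequality throughout the admissible range $(n+2)/(n-2)<p<(n+1)/(n-3)$.
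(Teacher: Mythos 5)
Your reduction to a Gamma-function inequality is correct and is exactly what the paper does (the Gaussian integral $\int|y|^{-a}\rho\,dy=2^{-a}\Gamma((n-a)/2)/\Gamma(n/2)$, the substitution $\gamma=(n-2-2\alpha)/2$, and the target inequality $\Gamma(\gamma+\alpha+1)/\Gamma(\gamma)<(\gamma+\alpha/2)^{\alpha+1}$ all match, after unwinding, the paper's inequality \eqref{appendix2newobjective}). The problem is the final step: your two routes to the Gamma inequality both have gaps.

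First, the integer-$s$ AM--GM observation, while correct, never applies here: in the admissible range $(n+2)/(n-2)<p<(n+1)/(n-3)$ one has $s=1+\alpha\in((n-1)/2,n/2)$, an open interval of length $1/2$ whose endpoints are excluded, so $s$ is never an integer. Second, the proposed interpolation by log-convexity of $\Gamma$ runs the wrong way. For $s\in(m,m+1)$, log-convexity gives $\Gamma(x+s)\le\Gamma(x+m)^{m+1-s}\Gamma(x+m+1)^{s-m}$, and combining this with the integer AM--GM bounds and concavity of $t\mapsto\log(x+t)$ produces an upper bound of the form $(x+c)^s$ with $c=\frac{m(2s-m-1)}{2s}$; but $c\le\frac{s-1}{2}$ is equivalent to $(s-m)(s-m-1)\ge 0$, which is \emph{false} for $s\in(m,m+1)$. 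So this chain of inequalities does not close. Third, the Stirling-plus-monotonicity route is the right idea, but you have only asserted monotonicity of $g(x)=s\log(x+(s-1)/2)-\log\Gamma(x+s)+\log\Gamma(x)$ without proving it, and that is precisely the technical heart of the argument. The paper proves it by showing $\phi$ (which is $g$ shifted by $s$) is convex, via the trigamma series $\sum_{l\ge 0}(x+l)^{-2}$ together with the elementary estimate $(\tau-1-\alpha/2)(\tau+\alpha/2)\le\tau(\tau-1)$ for $\tau>1$; once convexity and $\phi'\to 0$ are in hand, $\phi$ is decreasing to $0$ and hence positive. As a minor point, your stated leading Stirling term has the wrong sign: one finds $g(x)=+\frac{s(s-1)(s+1)}{24x^2}+O(x^{-3})$ (positive for $s>1$), consistent with $g>0$; a negative leading term would contradict the conclusion you are aiming for. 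In short: the reduction is fine, but you still need to supply the convexity/monotonicity computation that the paper carries out for $\phi$, or find another proof of $\Gamma(x+s)/\Gamma(x)<(x+(s-1)/2)^s$ valid for non-integer $s>1$.
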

\begin{proof}
Let $\Phi$ be a positive function  defined on $\mathbb{S}^{n-1}$ such that $w(r, \theta)=r^{-2/(p-1)}\Phi(\theta)$. Then $\Phi$ satisfies \begin{equation}\label{sphereequation}
\Delta_{\mathbb{S}^{n-1}}\Phi-\beta\Phi+\Phi^{p}=0,
\end{equation}
where $\mathbb{S}^{n-1}$ is the unit sphere in $\mathbb{R}^{n-1}$ and
\[\beta=\frac{2}{p-1}\left(n-2-\frac{2}{p-1}\right).\]
By Proposition \ref{classifyode},  $\Phi=\beta^{1/(p-1)}$. It follows that $w=\beta^{1/(p-1)}r^{-\frac{2}{p-1}}$.  In particular, we have
\[
\begin{aligned}
E(w)=&\left(\frac{1}{2}-\frac{1}{p+1}\right)\int_{\mathbb{R}^{n}}w^{p+1}\rho dy\\
=&\left(\frac{1}{2}-\frac{1}{p+1}\right)\left[\frac{2}{p-1}\left(n-2-\frac{2}{p-1}\right)\right]^{\frac{p+1}{p-1}}\int_{\mathbb{R}^{n}}|y|^{\frac{-2(p+1)}{p-1}}\rho dy\\
\end{aligned}
\]
Thus in order to  get \eqref{radialsingularsolutionenergy}, it sufficies to show that
\begin{equation}\label{comparewithsingularsolution}
\left[\frac{2}{p-1}\left(n-2-\frac{2}{p-1}\right)\right]^{\frac{p+1}{p-1}}\int_{\mathbb{R}^{n}}|y|^{\frac{-2(p+1)}{p-1}}\rho dy>(\frac{1}{p-1})^{\frac{p+1}{p-1}}.
\end{equation}
This has been  essentially proved by Matano and Merle  in \cite{Merle-Matano2011}.   In \cite{Merle-Matano2011}, Matano and Merle  gave a ``parabolic proof'' without using the explicit formula. To be self contained, we will give a more direct proof in Appendix B.
\end{proof}
\begin{lemma}\label{subcriticalsuitableweaksolutions}
 Assume $n\leq 2$ or $n\geq 3, 1<p\leq (n+2)/(n-2)$, $w$ is a suitable weak solution of \eqref{SC1}, then $w=0$ or $w=\pm \kappa$.
\end{lemma}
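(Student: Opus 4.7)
The plan is to extend the classical Giga--Kohn Pohozaev argument of Theorem~\ref{theoremGiga-Kohn1985} from bounded smooth solutions to the suitable weak setting, exploiting that the stationary condition (a Pohozaev--type identity in disguise) is already built into the definition. Concretely, I aim to derive the weighted Pohozaev identity
\[
\Bigl(\tfrac{n}{p+1}-\tfrac{n-2}{2}\Bigr)\int_{\mathbb{R}^n}|\nabla w|^2\rho\, dy + \tfrac{p-1}{4(p+1)}\int_{\mathbb{R}^n}|y|^2|\nabla w|^2\rho\, dy=0,
\]
and then read off $\nabla w\equiv 0$ from the signs of its coefficients in the subcritical/critical regime.

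To produce this identity for a suitable weak solution, I take a standard smooth cutoff $\eta_R$ equal to $1$ on $B_R$, supported in $B_{2R}$, with $|\nabla\eta_R|\le C/R$, and apply three tests: (A) the weak form of \eqref{SC1} with $\phi=\eta_R w$; (B) the weak form with $\phi=\eta_R|y|^2 w$; (C) the stationary condition with $Y=\eta_R y$. At finite $R$ each of these is a legitimate test (the test functions are in $C^\infty_0$), and each yields an identity among the truncated integrals $I_R=\int\eta_R|\nabla w|^2\rho\, dy$, $J_R=\int\eta_R w^2\rho\, dy$, $K_R=\int\eta_R|w|^{p+1}\rho\, dy$ and their $|y|^2$-weighted analogues $I'_R,J'_R,K'_R$, plus error terms supported on the annulus $B_{2R}\setminus B_R$. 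A short algebraic combination --- using (A) to eliminate $K$ from (C), and then (B) to kill the remaining $J,J',K'$ contributions, exactly paralleling the derivation carried out in Section~3 of the paper for smooth solutions --- leaves the Pohozaev identity above at finite $R$, up to errors.

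Passing to the limit $R\to\infty$, the main terms converge by dominated convergence (for $I_R,J_R,K_R$, using the integrability in the definition of suitable weak solution) and by monotone convergence in $[0,\infty]$ for $I'_R,J'_R,K'_R$. The errors in (A) and (C) vanish straightforwardly because $|y||\nabla\eta_R|$ is uniformly bounded, so they are dominated by $C\int_{B_{2R}\setminus B_R}(|\nabla w|^2+w^2+|w|^{p+1})\rho\,dy$, which tends to zero. The error from (B) is the main obstacle, being of order $R\int_{B_{2R}\setminus B_R}|w||\nabla w|\rho$, which need not vanish from the raw integrability alone. I would handle this by first promoting $w$ to a smooth function via elliptic bootstrap (the subcritical/critical growth of $|w|^{p-1}w$ makes the iteration close, and the integrability $\int|w|^{p+1}\rho\,dy<\infty$ rules out local singularities of the self-similar model $|y-y_0|^{-2/(p-1)}$), and then using the Ornstein--Uhlenbeck character of the principal operator $\Delta-\tfrac{y}{2}\cdot\nabla$ together with the finite weighted Dirichlet energy of $w$ to derive enough exponential-type decay that the finiteness of $\int|y|^2(w^2+|\nabla w|^2)\rho\,dy$ is secured and the error in (B) is $o(1)$. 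Once the Pohozaev identity holds in the limit, both coefficients are nonnegative when $p\le(n+2)/(n-2)$, with the coefficient of $\int|y|^2|\nabla w|^2\rho\,dy$ strictly positive, so that integral vanishes; hence $\nabla w\equiv 0$ a.e.\ on $\mathbb{R}^n\setminus\{0\}$. Together with $w\in H^1_{loc}(\mathbb{R}^n)$ (which rules out a jump when $n=1$), this forces $w$ to be a constant, and the equation itself then gives $w\in\{0,\pm\kappa\}$.
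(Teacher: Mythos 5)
Your route matches the paper's (very terse) proof: derive the Giga--Kohn Pohozaev identity for suitable weak solutions via cutoff tests and let $R\to\infty$. You are in fact more explicit than the paper about where the difficulty sits---the error from test (B) with $\phi=\eta_R|y|^2w$, of size $R\int_{B_{2R}\setminus B_R}|w||\nabla w|\rho\,dy$, which does not vanish from the raw integrability alone. That diagnosis is correct, and the errors from (A) and (C) vanish as you say.

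The proposed repair has two soft spots. First, the naive Calder\'{o}n--Zygmund bootstrap does \emph{not} close at the critical exponent $p=(n+2)/(n-2)$: starting from $w\in L^{2^*}_{\mathrm{loc}}$ one lands back in $L^{2^*}_{\mathrm{loc}}$, so ``makes the iteration close'' is inaccurate and a Brezis--Kato-type argument would be required. However, smoothness is unnecessary here: since $\eta_R|y|^2w$ is compactly supported and lies in $H^1\cap L^{p+1}$, it is admissible in the weak formulation by density of $C^\infty_0$, so test (B) can be performed without any regularity of $w$. Second, ``exponential-type decay'' overstates both what holds (the constant $\kappa$ is a suitable weak solution and decays not at all) and what is needed; the correct statement is the second-moment bound $\int_{\mathbb{R}^n}|y|^2w^2\rho\,dy<\infty$, which already follows from $w\in H^1_\omega$ by the ground-state transform $v=w\rho^{1/2}$. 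Indeed, a direct computation with cutoffs gives
\[
\int_{\mathbb{R}^n}\eta_R^2|\nabla v|^2\,dy+\frac{1}{16}\int_{\mathbb{R}^n}\eta_R^2|y|^2w^2\rho\,dy=\int_{\mathbb{R}^n}\eta_R^2|\nabla w|^2\rho\,dy+\frac{n}{4}\int_{\mathbb{R}^n}\eta_R^2w^2\rho\,dy+\frac{1}{2}\int_{\mathbb{R}^n}\eta_R(\nabla\eta_R\cdot y)w^2\rho\,dy,
\]
whose right-hand side converges as $R\to\infty$ (the last term is $O(\int_{B_{2R}\setminus B_R}w^2\rho)$), while the left-hand side is a sum of two nonnegative increasing quantities; hence both converge and $\int|y|^2w^2\rho<\infty$. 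With this bound and $|y|\,|\nabla\eta_R|\le C$, Cauchy--Schwarz gives the (B)-error $\le C(\int_{B_{2R}\setminus B_R}|y|^2w^2\rho)^{1/2}(\int_{B_{2R}\setminus B_R}|\nabla w|^2\rho)^{1/2}\to0$. Once these two points are patched, the monotone-convergence treatment of $\int|y|^2|\nabla w|^2\rho$ in the combined Pohozaev identity and the sign analysis at $p\le(n+2)/(n-2)$ close the proof exactly as you describe, so the argument is repairable and essentially the one the paper intends.
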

\begin{proof}
In \cite{Giga-Kohn1985}, the proof of Theorem 1 is obtained by choosing three test functions and then combining the three identities in a suitable way.
 By approximation, we are still able to do the same computation for suitable weak solutions to derive the Pohozaev identity.
\end{proof}
\begin{proof}[Proof of Theorem \ref{maintheorem2}]
Similar to the proof of Theorem \ref{maintheorem1}, we will prove by induction.  If $n=1, 2$, we get from Lemma \ref{subcriticalsuitableweaksolutions} that $\mathcal{B}_{n}$ consists only of the constant solutions, so Theorem \ref{maintheorem2} holds.

Assume that Theorem \ref{maintheorem2} holds for $n-1$ with $n\geq 3$.  We want to show that Theorem \ref{maintheorem2}
holds for dimension $n$.

Let $u(x, t):=(-t)^{-1/(p-1)}w(x/\sqrt{-t})$, which is a suitable weak solution of \eqref{Fujitaequation}.

Assume $u$ blows up at some point $x_{0}\neq 0$, then we can apply the Federer dimension reduction to get    a solution $w_{1}$ of \eqref{SC1} such that $w_{1}$ is translation invariant in the $x_{0}$ direction. Moreover,
\[E(w_{1})=\Theta(x_{0}, 0; u)\leq \Theta(0, 0; u)\leq E(w).\]
By the inductive assumption,
\[E(w)\geq \left(\frac{1}{2}-\frac{1}{p+1}\right)(\frac{1}{p-1})^{\frac{p+1}{p-1}},\]
and the proof is complete.

Next,   assume  $u(x_{0}, 0)\neq 0$ for any $x_{0}\neq 0$. By
standard parabolic regularity theory, there exist $C_{1}, \delta_{0}$ such that
\[u\leq C_{1},\quad\text{in}~\{1/4<|x|<1/2\}\times [-\delta_{0}, 0].\]
This is equivalent to
\[w\leq C_{1}|y|^{-\frac{2}{p-1}},\quad\text{in}~|y|\geq \frac{1}{4\sqrt{\delta_{0}}}.\]
There are two cases.
\begin{itemize}
\item Case 1: There exists a constant $cC_{0}$ such that
\[w\leq C_{0},\quad\text{in}~\mathbb{R}^{n};\]
\item Case 2: There exists a point $y_{0}\in\mathbb{R}^{n}$ such that $w_{0}$ blows up at $y_{0}$.
\end{itemize}

In Case 1,  $w_{0}$ satisfies \eqref{lowerbound}. Then we can apply Lemma \ref{decreaseentropy} to obtain 
\[E(w)>\left(\frac{1}{2}-\frac{1}{p+1}\right)(\frac{1}{p-1})^{\frac{p+1}{p-1}}.\]

In Case 2, we choose a sequence $\{\lambda_{k}\}$ such that $\lim_{k\rightarrow\infty}\lambda_{k}=+\infty.$ For any $k$,  define
$w_{k}(y)=\lambda_{k}^{-2/(p-1)}w(\lambda_{k}^{-1}(y-y_{0}))$. Then $w_{k}$ satisfies the equation
\begin{equation}\label{eqscale}
\Delta w_{k}-\frac{\lambda_{k}^{-2}}{2}(y+y_{0})\cdot\nabla w_{k}-\frac{\lambda_{k}^{-2}}{p-1}w_{k}
+w_{k}^{p}=0.
\end{equation}
By the convergence theories obtained in \cite{Wang-Wei2021}, there exists a function $w_{\infty}$ such that $\lim_{k\rightarrow\infty}w_{k}=w_{\infty}$. Moreover, $w_{\infty}$ is a homogeneous suitable weak solution of the equation
\[\Delta w_{\infty}+w_{\infty}^{p}=0,\quad\text{in}~\mathbb{R}^{n}.\]
Therefore, there exists a function $\Phi$ defined on $\mathbb{S}^{n-1}$ such that $w_{\infty}(r, \theta)=r^{-\frac{2}{p-1}}\Phi(\theta)$,
where $(r, \theta)$ is the polar coordinate. If $\Phi$ is not smooth on $\mathbb{S}^{n-1}$, we can apply
the Federer dimension procedure (see \cite[Section 4]{Wang-Wei2015}) once again
to reduce to the case that $\Phi$ is smooth on $\mathbb{S}^{n-2}$. By
Lemma \ref{twodiemsionenergy},
\[E(w)\geq E(w_{\infty})>\left(\frac{1}{2}-\frac{1}{p+1}\right)(\frac{1}{p-1})^{\frac{p+1}{p-1}}.\]
Hence the proof of Theorem \ref{maintheorem2} is completed.
\end{proof}

\section{Some open questions}
The results above also suggest several closely related questions. The first one is  about the  energy of functions in $\mathcal{B}_{n}$.

\textbf{Open question 1:} Does Theorem \ref{theoremmain3} hold for all $n\geq 4, p\geq (n+1)/(n-3)$ and $w\in\mathcal{B}_{n}$?

As mentioned before, if $n\geq 3$, $p>(n+2)/(n-2)$, then $\mathcal{B}_{n}$ contains ``elliptic solutions''. That is, $2/(p-1)$ homogeneous solutions of the elliptic equation
\begin{equation}\label{ellipticsolution}
 \Delta w+w^{p}=0,\quad\text{in}~\mathbb{R}^{n}\backslash\{0\}.
 \end{equation}
 Let
 \[\mathcal{B}_{n, e}=\{w\in\mathcal{B}_{n}: w~\text{is a $2/(p-1)$ homogeneous solution of}~ \eqref{ellipticsolution}\}.\]
 It turns out that functions in $\mathcal{B}_{n, e}$ are the main obstructions to prove the first open problem.  If $w\in\mathcal{B}_{n, e}$, then there exists a positive function $\Phi$ defined on $\mathbb{S}^{n-1}$ such that
$w(r, \theta)=r^{-2/(p-1)}\Phi(\theta)$, where $(r, \theta)$ is the polar coordinate. Since $w$ satisfies \eqref{ellipticsolution}, then $\Phi$ is  a positive solution of the equation \eqref{sphereequation}.

Let $\Phi$ be a bounded positive solution of \eqref{sphereequation}, we define
\[\mathcal{E}(\Phi)=\frac{1}{2}\int_{\mathbb{S}^{n-1}}|\nabla\Phi|^{2}d\theta+\frac{1}{2\beta}\int_{\mathbb{S}^{n-1}}\Phi^{2}d\theta-\frac{1}{p+1} \int_{\mathbb{S}^{n-1}}\Phi^{p+1}d\theta.\]

\textbf{Open question 2:} If $n\geq 4, p\geq (n+1)/(n-3)$, does
\[\mathcal{E}(\Phi)\geq \mathcal{E}(\beta^{\frac{1}{p-1}})\]
for any bounded positive solution of \eqref{sphereequation}?

Finally, for finite time blow up solutions of \eqref{Cauchyproblem}, we propose the following.

\textbf{Open question 3:} Without condition \ref{TypeIassumption}, does Proposition \ref{blowupsetstructure} hold for all $n\geq 3$ and $p=(m+2)/(m-2)$ with $3\leq m\leq n$?
\begin{remark}
If $n\geq 7$ and $p=(n+2)/(n-2)$, it is proved by Wang-Wei \cite{Wang-Wei2021} that any finite time blow up solution of \eqref{Cauchyproblem} must be of type I. Combing this fact with \cite[Theorem 1]{Giga-Kohn1985} and main theorem in \cite{Velazquez1993-2}, we know that the blow up set $\Sigma$ is $n-1$ rectifiable. Moreover, any tangent flow is the constant solution of \eqref{SC1}.
\end{remark}

\section*{Appendix A}
In this appendix, we prove Theorem \ref{secondvariation}.
\begin{proof}[Proof of Theorem \ref{secondvariation}]
Because $w$ is a bounded solution of \eqref{SC1-2},  for any $\phi\in C_{0}^{\infty}(\mathbb{R}^{n})$,
\[\frac{\partial}{\partial s} (F_{x(s), t(s)}(w+s\phi))|_{s=0}=0.\]
Substituting \eqref{mlpsi}, \eqref{firstvariation2024-04-05} and \eqref{Testfunction} into \eqref{Thegeneralsecondvariationformula}, we have
\begin{align}\label{Thegeneralsecondvariationformula'}
&\frac{\partial^{2}}{\partial s^2} (F_{x(s), t(s)}(w+s\phi))|_{s=0}\notag\\
=&\frac{p+1}{2(p-1)}\frac{2}{p-1}h^{2}\int_{\mathbb{R}^{n}}|\nabla w|^{2}\rho dy\notag\\
&-\frac{1}{p-1}\frac{2}{p-1}h^{2}\int_{\mathbb{R}^{n}}|w|^{p+1}\rho dy\notag\\
&-\frac{1}{(p-1)^{2}}(-\frac{2}{p-1}+1)h^{2}\int_{\mathbb{R}^{n}}w^{2}\rho dy\notag\\
&+\frac{2}{p-1}h\int_{\mathbb{R}^{n}}w\phi\rho dy\notag\\
&-\frac{p+1}{p-1}h\int_{\mathbb{R}^{n}}|\nabla w|^{2}\rho[\frac{nh}{2}+\frac{y\cdot y_{0}}{2}-\frac{h|y|^{2}}{4}]dy\notag\\
&+\frac{2}{p-1}h\int_{\mathbb{R}^{n}}|w|^{p+1}\rho[\frac{nh}{2}+\frac{y\cdot y_{0}}{2}-\frac{h|y|^{2}}{4}]dy\\
&-\frac{2}{(p-1)^{2}}h\int_{\mathbb{R}^{n}}w^{2}\rho[\frac{nh}{2}+\frac{y\cdot y_{0}}{2}-\frac{h|y|^{2}}{4}]dy\notag\\
&+\int_{\mathbb{R}^{n}}|\nabla\phi|^{2}\rho dy-p\int_{\mathbb{R}^{n}}|w|^{p-1}\phi^{2}\rho dy+\frac{1}{p-1}\int_{\mathbb{R}^{n}}\phi^{2}\rho dy\notag\\
&+2\int_{\mathbb{R}^{n}}(\nabla w\cdot\nabla\phi)\rho[\frac{y\cdot y_{0}}{2}-\frac{h|y|^{2}}{4}]dy\notag\\
&-2\int_{\mathbb{R}^{n}}|w|^{p-1}w\phi \rho[\frac{y\cdot y_{0}}{2}-\frac{h|y|^{2}}{4}]dy\notag\\
&+\frac{2}{p-1}\int_{\mathbb{R}^{n}}w\phi\rho[\frac{y\cdot y_{0}}{2}-\frac{h|y|^{2}}{4}]dy\notag\\
&+\frac{1}{2}\int_{\mathbb{R}^{n}}|\nabla w|^{2} \rho\{[\frac{nh}{2}+\frac{y\cdot y_{0}}{2}-\frac{h|y|^{2}}{4}]^{2}+\frac{nh^{2}}{2}-\frac{|y_{0}|^{2}}{2}-\frac{h^{2}|y|^{2}}{2}\}dy\notag\\
&-\frac{1}{p+1}\int_{\mathbb{R}^{n}}|w|^{p+1}\rho\{[\frac{nh}{2}+\frac{y\cdot y_{0}}{2}-\frac{h|y|^{2}}{4}]^{2}+\frac{nh^{2}}{2}-\frac{|y_{0}|^{2}}{2}-\frac{h^{2}|y|^{2}}{2}\}dy\notag\\
&+\frac{1}{2(p-1)}\int_{\mathbb{R}^{n}}w^{2}\rho\{[\frac{nh}{2}+\frac{y\cdot y_{0}}{2}-\frac{h|y|^{2}}{4}]^{2}+\frac{nh^{2}}{2}-\frac{|y_{0}|^{2}}{2}-\frac{h^{2}|y|^{2}}{2}\}dy\notag.
\end{align}
Notice that
\begin{align}
&-\frac{p+1}{p-1}h\int_{\mathbb{R}^{n}}|\nabla w|^{2}\rho[\frac{nh}{2}+\frac{y\cdot y_{0}}{2}-\frac{h|y|^{2}}{4}]dy\notag\\
&+\frac{2}{p-1}h\int_{\mathbb{R}^{n}}|w|^{p+1}\rho[\frac{nh}{2}+\frac{y\cdot y_{0}}{2}-\frac{h|y|^{2}}{4}]dy\notag\\
&-\frac{2}{(p-1)^{2}}h\int_{\mathbb{R}^{n}}w^{2}\rho[\frac{nh}{2}+\frac{y\cdot y_{0}}{2}-\frac{h|y|^{2}}{4}]dy\notag\\
=&-2h(\frac{p}{p-1}+\frac{1}{p-1})\frac{1}{2}\int_{\mathbb{R}^{n}}|\nabla w|^{2}\rho[\frac{nh}{2}+\frac{y\cdot y_{0}}{2}-\frac{h|y|^{2}}{4}]dy\\
&+2h(\frac{p}{p-1}+\frac{1}{p-1})\frac{1}{p+1}\int_{\mathbb{R}^{n}}|w|^{p+1}\rho[\frac{nh}{2}+\frac{y\cdot y_{0}}{2}-\frac{h|y|^{2}}{4}]dy\notag\\
&-2h(\frac{p}{p-1}+\frac{1}{p-1})\frac{1}{2(p-1)}\int_{\mathbb{R}^{n}}w^{2}\rho[\frac{nh}{2}+\frac{y\cdot y_{0}}{2}-\frac{h|y|^{2}}{4}]dy \notag\\
&+\frac{h}{p-1}\int_{\mathbb{R}^{n}}w^{2}\rho[\frac{nh}{2}+\frac{y\cdot y_{0}}{2}-\frac{h|y|^{2}}{4}]dy \notag.
\end{align}
Thus
\begin{align}\label{Newsecondvariationformula}
&\frac{\partial^{2}}{\partial s^2} (F_{x(s), t(s)}(w+s\phi))|_{s=0}\notag\\
=&-\frac{1}{(p-1)^{2}}h^{2}\int_{\mathbb{R}^{n}}w^{2}\rho dy+\frac{2}{p-1}h\int_{\mathbb{R}^{n}}w\phi\rho dy\notag\\
&-\frac{2ph}{p-1}\frac{1}{2}\int_{\mathbb{R}^{n}}|\nabla w|^{2}\rho[\frac{nh}{2}-\frac{h|y|^{2}}{4}]dy\notag\\
&+\frac{2ph}{p-1}\frac{1}{p+1}\int_{\mathbb{R}^{n}}|w|^{p+1}\rho[\frac{nh}{2}-\frac{h|y|^{2}}{4}]dy\notag\\
&-\frac{2ph}{p-1}\frac{1}{2(p-1)}\int_{\mathbb{R}^{n}}w^{2}\rho[\frac{nh}{2}-\frac{h|y|^{2}}{4}]dy \notag\\
&+\frac{h}{p-1}\int_{\mathbb{R}^{n}}w^{2}\rho[\frac{nh}{2}+\frac{y\cdot y_{0}}{2}-\frac{h|y|^{2}}{4}]dy\notag\\
&+\int_{\mathbb{R}^{n}}|\nabla\phi|^{2}\rho dy-p\int_{\mathbb{R}^{n}}|w|^{p-1}\phi^{2}\rho dy+\frac{1}{p-1}\int_{\mathbb{R}^{n}}\phi^{2}\rho dy\\
&+2\int_{\mathbb{R}^{n}}(\nabla w\cdot\nabla\phi)\rho[\frac{y\cdot y_{0}}{2}-\frac{h|y|^{2}}{4}]dy\notag\\
&-2\int_{\mathbb{R}^{n}}|w|^{p-1}w\phi \rho[\frac{y\cdot y_{0}}{2}-\frac{h|y|^{2}}{4}]dy\notag\\
&+\frac{2}{p-1}\int_{\mathbb{R}^{n}}w\phi\rho[\frac{y\cdot y_{0}}{2}-\frac{h|y|^{2}}{4}]dy\notag\\
&+\frac{1}{2}\int_{\mathbb{R}^{n}}|\nabla w|^{2} \rho\{[\frac{nh}{2}+\frac{y\cdot y_{0}}{2}-\frac{h|y|^{2}}{4}]^{2}+\frac{nh^{2}}{2}-\frac{|y_{0}|^{2}}{2}-\frac{h^{2}|y|^{2}}{2}\}dy\notag\\
&-\frac{1}{p+1}\int_{\mathbb{R}^{n}}|w|^{p+1}\rho\{[\frac{nh}{2}+\frac{y\cdot y_{0}}{2}-\frac{h|y|^{2}}{4}]^{2}+\frac{nh^{2}}{2}-\frac{|y_{0}|^{2}}{2}-\frac{h^{2}|y|^{2}}{2}\}dy\notag\\
&+\frac{1}{2(p-1)}\int_{\mathbb{R}^{n}}w^{2}\rho\{[\frac{nh}{2}+\frac{y\cdot y_{0}}{2}-\frac{h|y|^{2}}{4}]^{2}+\frac{nh^{2}}{2}-\frac{|y_{0}|^{2}}{2}-\frac{h^{2}|y|^{2}}{2}\}dy\notag.
\end{align}
Multiplying   both sides of \eqref{SC1-2} by $(y\cdot y_{0})\phi\rho$ and  integrating by parts, we have
\begin{align}\label{mulyy0rho}
0=&\int_{\mathbb{R}^{n}}[\frac{1}{\rho}{\rm div}(\rho\nabla w)-\frac{1}{p-1}w+|w|^{p-1}w](y\cdot y_{0})\phi\rho dy\notag\\
=&-\int_{\mathbb{R}^{n}}\rho\nabla w\nabla((y\cdot y_{0})\phi)dy+\int_{\mathbb{R}^{n}}|w|^{p-1}w(y\cdot y_{0})\phi\rho dy\notag\\
&-\frac{1}{p-1}\int_{\mathbb{R}^{n}}w(y\cdot y_{0})\phi\rho dy\\
=&-\int_{\mathbb{R}^{n}}(\nabla w\cdot y_{0})\phi\rho dy-\int_{\mathbb{R}^{n}}(\nabla w\cdot\nabla\phi)(y\cdot y_{0})\rho dy\notag\\
&+\int_{\mathbb{R}^{n}}|w|^{p-1}w(y\cdot y_{0})\phi\rho dy-\frac{1}{p-1}\int_{\mathbb{R}^{n}}w(y\cdot y_{0})\phi\rho dy\notag.
\end{align}
Multiplying   both sides of \eqref{SC1-2} by $|y|^{2}\phi\rho$ and   integrating by parts, we have
\begin{align}\label{ysquarephirho}
0=&\int_{\mathbb{R}^{n}}[\frac{1}{\rho}{\rm div}(\rho\nabla w)-\frac{1}{p-1}w+|w|^{p-1}w]|y|^{2}\phi\rho dy\notag\\
=&-\int_{\mathbb{R}^{n}}\rho\nabla w\nabla(|y|^{2}\phi)dy+\int_{\mathbb{R}^{n}}|w|^{p-1}w|y|^{2}\phi\rho dy\notag\\
&-\frac{1}{p-1}\int_{\mathbb{R}^{n}}w|y|^{2}\phi\rho dy\\
=&-2\int_{\mathbb{R}^{n}}(\nabla w\cdot y)\phi\rho dy-\int_{\mathbb{R}^{n}}(\nabla w\cdot\nabla\phi)|y|^{2}\rho dy\notag\\
&+\int_{\mathbb{R}^{n}}|w|^{p-1}w|y|^{2}\phi\rho dy-\frac{1}{p-1}\int_{\mathbb{R}^{n}}w|y|^{2}\phi\rho dy\notag.
\end{align}
Substituting \eqref{mulyy0rho} and \eqref{ysquarephirho} into \eqref{Newsecondvariationformula}, we get
\begin{align}\label{Thefirstsimplication}
&\frac{\partial^{2}}{\partial s^2} (F_{x(s), t(s)}(w+s\phi))|_{s=0}\notag\\
&=\int_{\mathbb{R}^{n}}|\nabla\phi|^{2}\rho dy-p\int_{\mathbb{R}^{n}}|w|^{p-1}\phi^{2}\rho dy+\frac{1}{p-1}\int_{\mathbb{R}^{n}}\phi^{2}\rho dy\notag\\
&+h\int_{\mathbb{R}^{n}}(\frac{2}{p-1}w+\nabla w\cdot y)\phi\rho dy-\int_{\mathbb{R}^{n}}(\nabla w\cdot y_{0})\phi\rho dy\notag\\
&-\frac{2ph}{p-1}\frac{1}{2}\int_{\mathbb{R}^{n}}|\nabla w|^{2}\rho[\frac{nh}{2}-\frac{h|y|^{2}}{4}]dy\notag\\
&+\frac{2ph}{p-1}\frac{1}{p+1}\int_{\mathbb{R}^{n}}|w|^{p+1}\rho[\frac{nh}{2}-\frac{h|y|^{2}}{4}]dy\notag\\
&-\frac{2ph}{p-1}\frac{1}{2(p-1)}\int_{\mathbb{R}^{n}}w^{2}\rho[\frac{nh}{2}-\frac{h|y|^{2}}{4}]dy \\
&+\frac{h}{p-1}\int_{\mathbb{R}^{n}}w^{2}\rho[\frac{nh}{2}+\frac{y\cdot y_{0}}{2}-\frac{h|y|^{2}}{4}]dy-\frac{h^{2}}{(p-1)^{2}}\int_{\mathbb{R}^{n}}w^{2}\rho dy\notag\\
&+\frac{1}{2}\int_{\mathbb{R}^{n}}|\nabla w|^{2} \rho\{[\frac{nh}{2}+\frac{y\cdot y_{0}}{2}-\frac{h|y|^{2}}{4}]^{2}+\frac{nh^{2}}{2}-\frac{|y_{0}|^{2}}{2}-\frac{h^{2}|y|^{2}}{2}\}dy\notag\\
&-\frac{1}{p+1}\int_{\mathbb{R}^{n}}|w|^{p+1}\rho\{[\frac{nh}{2}+\frac{y\cdot y_{0}}{2}-\frac{h|y|^{2}}{4}]^{2}+\frac{nh^{2}}{2}-\frac{|y_{0}|^{2}}{2}-\frac{h^{2}|y|^{2}}{2}\}dy\notag\\
&+\frac{1}{2(p-1)}\int_{\mathbb{R}^{n}}w^{2}\rho\{[\frac{nh}{2}+\frac{y\cdot y_{0}}{2}-\frac{h|y|^{2}}{4}]^{2}+\frac{nh^{2}}{2}-\frac{|y_{0}|^{2}}{2}-\frac{h^{2}|y|^{2}}{2}\}dy\notag.
\end{align}
To continue the proof, we   need several more identities. Multiplying   both sides of \eqref{SC1-2} by $(\frac{n}{2}-\frac{|y|^{2}}{4})(\nabla w\cdot y_{0})\rho$ and   integrating by parts, we   get 
\begin{align}\label{24-5}
0=&\int_{\mathbb{R}^{n}}[\frac{1}{\rho}{\rm div}(\rho\nabla w)-\frac{1}{p-1}w+|w|^{p-1}w](\frac{n}{2}-\frac{|y|^{2}}{4})(\nabla w\cdot y_{0})\rho dy\notag\\
=&-\int_{\mathbb{R}^{n}}\rho\nabla w\cdot\nabla[(\frac{n}{2}-\frac{|y|^{2}}{4})(\nabla w\cdot y_{0})]dy\notag\\
&-\frac{1}{p-1}\int_{\mathbb{R}^{n}}w(\frac{n}{2}-\frac{|y|^{2}}{4})(\nabla w\cdot y_{0})\rho dy\notag\\
&+\int_{\mathbb{R}^{n}}|w|^{p-1}w(\frac{n}{2}-\frac{|y|^{2}}{4})(\nabla w\cdot y_{0})\rho dy\notag\\
=&-\frac{1}{4(p-1)}\int_{\mathbb{R}^{n}}w^{2}(y\cdot y_{0})\rho  dy+\frac{1}{2(p+1)}\int_{\mathbb{R}^{n}} |w|^{p+1}(y\cdot y_{0})\rho dy\\
&-\frac{1}{4(p-1)}\int_{\mathbb{R}^{n}}(\frac{n}{2}-\frac{|y|^{2}}{4})w^{2}(y\cdot y_{0})\rho dy\notag\\
&+\frac{1}{2(p+1)}\int_{\mathbb{R}^{n}}(\frac{n}{2}-\frac{|y|^{2}}{4})|w|^{p+1}(y\cdot y_{0})\rho dy\notag\\
&+\frac{1}{2}\int_{\mathbb{R}^{n}}(\nabla w\cdot y)(\nabla w\cdot y_{0}) \rho dy-\frac{1}{4}\int_{\mathbb{R}^{n}}|\nabla w|^{2}(y\cdot y_{0}) \rho dy
\notag\\
&-\frac{1}{4}\int_{\mathbb{R}^{n}}|\nabla w|^{2}(y\cdot y_{0})(\frac{n}{2}-\frac{|y|^{2}}{4})\rho dy\notag.
\end{align}
Multiplying   both sides of \eqref{SC1} by $(\nabla w\cdot y_{0})(y\cdot y_{0})\rho$
and   integrating by parts, we obtain
\begin{align}\label{24-5(1)}
0=&\int_{\mathbb{R}^{n}}[\frac{1}{\rho}{\rm div}(\rho\nabla w)-\frac{1}{p-1}w+|w|^{p-1}w](\nabla w\cdot y_{0})(y\cdot y_{0})\rho dy\notag\\
=&-\int_{\mathbb{R}^{n}}\rho\nabla w\cdot\nabla[(\nabla w\cdot y_{0})(y\cdot y_{0})]dy+\int_{\mathbb{R}^{n}}|w|^{p-1}w(\nabla w\cdot y_{0})(y\cdot y_{0})\rho dy\notag\\
&-\frac{1}{p-1}\int_{\mathbb{R}^{n}}w(\nabla w\cdot y_{0})(y\cdot y_{0})\rho dy\notag\\
=&-\frac{1}{4(p-1)}\int_{\mathbb{R}^{n}}( y\cdot y_{0})(y\cdot y_{0}) w^{2}\rho dy+\frac{1}{2(p-1)}\int_{\mathbb{R}^{n}}w^{2}|y_{0}|^{2}\rho dy\\
&+\frac{1}{2(p+1)}\int_{\mathbb{R}^{n}}(y\cdot y_{0})(y\cdot y_{0}) |w|^{p+1}\rho dy-\frac{1}{p+1}\int_{\mathbb{R}^{n}}|w|^{p+1}|y_{0}|^{2}\rho dy\notag\\
&-\int_{\mathbb{R}^{n}}(\nabla w\cdot y_{0})(\nabla w\cdot y_{0})\rho dy-\frac{1}{4}\int_{\mathbb{R}^{n}}|\nabla w|^{2}(y\cdot y_{0})(y\cdot y_{0})\rho dy\notag\\
&+\frac{1}{2}\int_{\mathbb{R}^{n}}|\nabla w|^{2}|y_{0}|^{2}\rho dy\notag.
\end{align}
Multiplying   both sides of \eqref{SC1} by $(\frac{n}{2}-\frac{|y|^{2}}{4})(\nabla w\cdot\nabla \rho)$ and   integrating by parts, we  get  
\begin{align}\label{24-5(2)}
0=&\int_{\mathbb{R}^{n}}[\frac{1}{\rho}{\rm div}(\rho\nabla w)-\frac{1}{p-1}w+|w|^{p-1}w](\frac{n}{2}-\frac{|y|^{2}}{4})(\nabla w\cdot\nabla \rho)dy\notag\\
=&-\int_{\mathbb{R}^{n}}\rho\nabla w\cdot\nabla[(\frac{n}{2}-\frac{|y|^{2}}{4})(\nabla \log\rho\cdot\nabla w)]dy\notag\\
&-\frac{1}{2(p-1)}\int_{\mathbb{R}^{n}}(\frac{n}{2}-\frac{|y|^{2}}{4})(\nabla w^{2}\cdot\nabla\rho)dy\notag\\
&+\frac{1}{p+1}\int_{\mathbb{R}^{n}}(\frac{n}{2}-\frac{|y|^{2}}{4})(\nabla |w|^{p+1}\cdot\nabla\rho)dy\notag\\
=&\frac{1}{8(p-1)}\int_{\mathbb{R}^{n}}w^{2}|y|^{2}\rho dy-\frac{1}{4(p+1)}\int_{\mathbb{R}^{n}}|w|^{p+1}|y|^{2}\rho dy\notag\\
&-\frac{1}{2(p-1)}\int_{\mathbb{R}^{n}}(\frac{n}{2}-\frac{|y|^{2}}{4})(\frac{n}{2}-\frac{|y|^{2}}{4})w^{2}\rho dy\\
&+\frac{1}{p+1}\int_{\mathbb{R}^{n}}(\frac{n}{2}-\frac{|y|^{2}}{4})(\frac{n}{2}-\frac{|y|^{2}}{4})|w|^{p+1}\rho dy\notag\\
&+\frac{n}{4}\int_{\mathbb{R}^{n}}|\nabla w|^{2}\rho dy-\frac{1}{4}\int_{\mathbb{R}^{n}}(\nabla w\cdot y)(\nabla w\cdot y)\rho dy\notag\\
&-\frac{1}{2}\int_{\mathbb{R}^{n}}(\frac{n}{2}-\frac{|y|^{2}}{4})(\frac{n}{2}-\frac{|y|^{2}}{4})|\nabla w|^{2}\rho dy\notag.
\end{align}
Combining \eqref{24-5}, \eqref{24-5(1)} and \eqref{24-5(2)}, we have
\begin{align}\label{24-5(3)}
&\frac{1}{2}\int_{\mathbb{R}^{n}}|\nabla w|^{2} \rho[\frac{nh}{2}+\frac{y\cdot y_{0}}{2}-\frac{h|y|^{2}}{4}]^{2}dy\notag\\
&-\frac{1}{p+1}\int_{\mathbb{R}^{n}}|w|^{p+1}\rho[\frac{nh}{2}+\frac{y\cdot y_{0}}{2}-\frac{h|y|^{2}}{4}]^{2}dy\notag\\
&+\frac{1}{2(p-1)}\int_{\mathbb{R}^{n}}w^{2}\rho[\frac{nh}{2}+\frac{y\cdot y_{0}}{2}-\frac{h|y|^{2}}{4}]^{2}dy\notag\\
=&\frac{h^{2}}{8(p-1)}\int_{\mathbb{R}^{n}}w^{2}|y|^{2}\rho dy-\frac{h^{2}}{4(p+1)}\int_{\mathbb{R}^{n}}|w|^{p+1}|y|^{2}\rho dy\notag\\
&+\frac{nh^{2}}{4}\int_{\mathbb{R}^{n}}|\nabla w|^{2}\rho dy-\frac{h^{2}}{4}\int_{\mathbb{R}^{n}}(\nabla w\cdot y)(\nabla w\cdot y)\rho dy\notag\\
&+\frac{1}{4(p-1)}\int_{\mathbb{R}^{n}}w^{2}|y_{0}|^{2}\rho dy-\frac{1}{2(p+1)}\int_{\mathbb{R}^{n}}|w|^{p+1}|y_{0}|^{2}\rho dy\\
&-\frac{1}{2}\int_{\mathbb{R}^{n}}(\nabla w\cdot y_{0})(\nabla w\cdot y_{0})\rho dy+\frac{1}{4}\int_{\mathbb{R}^{n}}|\nabla w|^{2}|y_{0}|^{2}\rho dy\notag\\
&-\frac{h}{2(p-1)}\int_{\mathbb{R}^{n}}w^{2}(y\cdot y_{0})\rho  dy+\frac{h}{p+1}\int_{\mathbb{R}^{n}} |w|^{p+1}(y\cdot y_{0})\rho dy\notag\\
&+h\int_{\mathbb{R}^{n}}(\nabla w\cdot y)(\nabla w\cdot y_{0}) \rho dy-\frac{h}{2}\int_{\mathbb{R}^{n}}|\nabla w|^{2}(y\cdot y_{0})\rho dy\notag\\
=&-\frac{1}{2}\int_{\mathbb{R}^{n}}(\nabla w\cdot y_{0})(\nabla w\cdot y_{0})\rho dy-\frac{h^{2}}{4}\int_{\mathbb{R}^{n}}(\nabla w\cdot y)(\nabla w\cdot y)\rho dy\notag\\
&+h\int_{\mathbb{R}^{n}}(\nabla w\cdot y)(\nabla w\cdot y_{0}) \rho dy+\frac{h^{2}}{2}\int_{\mathbb{R}^{n}}|\nabla w|^{2}[\frac{n}{2}-\frac{|y|^{2}}{4}]\rho dy\notag\\
&+\frac{1}{2}\int_{\mathbb{R}^{n}}|\nabla w|^{2}[\frac{|y_{0}|^{2}}{2}+\frac{|y|^{2}}{4}]\rho dy+\frac{1}{2(p-1)}\int_{\mathbb{R}^{n}}w^{2}[\frac{|y_{0}|^{2}}{2}+\frac{|y|^{2}}{4}]\rho dy\notag\\
&-\frac{1}{p+1}\int_{\mathbb{R}^{n}}|w|^{p+1}[\frac{|y_{0}|^{2}}{2}+\frac{|y|^{2}}{4}]\rho dy\notag,
\end{align}
where we have applied \eqref{Testfunction}. 

Then
\begin{align}
&\frac{1}{2}\int_{\mathbb{R}^{n}}|\nabla w|^{2} \rho\{[\frac{nh}{2}+\frac{y\cdot y_{0}}{2}-\frac{h|y|^{2}}{4}]^{2}+\frac{nh^{2}}{2}-\frac{|y_{0}|^{2}}{2}-\frac{h^{2}|y|^{2}}{2}\}dy\notag\\
&+\frac{1}{2(p-1)}\int_{\mathbb{R}^{n}}w^{2}\rho\{[\frac{nh}{2}+\frac{y\cdot y_{0}}{2}-\frac{h|y|^{2}}{4}]^{2}+\frac{nh^{2}}{2}-\frac{|y_{0}|^{2}}{2}-\frac{h^{2}|y|^{2}}{2}\}dy\notag\\
&-\frac{1}{p+1}\int_{\mathbb{R}^{n}}|w|^{p+1}\rho\{[\frac{nh}{2}+\frac{y\cdot y_{0}}{2}-\frac{h|y|^{2}}{4}]^{2}+\frac{nh^{2}}{2}-\frac{|y_{0}|^{2}}{2}-\frac{h^{2}|y|^{2}}{2}\}dy\notag\\
=&h^{2}\int_{\mathbb{R}^{n}}|\nabla w|^{2}[\frac{n}{2}-\frac{|y|^{2}}{4}]\rho dy-\frac{h^{2}}{p+1}\int_{\mathbb{R}^{n}}|w|^{p+1}[\frac{n}{2}-\frac{|y|^{2}}{4}]\rho dy\notag\\
&+\frac{h^{2}}{2(p-1)}\int_{\mathbb{R}^{n}}w^{2}[\frac{n}{2}-\frac{|y|^{2}}{4}]\rho dy-\frac{h^{2}}{4}\int_{\mathbb{R}^{n}}(\nabla w\cdot y)(\nabla w\cdot y)\rho dy\notag\\
&-\frac{1}{2}\int_{\mathbb{R}^{n}}(\nabla w\cdot y_{0})(\nabla w\cdot y_{0})\rho dy+h\int_{\mathbb{R}^{n}}(\nabla w\cdot y)(\nabla w\cdot y_{0})\rho dy\notag.
\end{align}
Plugging this into \eqref{Thefirstsimplication}, we have
\begin{align}\label{Thesecondsimplication}
&\frac{\partial^{2}}{\partial s^2} (F_{x(s), t(s)}(w+s\phi))|_{s=0}\notag\\
&=\int_{\mathbb{R}^{n}}|\nabla\phi|^{2}\rho dy-p\int_{\mathbb{R}^{n}}|w|^{p-1}\phi^{2}\rho dy+\frac{1}{p-1}\int_{\mathbb{R}^{n}}\phi^{2}\rho dy\notag\\
&+h\int_{\mathbb{R}^{n}}(\frac{2}{p-1}w+\nabla w\cdot y)\phi\rho dy-\int_{\mathbb{R}^{n}}(\nabla w\cdot y_{0})\phi\rho dy\notag\\
&-\frac{(p+1)h}{p-1}\frac{1}{2}\int_{\mathbb{R}^{n}}|\nabla w|^{2}\rho[\frac{nh}{2}-\frac{h|y|^{2}}{4}]dy\notag\\
&+\frac{(p+1)h}{p-1}\frac{1}{p+1}\int_{\mathbb{R}^{n}}|w|^{p+1}\rho[\frac{nh}{2}-\frac{h|y|^{2}}{4}]dy\notag\\
&-\frac{(p+1)h}{p-1}\frac{1}{2(p-1)}\int_{\mathbb{R}^{n}}w^{2}\rho[\frac{nh}{2}-\frac{h|y|^{2}}{4}]dy \\
&+\frac{h}{p-1}\int_{\mathbb{R}^{n}}w^{2}\rho[\frac{nh}{2}+\frac{y\cdot y_{0}}{2}-\frac{h|y|^{2}}{4}]dy-\frac{h^{2}}{(p-1)^{2}}\int_{\mathbb{R}^{n}}w^{2}\rho dy\notag\\
&+\frac{h^{2}}{2}\int_{\mathbb{R}^{n}}|\nabla w|^{2}[\frac{n}{2}-\frac{|y|^{2}}{4}] \rho dy-\frac{h^{2}}{4}\int_{\mathbb{R}^{n}}(\nabla w\cdot y)(\nabla w\cdot y)\rho dy\notag\\
&-\frac{1}{2}\int_{\mathbb{R}^{n}}(\nabla w\cdot y_{0})(\nabla w\cdot y_{0})\rho dy+h\int_{\mathbb{R}^{n}}(\nabla w\cdot y)(\nabla w\cdot y_{0})\rho dy\notag.
\end{align}
Multiplying   both sides of \eqref{1} by $(\frac{n}{2}-\frac{|y|^{2}}{4})w\rho$ and   integrating by parts, we   get  
\begin{align}
0=&\int_{\mathbb{R}^{n}}[\frac{1}{\rho}{\rm div}(\rho\nabla w)-\frac{1}{p-1}w+|w|^{p-1}w](\frac{n}{2}-\frac{|y|^{2}}{4})w\rho dy\notag\\
=&-\int_{\mathbb{R}^{n}}\rho\nabla w\nabla [(\frac{n}{2}-\frac{|y|^{2}}{4})w]-\frac{1}{p-1}\int_{\mathbb{R}^{n}}w^{2}(\frac{n}{2}-\frac{|y|^{2}}{4})\rho dy\notag\\
&+\int_{\mathbb{R}^{n}}|w|^{p+1}(\frac{n}{2}-\frac{|y|^{2}}{4})\rho dy\notag\\
=&-\int_{\mathbb{R}^{n}}|\nabla w|^{2}(\frac{n}{2}-\frac{|y|^{2}}{4})\rho dy+\frac{1}{2}\int_{\mathbb{R}^{n}}w(\nabla w\cdot y)\rho dy\notag\\
&-\frac{1}{p-1}\int_{\mathbb{R}^{n}}w^{2}(\frac{n}{2}-\frac{|y|^{2}}{4})\rho dy+\int_{\mathbb{R}^{n}}|w|^{p+1}(\frac{n}{2}-\frac{|y|^{2}}{4})\rho dy\notag\\
=&-\int_{\mathbb{R}^{n}}|\nabla w|^{2}(\frac{n}{2}-\frac{|y|^{2}}{4})\rho+\frac{1}{4}\int_{\mathbb{R}^{n}}(\nabla w^{2}\cdot y)\rho dy\notag\\
&+\frac{1}{p-1}\int_{\mathbb{R}^{n}}w^{2}(\frac{n}{2}-\frac{|y|^{2}}{4})\rho dy+\int_{\mathbb{R}^{n}}|w|^{p+1}(\frac{n}{2}-\frac{|y|^{2}}{4})\rho dy\notag\\
=&-\int_{\mathbb{R}^{n}}|\nabla w|^{2}(\frac{n}{2}-\frac{|y|^{2}}{4})\rho dy-\frac{1}{2}\int_{\mathbb{R}^{n}}w^{2}(\frac{n}{2}-\frac{|y|^{2}}{4})\rho dy\notag\\
&-\frac{1}{p-1}\int_{\mathbb{R}^{n}}w^{2}(\frac{n}{2}-\frac{|y|^{2}}{4})\rho dy+\int_{\mathbb{R}^{n}}|w|^{p+1}(\frac{n}{2}-\frac{|y|^{2}}{4})\rho dy\notag.
\end{align}
Therefore
\begin{align}\label{25-1}
0=&-\frac{1}{p+1}\int_{\mathbb{R}^{n}}|\nabla w|^{2}(\frac{n}{2}-\frac{|y|^{2}}{4})\rho dy-\frac{1}{2(p-1)}\int_{\mathbb{R}^{n}}w^{2}(\frac{n}{2}-\frac{|y|^{2}}{4})\rho dy\\
&+\frac{1}{p+1}\int_{\mathbb{R}^{n}}|w|^{p+1}(\frac{n}{2}-\frac{|y|^{2}}{4})\rho dy\notag.
\end{align}
By \eqref{10} and \eqref{25-1}, we have
\begin{align}\label{25-2}
0=\frac{1}{p-1}\int_{\mathbb{R}^{n}}|\nabla w|^{2}\rho dy-\frac{1}{p+1}\int_{\mathbb{R}^{n}}|\nabla w|^{2}(\frac{n}{2}-\frac{|y|^{2}}{4})\rho dy.
\end{align}
Substituting \eqref{25-2} and \eqref{10} into \eqref{Thesecondsimplication} gives
\begin{align}\label{Thethirdsimplication}
&\frac{\partial^{2}}{\partial s^2} (F_{x(s), t(s)}(w+s\phi))|_{s=0}\notag\\
&=\int_{\mathbb{R}^{n}}|\nabla\phi|^{2}\rho dy-p\int_{\mathbb{R}^{n}}|w|^{p-1}\phi^{2}\rho dy+\frac{1}{p-1}\int_{\mathbb{R}^{n}}\phi^{2}\rho dy\notag\\
&+h\int_{\mathbb{R}^{n}}(\frac{2}{p-1}w+\nabla w\cdot y)\phi\rho dy-\int_{\mathbb{R}^{n}}(\nabla w\cdot y_{0})\phi\rho dy\\
&+\frac{h}{p-1}\int_{\mathbb{R}^{n}}w^{2}\rho[\frac{nh}{2}+\frac{y\cdot y_{0}}{2}-\frac{h|y|^{2}}{4}]dy-\frac{h^{2}}{(p-1)^{2}}\int_{\mathbb{R}^{n}}w^{2}\rho dy\notag\\
&-\frac{h^{2}}{4}\int_{\mathbb{R}^{n}}(\nabla w\cdot y)(\nabla w\cdot y)\rho dy-\frac{1}{2}\int_{\mathbb{R}^{n}}(\nabla w\cdot y_{0})(\nabla w\cdot y_{0})\rho dy\notag\\
&+h\int_{\mathbb{R}^{n}}(\nabla w\cdot y)(\nabla w\cdot y_{0})\rho dy\notag.
\end{align}
Let $L$ be the linear operator defined by
\[L\psi=\Delta\psi-\frac{y}{2}\cdot\nabla\psi-\frac{1}{p-1}\psi+p|w|^{p-1}\psi.\]
We know from Lemma \ref{lemeig} that $2/(p-1)w+y\cdot\nabla w$ is an eigenfunction of $L$ associated
to the eigenvalue $-1$ and $w_{i}, i=1, 2,\cdots, n$ are eigenfunctions of $L$ associated to the
eigenvalue $-1/2$. Since $L$ is self-adjoint,  for any $y_{0}\in\mathbb{R}^{n}$,
\begin{equation}\label{25-x2}
\int_{\mathbb{R}^{n}}\left(\frac{2}{p-1}w+y\cdot\nabla w\right)(\nabla w\cdot y_{0})\rho dy=0.
\end{equation}
By \eqref{25-x2}, we have
\begin{align}\label{25-x3}
&\int_{\mathbb{R}^{n}}(\nabla w\cdot y)(\nabla w\cdot y_{0})\rho dy\notag\\
=&-\frac{2}{p-1}\int_{\mathbb{R}^{n}}w(\nabla w\cdot y_{0})\rho dy\\
=&-\frac{1}{p-1}\int_{\mathbb{R}^{n}}(\nabla w^{2}\cdot y_{0})\rho dy\notag\\
=&-\frac{1}{2(p-1)}\int_{\mathbb{R}^{n}}w^{2}\rho (y\cdot y_{0})dy\notag.
\end{align}
Finally, we have
\begin{align}\label{25-x5}
&\frac{h^{2}}{p-1}\int_{\mathbb{R}^{n}}w^{2}\rho[\frac{n}{2}-\frac{|y|^{2}}{4}]dy\notag\\
=&-\frac{h^{2}}{p-1}\int_{\mathbb{R}^{n}}w^{2}\Delta\rho\notag\\
=&\frac{h^{2}}{p-1}\int_{\mathbb{R}^{n}}\nabla w^{2}\cdot\nabla \rho dy\\
=&-\frac{h^{2}}{p-1}\int_{\mathbb{R}^{n}}w (\nabla w\cdot y)\rho dy\notag.
\end{align}
Substituting \eqref{25-x3} and \eqref{25-x5} into \eqref{Thethirdsimplication} gives
\begin{align}\label{secvari}
&\frac{\partial^{2}}{\partial s^2} (F_{x(s), t(s)}(w+s\phi))|_{s=0}\notag\\
&=\int_{\mathbb{R}^{n}}|\nabla\phi|^{2}\rho dy-p\int_{\mathbb{R}^{n}}|w|^{p-1}\phi^{2}\rho dy+\frac{1}{p-1}\int_{\mathbb{R}^{n}}\phi^{2}\rho dy\notag\\
&+h\int_{\mathbb{R}^{n}}(\frac{2}{p-1}w+\nabla w\cdot y)\phi\rho dy-\int_{\mathbb{R}^{n}}(\nabla w\cdot y_{0})\phi\rho dy\\
&-\frac{h^{2}}{(p-1)^{2}}\int_{\mathbb{R}^{n}}w^{2}\rho dy-\frac{h^{2}}{p-1}\int_{\mathbb{R}^{n}}w(\nabla w\cdot y)\rho dy\notag\\
&-\frac{h^{2}}{4}\int_{\mathbb{R}^{n}}(\nabla w\cdot y)(\nabla w\cdot y)\rho dy-\frac{1}{2}\int_{\mathbb{R}^{n}}(\nabla w\cdot y_{0})(\nabla w\cdot y_{0})\rho dy\notag.
\end{align}
Since
\begin{align}
&-\frac{h^{2}}{(p-1)^{2}}\int_{\mathbb{R}^{n}}w^{2}\rho dy-\frac{h^{2}}{p-1}\int_{\mathbb{R}^{n}}w(\nabla w\cdot y)\rho dy\notag-\frac{h^{2}}{4}\int_{\mathbb{R}^{n}}(\nabla w\cdot y)(\nabla w\cdot y)\rho dy\notag\\
=&-h^{2}\int_{\mathbb{R}^{n}}(\frac{1}{p-1}w+\frac{y}{2}\cdot\nabla w)^{2}\rho dy\notag,
\end{align}
we get  \eqref{secondvariation1} with the help of   \eqref{secvari}.
\end{proof}

\section*{Appendix B}
In this appendix, our main objective is to prove \eqref{comparewithsingularsolution}.
\begin{proof}[Proof of \eqref{comparewithsingularsolution}]
By the definition of $\rho$, we have
\begin{align}\label{Energysef2}
&\int_{\mathbb{R}^{n}}|y|^{-\frac{2(p+1)}{p-1}}\rho dy\notag\\
=&(4\pi)^{-\frac{n}{2}}\int_{\mathbb{R}^{n}}|y|^{-\frac{2(p+1)}{p-1}}e^{-\frac{|y|^{2}}{4}}dy\\
=&(4\pi)^{-\frac{n}{2}}\omega_{n-1}\int_{0}^{+\infty}r^{n-1-\frac{2(p+1)}{p-1}}e^{-\frac{r^{2}}{4}}dr\notag,
\end{align}
where $\omega_{n-1}$ is the area of the unit sphere $\mathbb{S}^{n-1}$ in $\mathbb{R}^{n}$. Recall that we have assumed $p>(n+2)/(n-2)$, so the above integral is well defined. Let $r=2\sqrt{s}$. Then $dr=s^{-1/2}ds$ and
\begin{align}
E(w)=&c(n, p)\int_{0}^{+\infty}r^{n-1-\frac{2(p+1)}{p-1}}e^{-\frac{r^{2}}{4}}dr\notag\\
=&c(n, p)\int_{0}^{+\infty}s^{-\frac{1}{2}}(2\sqrt{s})^{n-1-\frac{2(p+1)}{p-1}}e^{-s}ds\notag\\
=&2^{n-3-\frac{4}{p-1}}c(n, p)\int_{0}^{\infty}s^{\frac{n-4-\frac{4}{p-1}}{2}}e^{-s}ds\notag\\
=&2^{n-3-\frac{4}{p-1}}c(n, p)\Gamma(\frac{n-2-\frac{4}{p-1}}{2})\notag,
\end{align}
where
\[c(n, p)=\left(\frac{1}{2}-\frac{1}{p+1}\right)\left[\frac{2}{p-1}\left(n-2-\frac{2}{p-1}\right)\right]^{\frac{p+1}{p-1}}(4\pi)^{-\frac{n}{2}}\omega_{n-1}\]
and
\[\Gamma(\tau)=\int_{0}^{+\infty}s^{\tau-1}e^{-s}ds\]
is the  $\Gamma$-function. Recall that the area of the unit sphere is
\[\omega_{n-1}=\frac{2\pi^{\frac{n}{2}}}{\Gamma(\frac{n}{2})}.\]
Then
\begin{equation}\label{Energysef5}
\begin{aligned}
E(w)=&2^{n-3-\frac{4}{p-1}}c(n, p)\Gamma\left(\frac{n-2-\frac{4}{p-1}}{2}\right)\\
=&2^{-2-\frac{4}{p-1}}\left(\frac{1}{2}-\frac{1}{p+1}\right)\left[\frac{2}{p-1}\left(n-2-\frac{2}{p-1}\right)\right]^{\frac{p+1}{p-1}}\frac{\Gamma\left(\frac{n-2-\frac{4}{p-1}}{2}\right)}{\Gamma\left(\frac{n}{2}\right)}.
\end{aligned}
\end{equation}
In order that Lemma \ref{twodiemsionenergy} holds, we need only to show
\[2^{-2-\frac{4}{p-1}}\left[\frac{2}{p-1}\left(n-2-\frac{2}{p-1}\right)\right]^{\frac{p+1}{p-1}}\frac{\Gamma\left(\frac{n-2-\frac{4}{p-1}}{2}\right)}{\Gamma\left(\frac{n}{2}\right)}>(\frac{1}{p-1})^{\frac{p+1}{p-1}}.\]
This is equivalent to
\begin{equation}\label{appendix2newobjective}
\left(\frac{n-2}{2}-\frac{1}{p-1}\right)^{\frac{p+1}{p-1}}\frac{\Gamma\left(\frac{n-2-\frac{4}{p-1}}{2}\right)}{\Gamma\left(\frac{n}{2}\right)}>1.
\end{equation}
To obtain \eqref{appendix2newobjective}, we set $\alpha=2/(p-1)$. Using $p>(n+2)/(n-2)$ again, we have $\alpha\in (0, (n-2)/2)$. 
Let
\[x=\frac{n}{2},\quad f(x)=\frac{\Gamma(x-1-\alpha)}{\Gamma(x)}\left(x-1-\frac{\alpha}{2}\right)^{1+\alpha}\]
and
\[\phi(x)=\log f(x)=\log\Gamma(x-1-\alpha)-\log\Gamma(x)+(1+\alpha)\log\left(x-1-\frac{\alpha}{2}\right).\]
Then
\begin{equation}\label{phifirstorderderivative}
\phi^\prime(x)=\frac{\Gamma^\prime(x-1-\alpha)}{\Gamma(x-1-\alpha)}-\frac{\Gamma^\prime(x)}{\Gamma(x)}+\frac{1+\alpha}{x-1-\frac{\alpha}{2}}
\end{equation}
and
\[
\begin{aligned}
\phi^{\prime\prime}(x)=&\left(\frac{\Gamma^\prime(x-1-\alpha)}{\Gamma(x-1-\alpha)}\right)^\prime-\left(\frac{\Gamma^\prime(x)}{\Gamma(x)}\right)^\prime-\frac{1+\alpha}{\left(x-1-\frac{\alpha}{2}\right)^{2}}\\
=&\sum^{\infty}_{l=0}(x-1-\alpha+l)^{-2}-\sum^{\infty}_{l=0}(x+l)^{-2}-\frac{1+\alpha}{\left(x-1-\frac{\alpha}{2}\right)^{2}}
\end{aligned}\]
Notice that for any $\tau>1$,
\[\left(\tau-1-\frac{\alpha}{2}\right)\left(\tau+\frac{\alpha}{2}\right)\leq\tau(\tau-1),\]
so
\[
\begin{aligned}
&\frac{1}{1+\alpha}\left[\frac{1}{\left(\tau-1-\frac{\alpha}{2}\right)^{2}}-\frac{1}{\left(\tau+\frac{\alpha}{2}\right)^{2}}\right]-\left[\frac{1}{(\tau-1)^{2}}-\frac{1}{\tau^{2}}\right]\\
=&\frac{1}{1+\alpha}\frac{(1+\alpha)(2\tau-1)}{\left(\tau-1-\frac{\alpha}{2}\right)^{2}\left(\tau+\frac{\alpha}{2}\right)^{2}}-\frac{2\tau-1}{\tau^{2}(\tau-1)^{2}}\\
=&(2\tau-1)\left[\frac{1}{\left(\tau-1-\frac{\alpha}{2})^{2}(\tau+\frac{\alpha}{2}\right)^{2}}-\frac{1}{\tau^{2}(\tau-1)^{2}}\right]\\
\geq & 0.
\end{aligned}\]
Take $\tau=x+l-\alpha/2$ in the above inequality, which gives
\[
\phi^{\prime\prime}(x)\geq(1+\alpha)\sum_{l=0}^{\infty}\left[\frac{1}{(x-1+l-\alpha/2)^{2}}-\frac{1}{(x+l-\alpha/2)^{2}}\right]-\frac{1+\alpha}{(x-1-\frac{\alpha}{2})^{2}}=0.\]
Thus   $\phi$ is convex. By the mean value theorem, we have
\[
(1+\alpha)\left[\frac{1}{x-1-\frac{\alpha}{2}}-\sum_{l=0}^{\infty}(x+l)^{-2}\right]\leq\phi^\prime(x)\leq(1+\alpha)\left[\frac{1}{x-1-\frac{\alpha}{2}}-\sum_{l=0}^{\infty}(x-1-\alpha+l)^{-2}\right].\]
Hence
$\lim_{x\rightarrow\infty}\phi'(x)=0$; and for any $\alpha\in (0, (n-2)/2)$, $\phi$ is decreasing in $[3/2, +\infty)$. Moreover, we see from \eqref{phifirstorderderivative}  that $\phi^\prime(x)=O(|x|^{-2})$ as $x\rightarrow\infty$. Thus there exists a constant $c_{0}$ such that $\lim_{x\rightarrow\infty}\phi(x)=c_{0}$.  By the Stirling's formula (see \cite{Magnus}), we have for $m=1, 2, \cdots$,
\[\log\Gamma(x)=\left(x-\frac{1}{2}\right)\log x-x+\frac{1}{2}\log(2\pi)+\sum_{l=1}^{m}\frac{B_{2l}}{2l(2l-1)}x^{-2l+1}+O(x^{-2m-1}),\]
where $B_{2l}$ are the Bernoulli numbers.
Therefore, as $x\rightarrow\infty$,
\[
\begin{aligned}
\phi(x)=&\log\Gamma(x-1-\alpha)-\log\Gamma(x)+(1+\alpha)\log\left(x-1-\frac{\alpha}{2}\right)\\
=&(x-1-\alpha-\frac{1}{2})\log(x-1-\alpha)-(x-1-\alpha)\\
&-(x-\frac{1}{2})\log x+x+(1+\alpha)\log\left(x-1-\frac{\alpha}{2}\right)+O(|x|^{-1})\\
=&(x-\frac{1}{2})\log\left(1-\frac{1+\alpha}{x}\right)+(1+\alpha)\left[\log\left(1+\frac{\alpha}{2(x-1-\alpha)}\right)+1\right]+O(|x|^{-1}).
\end{aligned}\]
Because
\[\lim_{x\to\infty}\left(x-\frac{1}{2}\right)\log\left(1-\frac{1+\alpha}{x}\right)=-(1+\alpha),\]
we get
\[\phi(x)=O(|x|^{-1}),\quad\text{as}~x\to+\infty.\]
This then implies that for any $\alpha\in(0, (n-2)/2)$, if $x>1$, then $\phi(x)>0$.
\end{proof}

\bibliographystyle{plain}
\bibliography{Fujita}

\end{document}